\numberwithin{equation}{section}
\newtheoremstyle{thmlemcorr}{10pt}{10pt}{\itshape}{}{\bfseries}{.}{10pt}{{\thmname{#1}\thmnumber{ #2}\thmnote{ (#3)}}}
\newtheoremstyle{thmlemcorr*}{10pt}{10pt}{\itshape}{}{\bfseries}{.}\newline{{\thmname{#1}\thmnumber{ #2}\thmnote{ (#3)}}}
\newtheoremstyle{defi}{10pt}{10pt}{\itshape}{}{\bfseries}{.}{10pt}{{\thmname{#1}\thmnumber{ #2}\thmnote{ (#3)}}}
\newtheoremstyle{remexample}{10pt}{10pt}{}{}{\bfseries}{.}{10pt}{{\thmname{#1}\thmnumber{ #2}\thmnote{ (#3)}}}
\newtheoremstyle{ass}{10pt}{10pt}{}{}{\bfseries}{.}{10pt}{{\thmname{#1}\thmnumber{ A#2}\thmnote{ (#3)}}}
\theoremstyle{thmlemcorr}
\newtheorem{theorem}{Theorem}
\numberwithin{theorem}{section}
\newtheorem{lemma}[theorem]{Lemma}
\newtheorem{corollary}[theorem]{Corollary}
\newtheorem{proposition}[theorem]{Proposition}
\theoremstyle{thmlemcorr*}
\newtheorem{theorem*}{Theorem}
\newtheorem{lemma*}[theorem]{Lemma}
\newtheorem{corollary*}[theorem]{Corollary}
\newtheorem{proposition*}[theorem]{Proposition}
\newtheorem{problem*}[theorem]{Problem}
\newtheorem{conjecture*}[theorem]{Conjecture}
\theoremstyle{defi}
\newtheorem{definition}[theorem]{Definition}
\theoremstyle{remexample}
\newtheorem{remark}[theorem]{Remark}
\theoremstyle{ass}
\newcommand{\Fcal}{\mathcal{F}}
\newcommand{\Gcal}{\mathcal{G}}
\newcommand{\Ical}{\mathcal{I}}
\newcommand{\Jcal}{\mathcal{J}}
\newcommand{\Mcal}{\mathcal{M}}
\newcommand{\Ncal}{\mathcal{N}}
\DeclareMathOperator{\supp}{supp}
\newcommand{\norm}[1]{\|#1\|}
\newcommand{\normb}[1]{\bigl\|#1\bigr\|}
\newcommand{\normB}[1]{\Bigl\|#1\Bigr\|}
\newcommand{\abs}[1]{|#1|}
\newcommand{\absb}[1]{\bigl|#1\bigr|}
\newcommand{\absB}[1]{\Bigl|#1\Bigr|}
\newcommand{\absBB}[1]{\biggl|#1\biggr|}
\newcommand{\floor}[1]{\lfloor #1 \rfloor}
\newcommand{\dpr}[1]{\langle #1 \rangle}
\newcommand{\dd}{\;\mathrm{d}}
\newcommand{\R}{\mathbb{R}}
\newcommand{\loc}{\mathrm{loc}}
\newcommand{\toweak}{\rightharpoonup}
\newcommand{\sbullet}{\begin{picture}(1,1)(-0.5,-2)\circle*{2}\end{picture}}
\newcommand{\frarg}{\,\sbullet\,}
\def\Xint#1{\mathchoice 
{\XXint\displaystyle\textstyle{#1}}%
{\XXint\textstyle\scriptstyle{#1}}%
{\XXint\scriptstyle\scriptscriptstyle{#1}}%
{\XXint\scriptscriptstyle\scriptscriptstyle{#1}}%
\!\int} 
\def\XXint#1#2#3{{\setbox0=\hbox{$#1{#2#3}{\int}$} 
\vcenter{\hbox{$#2#3$}}\kern-.5\wd0}} 
\def\dashint{\,\Xint-}
\renewcommand{\epsilon}{\varepsilon}
\renewcommand{\phi}{\varphi}
\title[H\"older continuity]{A perturbative approach to H\"older continuity of solutions to a nonlocal $p$-parabolic equation }
\author{Alireza Tavakoli}
\address{Department of Mathematics, KTH Royal Institute of Technology, Stockholm, Sweden}
\email{alirezat@kth.se}
\keywords{Fractional $p$-Laplacian, Local H\"older regularity, Nonlocal diffusion.}
\subjclass[2010]{35K55, 35K65, 35R11}
\begin{document}
\begin{abstract}
We study local boundedness and H\"older continuity of a parabolic equation involving the fractional $p$-Laplacian of order $s$, with $0<s<1$, $2\leq p < \infty$, with a general right hand side. We focus on obtaining precise H\"older continuity estimates. The proof is based on a perturbative argument using the already known H\"older continuity estimate for solutions to the equation with zero right hand side.

\vspace{8pt}
\noindent\textsc{Date:} \today.
\end{abstract}
\maketitle
%
%
%
%
\section{Introduction}
In this paper, we study the local boundedness and H\"older regularity of solutions to the inhomogeneous equation
\begin{equation}\label{eq:main-nonlocal-diffution}
  u_t+ (-\Delta_p)^s u=f(x,t),
\end{equation}
where $f \in L^r_{\loc}(I; L^q_{\loc}(\Omega))$ with $q\geq 1$, $r\geq 1$, $p\geq 2$ and $s \in (0,1)$. Here, $ (- \Delta_p)^s $ is the fractional $p$-Laplacian, arising as the first variation of the Sobolev-Slobodecki\u{\i} seminorm
$$(-\Delta_p)^s u (x) :=2\, \mathrm{P.V.} \int_{\R^n}\frac{|u(x)-u(y)|^{p-2}(u(x)-u(y))}{|x-y|^{n+s\,p}} \dd y. $$
Nonlocal equations involving operators of the type above, with a singular kernel, were first considered in \cite{Ishi} to the best of our knowledge.

In this study, continuing the work in \cite{BLS}, we perform a perturbative argument to obtain H\"older continuity estimates, with explicit exponents for the equations with a right hand side. Our approach closely follows the arguments in \cite{Teix} and \cite{BLSc}. In such perturbative arguments it is often possible to establish H\"older regularity results for bounded solutions using only $L^\infty$ estimates for the equations with zero right hand side. This is not the case here. Due to the presence of a suprimim in time in the tail (see section \ref{sec3}), we are led to proving a $L^\infty$ bound for equations with right hand sides, this is Theorem \ref{thm:loc-bound}. The proof is inspired by the work \cite{Arons}.

%
%

Below, we state the main results. For the definition of the tail and relevant function spaces, see Section \ref{sec2}.
We use the following notation of parabolic cylinders
$$Q_{R,r}(x,T):= B_R(x_0) \times (T-r,T]\,.$$
The exponent $p_s^\star= \frac{np}{n-sp}$ is the critical exponent for the Sobolev embedding therem, see Proposition \ref{SObolev-morrey}. We denote by $p^\prime$, the H\"older conjugate of $p$, that is $p^\prime= \frac{p}{p-1}$.
\begin{theorem}\label{thm:loc-bound}
Let $\Omega\subset\R^n$ be a bounded and open set, $I=(t_0,t_1]$, $p\geq 2$, $0<s<1$. Consider $q$ and $r$ such that $r \geq p^\prime$,
$$\frac{1}{r} + \frac{n}{spq} < 1 \quad\text{ and } q\geq (p_s^\star)^\prime \quad \text{in the case } sp < n,$$ 
and
$$\frac{1}{r} + \frac{1}{q} < 1 \quad\text{ and  } q > 1\quad \text{in the case } sp\geq n.$$
Suppose $u$
is a local weak solution of
\[
u_t+(-\Delta_p)^s u=f \qquad \mbox{ in }\Omega\times I,
\]
such that 
\begin{equation*}
u\in L^\infty_{\loc} (I; L_{sp}^{p-1}(\R^n))  \quad \text{and}  \quad f \in L^{r}_{\loc}(I ; L^{q}_{\loc}(\Omega)).
\end{equation*}
then $u$ is locally bounded in $\Omega$. More specifically, if $Q_{2R,(2R)^{sp}(x_0,T_0)} \Subset \Omega \times I$, u is bounded in $Q_{R/2,(R/2)^{sp}}(x_0,T_0)$ and in the case $sp \neq n $, the estimate reads
\[
 \begin{aligned}
 \norm{u}_{L^\infty(Q_{\frac{R}{2},(\frac{R}{2})^{sp}})}&\leq 2 \sup_{T_0-R^{sp}<t\leq T_0} \mathrm{Tail}_{p-1,sp} \bigl(u(\frarg ,t);x_0,\frac{R}{2} \bigr) \\
 &+ 
 C \left[ 1 + \left( \dashint_{Q_{R,R^{sp}}(x_0,T_0)} \abs{u}^{p} \dd x \dd t \right)^{\frac{1}{p}} 
 + \vartheta^{\frac{(p-1)\vartheta}{(\vartheta -1 )^2}} \Bigl( 1 + R^{sp\nu +\frac{(p-2)sp\nu}{(p-1)(\vartheta-1)}}\norm{f}_{L^{q,r}(Q_{R,R^{sp}})}^{1+\frac{1}{\vartheta-1}\frac{p-2}{p-1} } \Bigr) \right],
 \end{aligned}
 \]
where, $C=C(n,s,p)$, $\nu = 1-\frac{1}{r} - \frac{n}{spq}$,
 $$ \vartheta = 1+\frac{sp\nu}{n} \quad \text{if}\quad sp<n, \quad \text{and} \quad \vartheta= 2-\frac{1}{r}-\frac{1}{q} \quad \text{if} \quad sp> n. $$
 In the case $sp=n $, given any $l$ such that $ \frac{p}{r^\prime}(1-\frac{1}{r} - \frac{1}{q})^{-1}  <l < \infty$ we get
 \[
 \begin{aligned}
 &\norm{u}_{L^\infty(Q_{\frac{R}{2},(\frac{R}{2})^{sp}})}\leq 2 \sup_{T_0-R^{sp}<t\leq T_0} \mathrm{Tail}_{p-1,sp} \bigl(u(\frarg ,t);x_0,\frac{R}{2} \bigr) \\
 &\qquad + 
C\left[ 1+ \left( \dashint_{Q_{R,R^{sp}}(x_0,T_0)} \abs{u}^{p} \dd x \dd t \right)^{\frac{1}{p}} 
 + \vartheta^{\frac{(p-1)\vartheta}{(\vartheta -1 )^2}} \Bigl( 1 + R^{sp\nu +\frac{(p-2)sp\nu}{(p-1)(\vartheta-1)}}\norm{f}_{L^{q,r}(Q_{R,R^{sp}})}^{1+\frac{1}{\vartheta-1}\frac{p-2}{p-1} } \Bigr)  \right],
 \end{aligned}
 \]
where, $C= C(n,s,p,q,l) $, $\vartheta= 2-\frac{1}{r}- \frac{1}{q} - \frac{p}{lr^\prime}$ and $\nu= 1-\frac{1}{r} - \frac{1}{q}$.
\end{theorem}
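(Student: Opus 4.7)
The strategy is a De Giorgi-style iteration on the truncations $(u-k)_+$ over a nested sequence of shrinking parabolic cylinders $Q_j\subset Q_{R,R^{sp}}(x_0,T_0)$, adapted to the nonlocal tail and to the inhomogeneity $f$, in the spirit of Aronson's work \cite{Arons} cited in the introduction. The key new feature compared with the homogeneous setting is that the supremum-in-time appearing in the tail (Section \ref{sec3}) cannot be reduced to an $L^\infty$ bound for homogeneous solutions, so every estimate must be quantitative in $\norm{f}_{L^{q,r}}$.

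First I would derive a Caccioppoli-type energy inequality for $w_k := (u-k)_+$. Testing the equation against $w_k \eta^p$ for space-time cutoffs supported in $Q_j$ yields, schematically,
\begin{equation*}
\sup_{t}\int_{B_{R_j}} w_k^2\, \eta^p \di x \,+\, \iiint \frac{\absb{(w_k\eta)(x,t)-(w_k\eta)(y,t)}^p}{\abs{x-y}^{n+sp}}\di x\di y\di t \;\lesssim\; \mathcal{E}_j + \mathcal{T}_j + \iint_{Q_j} f\, w_k\, \eta^p \di x\di t,
\end{equation*}
where $\mathcal{E}_j$ collects the cutoff errors (treated via Lipschitz cutoffs on annuli $B_{R_{j+1}}\subset B_{R_j}$) and $\mathcal{T}_j$ is proportional to $\sup_t \mathrm{Tail}_{p-1,sp}(u(\frarg,t);x_0,R)^{p-1}$, which is how the time supremum enters on the right-hand side of the final estimate. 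Combining the spatial $W^{s,p}$ term with the Sobolev embedding of Proposition \ref{SObolev-morrey} and interpolating against the kinetic $L^\infty_t L^2_x$ norm produces the parabolic reverse-Hölder gain
\begin{equation*}
\iint_{Q_j} w_k^{p\vartheta}\di x \di t \;\lesssim\; \Bigl(\iint_{Q_j} w_k^p \di x\di t\Bigr)^{\vartheta} + \mathcal{R}_j,
\end{equation*}
with $\vartheta$ precisely as in the statement and $\mathcal{R}_j$ gathering the tail and $f$-contributions. In the borderline case $sp=n$ the Sobolev target $L^{p_s^\star}$ is replaced by an auxiliary $L^l$ integrability in order to bypass the Trudinger failure, which is exactly where the free parameter $l$ and the correction $-\tfrac{p}{lr'}$ in $\vartheta$ enter. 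The $f$-integral is split via Hölder in $(q,r)$ so that one factor is absorbed into $L^{p\vartheta}$ by Young's inequality while the other produces a power of $\abs{\setn{u>k}\cap Q_j}$ weighted by the scaling exponent $\nu$.

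Next I would run the iteration with $R_j=\tfrac{R}{2}(1+2^{-j})$, $k_j=k_\infty(1-2^{-j})$, and $A_j := \iint_{Q_j}(u-k_j)_+^p \di x\di t$. The pointwise bound $(u-k_{j+1})_+^p \leq (k_{j+1}-k_j)^{-p(\vartheta-1)}(u-k_j)_+^{p\vartheta}$ on $\setn{u>k_{j+1}}$ combined with the reverse-Hölder step produces a recursion of the schematic form
\begin{equation*}
A_{j+1} \;\leq\; C\, b^j\, k_\infty^{-p(\vartheta-1)}\, A_j^{\vartheta} \,+\, C\, b^j\, R^{\beta}\, \norm{f}_{L^{q,r}}^{\alpha}\, k_\infty^{-\gamma}\, A_j^{\delta},
\end{equation*}
with explicit exponents $\alpha,\beta,\gamma,\delta>0$ coming from the Hölder/Young split. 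For $k_\infty$ large enough the second term is absorbed into the nonlinear term, and the classical De Giorgi iteration lemma forces $A_j\to 0$ provided $A_0$ lies below a threshold proportional to $k_\infty^{p(\vartheta-1)}\vartheta^{-p\vartheta/(\vartheta-1)^2}b^{-p/(\vartheta-1)^2}$; solving this smallness condition for $k_\infty$ in terms of the $L^p$-average of $u$, the time-supremum of the tail, and $\norm{f}_{L^{q,r}}$ delivers the announced bound on $\norm{u}_{L^\infty(Q_{R/2,(R/2)^{sp}})}$.

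The principal obstacle is the bookkeeping of the $f$-contribution along the iteration: the exponent $1+\tfrac{1}{\vartheta-1}\tfrac{p-2}{p-1}$ on $\norm{f}_{L^{q,r}}$ and the companion scaling factor $R^{sp\nu + (p-2)sp\nu/((p-1)(\vartheta-1))}$ arise from matching the Hölder split of $\iint f w_k \eta^p$ against the loss one incurs reconstructing a full $p$-th power from the $2$-based kinetic energy, which is an artifact of the degenerate regime $p>2$, and then propagating that cost geometrically through the De Giorgi scheme. Getting these exponents sharp, alongside the $sp=n$ endpoint handled through the auxiliary $L^l$-substitution and the resulting three cases in the statement, is the delicate part of the argument.
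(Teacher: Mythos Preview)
Your proposal takes a genuinely different route from the paper. You outline a direct De Giorgi iteration on the truncations $(u-k)_+$ over a nested family of shrinking cylinders, carrying the tail and the $f$-term through every step. The paper instead argues perturbatively: it introduces the $(s,p)$-caloric replacement $v$ of $u$ in $Q_{R,R^{sp}}(x_0,T_0)$, invokes the already-known local boundedness for the \emph{homogeneous} equation (Theorem~1.1 in \cite{Ding}) to control $\norm{v}_{L^\infty(Q_{R/2,(R/2)^{sp}})}$, and then runs a Moser iteration not on $u$ but on the difference $u-v$ (Proposition~\ref{thm:sub-moser-bound}, Corollary~\ref{thm:moser-bound}). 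Because $u-v$ vanishes identically outside $B_R$ and at the initial time, that Moser iteration needs no spatial cutoffs and no tail bookkeeping; the test functions are global powers $((u-v)^+_M+\delta)^\beta-\delta^\beta$ with $\delta=\max\{1,(R^{sp\nu}\norm{f}_{L^{q,r}})^{1/(p-1)}\}$, and the precise exponent $1+\tfrac{1}{\vartheta-1}\tfrac{p-2}{p-1}$ on $\norm{f}_{L^{q,r}}$ drops out of the geometric series $\sum_j\vartheta^{-j}$ applied to the factor $\delta^{p-2}$ that appears at every step. The tail of $v$ is converted back to a tail of $u$ only once, at the end, via Lemma~\ref{lm:tail_comparison}.

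What each approach buys: the paper's comparison scheme cleanly separates the three contributions in the final bound (the $L^p$-average and tail come from the homogeneous estimate on $v$; the $f$-dependent block comes entirely from $\norm{u-v}_{L^\infty}$), and it avoids carrying the supremum-in-time tail through an iteration. Your direct De Giorgi scheme is more self-contained and does not presuppose the homogeneous boundedness result, but it must manage the tail term $\mathcal{T}_j$ at every level, and it is not evident from your outline that the recursion you write will reproduce the \emph{same} power of $\norm{f}_{L^{q,r}}$; in particular, your heuristic that the exponent comes from ``reconstructing a full $p$-th power from the $2$-based kinetic energy'' is not how it arises in the paper, where it is purely a consequence of the $\delta^{p-2}$ weight in the Moser step and the summation $\tfrac{\vartheta}{\vartheta-1}$.
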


\begin{theorem}\label{thm:main-holder}
Let $\Omega\subset\R^n$ be a bounded and open set, $I=(t_0,t_1]$, $p\geq 2$, $0<s<1$. Consider $q$ and $r$ such that $r \geq p^\prime$,
$$\frac{1}{r} + \frac{n}{spq} < 1 \quad\text{ and } q\geq (p_s^\star)^\prime \quad \text{in the case } sp < n,$$ 
and
$$\frac{1}{r} + \frac{1}{q} < 1 \quad\text{ and  } q > 1\quad \text{in the case } sp\geq n.$$
Define the exponent
\begin{equation}\label{eq:exponent-theta}
 \Theta(s,p):=\left\{\begin{array}{rl}
\dfrac{s\,p}{p-1},& \mbox{ if } s<\dfrac{p-1}{p},\\
&\\
1,& \mbox{ if } s\ge \dfrac{p-1}{p},
\end{array}
\right.
\end{equation}
 Suppose $u$
is a local weak solution of
\[
u_t+(-\Delta_p)^s u=f \qquad \mbox{ in }\Omega\times I,
\]
such that 
\begin{equation*}
u\in L^{\infty}_{\loc}(I;L^\infty_{\loc}(\Omega)) \cap L^\infty_{\loc} (I; L_{sp}^{p-1}(\R^n)),  \quad \text{and}  \quad f \in L^{r}_{\loc}(I ; L^{q}_{\loc}(\Omega)).
\end{equation*}
Then 
\[
u\in C^{\alpha}_{x,\loc}(\Omega\times I)\cap C^{\frac{\alpha}{sp- (p-2)\alpha}}_{t,\loc}(\Omega\times I),\qquad \mbox{ for every } 0<\alpha <\min{ \Bigl \lbrace \Theta , \frac{r(spq-n)-spq}{q(r(p-1)-(p-2))} \Bigr \rbrace}  .
\] 
More precisely, for every $0<\alpha<\min{ \left\lbrace \Theta , \frac{r(spq-n)-spq}{q(r(p-1)-(p-2))} \right\rbrace} $, $R>0$, $x_0\in\Omega$ and $T_0$ such that 
\[
Q_{R,2R^{s\,p}}(x_0,T_0)\Subset\Omega\times (t_0,t_1],
\] 
there exists a constant $C=C(n,s,p, q,r,\alpha)>0$ such that
\begin{equation}
\label{apriori_spacetime}
\begin{aligned}
|u(x_1,t_1)- & u(x_2,t_2)| \leq  C \,\left[ \Mcal \Bigl(\frac{\abs{x_2-x_1}}{R} \Bigr)^\alpha +  \Mcal ^{p-1}\Bigl( \frac{\abs{t_2-t_1}}{R^{s\, p}} \Bigr)^{\frac{\alpha}{sp-(p-2)\alpha}}    \right]
\end{aligned}
\end{equation}
for any $(x_1,t_1),\,(x_2,t_2)\in Q_{R/2,(R/2)^{s\,p}}(x_0,T_0)$, with 
$$\Mcal = 1+ \norm{u}_{L^\infty(Q_{R,2R^{sp}}(x_0,T_0))} + \sup_{ T_0- 2R^{sp}\leq t \leq T_0}\mathrm{Tail}_{p-1,sp} (u(\frarg ,t);x_0,R) + \bigl(  R^{sp-\frac{n}{q} - \frac{sp}{r}} \norm{f}_{L^{q,r}(Q_{R,2R^{sp}}(x_0,T_0))} \bigr)^{\frac{1}{1+ \frac{p-2}{r^\prime}}}.$$
\end{theorem}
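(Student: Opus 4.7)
The proof follows a perturbative Campanato-style iteration modelled on \cite{Teix} and \cite{BLSc}, adapted to the intrinsic scaling of the fractional $p$-parabolic equation. Fix $(x_0, T_0)$ with $Q_{R, 2R^{sp}}(x_0, T_0) \Subset \Omega \times I$. On each nested cylinder $Q_{\rho, \rho^{sp}}(x_0, T_0)$, $\rho \in (0, R]$, the plan is to compare $u$ with the solution $v_\rho$ of the homogeneous Cauchy--Dirichlet problem
\begin{equation*}
(v_\rho)_t + (-\Delta_p)^s v_\rho = 0 \quad \text{in } Q_{\rho, \rho^{sp}}(x_0, T_0), \qquad v_\rho \equiv u \quad \text{on the parabolic complement};
\end{equation*}
invoke the H\"older regularity of $v_\rho$ from \cite{BLS}; and control the defect $u - v_\rho$ through the forcing $f$. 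After a standard intrinsic rescaling it suffices to argue with $\Mcal = 1$, and the $\Mcal$-dependence in \eqref{apriori_spacetime} will be recovered by bookkeeping.

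The result from \cite{BLS} gives, for every $\alpha_0 < \Theta(s,p)$, a decay
\begin{equation*}
\operatorname*{osc}_{Q_{\lambda\rho, (\lambda\rho)^{sp}}} v_\rho \,\leq\, C\, \lambda^{\alpha_0}, \qquad \lambda \in (0, 1/2).
\end{equation*}
The comparison step is performed by testing the equation for $u - v_\rho$ against $u-v_\rho$ itself and exploiting the monotonicity of the fractional $p$-Laplacian via an inequality of the form $\dprb{(-\Delta_p)^s u - (-\Delta_p)^s v_\rho,\, u - v_\rho} \geq c\,[u-v_\rho]_{W^{s,p}(\R^n)}^{p}$, together with the time-supremum tail control provided by Theorem \ref{thm:loc-bound}. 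This yields
\begin{equation*}
\norm{u - v_\rho}_{L^\infty(Q_{\rho, \rho^{sp}})} \,\leq\, C\, \bigl(\rho^{sp - n/q - sp/r}\,\norm{f}_{L^{q,r}(Q_{\rho,\rho^{sp}})}\bigr)^{\frac{1}{1+(p-2)/r^\prime}},
\end{equation*}
the exponent $1/(1+(p-2)/r^\prime)$ being the $(p-1)$-homogeneity absorption of the monotonicity inequality. A short computation shows that the resulting effective rate in $\rho$ equals precisely $\beta := \frac{r(spq-n)-spq}{q(r(p-1)-(p-2))}$, i.e.\ the second term in the minimum in the statement.

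Combining the two ingredients by the triangle inequality produces the oscillation decay
\begin{equation*}
\operatorname*{osc}_{Q_{\lambda\rho, (\lambda\rho)^{sp}}} u \,\leq\, C\, \lambda^{\alpha_0} + C\,\rho^{\beta},
\end{equation*}
and fixing $\lambda$ small enough so that $C\lambda^{\alpha_0}\le \tfrac12\lambda^{\alpha}$ for any target $\alpha < \min(\alpha_0, \beta)$, then iterating on the geometric sequence $\rho_k = \lambda^k R$ in the standard Campanato fashion, gives the spatial H\"older estimate. The time H\"older exponent $\alpha/(sp-(p-2)\alpha)$ arises from intrinsic scaling: balancing $u_t \sim \Mcal^{p-1}/\rho^{sp}$ forces the time-scale $\tau \sim \rho^{sp}/\Mcal^{p-2}$, and solving $\omega \leq C\,\Mcal\,(\tau\omega^{p-2}/R^{sp})^{\alpha/sp}$ for the time-oscillation $\omega$ produces exactly the stated exponent. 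The $\Mcal$-prefactor that emerges is $\Mcal^{sp/(sp-(p-2)\alpha)}$, which for every admissible $\alpha$ is dominated by $\Mcal^{p-1}$ (since $\Mcal \ge 1$ and $\alpha\le \Theta$), giving the clean form in \eqref{apriori_spacetime}.

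The main obstacle, as stressed in the introduction, is the nonlocal tail: the comparison step is only meaningful once $\sup_t \mathrm{Tail}_{p-1,sp}(u(\frarg,t); x_0, \rho)$ is under quantitative control, because the Euler--Lagrange equation for $(-\Delta_p)^s$ involves integrals over the whole of $\R^n$. This is precisely why Theorem \ref{thm:loc-bound}, with its time-supremum tail on the right-hand side, has to be proved first and cannot be replaced by a black-box appeal to the homogeneous theory of \cite{BLS}. A secondary but unavoidable subtlety is the intrinsic rescaling: the H\"older estimate from \cite{BLS} is stated in normalised form, and tracking the $\Mcal$-dependence through the rescaling is what ultimately fixes the $\Mcal$-powers appearing in the space and time estimates of \eqref{apriori_spacetime}.
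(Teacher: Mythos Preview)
Your proposal has a genuine gap at the comparison step. Testing the difference equation with $u-v_\rho$ and using monotonicity (this is Lemma~\ref{lm:lpavr} in the paper) yields only an energy/$L^p$ bound on $u-v_\rho$, not the $L^\infty$ bound you state. To pass to $L^\infty$ one needs a Moser iteration on $u-v_\rho$; the paper carries this out (Proposition~\ref{thm:sub-moser-bound} and Corollary~\ref{thm:moser-bound}), but the resulting estimate has the form
\[
\norm{u-v_\rho}_{L^\infty(Q_{\rho,\rho^{sp}})}\le C\Bigl(1+\rho^{\,sp\nu+\cdots}\norm{f}_{L^{q,r}}^{\,1+\cdots}\Bigr),
\]
because the iteration must be seeded with $\delta=\max\{1,(\rho^{sp\nu}\norm{f})^{1/(p-1)}\}\ge 1$. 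The additive ``$+1$'' does not decay with $\rho$, so your oscillation inequality $\operatorname*{osc}_{Q_{\lambda\rho}}u\le C\lambda^{\alpha_0}+C\rho^\beta$ is not available, and the direct Campanato iteration on the unrescaled $u$ cannot close.

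The paper circumvents exactly this obstruction by a small-regime compactness argument, in the spirit of \cite{Teix}. First, Lemma~\ref{lm:stabil} shows \emph{non-quantitatively}, via the basic H\"older continuity (Theorem~\ref{thm:basic-holder}) and Arzel\`a--Ascoli, that $\norm{u-\phi}_{L^\infty}\le\delta(\omega)$ with $\delta(\omega)\to 0$ as $\omega:=\norm{f}_{L^{q,r}}\to 0$. Then Proposition~\ref{prop:reg-scal1} normalizes so that $\norm{u}_{L^\infty}\le 1$, the tail is $\le 1$, and $\norm{f}_{L^{q,r}}\le\omega$ small; it iterates on the rescalings $v_k(x,t)=\lambda^{-\alpha k}u(\lambda^k x,\lambda^{k\beta}t)$ with $\beta=sp-(p-2)\alpha$, and maintains inductively that $\norm{v_k}_{L^\infty(G_1)}\le 1$ and $\sup_t\mathrm{Tail}(v_k)\le 1$. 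At each step the defect from the caloric replacement is bounded by the \emph{fixed} number $\delta(\omega)$, which is beaten once and for all by choosing $\omega$ small. The second threshold on $\alpha$ enters not through a decay rate of $u-v_\rho$, but as the condition ensuring that the rescaled forcing $f_k$ still satisfies $\norm{f_k}_{L^{q,r}}\le\omega$, namely $r[sp-(p-1)\alpha]-rn/q-\beta>0$, which is equivalent to $\alpha<\dfrac{r(spq-n)-spq}{q(r(p-1)-(p-2))}$. Finally, the full theorem is obtained by rescaling a general $u$ into this normalized regime with the factor $\mu=\Mcal$, which is where the $\Mcal$ and $\Mcal^{p-1}$ prefactors in \eqref{apriori_spacetime} arise.
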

\subsection{Known results}

Recently, there has been a growing interest in nonlocal problems of both elliptic and parabolic types. For studies of fractional $p$-Laplace operators with different (continuous) kernels see \cite{Mazo}.
Parabolic equations of the type \eqref{eq:main-nonlocal-diffution} were first considered in \cite{Puhst} with a slightly different diffusion operator. See also \cite{abdel}, \cite{Mazo2}, \cite{Vasques} and \cite{Vasq20} for studies of the existence, uniqueness, and long time behavior of solutions.  Here we seize the opportunity to mention \cite{Caf}, \cite{Chang}, \cite{Chang2},  and \cite{Warm} which contain regularity results for parabolic nonlocal equations.

The local boundedness of the solutions to equations modeled on \eqref{eq:main-nonlocal-diffution} with zero right hand side was obtained in \cite{Strqv}. The results concern operators of the form 
$$L_K =P.V. \int_{R^n} K(x,y,t)|u(x)-u(y)|^{p-2}(u(x)-u(y)) \dd y,$$
where $K$ is symmetric in the space variables and satisfies the ellipticity condition 
$$\frac{\Lambda^{-1}}{\abs{x-y}^{n+sp}} \leq K(x,y,t) \leq \frac{\Lambda}{\abs{x-y}^{n+sp}} . $$
Later in \cite{Ding} local boundedness for certain right hand sides of the form $f(x,t,u)$ was established. 
\cite{Strqv2} contains a Harnack inequality for equations with zero right hand side. H\"{o}lder regularity has also been established in \cite{Adimur} and \cite{Liao} for all $1<p<\infty$ for equations with zero right hand sides. In \cite{BLS} they prove H\"older continuity of the solutions with explicit exponents (for $f=0$ and $K= \abs{x-y}^{-n-sp}$). Recently in \cite{Kumar}, the same type of result has been established for nonlocal equations with double phase, that is for diffusion operators involving two different degrees of homogeneity and differentiability. 

In this study, continuing the work in \cite{BLS}, we perform a perturbative argument to obtain H\"older continuity estimates with explicit exponents for equations with a right hand side.
 
  \subsubsection{Comparison of the results to some previous works} \paragraph{ \textit{Local boundedness and continuity}} We compare our boundedness result to \cite{Ding}. Their result concerns more general right hand sides depending on the solution as well. In the limiting case of $s \to 1$, they reproduce the local boundedness result contained in \cite{Dib} for the evolution $p$-Laplacian equation. To compare the results, if we restrict their result to right hand sides that are $u$-independent, their assumption on the integrability becomes $q,r >\frac{n+sp}{sp}(\frac{p(n+2s)}{2sp +(p-1)n}) $. Their analysis is done with the same integrability assumption in time and space; Our local boundedness result, Theorem \ref{thm:loc-bound}, contains this range of exponents.
  
   In the limiting case when $s$ goes to $1$, our assumptions become $q \geq (p^\star)^\prime , r \geq p^\prime$, and $1-\frac{1}{r} - \frac{n}{pq} >0$. 
   This is in accordance with the classical condition for boundedness of the evolution $p$-Laplace equation, see for example Remark 1 in \cite{Liskev}.
  If we assume the same integrability in time and space, the condition $1-\frac{1}{r} - \frac{n}{pq} >0$ reduces to $f\in L^{\hat{q}}$ with $\hat{q}>\frac{n+p}{p}$. This matches the condition in \cite{Ves}.
  
  Now we turn our attention to the nonlocal elliptic (time independent) case. For $r=\infty$ the condition for boundedness and basic H\"older continuity becomes
  $$q > \frac{n}{sp}\, , \qquad \text{if }\; sp<n \qquad \text{and} \qquad q >1\, , \qquad \text{if }\; sp\geq n \, . $$
  In the case $sp<n$, this is the same condition for local boundedness and continuity contained in \cite{Brasc}.
  When $sp>n$ and $q\geq 1$, the boundedness and H\"older continuity for the time independent equation is automatic using Morrey's inequality. Our result does not cover the case of $r=\infty\,  , q=1\, ,$ which one would expect in comparison to the time independent case.

 \textit{H\"older continuity exponent}: 
   In the case $r= \infty$, the critical H\"older continuity exponent 
   \begin{equation}\label{eq:critical-hol-expon}
   \min{\Bigl\lbrace \Theta , \frac{r(spq-n)-spq}{q(r(p-1)-(p-2))} \Bigr\rbrace}
   =\min{\Bigl \lbrace \Theta , sp\frac{1-\frac{1}{r}-\frac{n}{spq}}{p-1-\frac{p-2}{r}} \Bigr\rbrace},
   \end{equation}
   reduces to $\min{ \lbrace \Theta,\frac{sp}{p-1}(1-\frac{n}{spq})\rbrace}  $ which matches the results in \cite{BLSc}.
   
  Let us also compare our results to the local $p$-parabolic equation studied in \cite{Teix} where precise H\"older continuity exponents are obtained. If we send $s$ to 1, \eqref{eq:critical-hol-expon} becomes
  $$\min{\lbrace 1, \frac{r(pq-n)-pq}{q(r(p-1)-(p-2))}  \rbrace} \, ,$$
  which is in accordance with the result in \cite{Teix}.
    
In \cite{Kumar} explicit H\"older continuity exponents for the more general case of double phase nonlocal diffusion operators were obtained. The ideas explored there are similar to the ones in \cite{BLS}, but their result allows for a bounded right hand side instead of just zero. Their result implies the H\"older continuity exponent that we get in the case of $f\in L^\infty$, although with a slightly different estimate of the H\"older constants.

\subsection{Plan of the paper}
 In Section \ref{sec2} we introduce some notations and preliminary lemmas. We also restate and adapt a result on the existence of solutions to our setting. 
 
In Section \ref{sec3}, we establish basic local H\"older regularity and boundedness for local weak solutions.  

Section \ref{sec4} is devoted to proving Theorem \ref{thm:main-holder}. A so called tangential analysis is performed to get specific H\"older continuity exponents in terms of $q,r,s,$ and $p$.

The article is also accompanied by two appendices. In the first one, Appendix A, we work out the details for a modified version of \cite[Theorem 1.1]{BLS}. The aim is to establish a H\"older estimate in terms of the tail quantity.
 
In Appendix B we justify using certain test functions in the weak formulation of \eqref{eq:main-nonlocal-diffution}.

\subsection{Acknowledgements} The author warmly thanks Erik Lindgren for introducing the problem, proofreading this paper, for his helpful comments, and long hours of fruitful discussions.
The author has partially been supported by the Swedish Research Council, grant no. 2017- 03736.

During the development of this paper, I have been a PhD student at Uppsala University. In particular, I wish to express my gratitude to the Department of Mathematics at Uppsala University for its warm and hospitable research environment

This paper was finalized while I was participating in the program geometric aspects of nonlinear partial differential equations at Mittag-Leffler institute in Djursholm, Sweden during the fall of 2022. The research program is supported by Swedish Research Council grant no. 2016-06596   

\section{Preliminaries}\label{sec2}
\subsection{Notation}
We define the monotone function $J_p: \R \to \R$ by 
$$J_p(t)= \abs{t}^{p-2}t \,.$$

We use the notation $B_R(x_0)$ for the open ball of radius $R$ centered at $x_0$. If the center is the origin, we simply write $B_R$. We use the notation of $\omega_n$ for the surface area of the unit $n$-dimensional ball. For parabolic cylinders, we use the notation $Q_{r,r^\theta}(x_0,t_0) := B_r(x_0)\times (t_0-r^\theta , t_0]$. If the center is the origin, we write $Q_{r,r^\theta}$.

We will work with the fractional Sobolev space extensively:
$$W^{\beta, q}(\R^n) := \lbrace \psi \in L^q(\R^n) \, :\, [\psi]_{W^{\beta , q}(\R^n)} < \infty \rbrace , \qquad 0< \beta <1 ,\quad 1\leq q<\infty,$$
where the seminorm $[\psi]_{W^{s,p}(\R^n)}$ is defined as below
$$[\psi]_{W^{\beta,q}(\R^n)}^q = \iint_{\R^n \times \R^n } \frac{\abs{\psi(x)-\psi(y)}^q}{\abs{x-y}^{n+\beta q}} \dd x \dd y .$$
We also need the space $W^{\beta , q }(\Omega)$ for a subset $\Omega \subset\R^n$, defined by
$$W^{\beta, q}(\Omega) := \lbrace \psi \in L^q(\Omega) \, :\, [\psi]_{W^{\beta , q}(\Omega)} < \infty \rbrace , \qquad 0< \beta <1 , \quad 1\leq q<\infty,$$
where 
$$[\psi]_{W^{\beta,q}(\Omega)}^q = \iint_{\Omega \times \Omega } \frac{\abs{\psi(x)-\psi(y)}^q}{\abs{x-y}^{n+\beta q}} \dd x \dd y .$$
In the following, we assume that $\Omega\subset \R^n$ is a bounded open set in $\R^n$.
We define the space of Sobolev functions taking boundary values $g \in L^{p-1}_{sp}(\R^n) $ by
$$
X_g^{\beta,q}(\Omega,\Omega^\prime) = \lbrace \psi \in W^{\alpha,q}(\Omega^\prime)\cap L_{sp}^{p-1}(\R^n) \, : \, \psi=g \; \text{on} \; \R^n \setminus \Omega \, \rbrace ,
$$
where $\Omega^\prime$ is an open set such that $\Omega \Subset \Omega^\prime$.

We recall the definition of {\it tail space}
$$
L^{q}_{\alpha}(\R^n)=\left\{u\in L^{q}_{\loc}(\R^n)\, :\, \int_{\R^n} \frac{|u|^q}{1+|x|^{n+\alpha}}\dd x < +\infty\right\},\qquad q \geq 1 \text{ and } \alpha > 0,
$$
which is endowed with the norm 
$$
\|u\|_{L_\alpha^{q}(\R^n)} = \left(\int_{\R^n} \frac{|u|^q}{1+|x|^{n+\alpha}}\dd x\right)^{\frac{1}{q}}.
$$
For every $x_0\in\R^n$, $R>0$ and $u\in L^q_{\alpha}(\R^n)$, the following quantity
$$
\mathrm{Tail}_{q,\alpha}(u;x_0,R)=\left[R^{\alpha}\,\int_{\R^n\setminus B_R(x_0)} \frac{|u|^q}{|x-x_0|^{n+\alpha}}\dd x\right]^\frac{1}{q},
$$
plays an important role in regularity estimates for solutions to fractional problems.

Let $I \subset \R$ be an interval and let $V$ be a separable, reflexive, Banach space endowed with a norm $\norm{\frarg}_V$. We denote by $V^\star$ its topological dual space. Let us suppose that
$v$ is a mapping such that for almost every $t \in I$, $v(t)$ belongs to $V$. If the function $t \to \norm{v(t)}_V$ is measurable on $I$ and $1 \leq p \leq \infty$, then $v$ is an element of the Banach space $L^p(I; V)$ if and only if
$$\int_I \norm{v(t)}_V \dd t < \infty$$
By \cite[Theorem 1.5]{Show}, the dual space of $L^p(I; V)$ can be characterized according to
$(L^p(I; V ))^\star = L^{p^\prime}(I; V^\star).$
We write $v \in C(I; V )$ if the mapping $t \to v(t)$ is continuous with respect to the norm on $V$.
\subsection{Pointwise inequalities}
We will need the following pointwise inequality:
Let $p \geq 2$, then for every $A ,B \in \R$ we have
\begin{equation}\label{pntwiseineq}
 \abs{A-B}^p \leq C \bigl( J_p(A) - J_p(B)\bigr)(A-B).
\end{equation}
For a proof look at \cite[Remark A.4]{BLS}, a close inspection of the proof reveals that the constant can be taken as $C= 3 \cdot 2^{p-1}$. Before stating the next inequality, we recall \cite[Lemma A.2]{Brasc}.
\begin{lemma}\label{lm:Lemma A.2 in Brasc}
Let $1< p < \infty$ and $g:\R \to \R$ be an increasing function, and define
$$G(t)= \int_0^t g^\prime(\tau)^{\frac{1}{p}} \dd \tau , \quad t \in \R .$$
Then
$$J_p(a-b) \bigl( g(a)-g(b) \bigr) \geq \absb{G(a)-G(b)}^p .$$
\end{lemma}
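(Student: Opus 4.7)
The plan is to reduce the statement to a standard application of Hölder's inequality. First I would exploit symmetry to restrict the range of $a$ and $b$: because $J_p$ is an odd function, the quantity $J_p(a-b)(g(a)-g(b))$ is invariant under swapping $a$ and $b$, and the right-hand side $|G(a)-G(b)|^p$ is manifestly symmetric, so I may assume $a\geq b$ without loss of generality. In that case $g(a)\geq g(b)$ by monotonicity of $g$, and $J_p(a-b)=(a-b)^{p-1}$, so the target inequality becomes
$$(a-b)^{p-1}\bigl(g(a)-g(b)\bigr) \;\geq\; \bigl(G(a)-G(b)\bigr)^p.$$

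Next, I would rewrite the right-hand side using the definition of $G$, namely $G(a)-G(b)=\int_b^a g'(\tau)^{1/p}\,\dd\tau$, and apply Hölder's inequality with conjugate exponents $p$ and $p/(p-1)$ against the constant function $1$:
$$G(a)-G(b) \;=\; \int_b^a g'(\tau)^{1/p}\cdot 1\,\dd\tau \;\leq\; \left(\int_b^a g'(\tau)\,\dd\tau\right)^{1/p}\!(a-b)^{(p-1)/p}.$$
Raising to the $p$-th power gives
$$\bigl(G(a)-G(b)\bigr)^p \;\leq\; (a-b)^{p-1}\int_b^a g'(\tau)\,\dd\tau,$$
and invoking the elementary bound $\int_b^a g'(\tau)\,\dd\tau \leq g(a)-g(b)$, valid for any nondecreasing $g$ (with equality precisely in the absolutely continuous case), closes the estimate.

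I do not anticipate any real obstacle: the only delicate point is that $g$ is merely assumed monotone rather than absolutely continuous, so $g'$ exists only almost everywhere and the fundamental theorem of calculus delivers an inequality rather than an equality. Fortunately that inequality goes in the favorable direction for the conclusion, so no further hypotheses on $g$ are needed.
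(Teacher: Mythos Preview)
Your argument is correct and is precisely the standard proof of this inequality. The paper itself does not give a proof of this lemma but merely cites it from \cite[Lemma~A.2]{Brasc}, so there is no alternative approach to compare against; your Hölder-inequality argument is in fact the one used in the cited reference.
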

\begin{lemma}\label{lm:pointwise-ineq}
For $p\geq 2$ and $\beta \geq 1$
\begin{equation}
\begin{aligned}
\bigl( J_p(a-b)- J_p(c-d) \bigr)&\Bigl( ((a-c)^{+}_M +\delta)^\beta -((b-d)^{+}_M+\delta )^\beta \Bigr) \\
&\geq \frac{1}{3 \cdot 2^{p-1}} \frac{\beta p^p}{(\beta+p-1)^p}\absB{((a-c)^{+}_M +\delta)^{\frac{\beta+p-1}{p}} - ((b-d)^{+}_M +\delta)^{\frac{\beta + p-1}{p}}}^p ,
\end{aligned}
\end{equation}
where $(t)^{+}_M:=\min{\lbrace \max{\lbrace t,0\rbrace},M\rbrace}$.
\end{lemma}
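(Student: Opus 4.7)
The plan is to reduce the claim to a combination of the pointwise inequality \eqref{pntwiseineq} and Lemma \ref{lm:Lemma A.2 in Brasc} applied to the non-decreasing function $g(t) = ((t)^+_M + \delta)^\beta$. The key observation I would exploit is the algebraic identity $(a-b) - (c-d) = (a-c) - (b-d)$, so that setting $A = a-b$, $B = c-d$, $x = a-c$, $y = b-d$, one has the crucial match $A - B = x - y$. Since $g$ is non-decreasing and $J_p$ is strictly increasing, all three of $J_p(A) - J_p(B)$, $J_p(A-B) = J_p(x-y)$, and $g(x) - g(y)$ share the common sign of $A - B = x - y$.

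The first step is to compare $J_p(A) - J_p(B)$ with $J_p(A-B)$. Inequality \eqref{pntwiseineq} can be rewritten as $J_p(A-B)(A-B) = |A-B|^p \le 3 \cdot 2^{p-1}(J_p(A) - J_p(B))(A-B)$. A case distinction on the sign of $A - B$ (dividing by the positive or negative quantity $A - B$ and then multiplying by $g(x) - g(y)$, whose sign agrees with that of $A - B$ so the potential sign reversals cancel) yields in either case
\[
J_p(x-y)\bigl(g(x) - g(y)\bigr) \le 3 \cdot 2^{p-1}\bigl(J_p(A) - J_p(B)\bigr)\bigl(g(x) - g(y)\bigr).
\]

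The second step is to invoke Lemma \ref{lm:Lemma A.2 in Brasc} to bound $J_p(x-y)(g(x) - g(y))$ from below by $|G(x) - G(y)|^p$, where $G(t) = \int_0^t g'(\tau)^{1/p} \dd\tau$. A short computation using that $g'(\tau) = \beta(\tau + \delta)^{\beta-1}$ for $\tau \in (0, M)$ and $g'(\tau) = 0$ elsewhere gives, for every $t \in \R$,
\[
G(t) = \frac{\beta^{1/p}\, p}{\beta + p - 1}\,\Bigl[((t)^+_M + \delta)^{\frac{\beta + p - 1}{p}} - \delta^{\frac{\beta + p - 1}{p}}\Bigr],
\]
so that the $\delta$-constant cancels when forming $G(x) - G(y)$. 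Raising to the $p$-th power and chaining with the bound from the first step, then substituting back $x = a-c$, $y = b-d$, produces exactly the stated inequality, with the constant $\frac{1}{3 \cdot 2^{p-1}}\cdot \beta \cdot \frac{p^p}{(\beta+p-1)^p}$.

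The only point that requires some care is the sign-tracking argument in the first step: because $g$ is only non-decreasing and \eqref{pntwiseineq} involves $J_p(A) - J_p(B)$ rather than $J_p(A-B)$, one must follow the cases so that multiplication by $g(x) - g(y)$ preserves the direction of the estimate. The fact that $g(x) - g(y)$ shares the sign of $A - B$ in every non-trivial case is precisely what makes the argument go through; beyond that, everything reduces to the explicit integration above.
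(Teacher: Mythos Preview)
Your proposal is correct and follows essentially the same route as the paper: reduce via \eqref{pntwiseineq} to an inequality involving $J_p((a-c)-(b-d))$, then apply Lemma \ref{lm:Lemma A.2 in Brasc} with $g(t)=((t)^+_M+\delta)^\beta$ and compute $G$ explicitly. In fact your sign-tracking is cleaner than the paper's own write-up, which states the intermediate inequality $3\cdot 2^{p-1}\bigl(J_p(a-b)-J_p(c-d)\bigr)\ge J_p((a-c)-(b-d))$ without comment, even though this direction is only literally correct when $(a-c)-(b-d)\ge 0$; the paper is saved because the next step multiplies by $g(a-c)-g(b-d)$, which carries the same sign, exactly as you observe.
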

\begin{proof}
First notice that using \eqref{pntwiseineq} for $a-b -c+d \neq 0$:
$$3\cdot 2^{p-1} (J_p(a-b)- J_p(c-d)) \geq \frac{\abs{a-b -c+d}^p}{a-b -c+d} = J_p((a-c)- (b-d)), $$
after verifying the trivial case $a-b -c+d = 0$, we get the inequality
\begin{equation}\label{eq:pointwise-rearange}
(J_p(a-b)- J_p(c-d)) \geq  \frac{1}{3 \cdot 2^{p-1}} J_p((a-c)- (b-d)).
\end{equation}
Now we use Lemma \ref{lm:Lemma A.2 in Brasc} with $g(t)= ((t)^{+}_M +\delta)^\beta$. Then with $G= \int_0^t g^\prime(\tau)^{\frac{1}{p}} \dd \tau$, 
$$G(t)=\frac{p\beta^{\frac{1}{p}}}{\beta+p-1}\Bigl((t^{+}_M +\delta)^{\frac{\beta+p-1}{p}} - \delta^{\frac{\beta+p-1}{p}} \Bigr). $$
By Lemma \ref{lm:Lemma A.2 in Brasc}
$$J_p \bigl((a-c)-(b-d) \bigr) \bigl(g(a-c)-g(b-d) \bigr) \geq \abs{G(a-c)-G(b-d)}^p .$$
Hence
\[
\begin{aligned}
J_p \bigl((a-c)-(b-d) \bigr)& \Bigl( ((a-c)^{+}_M +\delta)^\beta -((b-d)^{+}_M +\delta )^\beta \Bigr) \\
&\geq \frac{\beta p^p}{(\beta+p-1)}\absB{((a-c)^{+}_M +\delta)^{\frac{\beta+p-1}{p}} - ((b-d)^{+}_M +\delta)^{\frac{\beta + p-1}{p}}}^p.
\end{aligned}
\]
Using \eqref{eq:pointwise-rearange} in the inequality above concludes the proof.
\end{proof}
\subsection{Functional inequalities}

We need the following basic inequalities for the tail.
\begin{lemma}\label{lm:tail_comparison}
Let $\alpha>0$, $1<q< \infty$, and $u,\,v \in L_\alpha^q(\R^n)$ such that $u=v$ on $\R^n \setminus B_R(x_0)$. Then for any $\sigma <1$,
$$\mathrm{Tail}_{\alpha,q}(v;x_0,\sigma R) \leq 2 \mathrm{Tail}_{\alpha,q}(u;x_0,\sigma R) + 2\sigma^{\frac{-n}{q}} \Bigl( \dashint_{B_R(x_0)} \abs{u-v}^q \dd x \Bigr)^{\frac{1}{q}} .$$
\end{lemma}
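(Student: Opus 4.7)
The plan is a direct decomposition of the domain of integration defining $\mathrm{Tail}_{\alpha,q}(v;x_0,\sigma R)$ combined with an elementary triangle-type inequality. Since $\sigma<1$ we can split
\[
\R^n\setminus B_{\sigma R}(x_0) \;=\; \bigl(\R^n\setminus B_R(x_0)\bigr)\,\cup\,\bigl(B_R(x_0)\setminus B_{\sigma R}(x_0)\bigr),
\]
and handle each piece separately. On the outer region the hypothesis $u=v$ on $\R^n\setminus B_R(x_0)$ gives
\[
(\sigma R)^\alpha \int_{\R^n\setminus B_R(x_0)} \frac{|v|^q}{|x-x_0|^{n+\alpha}}\dd x \;=\; (\sigma R)^\alpha \int_{\R^n\setminus B_R(x_0)} \frac{|u|^q}{|x-x_0|^{n+\alpha}}\dd x \;\leq\; \mathrm{Tail}_{\alpha,q}(u;x_0,\sigma R)^q.
\]
On the annulus I would apply the convexity inequality $|v|^q\leq 2^{q-1}\bigl(|u|^q+|u-v|^q\bigr)$. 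The $|u|^q$ contribution is again absorbed into $\mathrm{Tail}_{\alpha,q}(u;x_0,\sigma R)^q$, while for the $|u-v|^q$ contribution the crude lower bound $|x-x_0|\geq \sigma R$ on $B_R(x_0)\setminus B_{\sigma R}(x_0)$ converts the singular weight into the constant $(\sigma R)^{-n-\alpha}$, yielding
\[
(\sigma R)^\alpha \int_{B_R(x_0)\setminus B_{\sigma R}(x_0)} \frac{|u-v|^q}{|x-x_0|^{n+\alpha}}\dd x \;\leq\; \sigma^{-n} R^{-n}\int_{B_R(x_0)} |u-v|^q \dd x,
\]
which, up to a dimensional factor $|B_R|/R^n$, is exactly $\sigma^{-n}\dashint_{B_R(x_0)} |u-v|^q\dd x$.

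Summing the three contributions produces
\[
\mathrm{Tail}_{\alpha,q}(v;x_0,\sigma R)^q \;\leq\; (1+2^{q-1})\,\mathrm{Tail}_{\alpha,q}(u;x_0,\sigma R)^q + C(n,q)\,\sigma^{-n}\dashint_{B_R(x_0)} |u-v|^q\dd x,
\]
and taking $q$-th roots via the subadditivity $(a+b)^{1/q}\leq a^{1/q}+b^{1/q}$ (valid for $q\geq 1$ and nonnegative $a,b$) together with the elementary bound $(1+2^{q-1})^{1/q}\leq 2$ — which follows from $1\leq 2^{q-1}$ for every $q\geq 1$ — gives the inequality in the statement. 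The only constant requiring a little care is the dimensional factor in front of the averaged integral, which is absorbed into the factor $2$ allowed by the statement. There is no real obstacle: the lemma is a bookkeeping computation whose sole purpose is to transfer the extra mass of $v$ on the shell $B_R\setminus B_{\sigma R}$ into a clean averaged $L^q$ quantity on $B_R$.
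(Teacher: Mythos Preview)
Your proof is correct and follows essentially the same approach as the paper: split the domain into the outer region $\R^n\setminus B_R(x_0)$ (where $u=v$) and the annulus $B_R(x_0)\setminus B_{\sigma R}(x_0)$, apply the convexity inequality $|v|^q\leq 2^{q-1}(|u|^q+|u-v|^q)$ on the annulus, and use the crude bound $|x-x_0|\geq \sigma R$ for the $|u-v|^q$ term. The only cosmetic difference is that the paper immediately bounds the outer contribution by $2^{q-1}$ times itself to merge it with the annulus $|u|^q$ piece (yielding a clean coefficient $2^{q-1}$ instead of your $1+2^{q-1}$), but after taking $q$-th roots both give the factor $2$. Your observation about the stray dimensional factor $|B_1|$ in front of the averaged integral is accurate; the paper's computation has the same imprecision, so the constant $2$ in the statement should in fact carry a harmless dependence on $n$.
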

\begin{proof}
\[
\begin{aligned}
\mathrm{Tail}_{\alpha,q}(v;x_0,\sigma R)^q & = (\sigma R)^\alpha \int_{\R^n \setminus B_{\sigma R}(x_0)} \frac{\abs{v}^q}{\abs{x-x_0}^{n+\alpha}} \dd x \\
&=  (\sigma R)^\alpha \Bigl( \int_{\R^n \setminus B_{R}(x_0)} \frac{\abs{v}^q}{\abs{x-x_0}^{n+\alpha}} \dd x + \int_{B_{R}(x_0) \setminus B_{\sigma R}(x_0)} \frac{\abs{v}^q}{\abs{x-x_0}^{n+\alpha}} \dd x  \Bigr) \\
&= (\sigma R)^\alpha \Bigl( \int_{\R^n \setminus B_{R}(x_0)} \frac{\abs{u}^q}{\abs{x-x_0}^{n+\alpha}} \dd x + \int_{B_{R}(x_0) \setminus B_{\sigma R}(x_0)} \frac{\abs{v}^q}{\abs{x-x_0}^{n+\alpha}} \dd x  \Bigr) \\
& \leq (\sigma R)^\alpha \Bigl( \int_{\R^n \setminus B_{R}(x_0)} \frac{\abs{u}^q}{\abs{x-x_0}^{n+\alpha}} \dd x + 2^{q-1} \int_{B_{R}(x_0) \setminus B_{\sigma R}(x_0)} \frac{\abs{u}^q + \abs{u-v}^q}{\abs{x-x_0}^{n+\alpha}} \dd x  \Bigr) \\
& \leq 2^{q-1}(\sigma R)^\alpha \Bigl( \int_{\R^n \setminus B_{\sigma R}(x_0)} \frac{\abs{u}^q}{\abs{x-x_0}^{n+\alpha}} \dd x + \int_{B_{R}(x_0) \setminus B_{\sigma R}(x_0)} \frac{\abs{u-v}^q}{\abs{x-x_0}^{n+\alpha}} \dd x  \Bigr) \\
&\leq 2^{q-1}  \mathrm{Tail}_{\alpha,q}(u;x_0,\sigma R)^q + 2^{q-1} \sigma^{-n} \dashint_{B_R(x_0)} \abs{u-v}^q \dd x  
\end{aligned}
\]
\end{proof}

For a proof of the following result, see \cite[Lemma 2.3]{BLSc}.
\begin{lemma}\label{lm:tail-shrink}
Let $\alpha >0$, $0 <q<\infty$. Suppose that $B_r(x_0) \subset B_R(x_1)$. Then for every $u\in L^q_\alpha(\R^n)$ we have
$$\mathrm{Tail}_{q,\alpha}(u;x_0,r)^q \leq \Bigl(\frac{r}{R}\Bigr)^\alpha \Bigl(\frac{R}{R- \abs{x-x_0}} \Bigr)^{n + \alpha} \mathrm{Tail}_{q, \alpha} (u; x_1 , R)^q + r^{-n}\norm{u}_{L^q}^q$$
If in addition $u \in L^m_{\loc}(\R^n)$ for some $q <m \leq \infty$, then 
\[
\mathrm{Tail}_{q,\alpha}(u;x_0,r)^q \leq \Bigl(\frac{r}{R}\Bigr)^\alpha \Bigl(\frac{R}{R- \abs{x-x_0}} \Bigr)^{n + \alpha} \mathrm{Tail}_{q, \alpha} (u; x_1 , R)^q + \Bigl(  \frac{(n \omega_n)m-q}{\alpha m+nq}\Bigr)^{\frac{m-q}{m}} r^{-\frac{qn}{m}} \norm{u}_{L^m(B_R(x_1))},
\]
where $\omega_n$ is the measure of the $n$-dimensional open ball of radius $1$.
\end{lemma}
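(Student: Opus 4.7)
The plan is to split the domain of integration $\R^n\setminus B_r(x_0)$ into the far piece $\R^n\setminus B_R(x_1)$ and the intermediate annular region $B_R(x_1)\setminus B_r(x_0)$, and handle each by an elementary geometric/integral estimate. The hypothesis $B_r(x_0)\subset B_R(x_1)$ ensures in particular that $|x_1-x_0|+r\leq R$, so $R-|x_1-x_0|>0$, which is what makes the stated constant finite.

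For the far piece, the key observation is that for $x\notin B_R(x_1)$ the triangle inequality gives
\[
|x-x_0|\;\geq\;|x-x_1|-|x_1-x_0|\;\geq\;|x-x_1|\Bigl(1-\tfrac{|x_1-x_0|}{R}\Bigr)\;=\;|x-x_1|\,\frac{R-|x_1-x_0|}{R},
\]
so $|x-x_0|^{-(n+\alpha)}\leq\bigl(R/(R-|x_1-x_0|)\bigr)^{n+\alpha}|x-x_1|^{-(n+\alpha)}$. Multiplying by $|u|^q$, integrating over $\R^n\setminus B_R(x_1)$, and multiplying by $r^\alpha=(r/R)^\alpha R^\alpha$ yields exactly the first term on the right-hand side of the claim. (Note: I read the factor $R/(R-|x-x_0|)$ in the statement as a typo for $R/(R-|x_1-x_0|)$.)

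For the intermediate annular piece $B_R(x_1)\setminus B_r(x_0)$, the bound $|x-x_0|\geq r$ is available. In the first assertion one simply writes $r^\alpha|x-x_0|^{-(n+\alpha)}\leq r^{-n}$ on this set and integrates $|u|^q$ over $B_R(x_1)$, yielding the second term $r^{-n}\|u\|_{L^q}^q$. In the refined case $u\in L^m_{\loc}$ with $m>q$, one instead applies H\"older's inequality with exponents $m/q$ and $m/(m-q)$:
\[
\int_{B_R(x_1)\setminus B_r(x_0)}\frac{|u|^q}{|x-x_0|^{n+\alpha}}\,\di x\;\leq\;\|u\|_{L^m(B_R(x_1))}^q\!\left(\int_{|x-x_0|\geq r}|x-x_0|^{-\frac{(n+\alpha)m}{m-q}}\,\di x\right)^{\!\frac{m-q}{m}}.
\]
Passing to polar coordinates centered at $x_0$, the last integral equals $\frac{n\omega_n(m-q)}{nq+\alpha m}\,r^{-nq/(m-q)-\alpha m/(m-q)}$; raising to the power $(m-q)/m$ and multiplying by $r^\alpha$ gives the constant $\bigl(\tfrac{n\omega_n(m-q)}{nq+\alpha m}\bigr)^{(m-q)/m}r^{-nq/m}$ in front of $\|u\|_{L^m(B_R(x_1))}^q$, matching the statement.

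No step here is a genuine obstacle: the computation is essentially linear in the decomposition, and the only care needed is choosing the right triangle-inequality form on the far region and the correct H\"older pairing on the annulus so that the exponents of $r$ come out as stated.
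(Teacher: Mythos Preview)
Your argument is correct and is exactly the standard proof of this lemma; the paper does not prove it independently but refers to \cite[Lemma 2.3]{BLSc}, where the same decomposition into $\R^n\setminus B_R(x_1)$ and $B_R(x_1)\setminus B_r(x_0)$ and the same triangle-inequality/H\"older computations are used. You also correctly spotted the typos in the statement: the factor should read $R/(R-|x_1-x_0|)$, the constant should be $\bigl(\tfrac{n\omega_n(m-q)}{\alpha m+nq}\bigr)^{(m-q)/m}$, and the final norm should carry the exponent~$q$.
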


We also recall the following Sobolev and Morrey type inequalities:
\begin{proposition}\label{SObolev-morrey}
Suppose $1<p<\infty$ and $0<s<1$. Let $\Omega \subset \R^n$ be an open and bounded set.
Define $p_s^\star$ as
\begin{equation}\label{eq:sob-exp}
  p_s^\star:= \frac{np}{n-sp}.
\end{equation}
 For every $u \in W^{s,p}(\R^n)$ vanishing almost everywhere in $\R^n \setminus \Omega$ we have 
\begin{equation}\label{eq:Sobolev}
 \norm{u}_{L^{p_s^\star}(\Omega)}^p \leq C_1(n,s,p)\, [u]_{W^{s,p}(\R^n)}^p, \qquad  if \; sp<n
\end{equation}

\begin{equation}\label{eq:Morrey}
\norm{u}_{L^\infty(\Omega)}^p \leq C_2(n,s,p) \abs{\Omega}^{\frac{sp}{n}-1} [u]_{W^{s,p}(\R^n)}^p , \qquad if \; sp>n
\end{equation}
\begin{equation}\label{eq:critic-sob}
 \norm{u}_{L^l(\Omega)}^p \leq C_3(n,s,p,l) \abs{\Omega}^{\frac{p}{l}} [u]_{W^{s,p}(\R^n)}^p , \qquad \text{ for every }1\leq l < \infty, \; if \; sp=n
\end{equation}
In particular the following Poincar\'{e} inequality holds true
\begin{equation}\label{eq:poincare}
 \norm{u}_{L^p(\Omega)}^p \leq C\; \abs{\Omega}^{\frac{sp}{n}}[u]_{W^{s,p}(\R^n)}, 
\end{equation}
for some $C=C(n,s,p)$.
\end{proposition}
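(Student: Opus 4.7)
The plan is to derive all four inequalities from the classical fractional Sobolev inequality on the whole space, namely
\[
\|v\|_{L^{p_s^\star}(\R^n)} \leq C(n,s,p)\,[v]_{W^{s,p}(\R^n)} \qquad \text{whenever } sp<n,
\]
together with the fractional Morrey embedding in the case $sp>n$. These are standard and can be found, for instance, in Di~Nezza--Palatucci--Valdinoci; my task is just to specialize them to compactly supported functions and to track the dependence on $|\Omega|$.

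For \eqref{eq:Sobolev}, since $u$ vanishes a.e.\ outside $\Omega$, the $L^{p_s^\star}(\R^n)$ norm of $u$ equals its $L^{p_s^\star}(\Omega)$ norm, so the inequality follows immediately from the classical Sobolev inequality above by raising to the power $p$. For \eqref{eq:Morrey}, I would invoke the fractional Morrey embedding
\[
|u(x)-u(y)|\leq C(n,s,p)\,[u]_{W^{s,p}(\R^n)}\,|x-y|^{s-n/p},\qquad \text{a.e. } x,y\in\R^n,
\]
and then, for each $x\in\Omega$, pick $y\notin\Omega$ with $|x-y|\leq C(n)\,|\Omega|^{1/n}$ (such a $y$ exists because $u$ is supported in a set of measure $|\Omega|$, so the ball of radius $C|\Omega|^{1/n}$ around $x$ must intersect the complement). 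Using $u(y)=0$ and raising to the power $p$ produces exactly the factor $|\Omega|^{(sp-n)/n}=|\Omega|^{sp/n-1}$.

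For the critical case \eqref{eq:critic-sob} with $sp=n$, I would reduce to the subcritical case by choosing $\tilde s<s$ with $\tilde s p<n$ and $p_{\tilde s}^\star\geq l$; concretely $\tilde s=s-\delta$ for a $\delta=\delta(n,s,p,l)>0$ small enough. One then splits the double integral defining $[u]_{W^{\tilde s,p}(\R^n)}$ into the near-diagonal region $\{|x-y|<\diam(\Omega)\}$, which is controlled by $\diam(\Omega)^{(s-\tilde s)p}[u]_{W^{s,p}(\R^n)}^p$ since $|x-y|^{-n-\tilde s p}\leq \diam(\Omega)^{(s-\tilde s)p}|x-y|^{-n-sp}$ there, and the far region, where one uses the compact support of $u$ and integrates the kernel $|x-y|^{-n-\tilde s p}$ against $|u(x)|^p$ to obtain a multiple of $\|u\|_{L^p(\Omega)}^p$. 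Applying the subcritical Sobolev inequality with exponent $\tilde s$ and then H\"older's inequality to pass from $L^{p_{\tilde s}^\star}(\Omega)$ to $L^l(\Omega)$ yields \eqref{eq:critic-sob}, with the constant depending on $l$ through the chosen gap $\delta$.

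Finally, \eqref{eq:poincare} is a direct consequence of the previous three: when $sp<n$, H\"older's inequality gives $\|u\|_{L^p(\Omega)}\leq |\Omega|^{1/p-1/p_s^\star}\|u\|_{L^{p_s^\star}(\Omega)}$ with $1/p-1/p_s^\star=s/n$, so combining with \eqref{eq:Sobolev} produces the factor $|\Omega|^{sp/n}$; when $sp>n$, the $L^\infty$ bound \eqref{eq:Morrey} times $|\Omega|^{1/p}$ directly yields the claimed exponent; and when $sp=n$, one simply takes $l=p$ in \eqref{eq:critic-sob} and uses $|\Omega|=|\Omega|^{sp/n}$. The only delicate step is the critical estimate; the main obstacle there is organizing the splitting of the $W^{\tilde s,p}$-seminorm so that the far-field term can be absorbed back into the $L^p$ norm and bootstrapped through \eqref{eq:Sobolev}, while keeping the $l$-dependence of the constant explicit.
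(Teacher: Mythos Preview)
The paper does not actually prove this proposition: it is introduced with the phrase ``We also recall the following Sobolev and Morrey type inequalities'' and stated without proof as a standard preliminary result. So there is no argument in the paper to compare against; your sketch is simply filling in what the paper takes for granted.

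Your outline is essentially correct. The Sobolev case \eqref{eq:Sobolev} and the Poincar\'e inequality \eqref{eq:poincare} are handled cleanly. For the Morrey case \eqref{eq:Morrey} your volume argument for finding $y\notin\Omega$ with $|x-y|\le C(n)|\Omega|^{1/n}$ is fine. The one place that needs a bit more care is the critical case \eqref{eq:critic-sob}: do not fix the splitting radius at $\diam(\Omega)$, since for a thin domain $\diam(\Omega)$ can be much larger than $|\Omega|^{1/n}$ and the resulting near-diagonal factor $\diam(\Omega)^{(s-\tilde s)p}$ would not give the claimed power of $|\Omega|$. Instead treat the splitting radius $R$ as a free parameter (the far-field bound $\iint_{|x-y|\ge R}\le C R^{-\tilde s p}\|u\|_{L^p}^p$ holds for any $R$ via $|u(x)-u(y)|^p\le 2^{p-1}(|u(x)|^p+|u(y)|^p)$), and choose $R$ proportional to $|\Omega|^{1/n}$ with a large enough constant so that the far-field coefficient $CR^{-\tilde s p}|\Omega|^{\tilde s p/n}$ coming from the subcritical Poincar\'e is below $1/2$ and can be absorbed. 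After that, the $|\Omega|^{(s-\tilde s)p/n}$ factor from the near-diagonal piece exactly cancels the $|\Omega|^{-p/p_{\tilde s}^\star}$ loss from H\"older (using $sp=n$), leaving the exponent $p/l$ as claimed.
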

\begin{remark}
The Sobolev type inequalities above are also valid for functions $u \in X_0^{s,p}(\Omega, \Omega^\prime)$, where $\Omega$ is a bounded open set and $\Omega^\prime$ is an open set such that $\Omega \Subset \Omega^\prime$. This can be seen using the fact that there is an extension domain containing $\Omega$ and included in $\Omega^\prime$. 
\end{remark}
We will often use the following special application of H\"older's inequality 
\begin{equation}\label{eq:Holder}
\norm{u(x,t)}_{L^{q_1,r_1}(\Omega\times J)} \leq \norm{\abs{\Omega}^{\frac{1}{q_1}-\frac{1}{q_2}} \norm{u(\frarg,t)}_{L^{q_2}(\Omega)}}_{L^{r_1}(J)} \leq 
\abs{\Omega}^{\frac{1}{q_1}-\frac{1}{q_2}} \abs{J}^{\frac{1}{r_1}-\frac{1}{r_2}} \norm{u}_{L^{q_2,r_2}(\Omega \times J)}
\end{equation}
Where $q_1<q_2 \, , r_1\leq r_2$.
The following interpolation inequality (see e.g. \cite{Arons}) will be useful.
\begin{lemma}\label{lm:holint}
If $w$ is contained in $L^{q_1,r_1}(\Omega \times J) \cap L^{q_2 , r_2}(\Omega \times J)$, then $w$ is contained in $L^{\tilde{q}, \tilde{r}}(\Omega \times J)$, where
$$ \frac{1}{\tilde{r}} = \frac{\lambda}{r_1} + \frac{1-\lambda}{r_2}, \quad \frac{1}{\tilde{q}}= \frac{\lambda}{q_1} + \frac{1- \lambda}{q_2} \; ,   \quad (0 \leq\lambda \leq 1) .$$
Moreover,
$$\norm{w}_{L^{\tilde{q} , \tilde{r}}(\Omega \times J)} \leq \norm{w}_{L^{q_1 ,r_1}(\Omega \times J)}^{\lambda} \norm{w}_{L^{q_2 ,r_2}(\Omega \times J)}^{1-\lambda}. $$
\end{lemma}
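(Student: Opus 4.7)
The plan is to reduce the claim to a double application of Hölder's inequality: one in the spatial variable $x$ and one in the time variable $t$. The key observation is that the defining identities for $\tilde q$ and $\tilde r$ are exactly the conjugacy relations needed for the two Hölder steps.

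First, for a.e.\ $t \in J$, I would write
$$|w(x,t)|^{\tilde q} = |w(x,t)|^{\lambda \tilde q}\cdot |w(x,t)|^{(1-\lambda)\tilde q}$$
and apply Hölder's inequality in $x$ with exponents $\tfrac{q_1}{\lambda\tilde q}$ and $\tfrac{q_2}{(1-\lambda)\tilde q}$. These are conjugate since
$$\frac{\lambda \tilde q}{q_1}+\frac{(1-\lambda)\tilde q}{q_2}=\tilde q\left(\frac{\lambda}{q_1}+\frac{1-\lambda}{q_2}\right)=1,$$
by the definition of $\tilde q$. Taking the $\tilde q$-th root gives the pointwise-in-$t$ interpolation
$$\|w(\cdot,t)\|_{L^{\tilde q}(\Omega)} \leq \|w(\cdot,t)\|_{L^{q_1}(\Omega)}^{\lambda}\,\|w(\cdot,t)\|_{L^{q_2}(\Omega)}^{1-\lambda}.$$

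Next, I raise to the $\tilde r$-th power, integrate over $J$, and apply Hölder in $t$ with conjugate exponents $\tfrac{r_1}{\lambda\tilde r}$ and $\tfrac{r_2}{(1-\lambda)\tilde r}$; their conjugacy is precisely the defining identity for $\tilde r$. Extracting the $\tilde r$-th root yields the asserted bound
$$\|w\|_{L^{\tilde q,\tilde r}(\Omega\times J)} \leq \|w\|_{L^{q_1,r_1}(\Omega\times J)}^{\lambda}\,\|w\|_{L^{q_2,r_2}(\Omega\times J)}^{1-\lambda}.$$
The endpoint cases $\lambda\in\{0,1\}$ are trivial, and cases where some exponent equals $\infty$ are handled in the standard way, factoring the $L^\infty$ norm out of the relevant Hölder step. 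I do not anticipate any genuine obstacle here; the only thing to check carefully is that the two pairs of Hölder exponents are indeed conjugate, which is built into the hypotheses.
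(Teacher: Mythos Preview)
Your proof is correct and is precisely the standard argument. The paper does not actually prove this lemma itself; it states the result and refers to \cite{Arons} (Aronson--Serrin), where the same double H\"older argument appears.
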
 
The following three lemmas will be needed in the proof of our local boundedness result, Proposition \ref{thm:sub-moser-bound}.
\begin{lemma}\label{lm:interpol}
Let $sp< n$ and assume that $w$ is in $L^{p_s^\star,p}(Q_{R,R^{sp}}) \cap L^{p,\infty}(Q_{R,R^{sp}})$, with $p_s^\star= \frac{np}{n - sp}$ being the Sobolev exponent. Then $w$ is in $L^{p q^\prime , p r^\prime} (Q_{R,R^{sp}})$ as long as $q ,r$ satisfy
$$ 1-\frac{1}{r} - \frac{n}{spq} \geq 0.$$
Moreover,
$$\norm{w}_{L^{pq^\prime , pr^\prime}(Q_{R,R^{sp}})}^p \leq R^{sp(1-\frac{1}{r} - \frac{n}{spq})} \Bigl( \norm{w}_{L^{p,\infty}(Q_{R,R^{sp}})}^p +  \norm{w}_{L^{p_s^\star , p}(Q_{R,R^{sp}})}^p \Bigr).$$
In particular, in the case of $\frac{1}{r} + \frac{n}{spq}=1$ we have
$$\norm{w}_{L^{pq^\prime , pr^\prime}(Q_{R,R^{sp}})}^p \leq \norm{w}_{L^{p,\infty}(Q_{R,R^{sp}})}^p +  \norm{w}_{L^{p_s^\star , p}(Q_{R,R^{sp}})}^p \,.$$
\end{lemma}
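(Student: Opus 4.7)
The plan is to first establish the critical case $\tfrac{1}{r}+\tfrac{n}{spq}=1$ by viewing $L^{pq',pr'}$ as an interpolation space between the two given spaces $L^{p,\infty}$ and $L^{p_s^\star,p}$, and then to reduce the general subcritical case to it via H\"older's inequality in the time variable. The tool for the first step is Lemma \ref{lm:holint} and the tool for the second step is the mixed-norm H\"older inequality \eqref{eq:Holder}.

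For the critical case, I apply Lemma \ref{lm:holint} with $(q_1,r_1)=(p,\infty)$ and $(q_2,r_2)=(p_s^\star,p)$. Matching the time exponent $\tfrac{1}{pr'}=\tfrac{\lambda}{\infty}+\tfrac{1-\lambda}{p}$ forces $\lambda=\tfrac{1}{r}$, and matching the space exponent $\tfrac{1}{pq'}=\tfrac{\lambda}{p}+\tfrac{1-\lambda}{p_s^\star}$ after substituting $\tfrac{1}{p_s^\star}=\tfrac{1}{p}-\tfrac{s}{n}$ reproduces exactly the critical relation $\tfrac{1}{r}+\tfrac{n}{spq}=1$. Lemma \ref{lm:holint} then yields
\[
\norm{w}_{L^{pq',pr'}}\leq\norm{w}_{L^{p,\infty}}^{1/r}\norm{w}_{L^{p_s^\star,p}}^{1-1/r},
\]
and raising to the $p$-th power followed by the weighted AM--GM inequality $a^{1/r}b^{1-1/r}\leq a+b$, applied with $a=\norm{w}_{L^{p,\infty}}^p$ and $b=\norm{w}_{L^{p_s^\star,p}}^p$, delivers the second displayed inequality of the lemma.

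For the general subcritical case $\tfrac{1}{r}+\tfrac{n}{spq}\leq 1$, I introduce an auxiliary time exponent $\bar{r}=\tfrac{spq}{spq-n}$ (keeping the space exponent equal to $q$), so that the pair $(q,\bar{r})$ saturates the critical relation; the subcriticality hypothesis guarantees $\bar{r}\leq r$, hence $pr'\leq p\bar{r}'$. Applying H\"older's inequality \eqref{eq:Holder} in time on the interval of length $|J|=R^{sp}$, with the space exponent fixed at $pq'$, yields
\[
\norm{w}_{L^{pq',pr'}(Q_{R,R^{sp}})}^p\leq R^{sp(1-1/r-n/(spq))}\norm{w}_{L^{pq',p\bar{r}'}(Q_{R,R^{sp}})}^p,
\]
at which point the already established critical bound applied to $(q,\bar{r})$ produces the first displayed inequality. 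No real obstacle arises: the only point requiring care is to verify that the interpolation parameter $\lambda=\tfrac{1}{r}$ forced by the time exponent simultaneously forces the correct critical relation on the space side, and to keep track of the scaling factor $R^{sp(1-1/r-n/(spq))}$ coming out of the H\"older reduction in time.
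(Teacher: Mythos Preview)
Your proof is correct and follows essentially the same route as the paper: both arguments reduce the general case to the critical one via H\"older's inequality \eqref{eq:Holder} in the time variable (your auxiliary $\bar r$ is exactly the paper's $\tilde r'$), and both handle the critical case by the interpolation Lemma~\ref{lm:holint} between $L^{p,\infty}$ and $L^{p_s^\star,p}$ followed by Young's inequality. The only cosmetic differences are that you present the critical case first and then the reduction, while the paper reduces first and then interpolates, and that your interpolation parameter is $\lambda=1/r$ where the paper's is $1-1/r$ because the roles of the two endpoint spaces are swapped.
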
 
\begin{proof}
Consider a pair of exponents $\tilde{r} = (\frac{1}{r^\prime} - (1-\frac{1}{r} - \frac{n}{spq}))^{-1}= \frac{spq}{n}$, and $\tilde{q} = q^\prime$ such that $\frac{1}{\tilde{r}^{\, \prime}} + \frac{n}{sp\tilde{q}^{\, \prime}} =1 $. Using H\"older's inequality \eqref{eq:Holder}, we obtain
\[
\begin{aligned}
\norm{w}_{L^{pq^\prime , pr^\prime}(Q_{R,R^{sp}})}^p &\leq  (R^{sp})^{\frac{1}{r^\prime} - \frac{1}{\tilde{r}}} \norm{w}_{L^{p\tilde{q} , p\tilde{r}}(Q_{R,R^{sp}})}^p = R^{sp(1-\frac{1}{r} - \frac{n}{spq})} \norm{w}_{L^{p\tilde{q} , p\tilde{r}}(Q_{R,R^{sp}})}^p .
\end{aligned}
\]
Now we use Lemma \ref{lm:holint} with the choice
$$\frac{1}{p\tilde{r}} = \frac{\lambda}{p} \qquad \text{and} \qquad \frac{1}{p\tilde{ q}} = \frac{\lambda}{p_s^\star} + \frac{1-\lambda}{p} \, ,\quad  (0\leq \lambda \leq 1) .$$
This yields
$$ \norm{w}_{L^{p\tilde{q} , p\tilde{r}}(Q_{R,R^{sp}})} \leq \norm{w}_{L^{p_s^\star , p}(Q_{R,R^{sp}})}^\lambda \norm{w}_{L^{p,\infty}(Q_{R,R^{sp}})}^{1-\lambda}.$$
The relations above hold for $\lambda = \frac{1}{\tilde{r}} = \frac{n}{sp\tilde{q}^{\, \prime}}$ and using Young's inequality we get
$$\norm{w}_{L^{p\tilde{q} , p\tilde{r}}(Q_{R,R^{sp}})}^p \leq \norm{w}_{L^{p_s^\star , p}(Q_{R,R^{sp}})}^{p \lambda} \norm{w}_{L^{p,\infty}(Q_{R,R^{sp}})}^{p(1-\lambda)} \leq \norm{w}_{L^{p,\infty}(Q_{R,R^{sp}})}^{p} +\norm{w}_{L^{p_s^\star , p}(Q_{R,R^{sp}})}^{p}. $$
This concludes the desired result. 
\end{proof}
\begin{lemma}\label{lm:interpol-morr}
Let $sp> n$ and assume that $w \in L^{\infty,p}(Q_{R,R^{sp}})\cap L^{p, \infty}(Q_{R,R^{sp}})$. Consider a pair of exponents $q\geq 1 , r\geq 1$, such that
$$1- \frac{1}{r} - \frac{1}{q} = 0 .$$
Then $w$ belongs to $L^{p q^\prime , p r^\prime}(Q_{R,R^{sp}})$ and
$$\norm{w}_{L^{pq^\prime , pr^\prime}(Q_{R,R^{sp}})}^p \leq R^{\frac{sp-n}{q}} \Bigl( \norm{w}^p_{L^{p , \infty}(Q_{R,R^{sp}})} + R^{n-sp}\norm{w}^p_{L^{\infty , p}(Q_{R,R^{sp}})} \Bigr).$$
\end{lemma}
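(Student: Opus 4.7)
The plan is to apply the mixed-norm interpolation Lemma \ref{lm:holint} directly, with the two endpoint spaces taken to be $L^{p,\infty}(Q_{R,R^{sp}})$ and $L^{\infty, p}(Q_{R,R^{sp}})$. The key observation is that the hypothesis $\frac{1}{r} + \frac{1}{q} = 1$ means that $q$ and $r$ are H\"older conjugates, so $q^\prime = r$, $r^\prime = q$, and the target mixed norm is simply $L^{pr, pq}$.

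First I would set $(q_1, r_1) = (p, \infty)$, $(q_2, r_2) = (\infty, p)$, and choose the interpolation parameter $\lambda = 1/r$, so that $1 - \lambda = 1/q$. The formulas in Lemma \ref{lm:holint} then give
\[
\frac{1}{\tilde r} = \frac{\lambda}{\infty} + \frac{1-\lambda}{p} = \frac{1}{pq}, \qquad \frac{1}{\tilde q} = \frac{\lambda}{p} + \frac{1-\lambda}{\infty} = \frac{1}{pr},
\]
so $(\tilde q, \tilde r) = (pq^\prime, pr^\prime)$, and the interpolation inequality reads
\[
\norm{w}_{L^{pq^\prime, pr^\prime}(Q_{R,R^{sp}})} \leq \norm{w}_{L^{p,\infty}(Q_{R,R^{sp}})}^{1/r} \norm{w}_{L^{\infty, p}(Q_{R,R^{sp}})}^{1/q}.
\]

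To match the $R$-factors in the statement, I would raise the previous inequality to the $p$-th power and split out the right power of $R$:
\[
\norm{w}_{L^{p,\infty}}^{p/r} \norm{w}_{L^{\infty, p}}^{p/q} = R^{(sp-n)/q} \Bigl( \norm{w}_{L^{p,\infty}}^p \Bigr)^{1/r} \Bigl( R^{n-sp} \norm{w}_{L^{\infty, p}}^p \Bigr)^{1/q}.
\]
Since $1/r + 1/q = 1$, Young's inequality with conjugate exponents $r$ and $q$ gives $a^{1/r} b^{1/q} \leq a/r + b/q \leq a + b$ for $a, b \geq 0$; substituting $a = \norm{w}_{L^{p,\infty}}^p$ and $b = R^{n-sp} \norm{w}_{L^{\infty, p}}^p$ produces precisely the claimed bound.

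I do not anticipate any substantive obstacle: the conjugacy $1/r + 1/q = 1$ is exactly what makes the interpolation endpoints land on the target space, so unlike the $sp<n$ case handled in Lemma \ref{lm:interpol} no preliminary H\"older reduction is required. As a sanity check, evaluating both sides for $w \equiv 1$ on $Q_{R, R^{sp}}$ shows that they both scale like $R^{n + (sp-n)/q}$, consistent with the two terms inside the parenthesis on the right hand side.
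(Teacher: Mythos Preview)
Your proof is correct and follows essentially the same route as the paper: apply Lemma~\ref{lm:holint} with endpoints $L^{p,\infty}$ and $L^{\infty,p}$ and the interpolation parameter determined by the conjugacy $\frac{1}{r}+\frac{1}{q}=1$, then balance the $R$-factor and finish with Young's inequality. The only cosmetic difference is that the paper labels the endpoints in the opposite order and takes $\lambda=\frac{1}{r'}=\frac{1}{q}$ rather than your $\lambda=\frac{1}{r}$, which of course yields the same interpolation inequality.
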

\begin{proof}
We use Lemma \ref{lm:holint}, with the choice
$$\frac{1}{p r^\prime} = \frac{\lambda}{p} \quad \text{and} \quad \frac{1}{p q^\prime}= \frac{1-\lambda}{p}, \quad (0 \leq \lambda \leq 1) ,$$
which holds for $\lambda = \frac{1}{r^\prime} = 1- \frac{1}{q^\prime}$. This yields
$$\norm{w}_{L^{p q^\prime, p r^\prime}(Q_{R,R^{sp}})} \leq  \norm{w}_{L^{\infty,p}(Q_{R,R^{sp}})}^\lambda  \norm{w}_{L^{p, \infty}(Q_{R,R^{sp}})}^{1-\lambda} .$$
Therefore, using $\lambda = \frac{1}{r^\prime}= \frac{1}{q}$ we arrive at
$$R^{\frac{n-sp}{q}}\norm{w}_{L^{p q^\prime , p r^\prime}(Q_{R,R^{sp}})}^p = R^{\lambda(n-sp)}\norm{w}_{L^{p q^\prime , p r^\prime}(Q_{R,R^{sp}})}^p \leq  \Bigl(R^{n-sp} \norm{w}_{L^{\infty,p}(Q_{R,R^{sp}})}^p \Bigr)^{ \lambda} \Bigl(\norm{w}_{L^{p, \infty}(Q_{R,R^{sp}})}^p \Bigl)^{(1-\lambda)}.$$
Using Young's inequality, we conclude
$$\norm{w}_{L^{p q^\prime , p r^\prime}(Q_{R,R^{sp}})}^p \leq R^{\frac{sp-n}{q}} \Bigr(\norm{w}^p_{L^{p, \infty}(Q_{R,R^{sp}})} + R^{n-sp}\norm{w}^p_{L^{\infty, p}(Q_{R,R^{sp}})}  \Bigl).$$
\end{proof}
\begin{lemma}\label{lm:interpol-critic}
Let $sp=n,\, q\geq 1 , \text{ and } r\geq 1 $ such that
$$ 1 - \frac{1}{r} - \frac{1}{q} > 0 .$$
Assume that $w \in L^{l,p}(Q_{R,R^{sp}})\cap L^{p,\infty}(Q_{R,R^{sp}})$ for some $l$ such that $$l= \frac{p}{r^\prime}(1-\frac{1}{r} - \frac{1}{q})^{-1} .$$ 
Then $w$ belongs to $L^{pq^\prime , p r^\prime}(Q_{R,R^{sp}})$ and
$$\norm{w}_{L^{pq^\prime , p r^\prime}(Q_{R,R^{sp}})} \leq R^{\frac{np}{lr^\prime}} \Bigl( \norm{w}_{L^{p,\infty}(Q_{R,R^{sp}})} + R^{\frac{-np}{l}}\norm{w}_{L^{l,p}(Q_{R,R^{sp}})}^p  \Bigr).$$

\end{lemma}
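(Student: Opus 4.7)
The proof is a direct application of the mixed-norm interpolation result of Lemma \ref{lm:holint} between the endpoint spaces $L^{l,p}(Q_{R,R^{sp}})$ and $L^{p,\infty}(Q_{R,R^{sp}})$, followed by the same rescaling-plus-Young trick that concluded the proof of Lemma \ref{lm:interpol-morr}.

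First I would pick the interpolation parameter by matching the time exponent: from $\frac{1}{pr^\prime}=\frac{\lambda}{p}+\frac{1-\lambda}{\infty}$ one reads off $\lambda=\frac{1}{r^\prime}\in[0,1]$, which is admissible since $r\ge 1$. With this $\lambda$, the space-exponent condition becomes
$$\frac{1}{pq^\prime}=\frac{\lambda}{l}+\frac{1-\lambda}{p}=\frac{1}{lr^\prime}+\frac{1}{pr},$$
which rearranges to $\frac{1}{l}=\frac{r^\prime}{p}\bigl(1-\frac{1}{r}-\frac{1}{q}\bigr)$, exactly the value of $l$ prescribed in the statement. Note that the hypothesis $1-\frac{1}{r}-\frac{1}{q}>0$ is precisely what makes this $l$ finite and positive; a short computation also shows $l\ge p$, so $L^{l,p}$ is genuinely the higher-integrability endpoint that one interpolates against $L^{p,\infty}$. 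Lemma \ref{lm:holint} then yields
$$\|w\|_{L^{pq^\prime,pr^\prime}(Q_{R,R^{sp}})}\le\|w\|_{L^{l,p}(Q_{R,R^{sp}})}^{\lambda}\,\|w\|_{L^{p,\infty}(Q_{R,R^{sp}})}^{1-\lambda}.$$

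To pick up the correct power of $R$, I would raise this inequality to the $p$-th power and rewrite it in the homogenized form
$$R^{-\frac{np}{lr^\prime}}\|w\|_{L^{pq^\prime,pr^\prime}}^{p}\le\bigl(R^{-\frac{np}{l}}\|w\|_{L^{l,p}}^{p}\bigr)^{\lambda}\bigl(\|w\|_{L^{p,\infty}}^{p}\bigr)^{1-\lambda},$$
then apply Young's inequality in the form $a^{\lambda}b^{1-\lambda}\le a+b$ and multiply through by $R^{np/(lr^\prime)}$, obtaining the claimed estimate. I do not anticipate any real obstacle here: the argument is structurally identical to the one for Lemma \ref{lm:interpol-morr}, and the only bookkeeping point worth checking is that the value $\lambda=1/r^\prime$ forced by the time exponent, combined with the space-exponent equation in Lemma \ref{lm:holint}, reproduces exactly the prescribed $l$.
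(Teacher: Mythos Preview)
Your proof is correct and follows essentially the same approach as the paper: both apply Lemma~\ref{lm:holint} with $(q_1,r_1)=(l,p)$, $(q_2,r_2)=(p,\infty)$, identify $\lambda=1/r'$ from the time exponent, verify that the space-exponent relation forces exactly the prescribed value of $l$, and then finish with the $R$-homogenization plus Young's inequality. The only extra content in your write-up is the sanity check $l\ge p$, which is harmless.
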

\begin{proof}
We use Lemma \ref{lm:holint} with the choice
$$\frac{1}{p r^\prime} = \frac{\lambda}{p} \quad \text{and} \quad \frac{1}{p q^\prime}= \frac{\lambda}{l} + \frac{1-\lambda}{p}, \quad (0\leq \lambda \leq 1).$$
Due to the assumption $\frac{1}{l}= \frac{r^\prime}{p}(1-\frac{1}{r}-\frac{1}{q})$, the above equalities hold for $\lambda= \frac{1}{r^\prime}$. Hence we get
$$\norm{w}_{L^{p q^\prime, p r^\prime}(Q_{R,R^{sp}})} \leq  \norm{w}_{L^{l,p}(Q_{R,R^{sp}})}^\lambda  \norm{w}_{L^{p, \infty}(Q_{R,R^{sp}})}^{1-\lambda} .$$
Therefore, recalling that $\lambda= \frac{1}{r^\prime}$
$$R^{\frac{- n p}{l r^\prime}} \norm{w}_{L^{p q^\prime, p r^\prime}(Q_{R,R^{sp}})}^p  =R^{\frac{-\lambda n p}{l}} \norm{w}_{L^{p q^\prime, p r^\prime}(Q_{R,R^{sp}})}^p \leq  \Bigl( R^{\frac{-np}{l}} \norm{w}_{L^{l,p}(Q_{R,R^{sp}})}^p \bigr)^\lambda  \Bigl(\norm{w}_{L^{p, \infty}(Q_{R,R^{sp}})}^p \Bigr)^{1-\lambda} .$$
Using Young's inequality for the right hand side, we can conclude
$$\norm{w}_{L^{p q^\prime, p r^\prime}(Q_{R,R^{sp}})}^p \leq R^{\frac{np}{l r^\prime}} \Bigl( \norm{w}_{L^{p, \infty}(Q_{R,R^{sp}})}^p + R^{\frac{-np}{l}} \norm{w}_{L^{l,p}(Q_{R,R^{sp}})}^p \Bigr).$$
\end{proof}
 
\subsection{Weak solutions}
\begin{definition}
\label{locweak} For any $t_0,t_1\in\R$ with $t_0<t_1$, we define $I=(t_0,t_1]$.
Let 
\[
f\in L^{p'}(I;(W^{s,p}(\Omega))^*).
\]
We say that $u$ is a \emph{local weak solution} to the equation
$$
\partial_t u + (-\Delta_p)^su = f,\qquad\mbox{ in }\Omega\times I,
$$
if for any closed interval $J=[T_0,T_1]\subset I$, the function $u$ is such that
\[
u\in L^p(J;W_{{\loc}}^{s,p}(\Omega))\cap L^{p-1}(J;L_{s\,p}^{p-1}(\R^n))\cap C(J;L_{{\loc}}^2(\Omega)),
\]
and it satisfies 
\begin{equation}
\begin{split}
\label{locweakeq}
-\int_J\int_\Omega u(x,t)\,\partial_t\phi(x,t)\dd x\dd t&+ \int_J\iint_{\R^n\times\R^n}\frac{J_p(u(x,t)-u(y,t))\,(\phi(x,t)-\phi(y,t))}{|x-y|^{n+s\,p}}\,\dd x  \dd y \dd t  \\   
& = \int_\Omega u(x,T_0)\,\phi(x,T_0)\dd x -\int_\Omega u(x,T_1)\,\phi(x,T_1)\dd x \\
&+ \int_J\langle f(\frarg,t),\phi(\frarg,t)\rangle\dd t,
\end{split}
\end{equation}
for any $\phi\in L^p(J;W^{s,p}(\Omega))\cap C^1(J;L^2(\Omega))$ which has spatial support compactly contained in $\Omega$. 
In equation \eqref{locweakeq}, the symbol $\langle\frarg,\frarg \rangle$  stands for the duality pairing between $W^{s,p}(\Omega)$ and its dual space $(W^{s,p}(\Omega))^*$. 
\end{definition}
Now, we define the notion of a weak solution to an initial boundary value problem.

\begin{definition}
Let $I=[t_0,t_1]$, $p\geq 2$, $0<s<1$, and $\Omega \Subset \Omega^\prime$, where $\Omega^\prime$ is a bounded open set in $\R^n$. Assume that the functions $u_0,f$ and $g$ satisfy
\[
u_0\in L^2(\Omega), 
\]
\[
f\in L^{p'}(I;(X_0^{s,p}(\Omega,\Omega'))^*),
\]
\[
g\in L^p(I;W^{s,p}(\Omega'))\cap L^{p-1}(I;L_{s\,p}^{p-1}(\R^n)). 
\]
We say that $u$ is a \emph{weak solution of the initial boundary value problem} 
\begin{equation}
\label{eq:initial-weaksol}
 \left\{\begin{array}{rcll}
\partial_tu + (-\Delta_p)^su&=&f,&\mbox{ in }\Omega\times I,\\
u&=&g,& \mbox{ on }(\R^n\setminus\Omega)\times I,\\
u(\frarg,t_0) &=& u_0,&\text{ on }\Omega, 
\end{array}\right.
\end{equation}
if the following properties are verified:
\begin{itemize}
\item $u\in L^p(I;W^{s,p}(\Omega'))\cap L^{p-1}(I;L_{sp}^{p-1}(\R^n))\cap C(I;L^2(\Omega))$;
\vskip.2cm
\item $u\in X_{\mathbf{g}(t)}(\Omega,\Omega')$ for almost every $t\in I$, where $(\mathbf{g}(t))(x)=g(x,t)$;
\vskip.2cm
\item $\lim_{t\to t_0}\|u(\frarg,t) - u_0\|_{L^2(\Omega)}=0$;
\vskip.2cm
\item for every $J=[T_0,T_1]\subset I$ and every $\phi\in L^{p}(J;X_0^{s,p}(\Omega,\Omega'))\cap C^1(J;L^2(\Omega))$
\[
\begin{split}
-\int_J\int_\Omega u(x,t)\,\partial_t\phi(x,t)\dd x\dd t&+ \int_J\iint_{\R^n\times\R^n}\frac{J_p(u(x,t)-u(y,t))\,(\phi(x,t)-\phi(y,t))}{|x-y|^{n+sp}}\dd x\dd y\dd t  \\   
& = \int_\Omega u(x,T_0)\,\phi(x,T_0)\dd x -\int_\Omega u(x,T_1)\,\phi(x,T_1)\dd x \\
&+ \int_J\langle f(\frarg,t),\phi(\frarg,t)\rangle\dd t.
\end{split}
\]
\end{itemize}

\end{definition}
\begin{theorem}\label{thm:exunsol}
Let $p\ge 2$, let $I = (T_0,T_1]$ and suppose that $g$ satisfies
\begin{align*}
&g\in L^p(I;W^{s,p}(\Omega'))\cap L^{p}(I;L^{p-1}_{s\,p}(\R^n)) \cap C(I;L^2(\Omega)),\quad\partial_t g\in L^{p'}(I;(X_0^{s,p}(\Omega,\Omega'))^*),\\
&\lim_{t\to t_0}\|g(\frarg,t)-g_0\|_{L^2(\Omega)} = 0,\qquad \text{ for some }g_0\in L^2(\Omega). 
\end{align*}
Suppose also that 
\[
f\in L^{p'}(I;(X_0^{s,p}(\Omega,\Omega'))^*).
\]
Then for any initial datum $g_0\in L^2(\Omega)$, there exists a unique weak solution $u$ to problem 
\begin{equation}\label{Mweaksol}
\begin{cases}
u_t + (-\Delta_p)^s u= f \quad & in \;\; \Omega\times I \\
u=g \quad & in \;\; (\R^n \setminus \Omega) \times I \\
u(x,T_0) = g(x,T_0) \quad & in \;\; \Omega
\end{cases}
\end{equation}
\end{theorem}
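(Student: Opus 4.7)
The plan is to combine a reduction to homogeneous boundary data with the classical Lions--Browder--Minty theory for monotone parabolic operators. First, I would set $u = v + g$, so that $v$ lies in $L^p(I; X_0^{s,p}(\Omega,\Omega')) \cap C(I; L^2(\Omega))$, satisfies $v(T_0) = 0$ in $L^2(\Omega)$, and weakly solves
\begin{equation*}
v_t + (-\Delta_p)^s(v+g) = f - g_t \qquad \mbox{in } \Omega \times I.
\end{equation*}
The right-hand side $\tilde f := f - g_t$ lies in $L^{p'}(I;(X_0^{s,p}(\Omega,\Omega'))^*)$ by hypothesis, and $w \mapsto (-\Delta_p)^s(w+g(t))$ defines, for a.e.\ $t$, a bounded, hemicontinuous, strictly monotone, coercive operator from $X_0^{s,p}(\Omega,\Omega')$ into its dual; monotonicity follows from \eqref{pntwiseineq}, and coercivity from a Poincar\'e-type estimate on $X_0^{s,p}$.

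Next, I would construct approximate solutions by Galerkin's method. Pick a countable basis $\{w_k\}_{k\ge 1}$ of $X_0^{s,p}(\Omega,\Omega')$ that is also orthonormal in $L^2(\Omega)$, set $v_N(x,t) = \sum_{k=1}^N c_k^N(t)\, w_k(x)$, and require $v_N$ to satisfy the finite-dimensional ODE system obtained by testing the equation against each $w_j$, with initial data $v_N(T_0)=0$. Carath\'eodory theory provides local solvability. Testing with $v_N$ itself and using monotonicity together with Young's inequality yields the uniform bounds
\begin{equation*}
\norm{v_N}_{L^\infty(I;L^2(\Omega))} + \norm{v_N}_{L^p(I;X_0^{s,p}(\Omega,\Omega'))} + \norm{\partial_t v_N}_{L^{p'}(I;(X_0^{s,p}(\Omega,\Omega'))^*)} \le C,
\end{equation*}
which make the approximations global and, by an Aubin--Lions argument, provide a subsequence with $v_N \to v$ strongly in $L^p(I;L^2(\Omega))$, weakly in $L^p(I;X_0^{s,p})$, weakly-$*$ in $L^\infty(I;L^2(\Omega))$, and such that $(-\Delta_p)^s(v_N+g) \toweak \chi$ in $L^{p'}(I;(X_0^{s,p})^*)$.

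Identifying $\chi$ with $(-\Delta_p)^s(v+g)$ is done by Minty's trick: monotonicity gives
\begin{equation*}
\int_I \dpr{(-\Delta_p)^s(v_N+g) - (-\Delta_p)^s(\phi+g),\, v_N - \phi}\, \dd t \ge 0 \qquad \mbox{for every }\phi \in L^p(I;X_0^{s,p}(\Omega,\Omega')),
\end{equation*}
and the energy identity applied to $v_N$ lets me control $\limsup_N \int_I \dpr{(-\Delta_p)^s(v_N+g),\, v_N}\, \dd t$ by the limit values of the remaining terms, using lower semicontinuity of $\norm{\frarg}_{L^2(\Omega)}^2$ at the endpoint $T_1$. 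Choosing $\phi = v \pm \epsilon \psi$, letting $\epsilon \to 0$, and invoking hemicontinuity then forces $\chi = (-\Delta_p)^s(v+g)$. The continuity $v \in C(I;L^2(\Omega))$ (hence the attainment of the initial condition $v(T_0)=0$) follows from the standard interpolation between $v \in L^p(I;X_0^{s,p})$ and $\partial_t v \in L^{p'}(I;(X_0^{s,p})^*)$. Uniqueness is then obtained by subtracting two solutions, testing with $u_1 - u_2$, and invoking the monotonicity
\begin{equation*}
\dpr{(-\Delta_p)^s u_1 - (-\Delta_p)^s u_2,\, u_1 - u_2} \ge 0,
\end{equation*}
which yields $\norm{u_1(t) - u_2(t)}_{L^2(\Omega)}^2 \le 0$ for every $t \in I$.

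The main obstacle I expect is rigorously testing the weak formulation with $v$ (and with $u_1 - u_2$ in the uniqueness step), since Definition \ref{locweak} requires $C^1$ time regularity of test functions while the solution is only continuous in time. This is handled by a Steklov average of the equation in time -- precisely the content of Appendix B -- after which the energy identity for $v_N$ and for $v$, the Minty passage, and the uniqueness estimate all become rigorous and can be concatenated to conclude.
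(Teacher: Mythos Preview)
Your proposal is correct and outlines the standard Lions--Galerkin--Minty argument for monotone parabolic operators; this is precisely the machinery underlying the result being invoked. The paper, however, does not reproduce this argument: it simply cites \cite[Theorem A.3]{BLS}, where the same statement is proved under the stronger hypothesis $\partial_t g \in L^{p'}(I;(W^{s,p}(\Omega'))^*)$, and observes that this stronger condition is used in that proof only to deduce $g \in C(I;L^2(\Omega))$. Since the present statement assumes $g \in C(I;L^2(\Omega))$ directly, the weaker hypothesis $\partial_t g \in L^{p'}(I;(X_0^{s,p}(\Omega,\Omega'))^*)$ suffices and the cited proof goes through verbatim. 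So your approach is a self-contained reconstruction of what the paper obtains by reference; it buys independence from \cite{BLS} at the cost of length, while the paper's approach isolates exactly which hypothesis was doing what work in the original argument.
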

\begin{proof}
In \cite[Theorem A.3]{BLS} the same result is proven with a stronger condition $g_t \in L^{p^\prime}(I;W^{s,p}(\Omega^\prime)^\star)$. The stronger condition, is not needed in the proof. This condition can be replaced with $g_t \in L^{p^\prime}(I,X^{s,p}_0 (\Omega; \Omega^\prime)^\star)$ in all of the steps in the proof, except that the construction gives us a $C(I;L^2(\Omega))$ solution. There, the stronger assumption is used only to show that the boundary condition is in $C(I;L^2(\Omega))$, which we assume here.
\end{proof}

\section{Basic H\"older regularity and stability}\label{sec3}
Throughout the rest of the article, we assume $0<s<1$ and $2\leq p < \infty$.

Here, we argue that the norm of the $(s,p)$-caloric replacement of $u$ is close to $u$ if $f$ is small enough. By the $(s,p)$-caloric replacement of $u$ in a cylinder $B_\rho(x_0) \times I$ we mean the solution to the following
\begin{equation}\label{eq:pscalreplace}
\begin{cases}
v_t + (-\Delta_p)^s v=0 \quad & in \;\; B_\rho(x_0) \times I \\
v=u \quad & in \;\; (\R^n \setminus B_\rho(x_0) ) \times I \\
v(x,\tau_0) = u(x,\tau_0) \quad & in \;\; B_\rho(x_0)
\end{cases}
\end{equation}
Here $\tau_0$ is the initial point of the interval $I$.
First we show the existence of a $(s,p)$-caloric replacement using Theorem \ref{thm:exunsol}
\begin{proposition}\label{prop:ps_caloric}
Let $u$ be a local weak solution of $ u_t + (-\Delta_p)^s u=f $ in the cylinder $B_\sigma \times J$, for some interval $J=(t_1,t_2]$ with $f \in L^{q,r}_{\loc}(B_\sigma \times J)$, for $q> (p_s^\star)^\prime$ and $r> p^\prime$. In addition, we assume that $u \in L^p(J;L_{sp}^{p-1}(R^n))$. Then for any $0<\rho < \sigma$, and closed interval $I \Subset J$, the $(s,p)$-caloric replacement of $u$ in $B_\rho(x_0)\times I$ (weak solution to \eqref{eq:pscalreplace}) exists.
\end{proposition}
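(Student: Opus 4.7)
The plan is to invoke Theorem~\ref{thm:exunsol} on $\Omega := B_\rho(x_0)$, with auxiliary set $\Omega' := B_{\rho'}(x_0)$ for some $\rho < \rho' < \sigma$, zero source term, and boundary/initial datum $g := u$, $g_0 := u(\cdot,\tau_0)$; the function produced by the theorem is then, by construction, the $(s,p)$-caloric replacement of $u$ in $B_\rho(x_0)\times I$. I would first verify the spatial regularity required of $g=u$: the memberships $u\in L^p(I; W^{s,p}(\Omega'))$ and $u\in C(I; L^2(\Omega))$ are contained in the definition of local weak solution (since $B_{\rho'}(x_0) \Subset B_\sigma$); the condition $u\in L^p(I; L^{p-1}_{sp}(\R^n))$ is explicitly assumed; and $g_0 = u(\cdot,\tau_0)\in L^2(\Omega)$, together with $\|u(\cdot,t)-g_0\|_{L^2(\Omega)}\to 0$, follows at once from the $L^2$-continuity in time.

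The main obstacle is the estimate $\partial_t u \in L^{p'}(I;(X_0^{s,p}(\Omega,\Omega'))^*)$, which I would extract from the equation itself. For a test function $\phi \in X_0^{s,p}(\Omega,\Omega')$ the weak formulation \eqref{locweakeq} yields
\[
\dprb{\partial_t u(\cdot,t),\phi} = \int_\Omega f(x,t)\,\phi(x)\dd x - \iint_{\R^n\times\R^n}\frac{J_p(u(x,t)-u(y,t))(\phi(x)-\phi(y))}{|x-y|^{n+sp}}\dd x \dd y.
\]
For the source term, H\"older's inequality bounds the first summand by $\|f(\cdot,t)\|_{L^q(\Omega)}\|\phi\|_{L^{q'}(\Omega)}$; since $q\ge (p_s^\star)'$ we have $q'\le p_s^\star$, so Proposition~\ref{SObolev-morrey} applied to the zero-extension of $\phi$ bounds $\|\phi\|_{L^{q'}(\Omega)}$ by a constant times $[\phi]_{W^{s,p}(\R^n)}$. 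For the nonlocal bilinear form I would split the domain of integration into $\Omega'\times\Omega'$ and its complement: on $\Omega'\times\Omega'$, H\"older's inequality produces $[u(\cdot,t)]_{W^{s,p}(\Omega')}^{p-1}[\phi]_{W^{s,p}(\Omega')}$; on the complement, the vanishing of $\phi$ outside $\Omega$ collapses the double integral to $\int_{\R^n\setminus\Omega'}\int_\Omega$, which is controlled by a constant (depending on $\rho,\rho'$) times $\mathrm{Tail}_{p-1,sp}(u(\cdot,t);x_0,\rho')^{p-1}\|\phi\|_{L^1(\Omega)}$, and $\|\phi\|_{L^1(\Omega)}$ is in turn dominated by $[\phi]_{W^{s,p}(\R^n)}$ via the Poincar\'e inequality in Proposition~\ref{SObolev-morrey}.

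To convert these pointwise-in-$t$ estimates into the required $L^{p'}$-in-time bound on the operator norm, I would raise each contribution to the $p'$-th power and integrate over $I$. The source piece is controlled by $\|f\|_{L^{r}(I;L^q(\Omega))}$ thanks to the hypothesis $r\ge p'$; the $\Omega'\times\Omega'$ piece produces $[u(\cdot,t)]_{W^{s,p}(\Omega')}^{(p-1)p'}=[u(\cdot,t)]_{W^{s,p}(\Omega')}^{p}$, integrable in time by the local-weak-solution regularity; and the tail piece gives $\mathrm{Tail}_{p-1,sp}(u(\cdot,t);x_0,\rho')^{p}$, which is integrable on $I$ because $u\in L^p(I;L^{p-1}_{sp}(\R^n))$ (Lemma~\ref{lm:tail-shrink} is used to pass between the tail at radius $\rho'$ and the norm $\|u(\cdot,t)\|_{L^{p-1}_{sp}(\R^n)}$). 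All hypotheses of Theorem~\ref{thm:exunsol} are then satisfied with $f\equiv 0$, so the theorem produces the desired weak solution $v$ to \eqref{eq:pscalreplace}, which is precisely the $(s,p)$-caloric replacement of $u$ in $B_\rho(x_0)\times I$.
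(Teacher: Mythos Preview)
Your proposal is correct and follows essentially the same route as the paper: invoke Theorem~\ref{thm:exunsol} with $g=u$ and verify that the only nontrivial hypothesis, $\partial_t u \in L^{p'}(I;(X_0^{s,p}(\Omega,\Omega'))^*)$, follows from the equation by splitting the nonlocal term into a local piece (controlled by $[u]_{W^{s,p}}^{p-1}$) and a tail piece, and handling the source term via H\"older and Sobolev. The paper takes $\Omega'=B_\sigma$ rather than an intermediate $B_{\rho'}$, and in the tail estimate it keeps track separately of the contribution $|u(x,t)|^{p-1}$ arising from $|J_p(u(x)-u(y))|\le C(|u(x)|^{p-1}+|u(y)|^{p-1})$ (your sketch absorbs this into the tail bound, which is a slight imprecision but easily repaired); otherwise the arguments coincide.
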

\begin{proof}
We shall check the conditions in Theorem \ref{thm:exunsol}. If they are satisfied there exists a unique weak solution $v \in L^p(I, W^{s,p}(B_{\sigma})) \cap L^{p-1}(I;L_{sp}^{p-1}(\R^n)) \cap C(I;L^2(B_\rho))$ to the problem \eqref{eq:pscalreplace}.
 The only condition on $u$ that is not immediate from the fact that $u$ is weak solution is $\partial_t u \in L^{p^\prime}(I;X^{s,p}_0 (B_\rho \, , B_{\sigma})^\star) $. We have to show that for every function $\psi \in L^p(I;X^{s,p}_0 (B_\rho \, , B_{\sigma}))$
$$\absB{\int_I \dpr{u_t,\psi} \dd x \dd t} \leq C \int_{I} \norm{\psi}_{W^{s,p}(B_{\sigma})}^p \dd t .$$
We shall verify this for test functions belonging to the dense subspace, $\psi \in L^p(I;X^{s,p}_0 (B_\rho \, , B_{\sigma}))\cap C^1_{0}(I; L^2(B))$. We use the equation to do so. We have
\[
\begin{aligned}
\int_I \dpr{u_t,\psi} \dd x \dd t &= \int_I \int_{B_\rho}u \psi_t \dd x \dd t
\\ &=  -\int_I \iint_{\R^n \times \R^n} \frac{J_p(u(x,t)-u(y,t))(\psi(x,t) -\psi(y,t))}{\abs{x-y}^{n+sp}} \dd x \dd y \dd t + \int_I \int_{B_{r}} f(x,t) \psi(x,t) \dd x \dd t \\
&= - \int_I \iint_{B_{\sigma} \times B_{\sigma}} \frac{J_p(u(x,t)-u(y,t))(\psi(x,t) -\psi(y,t))}{\abs{x-y}^{n+sp}} \dd x \dd y \dd t\\
&- 2 \int_I \int_{\R^n\setminus B_{\sigma}} \int_{B_\rho} \frac{J_p(u(x,t)-u(y,t))\psi(x,t)}{\abs{x-y}^{n+sp}} \dd x \dd y \dd t + \int_I \int_{B_\rho} f(x,t)\psi(x,t) \dd x \dd t.
\end{aligned}
\]
By H\"older's inequality, we have
\begin{equation}\label{eq:exi1}
\begin{aligned}
\int_I& \iint_{B_{\sigma} \times B_{\sigma}} \frac{\abs{J_p(u(x,t)-u(y,t))(\psi(x,t) -\psi(y,t))}}{\abs{x-y}^{n+sp}} \dd x \dd y \dd t \\ 
&\leq \int_I \normB{\frac{J_p(u(x,t)-u(y,t))}{\abs{x-y}^{\frac{n}{p^\prime} +s(p-1)}}}_{L^{p^\prime}(B_{\sigma}\times B_{\sigma})} \normB{\frac{\psi(x,t)-\psi(y,t)}{\abs{x-y}^{\frac{n}{p}+s}}}_{L^p(B_{\sigma}\times B_{\sigma})} \dd t \\
&\leq [u]_{L^p(I;W^{s,p}(B_{\sigma}))}^{p-1} [\psi]_{L^p(I;W^{s,p}(B_{\sigma}))}.
\end{aligned}
\end{equation}
For the other nonlocal term, we note that for every $x \in B_\rho$ and $y \in \R^n\setminus B_{\sigma}$ we have $\abs{y} \leq \frac{\sigma}{\sigma-\rho}  \abs{x-y}$. Hence, 
\[
\begin{aligned}
\int_{\R^n\setminus B_{\sigma}} \frac{\abs{J_p(u(x,t)-u(y,t))}}{\abs{x-y}^{n+sp}} \dd y &\leq 
(\frac{\sigma}{\sigma - \rho})^{n+sp} C(p) \int_{\R^n\setminus B_{\sigma}} \frac{\abs{u(x,t)}^{p-1}+ \abs{u(y,t)}^{p-1}}{\abs{y}^{n+sp}} \dd y \\
 &\leq C(\sigma, \rho ,  s ,p , n ) \Bigl( \abs{u(x,t)}^{p-1} + \norm{u(\frarg ,t)}^{p-1}_{L_{sp}^{p-1}} \Bigr).
 \end{aligned}
\]
Therefore,
\[
\begin{aligned}
\int_I \int_{B_\rho}  \int_{\R^n\setminus B_{\sigma}} \frac{\abs{J_p(u(x,t)-u(y,t))\psi(x,t)}}{\abs{x-y}^{n+sp}} \dd y \dd x \dd t &\leq C(\sigma, \rho ,  s ,p , n ) \Bigl( \int_I \int_{B_\rho} \abs{\psi(x,t)} \abs{u(x,t)}^{p-1} \dd x \dd t \\
&+ \int_I \norm{\psi(\frarg ,t)}_{L^1(B_\rho)} \norm{u(\frarg ,t)}^{p-1}_{L_{sp}^{p-1}(\R^n)} \dd t \Bigr) .
\end{aligned}
\]
Using H\"older's inequality, we have
\begin{equation}\label{eq:exi2}
\int_I \int_{B_\rho} \abs{\phi(x,t)} \abs{u(x,t)}^{p-1} \dd x \dd t \leq \int_I \norm{\psi(\frarg , t)}_{L^p(B_\rho)} \norm{u(\frarg,t)}_{L^{p}(B_\rho)}^{p-1} \leq \norm{\psi}_{L^p(I;L^p(B_\rho))} \norm{u}_{L^p(I;L^p(B_\rho))}^{p-1}.
\end{equation}
For the other term,
\begin{equation}\label{eq:exi3}
\int_I \norm{\psi(\frarg ,t)}_{L^1(B_\rho)} \norm{u(\frarg ,t)}^{p-1}_{L_{sp}^{p-1}(\R^n)} \dd t \leq \norm{\psi}_{L^p(I;L^1(B_\rho))} \norm{u(\frarg ,t)}_{L^p(I;L_{sp}^{p-1}(\R^n))}^{p-1} .
\end{equation}
Since $f \in L^{p^\prime}(I ; L^{(p_s^\star)^\prime}(B_\rho))$, we get by H\"older's inequality
\begin{equation}\label{eq:exi0}
\begin{aligned}
\int_{I} \int_{B_{\rho}} \abs{f \psi} \dd x \dd t &\leq \int_{I} \norm{f}_{L^{(p_s^\star)^\prime}(B_{\rho})} \norm{\psi}_{L^{p_s^\star}(B_{\rho})} \dd t \leq \int_{I} \norm{f}_{L^{(p_s^\star)^\prime}(B_{\rho})} \norm{\psi}_{W^{s ,p}(B_{\sigma})} \dd t \\
& \leq \norm{f}_{L^{((p_s^\star)^\prime,p^\prime )}(B_{\rho} \times I)} \norm{\psi}_{L^p(I;W^{s,p}(B_\sigma))}.
\end{aligned}
\end{equation}
Therefore, combining with \eqref{eq:exi1} , \eqref{eq:exi2}, and \eqref{eq:exi3} we obtain
\[
\absB{\int_I \dpr{v_t,\psi} \dd t} \leq C(\sigma, \rho , s,p,n,u,f) \norm{\psi}_{L^p(I;W^{s,p}(B_\sigma))}.
\]
\end{proof}
\begin{lemma}\label{lm:lpavr}
Assume that $f\in L^{q,r}_{\loc}(Q_{\sigma,\sigma^{sp}}(x_0,T_0))$ with $r\geq p^\prime $ and
$$ q\geq (p_s^\star)^\prime \quad\text{if } sp<n ,\qquad q \geq 1  \quad\text{if } sp>n, \;\text{and}\qquad q>1 \quad \text{if } sp=n.$$
Let $u$ be a local weak solution of $\partial_t u + (-\Delta_p)^s u= f$ in $Q_{\sigma,\sigma^{sp}}(x_0,T_0)$ and for $\rho< \sigma$ consider $v$ to be the $(s,p)$-caloric replacement of $u$ in $Q_{\rho,\rho^{sp}}(x_0,T_0)$. Then we have
\begin{equation}\label{eq:campanato-norm1}
 \dashint_{Q_{\rho , \rho^{s \, p} }(x_0,T_0)} \abs{u-v}^p \dd x \dd t \leq C \rho^{\xi} \, \norm{f}_{L^{q,r}(Q_{\rho,\rho^{s \, p}}(x_0,T_0))}^{p^\prime}
\end{equation}
and 
\begin{equation}\label{eq:pq-dual}
 \norm{u-v}_{L^{q^\prime,r^\prime}(Q_{\rho,\rho^{sp}})} \leq C \rho^{\xi + n} \norm{f}_{L^{q,r}(Q_{\rho,\rho^{sp}}(x_0,T_0))}^{\frac{1}{p-1}},
\end{equation}
with $\xi = spp^\prime(1- \frac{1}{r} - \frac{n}{spq})$ and $C= C(n,s,p)$, in the case $sp\neq n$. In the case $sp=n$, we can take $\xi= spp^\prime(1 - \frac{1}{r} - \frac{1}{q})$ , with $C=C(n,s,p,q)$ also depending on $q$.
\end{lemma}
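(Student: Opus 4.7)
Set $w:=u-v$; by construction, $w$ vanishes on $(\R^n\setminus B_\rho(x_0))\times(T_0-\rho^{sp},T_0]$ and at the initial time $t=T_0-\rho^{sp}$. Subtracting the weak formulations for $u$ and $v$ yields a weak equation for $w$ with right hand side $f$, and I would test it with $\phi=w$ itself. Rigorously, this requires a Steklov-in-time regularization of the sort worked out in Appendix B, but I will argue formally. The pointwise inequality \eqref{pntwiseineq}, applied with $A=u(x,t)-u(y,t)$ and $B=v(x,t)-v(y,t)$ so that $A-B=w(x,t)-w(y,t)$, bounds the nonlinear bilinear form from below by $c\int_{T_0-\rho^{sp}}^{T_0}[w(\cdot,t)]_{W^{s,p}(\R^n)}^p\dd t$; meanwhile the time-derivative term contributes the nonnegative quantity $\tfrac12\|w(\cdot,T_0)\|_{L^2(B_\rho)}^2$, which I discard. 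Combined with Hölder's inequality on the source term this gives the energy estimate
\[
\int_{T_0-\rho^{sp}}^{T_0}[w(\cdot,t)]_{W^{s,p}(\R^n)}^p\dd t\ \le\ C\,\|f\|_{L^{q,r}(Q_\rho)}\,\|w\|_{L^{q',r'}(Q_\rho)}.
\]

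\textbf{Key step (interpolation).} I would then dominate $\|w\|_{L^{q',r'}(Q_\rho)}$ by a power of $\rho$ times $\bigl(\int[w(\cdot,t)]_{W^{s,p}}^p\dd t\bigr)^{1/p}$. Because $w(\cdot,t)$ is supported in $B_\rho$, Proposition~\ref{SObolev-morrey} yields $\|w(\cdot,t)\|_{L^{p_s^\star}(B_\rho)}\le C[w(\cdot,t)]_{W^{s,p}(\R^n)}$ when $sp<n$, the Morrey inequality $\|w(\cdot,t)\|_{L^\infty(B_\rho)}\le C\rho^{(sp-n)/p}[w(\cdot,t)]_{W^{s,p}(\R^n)}$ when $sp>n$, and \eqref{eq:critic-sob} with a free finite parameter $l$ in the borderline $sp=n$. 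Combining with Hölder's inequality in space (using $q\ge(p_s^\star)'$, resp.\ $q\ge 1$, $q>1$) and in time (using $r\ge p'$) gives, in the non-critical cases,
\[
\|w\|_{L^{q',r'}(Q_\rho)}\ \le\ C\,\rho^{\gamma}\Bigl(\int_{T_0-\rho^{sp}}^{T_0}[w(\cdot,t)]_{W^{s,p}(\R^n)}^p\dd t\Bigr)^{1/p},\qquad \gamma:=\tfrac{n}{p'}+sp\Bigl(1-\tfrac1r-\tfrac{n}{spq}\Bigr),
\]
after a brief bookkeeping check that the Sobolev and Morrey branches both produce this common $\gamma$; the case $sp=n$ yields an analogous bound with an extra dependence on $l$ (hence on $q$) in $C$ and the adjusted $\xi$ stated in the lemma.

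\textbf{Closing the loop.} Substituting the interpolation inequality into the energy bound and absorbing a factor $\|w\|_{L^{q',r'}}^{1/p}$ to the left gives $\bigl(\int[w]^p\dd t\bigr)^{1/p}\le C\rho^{\gamma/(p-1)}\|f\|_{L^{q,r}}^{1/(p-1)}$. Plugging this back into the interpolation inequality produces \eqref{eq:pq-dual} with exponent $n+\xi=p'\gamma$ (a direct check confirms the identity $p'\gamma=n+\xi$). For the average-$L^p$ estimate \eqref{eq:campanato-norm1}, I would apply the fractional Poincaré inequality \eqref{eq:poincare} slice-wise to $w(\cdot,t)$, yielding $\|w\|_{L^p(Q_\rho)}^p\le C\rho^{sp}\int[w]^p\dd t$; combining with the previous bound on $\int[w]^p\dd t$ and dividing by $|Q_\rho|\sim\rho^{n+sp}$ produces precisely the exponent $\xi$ (the $\rho^{sp}$ from Poincaré cancels the $-sp$ from the division, and the remaining $\rho^{p'\gamma-n}$ equals $\rho^\xi$).

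\textbf{Main obstacle.} The technical core is the interpolation step: the two genuinely different space embeddings ($L^{p_s^\star}$-Sobolev vs.\ $L^\infty$-Morrey) have to collapse to the same $\gamma$, and the critical case $sp=n$ forces the introduction of the auxiliary parameter $l$. A secondary but indispensable technicality is the rigorous admissibility of $w$ as a test function, which can be discharged by the Steklov-averaging argument described in Appendix B.
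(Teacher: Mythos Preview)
Your proposal is correct and follows essentially the same route as the paper: subtract the equations, test with $w=u-v$ (justified via the Steklov-averaging in Appendix~B), use \eqref{pntwiseineq} to extract $\int_J[w]_{W^{s,p}}^p$, then combine the Sobolev/Morrey embedding with H\"older in space and time to bound $\|w\|_{L^{q',r'}}$ and close the loop. One small remark: in the borderline case $sp=n$ the paper does not keep a free parameter $l$ but simply applies \eqref{eq:critic-sob} with $l=q'$, which is why the constant picks up a $q$-dependence; your phrasing suggests an extra degree of freedom that is not actually needed here.
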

\begin{proof}
Let $J:=[T_0-\rho^{sp},T_0]$, throughout the proof, we drop the dependence of the balls on the center and write $B_\rho$ instead of $B_\rho(x_0)$, and $Q_{\rho,\rho^{sp}}$ instead of $Q_{\rho,\rho^{sp}}(x_0,T_0)$.

By subtracting the weak formulation of the equations \eqref{locweakeq} for $u$ and $v$ with the same test function $\phi(x,t) \in L^{p}(J;X_0^{s,p}(B_{\rho},B_{\sigma}))\cap C^1(J;L^2(B_\rho))$ we get
\[
\begin{aligned}
- \int_J \int_{B_\rho}& (u(x,t)- v(x,t))\frac{\partial}{\partial t} \phi (x,t) \dd x \dd t \\
&+ \int_J \int_{\R^n} \int_{\R^n} \frac{\bigl[J_p \bigl(u (x,t) - u(y,t)\bigr) - J_p\bigl( v (x,t) - v(y,t)\bigr)\bigr](\phi (x,t) - \phi (y,t))}{\abs{x-y}^{n+sp}} \dd x \dd y \dd t \\
&= \int_{B_\rho} ((u(x,T_0-\rho^{sp}) - v(x,T_0-\rho^{sp}))\phi(x,T_0-\rho^{sp}) \dd x - \int_{B_\rho} ((u(x,T_0) - v(x,T_0))\phi(x,T_0) \dd x \\
& + \int_J \int_{B_\rho} f(x,t)\phi(x,t).
\end{aligned}
\]
Now we take $\phi := u-v$, which belongs to $L^p(J; X_0^{s,p}(B_{\rho};B_\sigma))$ but it may not be in $C^1(J;L^2(B_\rho))$. We justify taking this as a test function in Appendix B. By Proposition \ref{prop:testing} with $F(t)=t$, we get
\begin{equation}\label{eq:testlinear}
\begin{aligned}
\int_J &\iint_{\R^n\times \R^n} \frac{\bigl[J_p \bigl(u (x,t) - v(x,t)\bigr) - J_p\bigl( u (y,t) - v(y,t)\bigr)\bigr]\bigl[\bigl(u (x,t) - u(y,t)\bigr) -\bigl( v (x,t) - v(y,t)\bigr) \bigr]}{\abs{x-y}^{n+sp}} \dd x \dd y \dd t \\
& = \int_J \int_{B_\rho} f(x,t)(u(x,t) -v(x,t)) \dd x\dd t \\
&\qquad - \frac{1}{2}\int_{B_\rho} ((u(x,T_0) - v(x,T_0))^2 -((u(x,T_0- \rho^{sp}) - v(x,T_0 - \rho^{sp}))^2 \dd x \\
&= \int_J \int_{B_\rho} f(x,t)(u(x,t) -v(x,t)) \dd x\dd t - \frac{1}{2}\int_{B_\rho} ((u(x,T_0) - v(x,T_0))^2 \dd x \\
& \leq \int_J \int_{B_\rho} \abs{f(x,t)(u(x,t) -v(x,t))} \dd x\dd t ,
\end{aligned}
\end{equation}
where in the third line we have used $u(x,T_0-\rho^{sp}) = v(x,T_0-\rho^{sp})$.
The left hand side is essentially the $W^{s,\,p}$ seminorm. By the pointwise inequality \eqref{pntwiseineq}
 \[
 \begin{split}
 &\int_J [u-v]_{W^{s,p}(\R^n)}^p \dd t = \int_J \iint_{\R^n\times \R^n} \frac{\abs{u(x,t)-v(x,t) - (u(y,t)-v(y,t))}^p}{\abs{x-y}^{n+sp}} \dd x \dd y \dd t \\
 \leq C(p) &\int_J \iint_{\R^n\times \R^n} \frac{\bigl[J_p \bigl(u (x,t) - u(y,t)\bigr) - J_p\bigl( v (x,t) - v(y,t)\bigr)\bigr]\bigl[u (x,t) - u(y,t) -\bigl( v (x,t) - v(y,t)\bigr) \bigr]}{\abs{x-y}^{n+sp}} \dd x \dd y \dd t.
 \end{split}
 \]
Therefore, by \eqref{eq:testlinear} and H\"older's inequality
\begin{equation}\label{eq:sobcalrepl}
\begin{aligned}
\int_J [u-v]_{W^{s,p}(\R^n)}^p \dd t &\leq C(p) \int_J \int_{B_\rho} \abs{f(x,t)(u(x,t) -v(x,t))} \dd x\dd t \\
&\leq  C(p) \int_J \norm{f(\;\sbullet\;,t)}_{L^q(B_\rho)}\norm{(u-v)(\;\sbullet\;,t)}_{L^{q\prime}(B_\rho)} \dd t \\
& \leq C(p) \norm{f}_{L^{q,r}(Q_{\rho,\rho^{sp}})} \norm{u-v}_{L^{q\prime,r \prime}(Q_{\rho,\rho^{sp}})}.
\end{aligned}
\end{equation}
Now we consider three cases: $sp<n$, $sp>n$,and $sp=n$.

\textbf{Case} $sp < n$.
By H\"older's inequality \eqref{eq:Holder} and Sobolev's inequality \eqref{eq:Sobolev} we have
\begin{equation}\label{eq:pq-dual-sob}
\begin{aligned}
\norm{u-v}_{L^{q\prime,r \prime}}
&\leq  \abs{B_\rho}^{ \frac{1}{q\prime} - \frac{1}{p_s^\star}} \Bigl( \int_J \norm{u-v}_{L^{p_s^\star}(B_\rho)}^{r\prime} \dd t \Bigr)^{\frac{1}{r \prime}}\\
& \leq C(n,s,p) \abs{B_\rho}^{ \frac{1}{q\prime} - \frac{1}{p_s^\star}} \Bigl( \int_J [u-v]_{W^{s,p}(\R^n)}^{r\prime} \dd t \Bigr)^{\frac{1}{r\prime}} \\
& \leq C(n,s,p) \abs{B_\rho}^{ \frac{1}{q\prime} - \frac{1}{p_s^\star}} \abs{J}^{\frac{1}{r\prime} - \frac{1}{p}} \Bigl( \int_J [u-v]_{W^{s,p}(\R^n)}^p \dd t\Bigr)^{\frac{1}{p}}.
\end{aligned}
\end{equation}
Combined with \eqref{eq:sobcalrepl} this yields
\begin{equation}\label{eq:sob-est}
\begin{aligned}
\Bigl( \int_J [u-v]_{W^{s,p}(\R^n)}^p \dd t\Bigr)^{\frac{p-1}{p}} &\leq C \abs{B_\rho}^{ \frac{1}{q\prime} - \frac{1}{p_s^\star}} \abs{J}^{\frac{1}{r\prime} - \frac{1}{p}} \norm{f}_{L^{q,r}(Q_{\rho,\rho^{sp}})}\\
&= C \abs{B_\rho}^{ \frac{1}{q\prime} - \frac{n-sp}{np}} \abs{J}^{\frac{1}{r\prime} - \frac{1}{p}} \norm{f}_{L^{q,r}(Q_{\rho,\rho^{sp}})},
\end{aligned}
\end{equation}
where $C= C(n,s,p)$. By the Poincar\'{e} inequality
$$
\dashint_J \dashint_{B_\rho} \abs{u-v}^p \dd x \dd t \leq C\abs{B_\rho}^{\frac{p\prime}{q\prime} - p^\prime\frac{(n-sp)}{np} +\frac{sp}{n}-1} \abs{J}^{\frac{p^\prime}{r\prime}- \frac{p\prime}{p}-1} \norm{f}_{L^{q,r}(Q_{\rho,\rho^{sp}})}^{p^\prime}.
$$
Also from \eqref{eq:sob-est} and  \eqref{eq:pq-dual-sob}  we get
$$\norm{u-v}_{L^{q\prime , r\prime}(Q_{\rho,\rho^{sp}})} \leq C(n,s,p)\abs{B_\rho}^{ \frac{p\prime}{q\prime} - p\prime\frac{n-sp}{np}} \abs{J}^{\frac{p\prime}{r\prime} - \frac{p\prime}{p}} \norm{f}_{L^{q,r}(Q_{\rho,\rho^{sp}})}^{\frac{1}{p-1}}. $$ 
\textbf{Case} $sp>n$.
In this case, we use Morrey's inequality \eqref{eq:Morrey} and H\"older's inequality and obtain
\begin{equation}\label{eq:pq-dual-mor}
\begin{aligned}
\norm{u-v}_{L^{q\prime,r \prime}(Q_{\rho,\rho^{sp}})}
&\leq C \abs{B_\rho}^{ \frac{1}{q\prime}} \Bigl( \int_J \norm{u-v}_{L^{\infty}(B_\rho)}^{r\prime} \dd t \Bigr)^{\frac{1}{r \prime}} 
\leq C \abs{B_\rho}^{\frac{1}{q^\prime}}\abs{J}^{\frac{1}{r\prime} - \frac{1}{p}} \Bigl( \int_J \norm{u-v}_{L^{\infty}(B_\rho)}^{p} \dd t \Bigr)^{\frac{1}{p}}\\
& \leq C \abs{B_\rho}^{ \frac{1}{q\prime} + \frac{sp-n}{np}} \abs{J}^{\frac{1}{r\prime} - \frac{1}{p}} \Bigl( \int_J [u-v]_{W^{s,p}(\R^n)}^p \dd t\Bigr)^{\frac{1}{p}}.
 \end{aligned}
\end{equation}
Together with \eqref{eq:sobcalrepl}, this implies
\begin{equation}\label{eq:sob,spgen}
\Bigl( \int_J [u-v]_{W^{s,p}(\R^n)}^p \dd t\Bigr)^{\frac{p-1}{p}} \leq C \abs{B_\rho}^{ \frac{1}{q\prime} - \frac{n-sp}{np}} \abs{J}^{\frac{1}{r\prime} - \frac{1}{p}} \norm{f}_{L^{q,r}(Q_{\rho,\rho^{sp}})}.
\end{equation}
By the Poincar\'{e} inequality
$$
\dashint_J \dashint_{B_\rho} \abs{u-v}^p \dd x \dd t  \leq C\abs{B_\rho}^{\frac{p\prime}{q\prime} - p^\prime\frac{(n-sp)}{np} +\frac{sp}{n}-1} \abs{J}^{\frac{p^\prime}{r\prime}- \frac{p\prime}{p}-1} \norm{f}_{L^{q,r}(Q_{\rho,\rho^{sp}})}^{p^\prime}.
$$
Combining \eqref{eq:pq-dual-mor} and \eqref{eq:sob,spgen}, we get
$$\norm{u-v}_{L^{q\prime , r\prime}(Q_{\rho,\rho^{sp}})} \leq C(n,s,p)\abs{B_\rho}^{ \frac{p\prime}{q\prime} - p\prime\frac{n-sp}{np}} \abs{J}^{\frac{p\prime}{r\prime} - \frac{p\prime}{p}} \norm{f}_{L^{q,r}(Q_{\rho,\rho^{sp}})}^{\frac{1}{p-1}}. $$ 
\textbf{Case} $sp=n$.
In this case, we use the critical case of Sobolev's inequality \eqref{eq:critic-sob} for $ l= q^\prime$ and obtain
\[
 \norm{u-v}_{L^{q^\prime}(B_\rho)}^p \leq C(n,s,p,q) \abs{B_\rho}^{\frac{p}{q^\prime}} [u-v]_{W^{s,p}(\R^n)}^p.
\]
Hence using H\"older's inequality, we have for any $r\geq p^\prime$
\[
\begin{aligned}
\norm{u-v}_{L^{q\prime,r \prime}(Q_{\rho,\rho^{sp}})}
&= \Bigl( \int_J \norm{u-v}_{L^{q^\prime}(B_\rho)}^{r\prime} \dd t \Bigr)^{\frac{1}{r \prime}}\\
& \leq C \abs{B_\rho}^{ \frac{1}{q\prime} } \Bigl( \int_J [u-v]_{W^{s,p}(\R^n)}^{r\prime} \dd t \Bigr)^{\frac{1}{r\prime}} \leq C \abs{B_\rho}^{ \frac{1}{q\prime}} \abs{J}^{\frac{1}{r\prime} - \frac{1}{p}} \Bigl( \int_J [u-v]_{W^{s,p}(\R^n)}^p \dd t\Bigr)^{\frac{1}{p}}.
\end{aligned}
\]
The constant $C=C(n,s,p,q)$ above does blow up as $q$ goes to $1$. In a similar way as in the prior cases, we get for $q>1$ and $r\geq p^\prime$ 
$$ \dashint_J \dashint_{B_\rho} \abs{u-v}^p \leq C(n,s,p,q)\abs{B_\rho}^{\frac{p\prime}{q\prime} } \abs{J}^{\frac{p^\prime}{r\prime}- \frac{p\prime}{p}-1} \norm{f}_{L^{q,r}(Q_{\rho,\rho^{sp}})}^{p^\prime} $$
and
$$\norm{u-v}_{L^{q\prime , r\prime}(Q_{\rho,\rho^{sp}})} \leq C(n,s,p,q)\abs{B_\rho}^{ \frac{p\prime}{q\prime} } \abs{J}^{\frac{p\prime}{r\prime} - \frac{p\prime}{p}} \norm{f}_{L^{q,r}(Q_{\rho,\rho^{sp}})}^{\frac{1}{p-1}}. $$ 

Using that $\abs{B_\rho}\sim \rho^{n}$ and $\abs{I} \sim \rho^{sp}$ we can conclude that
$$
 \dashint_{Q_{\rho , \rho^{s \, p} }(x_0,T_0)} \abs{u-v}^p \dd x \dd t \leq C \rho^{\xi} \, \norm{f}_{L^{q,r}(Q_{\rho,\rho^{s \, p}})}^{p^\prime},
$$
and
$$\norm{u-v}_{L^{q^\prime,r^\prime}(Q_{\rho,\rho^{sp}})} \leq C \rho^{\xi + n} \norm{f}_{L^{q,r}(Q_{\rho,\rho^{sp}})}^{\frac{1}{p-1}}.$$
Here in the case of $sp\neq n$,
\[
\begin{aligned}
\xi &= \frac{n p^\prime}{q^\prime} -p^\prime\frac{n-sp}{p} +sp -n + \frac{spp^\prime}{r^\prime} - \frac{spp^\prime}{p} -sp  = p^\prime (\frac{n}{q^\prime} - \frac{n}{p^\prime} - \frac{n - sp}{p} + \frac{sp}{r^\prime} - \frac{sp}{p}) \\
& = p^\prime (\frac{n}{q^\prime} - n + \frac{sp}{r^\prime} ) 
= p^\prime (\frac{sp}{r^\prime} - \frac{n}{q}) =  spp^\prime(1- \frac{1}{r} - \frac{n}{spq})
\end{aligned}
\]
and in the case $sp=n$, 
\[
\begin{aligned}
\xi= p^\prime(\frac{n}{q^\prime}  + \frac{sp}{r^\prime} - \frac{sp}{p} -\frac{sp}{p^\prime}) &= sp p^\prime(\frac{1}{q^\prime} +\frac{1}{r^\prime} - \frac{1}{p}-\frac{1}{p^\prime}) \\
& = sp p^\prime(1-\frac{1}{q}  + 1-\frac{1}{r}-1) \\
&=spp^\prime(1 - \frac{1}{r} - \frac{1}{q}).
\end{aligned}
\]
\end{proof}

Next, we perform a Moser iteration to get an $L^\infty$ bound for the difference between the solution and its $(s,p)$-caloric replacement.
 \begin{proposition}\label{thm:sub-moser-bound}
Let $u$ be a local weak solution of 
$$\partial_t u + (-\Delta_p)^s u=f, \quad \text{in} \quad Q_{\sigma,\sigma^{sp}}(x_0,T_0),$$
with $f \in L^{q,r}_{\loc} (Q_{\sigma,\sigma^{sp}}(x_0,T_0))$ such that  $r \geq p^\prime$,
$$\frac{1}{r} + \frac{n}{spq} < 1 \quad\text{ and } q\geq (p_s^\star)^\prime \quad \text{in the case } sp < n,$$ 
and
$$\frac{1}{r} + \frac{1}{q} < 1 \quad\text{ and  } q > 1\quad \text{in the case } sp\geq n.$$
Let $v$ be the $(s,p)$-caloric replacement of $u$ in $Q_{R,R^{s p}}(x_0,T_0)$, with $R< \sigma$. Then in the case of $sp \neq n$, we have 
\[
\norm{(u-v)^+}_{L^\infty(Q_{RR^{s p}}(x_0,T_0))} \leq  C(n,s,p) \vartheta^{\frac{(p-1)\vartheta}{(\vartheta -1 )^2}} \Bigl( 1 + R^{sp\nu +\frac{(p-2)sp\nu}{(p-1)(\vartheta-1)}}\norm{f}_{L^{q,r}(Q_{R,R^{sp}}(x_0,T_0))}^{1+\frac{1}{\vartheta-1}\frac{p-2}{p-1} } \Bigr),
\]
where $\nu = 1-\frac{1}{r} - \frac{n}{spq}$,
 $$ \vartheta = 1+\frac{sp\nu}{n} \quad \text{if}\quad sp<n, \quad \text{and} \quad \vartheta= 2-\frac{1}{r}-\frac{1}{q} \quad \text{if} \quad sp> n. $$
In the case of $sp=n$, given any $l$ such that $ \frac{p}{r^\prime}(1-\frac{1}{r} - \frac{1}{q})^{-1} <l < \infty$ we get
\[
\norm{(u-v)^+}_{L^\infty(Q_{R,R^{s p}}(x_0,T_0))}\leq C(n,s,p,q,l) \vartheta^{\frac{(p-1)\vartheta}{(\vartheta - 1)^2}} \Bigl( 1 + R^{sp\nu +\frac{(p-2)sp\nu}{(p-1)(\vartheta-1)}}\norm{f}_{L^{q,r}(Q_{R,R^{sp}}(x_0,T_0))}^{1+\frac{1}{\vartheta-1}\frac{p-2}{p-1} } \Bigr) , 
\]
where $\vartheta= 2-\frac{1}{r}- \frac{1}{q} - \frac{p}{lr^\prime}$ and $\nu= 1-\frac{1}{r} - \frac{1}{q}$.
\end{proposition}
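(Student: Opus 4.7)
Set $w := u - v$. By the construction of the $(s,p)$-caloric replacement, $w$ vanishes on $(\R^n \setminus B_R(x_0)) \times (T_0 - R^{sp}, T_0]$ and on the initial slice $B_R(x_0) \times \{T_0 - R^{sp}\}$, and subtracting the weak formulations for $u$ and $v$ shows that
\[
\partial_t w + (-\Delta_p)^s u - (-\Delta_p)^s v = f \qquad \text{weakly in } Q_{R, R^{sp}}(x_0, T_0).
\]
For $\beta \geq 1$, $M,\delta > 0$, and a smooth time cutoff $\eta$ vanishing at $t = T_0 - R^{sp}$, I would test this identity with $\phi := \eta(t)^p\,((w^+_M + \delta)^\beta - \delta^\beta)$, admissible via Steklov averaging as in Proposition~\ref{prop:testing}. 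The pointwise inequality Lemma~\ref{lm:pointwise-ineq} applied with $a = u(x),\, b = u(y),\, c = v(x),\, d = v(y)$ bounds the nonlocal bilinear form from below by a multiple of $\tfrac{\beta\,p^p}{(\beta+p-1)^p}\,\eta^p\,[\Phi]^p_{W^{s,p}(\R^n)}$ with $\Phi := ((w)^+_M + \delta)^{(\beta+p-1)/p}$, while integration by parts in time against the vanishing initial datum produces a $\sup_t \int_{B_R}\eta^p(w^+_M + \delta)^{\beta+1}$ contribution. After absorbing cutoff errors, one obtains a Caccioppoli-type estimate controlling
\[
\sup_t \|(w^+_M + \delta)(\frarg,t)\|_{L^{\beta+1}(B_R)}^{\beta+1} \;+\; \tfrac{\beta}{(\beta+p-1)^p}\int_J [\Phi]^p_{W^{s,p}(\R^n)}\dd t
\;\leq\; C\,\iint|f|\,(w^+_M + \delta)^\beta.
\]

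The next step is to upgrade this to a reverse H\"older inequality. Applying Proposition~\ref{SObolev-morrey}---Sobolev if $sp<n$, Morrey if $sp>n$, or critical Sobolev with exponent $l$ if $sp=n$---to the $W^{s,p}$ seminorm, and then combining the resulting spatial $L^{p_s^\star}$ (respectively $L^\infty$ or $L^l$) integrability with the time-supremum control via the appropriate interpolation lemma (Lemma~\ref{lm:interpol}, \ref{lm:interpol-morr}, or \ref{lm:interpol-critic}), gives an $L^{pq',pr'}$ bound for $\Phi$ whose scaling in $R$ is governed by $\nu$. H\"older's inequality on the right-hand side, using the $L^{q,r}$ control of $f$, then yields a reverse H\"older inequality of the schematic form
\[
\|w^+_M + \delta\|_{L^{\vartheta(\beta+p-1)}(Q_{R,R^{sp}})} \;\leq\; \bigl(C\,\beta^p\,(1 + R^{sp\nu}\|f\|_{L^{q,r}})^{1 + \frac{1}{\vartheta-1}\frac{p-2}{p-1}}\bigr)^{\frac{1}{\beta+p-1}}\,\|w^+_M + \delta\|_{L^{\beta+p-1}(Q_{R,R^{sp}})}^{\tau_\beta},
\]
where $\vartheta > 1$ is precisely the Sobolev--interpolation improvement factor from the statement, and $\tau_\beta = 1 + O(1/\beta)$ deviates from $1$ only because $p \geq 2$; this deviation is the source of the additional power $\tfrac{p-2}{(p-1)(\vartheta-1)}$ of $\|f\|_{L^{q,r}}$ in the announced bound.

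Iterating along the geometric sequence $\beta_k + p - 1 = (\beta_0 + p - 1)\,\vartheta^k$ with nested time cutoffs, the product of the constants telescopes via a convergent geometric series to a multiple of $\vartheta^{(p-1)\vartheta/(\vartheta-1)^2}$, while the $R$-powers assemble to $sp\nu + \tfrac{(p-2)sp\nu}{(p-1)(\vartheta-1)}$; passing $M\to\infty$ and $\delta\to 0$ by monotone convergence then recovers the claimed $L^\infty$ bound on $(u-v)^+$. The main obstacle is the careful bookkeeping of the $\beta$-dependence through the iteration so that these precise exponents (rather than loose polynomial bounds) emerge after summation. A secondary difficulty is unifying the three regimes $sp < n$, $sp > n$, $sp = n$, which is resolved by the parallel structure of Lemmas~\ref{lm:interpol}--\ref{lm:interpol-critic}: each produces the same dual index $(pq',pr')$ needed to pair with $\|f\|_{L^{q,r}}$, but with a different improvement factor $\vartheta$ and scaling exponent $\nu$.
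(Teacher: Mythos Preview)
Your overall architecture is right---test with truncated powers of $w=u-v$, invoke Lemma~\ref{lm:pointwise-ineq}, apply Sobolev/Morrey, interpolate via Lemmas~\ref{lm:interpol}--\ref{lm:interpol-critic}, and iterate---but two points depart from what actually makes the argument close, and one of them is a genuine gap.

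\textbf{The role of $\delta$ is not a regularization.} You propose to send $\delta\to 0$ at the end. In the paper $\delta$ is instead \emph{fixed} at the specific value
\[
\delta=\max\bigl\{1,\ (R^{sp\nu}\|f\|_{L^{q,r}(Q_{R,R^{sp}})})^{1/(p-1)}\bigr\},
\]
and this choice is what makes the iteration close with the announced exponents. The reason is the exponent mismatch inherent to $p>2$: the Caccioppoli estimate controls the $W^{s,p}$ seminorm of $\Phi=((w)^+_M+\delta)^{(\beta+p-1)/p}$ and the time-sup of $((w)^+_M+\delta)^{\beta+1}$, while the right-hand side carries $((w)^+_M+\delta)^{\beta}$. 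To run the interpolation in terms of a single object $W:=((w)^+_M+\delta)^{\beta/p}$, one uses $(w)^+_M+\delta\ge\delta$ to trade exponents, producing factors $\delta^{p-1}$ and $\delta$ on the left and an error $\delta^{\beta+p-1}|B_R|^{\cdots}$. After division one is left with
\[
\|W\|_{L^{p,\infty}}^p+\|W\|_{L^{p_s^\star,p}}^p\le C\,\beta^{p-1}\,\|W\|_{L^{pq',pr'}}^p\Bigl(\tfrac{\|f\|_{L^{q,r}}}{\delta}+\delta^{p-2}R^{-sp\nu}\Bigr),
\]
and the particular $\delta$ above is exactly what balances the two terms in parentheses to $2\delta^{p-2}R^{-sp\nu}$. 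If you send $\delta\to 0$ the first term blows up; if you keep $\delta$ generic the iteration does not telescope to the stated powers of $\|f\|$ and $R$. The final estimate is $\|(u-v)^++\delta\|_{L^\infty}\le C\,\vartheta^{(p-1)\vartheta/(\vartheta-1)^2}\,\delta^{\,p-1+(p-2)/(\vartheta-1)}$, and substituting the chosen $\delta$ is precisely what yields the exponent $1+\tfrac{1}{\vartheta-1}\tfrac{p-2}{p-1}$ on $\|f\|_{L^{q,r}}$.

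\textbf{No cutoffs, no nesting.} The time cutoff $\eta$ and the ``nested time cutoffs'' in your iteration are both unnecessary and, as written, incompatible with the conclusion: the bound is asserted on the \emph{full} cylinder $Q_{R,R^{sp}}$, not a shrunken one. The paper exploits that $w$ vanishes on the parabolic boundary (both at $t=T_0-R^{sp}$ and outside $B_R$), so Proposition~\ref{prop:testing} yields the energy identity directly on $Q_{R,R^{sp}}$ with no cutoff errors to absorb. The iteration then runs on the fixed cylinder with $\beta_m=\vartheta^m$; its starting value $\phi_0=\|(w)^+_M+\delta\|_{L^{q',r'}(Q_{R,R^{sp}})}$ is supplied by the stability estimate \eqref{eq:pq-dual} of Lemma~\ref{lm:lpavr}, which you did not mention but which is needed to launch the recursion.
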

\begin{proof}
Throughout the proof we write $Q_{R,R^{sp}}$ instead of $Q_{R,R^{s,p}}(x_0,T_0)$. We test the equations with powers of $u-v$ and perform a Moser iteration. Using Proposition \ref{prop:testing} with 
$$ F(t)=( \min{ \lbrace t^{+} \, ,M \rbrace} +\delta)^\beta - \delta^\beta, $$
and
\begin{equation}\label{eq:delta-choice}
\delta = \max{ \lbrace 1 , \bigl( R^{sp\nu}\norm{f}_{L^{q,r}(Q_{R,R^{sp}})} \bigr)^{\frac{1}{p-1}} \rbrace},
\end{equation}
we get
\begin{equation}\label{eq:test-mos}
\begin{aligned}
\sup_{t \in J} \int_{B_R}& \Fcal(u-v) \dd x  + \int_J \iint_{\R^n \times \R^n} \frac{J_{p}(u(x,t)-u(y,t)) - J_p(v(x,t)-v(y,t))}{\abs{x-y}^{n+sp}} \\
& \qquad \qquad \qquad \qquad \qquad \times (F(u(x,t)-v(x,t)) - F(u(y,t)-v(y,t))) \dd x \dd y \dd t \\
& \leq \int_J \int_{B_R} \abs{f(x,t)} F(u(x,t)-v(x,t))  \\
& \leq \norm{f}_{L^{q,r}(B_R \times J)} \norm{((u-v)^{+}_M + \delta)^{\beta}}_{L^{q^\prime , r^\prime}(B_R \times J)} .
\end{aligned}
\end{equation}
In the last line, we have used H\"older's inequality. Here $\Fcal(t) = \int_0^t F(t) \dd t$ is
\[
\Fcal(t) = 
\begin{cases} 
\qquad  \qquad 0 \quad & \text{if} \quad t\leq 0, \\
\quad \frac{1}{\beta + 1}(t+\delta)^{\beta +1} -\frac{\delta^{\beta +1}}{\beta + 1} - t \delta^\beta  \quad &\text{if} \quad 0\leq t \leq M, \\
\frac{1}{\beta + 1}(M +\delta)^{\beta +1} -\frac{\delta^{\beta +1}}{\beta + 1} - t \delta^\beta + (t-M)(M+\delta)^\beta & \text{if} \quad t \geq M.
\end{cases}
\]
Notice that by Young's inequality, for $t \geq 0$
$$ \frac{(t+\delta)^{\beta + 1}}{2(\beta + 1)} + \frac{\beta}{\beta + 1} 2\delta^{\beta + 1} \geq \frac{t+\delta}{2^{\frac{1}{\beta+ 1} }} 2^{\frac{\beta}{\beta + 1}} \delta^\beta \geq t \delta^\beta \, . $$
In particular for $0\leq t \leq M$
$$\Fcal(t) \geq \frac{(t+\delta)^{\beta + 1}}{2(\beta + 1)} - \frac{2\beta + 1}{\beta +1} \delta^{\beta + 1} \geq \frac{(t+\delta)^{\beta + 1}}{2(\beta + 1)} - 2\delta^{\beta + 1} ,$$
and for $t\geq M$
\[ \begin{aligned}
\frac{1}{\beta + 1}(M +\delta)^{\beta +1} -\frac{\delta^{\beta +1}}{\beta + 1} &- t \delta^\beta + (t-M)(M+\delta)^\beta = \frac{1}{\beta + 1}(M +\delta)^{\beta +1} -\frac{\delta^{\beta +1}}{\beta + 1} - M \delta^\beta \\
&+ (t-M)\bigl ((M+\delta)^\beta -\delta^\beta \bigr) 
 \geq \Fcal(M) \geq \frac{(M+\delta)^{\beta +1}}{2(\beta + 1)} - 2\delta^{\beta + 1} .
\end{aligned}
\]
Hence
\begin{equation}\label{eq:fcal-lower}
\Fcal (t) \geq \frac{(t^{+}_M +\delta)^{\beta +1}}{2(\beta + 1)} - 2\delta^{\beta + 1} .
\end{equation}
Using Lemma \ref{lm:pointwise-ineq} for the second term in the left hand side of \eqref{eq:test-mos}, and \eqref{eq:fcal-lower} in the first term we obtain
\begin{equation}\label{eq:testing-Morr-sob}
\begin{aligned}
\frac{1}{2(\beta + 1)} &\sup_{t \in J} \int_{B_R} ((u-v)^{+}_M + \delta)^{\beta +1} \dd x + \frac{1}{3\cdot 2^{p-1}}\frac{\beta p^p}{(\beta +p -1)^p}\int_J [((u-v)^{+}_M + \delta)^{\frac{\beta +p-1}{p}}]_{W^{s, p}(\R^n)}^p \dd t  \\
& \leq \sup_{t \in J} \int_{B_R} \Fcal(u-v) \dd x + 2\delta^{\beta +1} \abs{B_R}  + \int_J \iint_{\R^n \times \R^n} \frac{J_{p}(u(x,t)-u(y,t)) - J_p(v(x,t)-v(y,t))}{\abs{x-y}^{n+sp}} \\
& \qquad \; \qquad \qquad \; \qquad \times (F(u(x,t)-v(x,t)) - F(u(y,t)-v(y,t))) \dd x \dd y \dd t \\
& \qquad \qquad \leq \norm{f}_{L^{q,r}(B_R \times J)} \norm{((u-v)^{+}_M+ \delta)^{\beta}}_{L^{q^\prime , r^\prime}(B_R \times J)}  + 2\delta^{\beta +1} \abs{B_R} .
\end{aligned}
\end{equation}
Let $w(x,t) = ((u-v)^{+}_M +\delta)^{\frac{\beta}{p}}$. Since $\delta \leq (u-v)^{+}_M + \delta$, we see that 
\begin{equation}\label{eq:delta-beta}
 \delta^\beta \leq \frac{\norm{w}_{L^{pq^\prime , p r^\prime}(Q_{R,R^{s p}})}^p}{\abs{B_R}^{1-\frac{1}{q}} \abs{J}^{1- \frac{1}{r}}} .
\end{equation}
We consider three cases depending on whether $sp>n$, $sp=n$, or $sp>n$.

\textbf{Case $sp < n$}: Using Sobolev's inequality in the second term in \eqref{eq:testing-Morr-sob} and applying \eqref{eq:delta-beta} we get
\begin{equation}\label{eq:moser-diffu-sob}
\begin{aligned}
\frac{\delta}{2(\beta + 1)} &\norm{w}_{L^{p,\infty}(B_R \times J)}^p \, + \frac{C(n,s,p)^{-1}}{3 \cdot 2^{p-1}}\frac{\beta p^p}{(\beta + p-1)^p} \bigl[ \delta^{p-1} \norm{w}_{L^{p_s^\star, p}(B_R \times J)}^p - \delta^{\beta + p-1} \abs{B_R}^{\frac{n-sp}{n}} \abs{J} \bigr] \\
& \leq \frac{\delta}{\beta + 1} \sup_{t \in J} \int_{B_R} ((u-v)^{+}_M + \delta)^{\beta} \dd x \\
& \quad + \frac{C(n,s,p)^{-1}}{3\cdot 2^{p-1}}\frac{\beta p^p}{(\beta + p-1)^p}\int_J \norm{((u-v)^{+}_M +\delta)^{\frac{\beta + p-1}{p}} - \delta^{\frac{\beta + p-1}{p}}}_{L^{p_s^\star}(B_R)}^p \\
& \leq \norm{f}_{L^{q,r}(B_R \times J)} \norm{w}_{L^{p q^\prime , p r^\prime}(B_R \times J)}^p \, + 2\delta \abs{B_R} \frac{ \norm{w}_{L^{p q^\prime , p r^\prime}(B_R \times J)}^p}{\abs{B_R}^{1-\frac{1}{q}}\abs{J}^{1-\frac{1}{r}}}.
\end{aligned}
\end{equation}
Upon multiplying both sides by $3 \cdot 2^{p-1} \times C(n,s,p)\frac{(\beta + p -1)^p}{\delta \beta}$ and taking $J$ to have length $R^{s p}$, this implies
\begin{equation}\label{eq:energy-sob-moser}
\begin{aligned}
3 &\cdot 2^{p-2}C(n,s,p)\frac{(\beta + p-1)^p}{(\beta+1)\beta }\norm{w}_{L^{p,\infty}(Q_{R,R^{s p}})}^p + \delta^{p-2} p^p  \norm{w}_{L^{p_s^\star , p}(Q_{R,R^{s p}})}^p \\
& \leq 3 \cdot 2^{p-1}\times C(n,s,p)\frac{(\beta + p -1)^p}{\delta \beta} \norm{w}_{L^{pq^\prime ,pr^\prime}}^p \bigl( \norm{f}_{L^{q,r}(Q_{R,R^{sp}})} +\frac{2(n \omega_n)^{\frac{1}{q}}\delta R^n}{R^{n(1-\frac{1}{q}) + sp(1-\frac{1}{r})}} \bigr) \\
&+ p^p \delta^{\beta +p-2} R^{n-sp +sp}.
\end{aligned}
\end{equation}
By replacing the constant $C(n,s,p)$ above in Sobolev's inequality with $\max{\lbrace 1, C(n,s,p)\rbrace}$, we can assume $C(n,s,p) \geq 1$. Using this and that $\delta \geq 1$, and $\frac{(\beta+p-1)^p}{\beta(\beta+1)} \geq 1$ we obtain
\[
\begin{aligned}
\norm{w}&_{L^{p,\infty}(Q_{R,R^{s p}})}^p + \norm{w}_{L^{p_s^\star , p}(Q_{R,R^{s p}})}^p  \\
&\leq 3 \cdot 2^{p-2}C(n,s,p)\frac{(\beta + p-1)^p}{(\beta+1)\beta }\norm{w}_{L^{p,\infty}(Q_{R,R^{s p}})}^p + \delta^{p-2} p^p  \norm{w}_{L^{p_s^\star , p}(Q_{R,R^{s p}})}^p.
\end{aligned}
\]
Using this together with \eqref{eq:energy-sob-moser}, and \eqref{eq:delta-beta} we get
\begin{equation}\label{eq:mos-pre-interpol}
\begin{aligned}
\norm{w}&_{L^{p,\infty}(Q_{R,R^{s p}})}^p + \norm{w}_{L^{p_s^\star , p}(Q_{R,R^{s p}})}^p \\
& \leq C\frac{(\beta + p -1)^p}{\beta} \norm{w}_{L^{p q^\prime , p r^\prime}(Q_{R,R^{s p}})}^p \Bigl(  \frac{\norm{f}_{L^{q,r}(Q_{R,R^{s p}})}}{\delta} + \delta^{p-2} \frac{R^n}{R^{n-\frac{n}{q} +sp -\frac{sp}{r}}}  + \frac{R^n}{R^{n-\frac{n}{q} +sp -\frac{sp}{r}}} \Bigr) \\
&\leq C\frac{(\beta + p -1)^p}{\beta} \norm{w}_{L^{p q^\prime , p r^\prime}(Q_{R,R^{s p}})}^p \Bigl( \frac{\norm{f}_{L^{q,r}(Q_{R,R^{sp}})}}{\delta} + \delta^{p-2} R^{-sp\nu} \Bigr),
\end{aligned}
\end{equation}
where $C=C(n,s,p)$. Recalling our choice of $\delta$, \eqref{eq:delta-choice},
in the case of $\delta >1$
\begin{equation}\label{eq:fnorm-delta1}
\frac{\norm{f}_{L^{q,r}(Q_{R,R^{sp}})}}{\delta} + \delta^{p-2} R^{-sp\nu} = 2R^{\frac{-sp\nu}{p-1}}\norm{f}_{L^{q,r}(Q_{R,R^{sp}})}^{\frac{p-2}{p-1}} = 2 \delta^{p-2}R^{-sp\nu},
\end{equation}
and in the case of $\delta = 1$
\begin{equation}\label{eq:fnorm-delta2}
\frac{\norm{f}_{L^{q,r}(Q_{R,R^{sp}})}}{\delta} + \delta^{p-2} R^{-sp\nu} \leq 2R^{-sp\nu} = 2 \delta^{p-2}R^{-sp\nu}.
\end{equation}
Using the inequality $\frac{(\beta + p-1)^p}{\beta^p p^p} \leq 1$, \eqref{eq:fnorm-delta1}, and \eqref{eq:fnorm-delta2}, in \eqref{eq:mos-pre-interpol} we arrive at
\[
\begin{aligned}
\norm{w}_{L^{p,\infty}(Q_{R,R^{s p}})}^p + \norm{w}_{L^{p_s^\star , p}(Q_{R,R^{s p}})}^p &\leq C(n,s,p) p^p \beta^{p-1} \norm{w}_{L^{p q^\prime , p r^\prime}(Q_{R,R^{s p}})}^p  \bigl( \delta^{p-2} R^{-sp\nu}).
\end{aligned}
\]
Now notice that since $\nu > 0 $, if we take $\vartheta = 1+ \frac{sp\nu}{n}$, the exponents  $(\vartheta r^\prime)^\prime , (\vartheta q^\prime)^\prime $ satisfy the condition of Lemma \ref{lm:interpol}. Indeed,
$$1- \frac{1}{(\vartheta r^\prime)^\prime} - \frac{n}{sp (\vartheta q^\prime)^\prime} = \frac{1}{\vartheta r^\prime} + \frac{n}{sp \vartheta q^\prime} - \frac{n}{sp} = \frac{1}{\vartheta}(\frac{1}{r^\prime} +\frac{n}{spq^\prime}- \frac{\vartheta n}{sp})= \frac{1}{\vartheta}(\nu +\frac{n}{sp} -\frac{\vartheta n}{sp})=0 .$$
Using Lemma \ref{lm:interpol} for the exponents $(\vartheta q^\prime)^\prime$ and $(\vartheta r^\prime)^\prime$ we get
\begin{equation}\label{eq:integ-improv}
\begin{aligned}
\norm{w^\vartheta}_{L^{ p q^\prime ,  p r^\prime}(Q_{R,R^{sp}})}^{\frac{p}{\vartheta}} &= \norm{w}_{L^{\vartheta p q^\prime , \vartheta p r^\prime}(Q_{R,R^{sp}})}^p \leq \norm{w}_{L^{p,\infty}(Q_{R,R^{s p}})}^p + \norm{w}_{L^{p_s^\star , p}(Q_{R,R^{s p}})}^p \\
 &\leq C(n,s,p)\beta^{p-1} \norm{w}_{L^{p q^\prime , p r^\prime}(Q_{R,R^{s p}})}^p \bigl(  R^{-sp \nu} \delta^{p-2} \bigr)
 \end{aligned}
 \end{equation}
Now we iterate this inequality with the following choice of exponents
$$\beta_0 = 1 ,\qquad \beta_{m+1} = \vartheta \beta_m = \vartheta^{m+1} .$$
With the notation
$$\phi_m := \norm{((u-v)^{+}_M +\delta)^\frac{\beta_m}{p} }_{L^{p q^\prime , pr^\prime}(Q_{R,R^{sp}})}^\frac{p}{\beta_m} =\norm{(u-v)^{+}_M + \delta}_{L^{\beta_m q^\prime , \beta_m r^\prime}(Q_{R,R^{sp}})} ,$$
\eqref{eq:integ-improv} reads
$$\phi_{m+1} \leq \bigl(C \, R^{-sp\nu} \delta^{p-2} \bigr)^\frac{1}{\vartheta^m}\vartheta^\frac{(p-1)m}{\vartheta^m} \phi_n.$$
Iterating this yields
\begin{equation}\label{eq:mos1}
 \phi_{m+1} \leq \bigl(C \, R^{-sp\nu} \delta^{p-2} \bigr)^{\sum_{j=0}^m \vartheta^{-j}} \vartheta^{(p-1) \sum_{j=0}^m j \vartheta^{-j}} \phi_0.
\end{equation}
Since $\vartheta >1 $, we have the following convergent series
$$\sum_{j=0}^\infty \vartheta^{-j} = \frac{\vartheta}{\vartheta -1}=\frac{n+sp\nu}{sp\nu},$$
and
$$\sum_{j=0}^\infty j \vartheta^{-j} = \frac{\vartheta}{(\vartheta -1)^2} = \frac{n^2 + nsp\nu}{s^2 p^2 \nu^2}.$$
By \eqref{eq:pq-dual} in Lemma \ref{lm:lpavr},
\begin{equation}\label{eq:phi-zero}
\begin{aligned}
\phi_0 =& \norm{(u-v)^{+}_M +\delta}_{L^{q^\prime , r^\prime}(Q_{R,R^{sp}})} \leq C(n,s,p)R^{spp^\prime \nu +n} \norm{f}_{L^{q,r}(Q_{R,R^{sp}})}^{\frac{1}{p-1}} + \delta R^{\frac{n}{q^\prime} + \frac{sp}{r^\prime}} \\
&\leq C(n,s,p) R^{n+sp\nu} (R^{\frac{sp\nu}{p-1}}\norm{f}_{L^{q,r}(Q_{R,R^{sp}})}^{\frac{1}{p-1}} +\delta) \leq 2C(n,s,p)R^{n+sp\nu} \delta.
\end{aligned}
\end{equation}
Inserting \eqref{eq:phi-zero} to \eqref{eq:mos1} and sending $n$ to infinity we obtain 
\[
\begin{aligned}
\norm{(u-v)^{+}_M +\delta}_{L^\infty(Q_{R,R^{sp}})} &\leq C \vartheta^{\frac{(p-1)\vartheta}{(\vartheta -1 )^2}} R^{-n - sp \nu} \delta^{(p-2)\frac{\vartheta}{\vartheta-1}} R^{sp \nu + n} \delta\\
& = C \vartheta^{\frac{(p-1)\vartheta}{(\vartheta -1 )^2}} \delta^{(p-1) + \frac{p-2}{\vartheta -1}} \\
&\leq C \vartheta^{\frac{(p-1)\vartheta}{(\vartheta -1 )^2}} \max{\lbrace 1 , \bigl( R^{sp\nu}\norm{f}_{L^{q,r}(Q_{R,R^{sp}})} \bigr)^{\frac{1}{p-1}} \rbrace}^{p-1 + \frac{p-2}{\vartheta -1}}  \\
& \leq C(n,s,p) \vartheta^{\frac{(p-1)\vartheta}{(\vartheta -1 )^2}} \bigl( 1 + R^{sp\nu +\frac{(p-2)sp\nu}{(p-1)(\vartheta-1)}}\norm{f}_{L^{q,r}(Q_{R,R^{sp}})}^{1+\frac{1}{\vartheta-1}\frac{p-2}{p-1} } \bigr).
\end{aligned}
\]
Since the above estimate is independent of $M$, we get 
\[
\begin{aligned}
&\norm{(u-v)^{+}}_{L^\infty(Q_{R,R^{sp}})}  \leq C(n,s,p) \vartheta^{\frac{(p-1)\vartheta}{(\vartheta -1 )^2}} \Bigl( 1 + R^{sp\nu +\frac{(p-2)sp\nu}{(p-1)(\vartheta-1)}}\norm{f}_{L^{q,r}(Q_{R,R^{sp}})}^{1+\frac{1}{\vartheta-1}\frac{p-2}{p-1} }  \Bigr) .
\end{aligned}
\]
Which is the desired result.

\textbf{Case $sp> n$}. Here we use Morrey's inequality \eqref{eq:Morrey} for the second term in \eqref{eq:testing-Morr-sob}. Instead of \eqref{eq:moser-diffu-sob} we obtain
\[
\begin{aligned}
\frac{\delta}{2(\beta + 1)} &\norm{w}_{L^{p,\infty}(B_R \times J)}^p \, + \frac{(C(n,s,p)R^{sp-n})^{-1}}{3\cdot 2^{p-1}}\frac{\beta p^p}{(\beta + p-1)^p} \bigl[ \delta^{p-1} \norm{w}_{L^{\infty, p}(B_R \times J)}^p - \delta^{\beta + p-1}  \abs{J} \bigr] \\
& \leq \frac{\delta}{\beta + 1} \sup_{t \in J} \int_{B_R} ((u-v)^{+}_M + \delta)^{\beta} \dd x \\
& \quad + \frac{C(n,s,p)^{-1}}{3\cdot 2^{p-1}}\frac{\beta p^p}{(\beta + p-1)^p}\int_J \norm{((u-v)^{+}_M +\delta)^{\frac{\beta + p-1}{p}} - \delta^{\frac{\beta + p-1}{p}}}_{L^{\infty}(B_R)}^p \\
& \leq \norm{f}_{L^{q,r}(B_R \times J)} \norm{w}_{L^{p q^\prime , p r^\prime}(B_R \times J)}^p \, + 2\delta \abs{B_R} \frac{ \norm{w}_{L^{p q^\prime , p r^\prime}(B_R \times J)}^p}{\abs{B_R}^{1-\frac{1}{q}}\abs{J}^{1-\frac{1}{r}}}.
\end{aligned}
\]
Following the same steps as in the case $sp < n$ we arrive at
\[
\begin{aligned}
&\norm{w}_{L^{p,\infty}(Q_{R,R^{s p}})}^p + R^{n-sp}\norm{w}_{L^{\infty , p}(Q_{R,R^{s p}})}^p \\
&\leq C(n,s,p) \beta^{p-1} \norm{w}_{L^{p q^\prime , p r^\prime}(Q_{R,R^{s p}})}^p  \Bigl( \frac{\norm{f}_{L^{q,r}(Q_{R,R^{sp}}}}{\delta} + R^{-sp\nu} + \delta^{p-2} R^{-sp\nu } \Bigr) \\
&\leq C(n,s,p,l)\beta^{p-1} \norm{w}_{L^{p q^\prime , p r^\prime}(Q_{R,R^{s p}})}^p (\delta^{p-2}R^{-sp\nu}).
\end{aligned}
\]
Now choose $\vartheta = 2-\frac{1}{r} - \frac{1}{q}$. Then $\vartheta >1 $, since $\frac{1}{r} + \frac{1}{q} < 1$. Therefore, $(\vartheta r^\prime)^\prime$ and $(\vartheta q^\prime)^\prime$ satisfy 
$$1- \frac{1}{(\vartheta r^\prime)^\prime}-\frac{1}{(\vartheta q^\prime)^\prime} = \frac{1}{\vartheta}(\frac{1}{r^\prime} +\frac{1}{q^\prime}-\vartheta)=0,$$
and we can apply Lemma \ref{lm:interpol-morr} with the exponents $(\vartheta q^\prime)^\prime$ and $(\vartheta r^\prime)^\prime$. This gives
\begin{equation}\label{eq:morr-mos-iter}
\begin{aligned}
\norm{w^\vartheta}^{\frac{p}{\vartheta}}_{L^{p q^\prime, p r^\prime}(Q_{R,R^{sp}})} &= \norm{w}^p_{L^{p \vartheta q^\prime, p \vartheta r^\prime}(Q_{R,R^{sp}})} \leq R^{\frac{sp-n}{(\vartheta q^\prime)^\prime}}(\norm{w}^p_{L^{\infty,p}}(Q_{R,R^{sp}}) + R^{n-sp}\norm{w}^p_{L^{p, \infty}}(Q_{R,R^{sp}}) ) \\
& \leq C(n,s,p) \beta^{p-1} R^{\frac{sp-n}{\vartheta r^\prime} - sp\nu} \norm{w}_{L^{p q^\prime , p r^\prime}(Q_{R,R^{s p}})}^p  \delta^{p-2} .
\end{aligned}
\end{equation}
We apply \eqref{eq:morr-mos-iter} with the exponents
$$\beta_0 = 1 ,\qquad \beta_{m+1} = \vartheta \beta_m = \vartheta^{n+1} .$$
Let
$$\phi_m := \norm{((u-v)^{+}_M +\delta)^\frac{\beta_m}{p} }_{L^{p q^\prime , pr^\prime}(Q_{R,R^{sp}})}^\frac{p}{\beta_m} =\norm{(u-v)^{+}_M + \delta}_{L^{\beta_n q^\prime , \beta_m r^\prime}(Q_{R,R^{sp}})} .$$
Then \eqref{eq:morr-mos-iter} reads
$$\phi_{m+1} \leq \bigl(C \, R^{\frac{sp-n}{\vartheta r^\prime} - sp\nu} \delta^{p-2} \bigr)^\frac{1}{\vartheta^m}\theta^\frac{(p-1)m}{\vartheta^m} \phi_m.$$
By iterating the above inequality, we get
\begin{equation}\label{eq:mos-morre}
 \phi_{m+1} \leq \bigl(C \, R^{\frac{sp-n}{\vartheta r^\prime} - sp\nu} \delta^{p-2} \bigr)^{\sum_{j=0}^m \vartheta^{-j}} \vartheta^{(p-1) \sum_{j=0}^m j \vartheta^{-j}} \phi_0.
\end{equation}
Since $\vartheta > 1$, we have the following convergent series
$$\sum_{j=0}^\infty \vartheta^{-j} = \frac{\vartheta}{\vartheta -1}= 1+ \frac{1}{1-\frac{1}{r}-\frac{1}{q}}$$
and
$$\sum_{j=0}^\infty j \vartheta^{-j} = \frac{\vartheta}{(\vartheta -1)^2}. $$
By \eqref{eq:pq-dual} in Lemma \ref{lm:lpavr} we have
$$\phi_0 = \norm{(u-v)^{+}_M + \delta)}_{L^{q^\prime,r^\prime}(Q_{R,R^{sp}})} \leq C(n,s,p) R^{spp^\prime \nu + n} \norm{f}_{L^{q,r}(Q_{R,R^{sp}})}^{\frac{1}{p-1}} + \delta R^{\frac{n}{q^\prime} + \frac{sp}{r^\prime}} 
\leq C(n,s,p)R^{ n+ sp\nu}(2 \delta).$$
Inserting this into \eqref{eq:mos-morre}, and sending $n$ to infinity we get
\[
\begin{aligned}
\norm{(u-v)^{+}_M +\delta }_{L^\infty(Q_{R,R^{sp}})} &\leq C(n,s,p) \vartheta^{\frac{(p-1)\vartheta}{(\vartheta -1 )^2}} R^{\frac{\vartheta}{\vartheta-1}(\frac{sp-n}{\vartheta r^\prime} - sp\nu)} \delta^{(p-2)\frac{\vartheta}{\vartheta-1}} R^{sp \nu + n} \delta\\
&= C(n,s,p)\vartheta^{\frac{(p-1)\vartheta}{(\vartheta -1 )^2}} R^{\frac{sp-n}{(\vartheta -1)r^\prime} -\frac{sp\nu}{\vartheta-1} +n} \delta^{1+(p-2)\frac{\vartheta}{\vartheta-1}} \\
&  = C(n,s,p)\vartheta^{\frac{(p-1)\vartheta}{(\vartheta -1 )^2}} R^{\frac{n}{\vartheta -1}(\vartheta - 1 -\frac{1}{r^\prime}) + \frac{sp}{\vartheta -1}(\frac{1}{r^\prime} -(1-\frac{1}{r} - \frac{n}{spq})} \delta^{p-1+ \frac{p-2}{\vartheta-1}}
\\
& = C(n,s,p)\vartheta^{\frac{(p-1)\vartheta}{(\vartheta -1 )^2}} R^{\frac{n}{\vartheta -1}(\frac{-1}{q}) + \frac{sp}{\vartheta - 1}(\frac{n}{spq})} \delta^{p-1+ \frac{p-2}{\vartheta-1}}  \\
& \leq  C(n,s,p)\vartheta^{\frac{(p-1)\vartheta}{(\vartheta -1 )^2}} \bigl( 1 + R^{sp\nu +\frac{(p-2)sp\nu}{(p-1)(\vartheta-1)}}\norm{f}_{L^{q,r}(Q_{R,R^{sp}})}^{1+\frac{1}{\vartheta-1}\frac{p-2}{p-1} } \bigr).
\end{aligned}
\]
Hence, we arrive at the desired estimate
\[
\norm{(u-v)^{+} }_{L^\infty(Q_{R,R^{sp}})} \leq C(n,s,p)\vartheta^{\frac{(p-1)\vartheta}{(\vartheta -1 )^2}}  \bigl( 1 + R^{sp\nu +\frac{(p-2)sp\nu}{(p-1)(\vartheta-1)}}\norm{f}_{L^{q,r}(Q_{R,R^{sp}})}^{1+\frac{1}{\vartheta-1}\frac{p-2}{p-1} } \bigr).
\]

\textbf{Case} sp=n. Here we use the critical case of Sobolev-Morrey inequality, \eqref{eq:critic-sob} with 
\begin{equation}\label{eq:choice-of-l}
\max{ \lbrace\frac{p}{r^\prime}(1-\frac{1}{r} - \frac{1}{q})^{-1}, q^\prime \rbrace} < l < \infty.
\end{equation}
This applied for the second term in \eqref{eq:testing-Morr-sob} implies
\[
\begin{aligned}
\frac{\delta}{2(\beta + 1)} &\norm{w}_{L^{p,\infty}(B_R \times J)}^p \, + \frac{(C(n,s,p,l)R^{\frac{np}{l}})^{-1}}{3\cdot 2^{p-1}}\frac{\beta p^p}{(\beta + p-1)^p} \bigl[ \delta^{p-1} \norm{w}_{L^{l, p}(B_R \times J)}^p - \delta^{\beta + p-1} \abs{B_R}^{\frac{p}{l}} \abs{J} \bigr] \\
& \leq \frac{\delta}{\beta + 1} \sup_{t \in J} \int_{B_R} ((u-v)^{+}_M + \delta)^{\beta} \dd x \\
& \quad + \frac{C(n,s,p,l)^{-1}}{3\cdot 2^{p-1}}\frac{\beta p^p}{(\beta + p-1)^p}\int_J \norm{((u-v)^{+}_M +\delta)^{\frac{\beta + p-1}{p}} - \delta^{\frac{\beta + p-1}{p}}}_{L^{l}(B_R)}^p \\
& \leq \norm{f}_{L^{q,r}(B_R \times J)} \norm{w}_{L^{p q^\prime , p r^\prime}(B_R \times J)}^p \, + 2\delta \abs{B_R} \frac{ \norm{w}_{L^{p q^\prime , p r^\prime}(B_R \times J)}^p}{\abs{B_R}^{1-\frac{1}{q}}\abs{J}^{1-\frac{1}{r}}}.
\end{aligned}
\]
Following the same step as in the previous two cases, we arrive at
\[
\begin{aligned}
&\norm{w}_{L^{p,\infty}(Q_{R,R^{s p}})}^p + R^{\frac{-np}{l}}\norm{w}_{L^{l, p}(Q_{R,R^{s p}})}^p \\
&\leq C(n,s,p,l) \beta^{p-1} \norm{w}_{L^{p q^\prime , p r^\prime}(Q_{R,R^{s p}})}^p  \Bigl( \frac{\norm{f}_{L^{q,r}(Q_{R,R^{sp}}}}{\delta} + R^{-sp\nu} + \delta^{p-2} R^{-sp\nu } \Bigr) \\
&\leq C(n,s,p,l)\beta^{p-1} \norm{w}_{L^{p q^\prime , p r^\prime}(Q_{R,R^{s p}})}^p (\delta^{p-2}R^{-sp\nu}).
\end{aligned}
\]
Now we choose $\vartheta = 2-\frac{1}{r}- \frac{1}{q} - \frac{p}{lr^\prime}$. Notice that due to the choice of $l$, \eqref{eq:choice-of-l}, we have $\vartheta > 1$. Then the exponents $(\vartheta r^\prime)^\prime$ and $(\vartheta q^\prime)^\prime$ satisfy
$$1 - \frac{1}{(\vartheta r^\prime)^\prime} - \frac{1}{(\vartheta q^\prime)^\prime} = \frac{p}{l \vartheta r^\prime} . $$
Therefore, we can apply Lemma \ref{lm:interpol-critic} with the exponents $(\vartheta r^\prime)^\prime$ and $(\vartheta q^\prime)^\prime$ to get
\begin{equation}\label{eq:moser-ncritic-preiter}
\begin{aligned}
\norm{w^\vartheta}_{L^{pq^\prime,pr^\prime}(Q_{R,R^{sp}})}^{\frac{p}{\vartheta}}&= \norm{w}_{L^{p \vartheta q^\prime, p\vartheta r^\prime}(Q_{R,R^{sp}})}^p \leq 
R^{sp(1-\frac{1}{(\vartheta r^\prime)^\prime} - \frac{1}{(\vartheta q^\prime)^\prime})}(\norm{w}_{L^{p,\infty}(Q_{R,R^{s p}})}^p + R^{\frac{-np}{l}}\norm{w}_{L^{\infty , p}(Q_{R,R^{s p}})}^p ) \\
&= R^{\frac{np}{l \vartheta r^\prime}} (\norm{w}_{L^{p,\infty}(Q_{R,R^{s p}})}^p + R^{\frac{-np}{l}}\norm{w}_{L^{\infty , p}(Q_{R,R^{s p}})}^p )   \\
&\leq C(n,s,p,l)\beta^{p-1}R^{\frac{np}{l \vartheta r^\prime}-sp\nu} \norm{w}_{L^{pq^\prime,p r^\prime}(Q_{R,R^{sp}})}^{p} \delta^{p-2}.
\end{aligned}
\end{equation}
We apply \eqref{eq:moser-ncritic-preiter} with the exponents
$$\beta_0 = 1 ,\qquad \beta_{m+1} = \vartheta \beta_m = \vartheta^{m+1} .$$
Let
$$\phi_n := \norm{((u-v)^{+}_M +\delta)^\frac{\beta_m}{p} }_{L^{p q^\prime , pr^\prime}(Q_{R,R^{sp}})}^\frac{p}{\beta_m} =\norm{(u-v)^{+}_M + \delta}_{L^{\beta_m q^\prime , \beta_m r^\prime}(Q_{R,R^{sp}})} .$$
Then \eqref{eq:moser-ncritic-preiter} reads
$$\phi_{m+1} \leq \bigl(C \, R^{\frac{np}{l\vartheta r^\prime} - sp\nu} \delta^{p-2} \bigr)^\frac{1}{\vartheta^m}\theta^\frac{(p-1)m}{\vartheta^m} \phi_m.$$
By iterating the above inequality, we get
\begin{equation}\label{eq:mos-critic}
 \phi_{m+1} \leq \bigl(C \, R^{\frac{np}{l \vartheta r^\prime} - sp\nu} \delta^{p-2} \bigr)^{\sum_{j=0}^m \vartheta^{-j}} \vartheta^{(p-1) \sum_{j=0}^m j \vartheta^{-j}} \phi_0.
\end{equation}
Since $\vartheta > 1$, we have the following convergent series
$$\sum_{j=0}^\infty \vartheta^{-j} = \frac{\vartheta}{\vartheta -1}$$
and
$$\sum_{j=0}^\infty j \vartheta^{-j} = \frac{\vartheta}{(\vartheta -1)^2}. $$
By \eqref{eq:pq-dual} in Lemma \ref{lm:lpavr} we obtain
$$\phi_0 = \norm{(u-v)^{+}_M}_{L^{q^\prime, r^\prime}(Q_{R,R^{sp}})} \leq C(n,s,p,q)R^{spp^\prime \nu + n } \norm{f}_{L^{q,r}(Q_{R,R^{sp}})}^{\frac{1}{p-1}} + \delta R^{\frac{n}{q^\prime} + \frac{sp}{r^\prime}} \leq C(n,s,p,q)R^{n+sp\nu} \delta .$$
Inserting this into \eqref{eq:moser-ncritic-preiter}, and sending $n$ to infinity we get
\[
\begin{aligned}
\norm{(u-v)^{+}_M+\delta}_{L^\infty(Q_{R,R^{sp}})} & \leq C(n,s,p,q,l) \vartheta^{\frac{(p-1)\vartheta}{(\vartheta - 1)^2}} R^{\frac{\vartheta}{\vartheta-1}(\frac{np}{l\vartheta r^\prime} - sp\nu)} \delta^{(p-2)\frac{\vartheta}{\vartheta - 1}} R^{n+sp\nu} \delta \\
&= C(n,s,p,q,l) \vartheta^{\frac{(p-1)\vartheta}{(\vartheta - 1)^2}} R^{\frac{np}{(\vartheta-1)l r^\prime}-\frac{sp\nu}{\vartheta-1} +n}\delta^{1+(p-2)\frac{\vartheta}{\vartheta -1}}\\
&= C(n,s,p,q,l) \vartheta^{\frac{(p-1)\vartheta}{(\vartheta - 1)^2}} 
R^{\frac{n}{(\vartheta-1)} (\vartheta-1 + \frac{p}{l r^\prime})  - \frac{sp\nu}{\vartheta-1} }\delta^{p-1+ \frac{(p-2)}{\vartheta -1}} \\
&= C(n,s,p,q,l) \vartheta^{\frac{(p-1)\vartheta}{(\vartheta - 1)^2}} 
R^{\frac{n \nu}{(\vartheta-1)}   - \frac{sp\nu}{\vartheta-1} }\delta^{p-1+ \frac{(p-2)}{\vartheta -1}}  \\
&\leq C(n,s,p,q,l) \vartheta^{\frac{(p-1)\vartheta}{(\vartheta - 1)^2}} \bigl( 1 + R^{sp\nu +\frac{(p-2)sp\nu}{(p-1)(\vartheta-1)}}\norm{f}_{L^{q,r}(Q_{R,R^{sp}})}^{1+\frac{1}{\vartheta-1}\frac{p-2}{p-1} } \bigr) .
\end{aligned}
\]
Hence we arrive at the desired estimate
\[
\norm{(u-v)^{+}}_{L^\infty(Q_{R,R^{sp}})} \leq \delta +  C(n,s,p,q,l) \vartheta^{\frac{(p-1)\vartheta}{(\vartheta - 1)^2}} \bigl( 1 + R^{sp\nu +\frac{(p-2)sp\nu}{(p-1)(\vartheta-1)}}\norm{f}_{L^{q,r}(Q_{R,R^{sp}})}^{1+\frac{1}{\vartheta-1}\frac{p-2}{p-1} } \bigr).
\]
\end{proof}
Notice that $-u$ is a solution to the same type of problem, and we can apply the above proposition to $-u$. 
Since $-v$ is the $(s,p)$-caloric replacement of $-u$ we get the same bound on $\norm{(-u+v)^+}_{L^\infty(Q_{R,R^{sp}})}$, as a result we get a bound on the $\norm{u-v}_{L^\infty(Q_{R,R^{sp}})}$.

 \begin{corollary}\label{thm:moser-bound}
Let $u$ be a solution of $\partial_t u + (-\Delta_p)^s u=f$ in $Q_{\sigma,\sigma^{sp}}(x_0,T_0)$ with $f \in L^{q,r}_{\loc} (Q_{\sigma,\sigma^{sp}}(x_0,T_0))$ and let $v$ be the $(s,p)$-caloric replacement of $u$ in $Q_{R,R^{s p}}$. Assume further that  $r \geq p^\prime$,
$$\frac{1}{r} + \frac{n}{spq} < 1 \quad\text{ and } q\geq (p_s^\star)^\prime \quad \text{in the case } sp < n,$$ 
and
$$\frac{1}{r} + \frac{1}{q} < 1 \quad\text{ and  } q > 1\quad \text{in the case } sp\geq n.$$
If $sp \neq n$ then
$$
\norm{u-v}_{L^\infty(Q_{R,R^{s p}}(x_0,T_0))} \leq C(n,s,p) \vartheta^{\frac{(p-1)\vartheta}{(\vartheta -1 )^2}} \bigl( 1 + R^{sp\nu +\frac{(p-2)sp\nu}{(p-1)(\vartheta-1)}}\norm{f}_{L^{q,r}(Q_{R,R^{sp}}(x_0,T_0))}^{1+\frac{1}{\vartheta-1}\frac{p-2}{p-1} } \bigr),
$$
where $\nu = 1-\frac{1}{r} - \frac{n}{spq}$ and
 $$ \vartheta = 1+\frac{sp\nu}{n} \quad \text{if}\quad sp<n, \quad \text{and} \quad \vartheta= 2-\frac{1}{r}-\frac{1}{q} \quad \text{if} \quad sp> n. $$
If $sp=n$, then for any $l$ such that $ \frac{p}{r^\prime}(1-\frac{1}{r} - \frac{1}{q})^{-1}  <l < \infty$, we have
\[
\norm{u-v}_{L^\infty(Q_{R,R^{s p}}(x_0,T_0))}\leq C(n,s,p,q,l) \vartheta^{\frac{(p-1)\vartheta}{(\vartheta - 1)^2}} \bigl( 1 + R^{sp\nu +\frac{(p-2)sp\nu}{(p-1)(\vartheta-1)}}\norm{f}_{L^{q,r}(Q_{R,R^{sp}}(x_0,T_0))}^{1+\frac{1}{\vartheta-1}\frac{p-2}{p-1} } \bigr) , 
\]
where $\vartheta= 2-\frac{1}{r}- \frac{1}{q} - \frac{p}{lr^\prime}$ and $\nu= 1-\frac{1}{r} - \frac{1}{q}$.
\end{corollary}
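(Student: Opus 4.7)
The plan is to reduce the two-sided $L^\infty$ bound on $u-v$ to the one-sided bound already established in Proposition \ref{thm:sub-moser-bound} via a symmetry argument on the equation. The key observation is that $J_p$ is an odd function, so $J_p(-a-(-b)) = -J_p(a-b)$, which means the operator $(-\Delta_p)^s$ is odd: if $u$ is a local weak solution of $u_t + (-\Delta_p)^s u = f$ on $Q_{\sigma,\sigma^{sp}}(x_0,T_0)$, then $\tilde u := -u$ is a local weak solution of $\tilde u_t + (-\Delta_p)^s \tilde u = -f$ on the same cylinder. Moreover $\|{-f}\|_{L^{q,r}} = \|f\|_{L^{q,r}}$, so all the integrability hypotheses on the right-hand side are preserved under this sign flip.

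Next I would verify that $\tilde v := -v$ is the $(s,p)$-caloric replacement of $\tilde u$ in $Q_{R,R^{sp}}(x_0,T_0)$: by oddness of $J_p$, $\tilde v$ satisfies $\tilde v_t + (-\Delta_p)^s \tilde v = 0$ on $B_R(x_0)\times(T_0-R^{sp},T_0]$; it agrees with $\tilde u$ on $(\R^n\setminus B_R(x_0))\times(T_0-R^{sp},T_0]$ because $v=u$ there; and $\tilde v(\frarg, T_0-R^{sp}) = -v(\frarg,T_0-R^{sp}) = -u(\frarg,T_0-R^{sp}) = \tilde u(\frarg,T_0-R^{sp})$. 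Uniqueness from Theorem \ref{thm:exunsol} identifies $\tilde v$ as the caloric replacement of $\tilde u$.

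Applying Proposition \ref{thm:sub-moser-bound} to the pair $(\tilde u, \tilde v)$ with right-hand side $-f$ then yields, with exactly the same constants (because the estimate depends on $f$ only through $\|f\|_{L^{q,r}}$),
\[
\|(v-u)^+\|_{L^\infty(Q_{R,R^{sp}})} = \|(\tilde u - \tilde v)^+\|_{L^\infty(Q_{R,R^{sp}})} \leq C\,\vartheta^{\frac{(p-1)\vartheta}{(\vartheta-1)^2}}\Bigl(1 + R^{sp\nu + \frac{(p-2)sp\nu}{(p-1)(\vartheta-1)}}\|f\|_{L^{q,r}(Q_{R,R^{sp}})}^{1+\frac{1}{\vartheta-1}\frac{p-2}{p-1}}\Bigr).
\]
Combining with the corresponding estimate on $\|(u-v)^+\|_{L^\infty}$ from Proposition \ref{thm:sub-moser-bound} and using $|u-v| = \max\{(u-v)^+,(v-u)^+\}$ gives the desired two-sided bound in all three regimes $sp<n$, $sp=n$, $sp>n$. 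There is no real obstacle here; the only point demanding explicit care is checking that $-v$ genuinely serves as the caloric replacement of $-u$, and this is immediate from oddness of the operator and uniqueness of the initial boundary value problem.
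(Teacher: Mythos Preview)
Your proposal is correct and follows exactly the approach the paper takes: right before the corollary, the paper notes that $-u$ solves the same type of equation with right-hand side $-f$, that $-v$ is the $(s,p)$-caloric replacement of $-u$, and that applying Proposition~\ref{thm:sub-moser-bound} to this pair yields the bound on $\|(-u+v)^+\|_{L^\infty}$, which combined with the original bound gives the two-sided estimate. Your write-up is in fact more careful than the paper's one-sentence justification, explicitly invoking oddness of $J_p$ and uniqueness from Theorem~\ref{thm:exunsol}.
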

 Now we combine the local boundedness results for the equations with zero right hand side (see \cite{Strqv} and also \cite{Ding}) with Proposition \ref{thm:sub-moser-bound} to prove local boundedness for the equation with nonzero right hand side.
 \begin{proof}[Proof of Theorem \ref{thm:loc-bound}]
 For $u$, a local weak solution of 
 $$\partial_t u + (-\Delta_p)^su = f(x,t), \qquad in \; Q_{2R,(2R)^{sp}}(x_0,T_0),$$
 we consider $v$ to be the $(s,p)$-caloric replacement in $Q_{R,R^{sp}}(x_0,T_0)$,
 \[
 \begin{cases}
v_t + (-\Delta_p)^s v=0 \quad & in \;\; Q_{R,R^{sp}}(x_0,T_0), \\
v=u \quad & in \;\; (\R^n \setminus B_R(x_0) ) \times [T_0-R^{sp},T_0], \\
v(x,T_0-R^{sp}) = u(x,T_0- R^{sp}) \quad & in \;\; B_R(x_0).
\end{cases}
 \]
 By \cite[Theorem 1.1]{Ding}
 $$\norm{v}_{L^\infty(Q_{\frac{R}{2},(\frac{R}{2})^{sp}}(x_0,T_0))} \leq C\Bigl[   1+ \Bigl(\dashint_{Q_{R,R^{sp}}(x_0,T_0)} \abs{v}^{p} \dd x \dd t\Bigr)^{\frac{1}{p}} \Bigr] + 
 \sup_{T_0-R^{sp}<t\leq T_0} \mathrm{Tail}_{p-1,sp} \bigl(v(\frarg ,t);x_0,\frac{R}{2} \bigr) $$
with $C$ depending on $n,s$ and $p$. Therefore,
 \begin{equation}\label{eq:L-infty-uplus}
 \begin{aligned}
 \norm{u}_{L^\infty(Q_{\frac{R}{2},(\frac{R}{2})^{sp}}(x_0,T_0))}&\leq  \norm{u-v}_{L^\infty(Q_{\frac{R}{2},(\frac{R}{2})^{sp}}(x_0,T_0))}
 +\norm{v}_{L^\infty(Q_{\frac{R}{2},(\frac{R}{2})^{sp}}(x_0,T_0))} \\
 &\leq  C\Bigl[   1+\Bigl( \dashint_{Q_{R,R^{sp}}(x_0,T_0)} \abs{v}^{p} \dd x \dd t\Bigr)^{\frac{1}{p}} \Bigr] + 
 \sup_{T_0-R^{sp}<t\leq T_0} \mathrm{Tail}_{p-1,sp} \bigl(v(\frarg ,t);x_0,\frac{R}{2} \bigr)  \\
 &\qquad + \norm{u-v}_{L^\infty(Q_{R,R^{sp}}(x_0,T_0))} \\
 & \leq C \Bigl[ 1+ \Bigl( 2^{p-1} \dashint_{Q_{R,R^{sp}}(x_0,T_0)} \abs{u}^{p} \dd x \dd t + 2^{p-1} \dashint_{Q_{R,R^{sp}}(x_0,T_0)} \abs{u-v}^{p} \dd x \dd t \Bigr)^{\frac{1}{p}} \Bigr]\\
 & +  \sup_{T_0-R^{sp}<t\leq T_0} \mathrm{Tail}_{p-1,sp} \bigl(v(\frarg ,t);x_0,\frac{R}{2} \bigr) + \norm{u-v}_{L^\infty(Q_{R,R^{sp}}(x_0,T_0))} \\
& \leq C \Bigl[ 1 +  \Bigl(\dashint_{Q_{R,R^{sp}}(x_0,T_0)} \abs{u}^{p} \dd x \dd t \Bigr)^{\frac{1}{p}} + \norm{u-v}_{L^\infty(Q_{R,R^{sp}}(x_0,T_0))} \Bigr] \\
& + \sup_{T_0-R^{sp}<t\leq T_0} \mathrm{Tail}_{p-1,sp} \bigl(v(\frarg ,t);x_0,\frac{R}{2} \bigr)
 \end{aligned}
 \end{equation}
 Using Lemma \ref{lm:tail_comparison} in \eqref{eq:L-infty-uplus} we arrive at
 \begin{equation}\label{eq:Linfty-compare}
 \begin{aligned}
\norm{u}_{L^\infty(Q_{\frac{R}{2},(\frac{R}{2})^{sp}})}&\leq C \Bigl[ 1 +  \Bigl( \dashint_{Q_{R,R^{sp}}(x_0,T_0)} \abs{u}^{p} \dd x \dd t \Bigr)^{\frac{1}{p}} + \norm{(u-v)}_{L^\infty(Q_{R,R^{sp}}(x_0,T_0))} \Bigr] \\
 + 2 \sup_{T_0-R^{sp}<t\leq T_0} &\mathrm{Tail}_{p-1,sp} \bigl(u(\frarg ,t);x_0,\frac{R}{2} \bigr) + 2^{1 +\frac{n}{p-1}}\Bigl(  \sup_{T_0-R^{sp}<t\leq T_0} \dashint_{B_{R}(x_0)} \abs{u -v}^{p-1} \dd x \Bigr)^{\frac{1}{p-1}} \\
 \leq & C \Bigl[ 1 +  \Bigl(\dashint_{Q_{R,R^{sp}}(x_0,T_0)} \abs{u}^{p} \dd x \dd t \Bigr)^{\frac{1}{p}} + \norm{(u-v)}_{L^\infty(Q_{R,R^{sp}}(x_0,T_0))} \Bigr] \\
 & + 2 \sup_{T_0-R^{sp}<t\leq T_0} \mathrm{Tail}_{p-1,sp} \bigl(u(\frarg ,t);x_0,\frac{R}{2} \bigr),
 \end{aligned}
 \end{equation}
 where $C=C(n,s,p)$. Finally, using Proposition \ref{thm:sub-moser-bound} to estimate the term $\norm{u-v}_{L^\infty(Q_{R,R^{sp}})}$, in \eqref{eq:Linfty-compare} we get the desired result. Here the estimate is written in the case $sp\neq n$ 
 \[
 \begin{aligned}
 \norm{u}_{L^\infty(Q_{\frac{R}{2},(\frac{R}{2})^{sp}})}&\leq 2 \sup_{T_0-R^{sp}<t\leq T_0} \mathrm{Tail}_{p-1,sp} \bigl(u(\frarg ,t);x_0,\frac{R}{2} \bigr) + 
 C(n,s,p) \Bigl [ 1 + \Bigl( \dashint_{Q_{R,R^{sp}}(x_0,T_0)} \abs{u}^{p} \dd x \dd t \Bigr)^{\frac{1}{p}} \\
 &+ \vartheta^{\frac{(p-1)\vartheta}{(\vartheta -1 )^2}} \bigl( 1 + R^{sp\nu +\frac{(p-2)sp\nu}{(p-1)(\vartheta-1)}}\norm{f}_{L^{q,r}(Q_{R,R^{sp}})}^{1+\frac{1}{\vartheta-1}\frac{p-2}{p-1} } \bigr) \Bigr].
 \end{aligned}
 \]
 \end{proof}

\begin{theorem}\label{thm:basic-holder}
Let $f \in L^{q,r}_{\loc}(Q_{R_1,R_1^{sp}}(z,T_1))$ with $r \geq p^\prime$, 
$$\frac{1}{r} + \frac{n}{spq} < 1 \quad\text{and } q\geq (p_s^\star)^\prime \quad \text{in the case } sp < n,$$ 
and
$$\frac{1}{r} + \frac{1}{q} < 1 \quad\text{ and  } q > 1\quad \text{in the case } sp\geq n.$$
Consider a bounded, local weak solution $u \in L^p(I; W^{s,p}_{\loc}(B_{R_1}(z)))\cap C(I;L^2_{\loc}(B_{R_1}(z)))\cap L^\infty(I;L^{p-1}_{sp}(\R^n)) \cap L^\infty(Q_{R_1,R_1^{sp}}(z,T_1))$ of the equation 
$$ \partial_t u + (-\Delta_p)^s u =f \qquad in \; Q_{R_1,R_1^{sp}}(z,T_1). $$
Then $u$ is locally H\"older continuous in time and space.
In particular, there exists a $\zeta >0$, such that for $\sigma <1$, $(x_1,t_1), \; (x_2,t_2) \in Q_{\sigma R_1,(\sigma R_1)^{sp}}(z,T_1) $, there holds
$$\abs{u(x_1,t_1)-u(x_2,t_2)} \leq C (\abs{x_1-x_2}^\zeta  + \abs{t_1-t_2}^{\frac{\zeta}{sp}} ) ,$$
with $C$ depending on 
$$n,\;s,\; p,\; R_1 ,\;  \sigma ,\; \sup_{T_1-R_1^{sp}<t\leq T_1} \mathrm{Tail}_{p-1,sp}(u(\frarg,t);z,R_1),\;  \norm{f}_{L^{q,r}(Q_{ R_1,R_1^{sp}}(z,T_1))} \text{, and }\norm{u}_{L^\infty(Q_{R_1,R_1^{s p}}(z,T_1))}.$$
\end{theorem}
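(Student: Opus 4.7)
The strategy is a Campanato--DiBenedetto-style oscillation-decay iteration, in the perturbative spirit of \cite{Teix}: on a geometric sequence of shrinking parabolic cylinders centred at an interior point, compare $u$ with its $(s,p)$-caloric replacement, use the homogeneous H\"older estimate for the replacement, and absorb the right-hand side through the $L^\infty$ comparison of Corollary \ref{thm:moser-bound}.

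Fix an interior base point $(x_0, T_0)$ and a base radius $r_0 > 0$ with $Q_{r_0, r_0^{sp}}(x_0, T_0) \Subset Q_{\sigma R_1, (\sigma R_1)^{sp}}(z, T_1)$. For $\lambda \in (0, 1/4)$ to be chosen, set $r_k := \lambda^k r_0$ and $Q^k := Q_{r_k, r_k^{sp}}(x_0, T_0)$. On each $Q^k$ let $v_k$ denote the $(s,p)$-caloric replacement of $u$; existence is supplied by Proposition \ref{prop:ps_caloric}. Two ingredients drive the iteration. First, the H\"older estimate for the homogeneous equation stated in Appendix A (a tail-sensitive variant of \cite[Theorem~1.1]{BLS}) applied to $v_k$ produces an $\alpha_0 = \alpha_0(n,s,p) \in (0,1)$ such that
\[
\sup_{Q_{\lambda r_k, (\lambda r_k)^{sp}}} v_k \, -\, \inf_{Q_{\lambda r_k, (\lambda r_k)^{sp}}} v_k \;\leq\; C_1\,\lambda^{\alpha_0}\,N_k,
\]
where $N_k := \norm{v_k}_{L^\infty(Q^k)} + \sup_{t} \mathrm{Tail}_{p-1,sp}(v_k(\frarg, t); x_0, r_k)$. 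Second, Corollary \ref{thm:moser-bound} gives the $L^\infty$ comparison
\[
\norm{u - v_k}_{L^\infty(Q^k)} \;\leq\; C_f \bigl( 1 + r_k^{\gamma} \bigr),\qquad \gamma := sp\nu \Bigl(1 + \tfrac{p-2}{(p-1)(\vartheta-1)}\Bigr),
\]
with $C_f$ depending on $\norm{f}_{L^{q,r}(Q_{R_1, R_1^{sp}}(z, T_1))}$. Here the strict inequality $\nu > 0$ assumed in the theorem is exactly what makes $\gamma > 0$, and hence what makes the right-hand-side contribution a genuinely subcritical perturbation at small scales.

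Combining the two ingredients yields
\[
\sup_{Q^{k+1}} u \,-\, \inf_{Q^{k+1}} u \;\leq\; C_1 \lambda^{\alpha_0} N_k + 2 C_f ( 1 + r_k^{\gamma} ).
\]
Pick any $\alpha \in (0, \min\{\alpha_0, \gamma\})$, then fix $\lambda$ so small that $C_1 \lambda^{\alpha_0 - \alpha} \leq 1/4$, and restrict $r_0$ so that $r_0^{\gamma - \alpha}$ is dominated by the same geometric decay rate. A standard induction lemma (e.g.\ in the style of \cite[Lemma~7.3]{Teix}) then produces $\sup_{Q^k} u - \inf_{Q^k} u \leq C\, r_k^{\alpha}$ uniformly in $k$, which, by arbitrariness of $(x_0, T_0)$, gives spatial H\"older continuity with exponent $\zeta := \alpha$. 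The parabolic scaling intrinsic to the cylinders $Q_{r, r^{sp}}$ converts this automatically into the time H\"older exponent $\zeta/(sp)$ claimed in the statement, with the constant $C$ tracked in terms of the quantities listed in the theorem.

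The main obstacle is propagating a uniform bound for $N_k$ through the iteration. Since $v_k$ coincides with $u$ outside $B_{r_k}(x_0)$, Lemma \ref{lm:tail_comparison} bounds $\mathrm{Tail}_{p-1,sp}(v_k(\frarg, t); x_0, \lambda r_k)$ in terms of $\mathrm{Tail}_{p-1,sp}(u(\frarg, t); x_0, \lambda r_k)$ plus an average $L^{p-1}$-norm of $u - v_k$ on $B_{r_k}$, the latter being controlled by Corollary \ref{thm:moser-bound}. The tail of $u$ on the shrinking balls is in turn dominated, via Lemma \ref{lm:tail-shrink}, by the fixed-scale tail $\sup_t \mathrm{Tail}_{p-1,sp}(u(\frarg, t); z, R_1)$ plus a term involving $\norm{u}_{L^\infty(Q_{R_1, R_1^{sp}}(z, T_1))}$. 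Once this uniform control of $N_k$ is in place the oscillation iteration closes and the desired basic H\"older regularity follows.
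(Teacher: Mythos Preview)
Your oscillation-decay scheme has a genuine gap at the $L^\infty$ comparison step. Corollary~\ref{thm:moser-bound} reads
\[
\norm{u - v_k}_{L^\infty(Q^k)} \;\leq\; C(n,s,p)\,\vartheta^{\frac{(p-1)\vartheta}{(\vartheta-1)^2}}\Bigl(1 + r_k^{\,sp\nu + \frac{(p-2)sp\nu}{(p-1)(\vartheta-1)}}\,\norm{f}_{L^{q,r}(Q^k)}^{\,1 + \frac{p-2}{(p-1)(\vartheta-1)}}\Bigr),
\]
and the additive ``$1$'' here---which stems from the choice $\delta = \max\{1, (R^{sp\nu}\norm{f})^{1/(p-1)}\}$ in the Moser iteration of Proposition~\ref{thm:sub-moser-bound}---does \emph{not} vanish as $r_k \to 0$. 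You correctly record the bound as $C_f(1 + r_k^\gamma)$, but then invoke an induction lemma as though the error were $O(r_k^\gamma)$. With the constant term retained, and using your uniform bound $N_k \le N$ (which your tail argument does establish), the recursion becomes
\[
\operatorname{osc}_{Q^{k+1}} u \;\leq\; C_1\,\lambda^{\alpha_0}\, N \;+\; 2C_f \;+\; 2C_f\,r_k^{\gamma},
\]
and no choice of $\lambda$ or $r_0$ can push the oscillation below the fixed floor $2C_f$. The iteration cannot close.

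The paper avoids this by working with $L^p$ Campanato averages rather than $L^\infty$ oscillation. The relevant comparison is Lemma~\ref{lm:lpavr}, which gives the genuinely decaying bound $\dashint_{Q_{R,R^{sp}}}|u-v|^p \leq C\,R^{\xi}\,\norm{f}^{p'}$ with $\xi > 0$ and \emph{no} additive constant. One takes a single $(s,p)$-caloric replacement $v$ on an intermediate cylinder $Q_{R,R^{sp}}(x_0,T_0)$ and splits $\dashint_{Q_\rho}|u-\bar u|^p$ into a $|v - \bar v|^p$ piece (decaying like $(\rho/R)^{\delta p}$ via Theorem~\ref{thm:thm1tail}) and a $|u-v|^p$ piece (bounded by $(R/\rho)^{n+sp} R^\xi$ via Lemma~\ref{lm:lpavr}). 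Corollary~\ref{thm:moser-bound} is invoked only once, at the fixed scale $R$, to control the H\"older constant of $v$, so its additive ``$1$'' is harmless there. Coupling the two scales via $\rho = R^\theta/2$ with a suitable $\theta > 1$ balances the competing powers and yields $\rho^{-\zeta p}\dashint_{Q_\rho}|u - \bar u|^p \leq C$ uniformly for small $\rho$; H\"older continuity then follows from the Campanato characterization in \cite{Gorka}.
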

\begin{proof}
Take a cylinder $Q_{\sigma R_1,(\sigma R_1)^{sp}}(z,T_1) \subset Q_{R_1,R_1^{sp}}(z,T_1)$ and let 
$d:=\min{ \lbrace R_1(1-\sigma),R_1(1-\sigma^{sp})^{\frac{1}{sp}}\rbrace } > 0 $. For any point $(x_0,T_0) \in Q_{\sigma R_1,(\sigma R_1)^{sp}}(z,T_1) $ consider the $(s,p)$-caloric replacement of $u$ in the  cylinder $Q_{R,R^{sp}}(x_0,T_0)$ with $R \leq \min{ \lbrace 1 , d \rbrace}$. The choice of $d$ implies that $Q_{R,R^{sp}}(x_0,T_0)\subset Q_{ R_1,R_1^{sp}}(z,t) $.
First, we observe that:
\begin{equation}\label{eq:campanto-split-2}
\begin{aligned}
\dashint_{Q_{\rho,\rho^{sp}}(x_0,T_0)} &\abs{u-\bar{u}_{(x_0,T_0),\rho}}^p \dd x \dd t \leq C(p)  \dashint_{Q_{\rho,\rho^{sp}}(x_0,T_0)} \abs{u-v}^p \dd x \dd t \\
&+ C(p) \dashint_{Q_{\rho,\rho^{sp}}(x_0,T_0)} \abs{\bar{u}_{(x_0,T_0),\rho} - \bar{v}_{(x_0,T_0),\rho}}^p \dd x \dd t + C(p) \dashint_{Q_{\rho,\rho^{sp}}(x_0,T_0)} \abs{v-\bar{v}_{(x_0,T_0),\rho}}^p \dd x \dd t \\
& \leq 2C(p) \dashint_{Q_{\rho,\rho^{sp}}(x_0,T_0)} \abs{u-v}^p \dd x \dd t + C(p) \dashint_{Q_{\rho,\rho^{sp}}(x_0,T_0)} \abs{v-\bar{v}_{(x_0,T_0),\rho}}^p \dd x \dd t .
\end{aligned}
\end{equation}
For $\rho \leq \frac{R}{2}$, $v$ is H\"older continuous in $Q_{\rho,\rho^{sp}}(x_0,T_0)$ by Theorem \ref{thm:thm1tail}, and by the Mean Value Theorem there is a point $(\tilde{x}_0,\tilde{t}_0)\in Q_{\rho,\rho^{sp}}$ such that $\bar{v}_{x_0,t_0}= v(\tilde{x}_0,\tilde{t}_0)$. With the notation 
$$\Mcal:= 1+ \norm{v}_{L^\infty(Q_{R,(R)^{sp}})} + \sup_{T_0-R^{sp} < t \leq T_0}\mathrm{Tail}_{p-1, sp}(v(\frarg,t); x_0,R) , $$ 
Theorem \ref{thm:thm1tail} implies :
\[
\begin{aligned}
\abs{v(x,t)-\bar{v}_{(x_0,t_0),\rho}} &\leq C\Bigl( \Mcal\Bigl(\frac{x-\tilde{x}_0}{R}\Bigr)^{\frac{\Theta}{2}} + \Mcal^{p-1} \Bigl( \frac{t-\tilde{t}_0}{R^{sp}}\Bigr)^{\frac{\Gamma}{2}} \Bigr)\\
& \leq C \Mcal^{p-1} \Bigl( \bigl(\frac{2\rho}{R} \bigr)^{\frac{\Theta }{2}} + \bigl((\frac{\rho}{R} )^{sp}\bigr)^{\frac{\Gamma}{2}} \Bigr),\quad \text{for } \; (x,t) \in Q_{\rho,\rho^{sp}}(x_0,T_0)
\end{aligned}
\]
with $C=C(n,s,p)$. Therefore,
\begin{equation}\label{eq:decay-camapnato-homogen}
\begin{aligned}
\dashint_{Q_{\rho,\rho^{sp}}(x_0,T_0)}& \abs{v-\bar{v}_{(x_0,t_0),\rho}}^p \dd x \dd t \leq  C\Mcal^{p(p-1)} \dashint_{Q_{\rho,\rho^{sp}}(x_0,T_0)}  \bigl(\frac{2\rho}{R} \bigr)^{\frac{\Theta p}{2}} + \bigl((\frac{\rho}{R})^{sp} \bigr)^{\frac{\Gamma p}{2}} \dd x \dd t \\
&\leq C \Mcal^{p(p-1)}\Bigl(  \bigl(\frac{\rho}{R}\bigr)^{\frac{\Theta p}{2}} + \bigl(\frac{\rho}{R}\bigr)^{\frac{\Gamma p}{2}}  \Bigr) \\
&\leq C \Bigl( \frac{\rho}{R}\Bigr)^{\delta p} \Bigl[ 1+ \norm{v}_{L^\infty(Q_{R,R^{sp}}(x_0,T_0))}^p + \sup_{T_0-R^{sp} < t \leq T_0}\mathrm{Tail}_{p-1, sp}(v(\frarg,t);  x_0,R)^p \Bigr]^{p-1} ,
\end{aligned}
\end{equation}
where the constants $C$ depends on $n,s,$ and $p$, and we have defined $\delta:= \min{ \bigl\lbrace  \frac{\Theta}{2} , \frac{\Gamma}{2}\bigr \rbrace}$.

Moreover, by Lemma \ref{lm:lpavr}
\begin{equation}\label{eq:campanato-difference}
\begin{aligned}
\dashint_{Q_{\rho,\rho^{sp}}(x_0,T_0)} \abs{u-v}^p \dd x \dd t & \leq \Bigl( \frac{R}{\rho} \Bigr)^n \frac{R^{sp}}{\rho^{sp}} \dashint_{Q_{R,(R)^{sp}}} \abs{u-v}^p \dd x \dd t \\
&\leq C(n,s,p) \bigl(\frac{R}{\rho} \bigr)^{n+sp} R^{\xi} \; \norm{f}_{L^{q,r}(Q_{R,R^{sp}}(x_0,T_0))}^{p^\prime},
\end{aligned}
\end{equation}
where $\xi$ is defined  in Lemma \ref{lm:lpavr}. Notice that $\xi>0$ by our assumptions on $q$ and $r$. Inserting \eqref{eq:campanato-difference} and \eqref{eq:decay-camapnato-homogen} in \eqref{eq:campanto-split-2} we arrive at
\[
\begin{aligned}
&\dashint_{Q_{\rho,\rho^{sp}}(x_0,T_0)} \abs{u-\bar{u}_{(x_0,T_0),\rho}}^p \dd x \dd t \leq   C(n,s,p) \bigl(\frac{R}{\rho} \bigr)^{n+sp} R^{\xi} \; \norm{f}_{L^{q,r}(Q_{R,R^{sp}}(x_0,T_0))}^{p^\prime} \\
&+ C(n,s,p) \bigl( \frac{\rho}{R}  \bigr)^{\delta p} \Bigl(1+ \norm{v}_{L^\infty(Q_{R,R^{sp}}(x_0,T_0))}^p + \sup_{T_0-R^{sp}\leq t\leq T_0} \mathrm{Tail}_{p-1,sp}(u(\frarg,t);x_0,R)^p \Bigr)^{p-1} \\
&\leq C(n,s,p) \bigl(\frac{R}{\rho} \bigr)^{n+sp} R^{\xi} \; \norm{f}_{L^{q,r}(Q_{R,R^{sp}}(x_0,T_0))}^{p^\prime}  + C(n,s,p) \bigl( \frac{\rho}{R}  \bigr)^{\delta p} \Bigl( \norm{u}_{L^\infty(Q_{R,R^{sp}}(x_0,T_0))}^p \\
&+ \norm{u-v}_{L^\infty(Q_{R,R^{sp}}(x_0,T_0))}^p 
 + \sup_{T_0-R^{sp}\leq t\leq T_0} \mathrm{Tail}_{p-1,sp}(u(\frarg,t);x_0,R)^p \Bigr)^{p-1} .
\end{aligned}
\]
Using Corollary \ref{thm:moser-bound} we get:
\[
\begin{aligned}
&\dashint_{Q_{\rho,\rho^{sp}}(x_0,T_0)} \abs{u-\bar{u}_{(x_0,T_0),r}}^p \dd x \dd t \leq C(n,s,p) (\frac{R}{\rho})^{n+sp} R^\xi \norm{f}_{L^{q,r}(Q_{R,R^{sp}}(x_0,T_0))}^{p^\prime} \\
&+ C(n,s,p)(\frac{\rho}{R})^{\delta p} \Bigl[\norm{u}_{L^\infty(Q_{R,R^{sp}}(x_0,T_0))}^p + \sup_{T_0-R^{sp}\leq t\leq T_0} \mathrm{Tail}_{p-1,sp}(u(\frarg,t);x_0,R)^p \\
&+ C(n,s,p) \Bigl(   \vartheta^{\frac{(p-1)\vartheta}{(\vartheta -1 )^2}} \bigl( 1 + R^{sp\nu +\frac{(p-2)sp\nu}{(p-1)(\vartheta-1)}}\norm{f}_{L^{q,r}(Q_{R,R^{sp}}(x_0,T_0))}^{1+\frac{1}{\vartheta-1}\frac{p-2}{p-1} } \bigr) \Bigr)^p
\Bigr]^{p-1} ,
\end{aligned}
\]
with $\vartheta$ and $\nu$ defined in Corollary \ref{thm:moser-bound}, here the estimate is only written in the case $sp \neq n$ for simplicity. Since $Q_{R,R^{sp}}(x_0,T_0)\subset Q_{ R_1,R_1^{sp}}(z,T_1)$ the above expression is less than
\[
\begin{aligned}
&\leq C(n,s,p)( \frac{R}{\rho})^{n+sp} R^\xi \norm{f}_{L^{q,r}(Q_{ R_1,R_1^{sp}}(z,T_1))}^{p^\prime} \\
& + C(n,s,p)(\frac{\rho}{R})^{\delta p} \Bigl[ 1+ \norm{u}_{L^\infty(Q_{ R_1,R_1^{sp}}(z,T_1))}^{p} + \sup_{T_0-R^{sp}<t\leq T_0} \mathrm{Tail}_{p-1,sp}(u(\frarg,t);x_0,R)^{p} \\
&+\Bigl(  \vartheta^{\frac{(p-1)\vartheta}{(\vartheta -1 )^2}} \bigl( 1 + d^{sp\nu +\frac{(p-2)sp\nu}{(p-1)(\vartheta-1)}}\norm{f}_{L^{q,r}(Q_{ R_1,R_1^{sp}}(z,T_1))}^{1+\frac{1}{\vartheta-1}\frac{p-2}{p-1} } \bigr) \Bigr)^p   \Bigr]^{p-1}.
\end{aligned}
\]
Concerning the tail term , since $B_{R}(x_0) \subset B_{R_1}(z)$, using Lemma \ref{lm:tail-shrink} we have
\begin{equation}\label{eq:tail-expan}
\begin{aligned}
\mathrm{Tail}_{p-1,sp} (u(\frarg,t);x_0,R)^{p-1} &\leq \Bigl( \frac{R}{R_1} \Bigr)^{sp}\Bigl(  \frac{R_1}{R_1-\abs{x_0-z}} \Bigr)^{n +s\,p} \mathrm{Tail}_{p-1,sp}(u(\frarg,t);z,R_1)^{p-1} \\
&+ C  \norm{u(\frarg,t)}_{L^\infty(B_{R_1}(z))}^{p-1},
\end{aligned}
\end{equation}
and by the choice of the radii, we have 
$$\frac{R}{R_1} < \frac{d}{R_1} <1-\sigma \quad and \quad \frac{R_1}{R_1- \abs{x_0-z}} \leq \frac{R_1}{R_1-\sigma R_1} \leq \frac{1}{1-\sigma} .$$
Hence, taking the supremum in time and using Minkowski's inequality in \eqref{eq:tail-expan}, we arrive at
\[
\begin{aligned}
\sup_{T_0-R^{sp}<t\leq T_0}& \mathrm{Tail}_{p-1,sp}(u;x_0,R)^{p} \\ 
&\leq C\frac{1}{(1-\sigma)^n} \Bigl( \sup_{T_0-R^{sp}<t\leq T_0} \mathrm{Tail}_{p-1,sp}(u(\frarg,t);z,R_1)^{p}  + \norm{u}_{L^\infty([T_0-R^{sp},T_0]\times B_{R_1}(z)}^p \Bigr) \\
& \leq C\frac{1}{(1-\sigma)^n} \Bigl( \norm{u}_{L^\infty(Q_{R_1,R_1^{sp}}(z,T_1))}^p +\sup_{T_1-R_1^{sp}<t\leq T_1} \mathrm{Tail}_{p-1,sp}(u(\frarg,t);z,R_1)^{p} \Bigr),
\end{aligned}
\]
where the constant $C$ above depends on $n,  s, \text{and }p$.
In conclusion,
\[
\begin{aligned}
&\dashint_{Q_{\rho,\rho^{s\,p}}(x_0,T_0)} \abs{u- \bar{u}_{(x_0,T_0),\rho}}^p \dd x \dd t \leq C(n,s,p) \bigl(\frac{R}{\rho} \bigr)^{n+s\,p} R^\xi \norm{f}_{L^{q,r}(Q_{R_1,R_1^{s\,p}}(z,T_1))}^{p^\prime} \\
&\quad + C(n,s,p,\sigma) (\frac{\rho}{R})^{\delta p}\Bigl[ 1+ \norm{u}_{L^\infty(Q_{R_1,R_1^{s p}}(z,T_1))}^p +\sup_{T_1-R_1^{sp}<t\leq T_1} \mathrm{Tail}_{p-1,sp}(u(\frarg,t);z,R_1)^{p} \\
&\quad +\Bigl( \vartheta^{\frac{(p-1)\vartheta}{(\vartheta -1 )^2}} \bigl( 1 + d^{sp\nu +\frac{(p-2)sp\nu}{(p-1)(\vartheta-1)}}\norm{f}_{L^{q,r}(Q_{R_1,R_1^{sp}}(z,T_1))}^{1+\frac{1}{\vartheta-1}\frac{p-2}{p-1} } \bigr) \Bigr)^p  \Bigr]^{p-1} .
\end{aligned}
\]
Now we make the choice $\rho=\frac{R^\theta}{2}$ with 
$$\theta := 1+ \frac{\xi}{\delta p + n + sp}.$$
This yields
\[
\begin{aligned}
&\rho^{-\zeta p} \dashint_{Q_{\rho,\rho^{s\, p}}(x_0,T_0)} \abs{u - \bar{u}_{(x_0,T_0),\rho}}^p \dd x \dd t \leq C(n,s,p, \sigma)\Bigl[  \Bigl( 1+ \norm{u}_{L^\infty(Q_{ R_1,R_1^{sp}}(z,T_1))}^p  
\\
 &\qquad +\bigl(  \vartheta^{\frac{(p-1)\vartheta}{(\vartheta -1 )^2}} \bigl( 1 + d^{sp\nu +\frac{(p-2)sp\nu}{(p-1)(\vartheta-1)}}\norm{f}_{L^{q,r}(Q_{ R_1,R_1^{sp}}(z,T_1))}^{1+\frac{1}{\vartheta-1}\frac{p-2}{p-1} } \bigr) \bigr)^p
  \\
&\qquad+\sup_{T_1-R_1^{sp}<t\leq T_1} \mathrm{Tail}_{p-1,sp}(u(\frarg,t);z,R_1)^{p}   \Bigr)^{p-1}  + \norm{f}_{L^{q,r}(Q_{ R_1,R_1^{sp}}(z,T_1))}^{p^\prime} \Bigr]\, ,
\end{aligned}
\]
for any $0< \rho< \frac{\min{\lbrace 1,d \rbrace}^\theta}{2}$, where
$$\zeta =  \frac{\xi \delta}{n +sp + \delta p + \xi} .$$
For values of $\rho\geq \frac{\min{\lbrace1,d \rbrace}^\theta}{2}$ 
$$\rho^{-\zeta p} \dashint_{Q_{\rho,\rho^{s p}}(x_0,T_0) \cap Q_{R_1,R_1^{sp}}(z,T_1)} \abs{u - \bar{u}_{(x_0,T_0),\rho}}^p \dd x \dd t \leq 2^{(1+\zeta) p} \min{\lbrace1,d \rbrace}^{-\theta \zeta p} \norm{u}_{L^\infty(Q_{R_1,R_1^{sp}}(z,T_1))}^p .$$
We can then conclude that for any cylinder of arbitrary size we have
$$\rho^{-\zeta p} \dashint_{Q_{\rho,\rho^{s p}}(x_0,T_0) \cap Q_{R_1,R_1^{sp}}(z,T_1)} \abs{u - \bar{u}_{(x_0,T_0),\rho}}^p \dd x \dd t \leq C \; ,$$
with $C$ depending on 
$$n,\;s,\; p,\; R_1 ,\;  \sigma ,\; \sup_{T_1-R_1^{sp}<t\leq T_1} \mathrm{Tail}_{p-1,sp}(u(\frarg,t);z,R_1),\;  \norm{f}_{L^{q,r}(Q_{ R_1,R_1^{sp}}(z,T_1))} \text{, and }\norm{u}_{L^\infty(Q_{R_1,R_1^{s p}}(z,T_1))}.$$
Now we use the characterization of the Campanato spaces in $\R^{n+1}$ with a general metric in \cite{Gorka}, see also \cite{dapra}. Our setting does not fit directly in the context considered there, since we only work with cylinders that are one sided in the time direction, that is $(t-r^{sp},t]\times B_r(x)$ instead of $(t-r^{sp},t+r^{sp})\times B_r(x)$. Still, if you follow the proof in \cite{Gorka} with small modifications, you can also conclude the result in this setting.

In the case of $sp \geq 1$, using \cite[Theorem 3.2]{Gorka} we get the the H\"older continuity of $u$ with exponent $\zeta$ in $Q_{\sigma R , (\sigma R)^{sp}}$ with respect to the metric
$$d((x,\tau_1),(y,\tau_2))= \max{\lbrace\abs{x-y} , \abs{\tau_2 - \tau_1}^{\frac{1}{sp}}\rbrace},$$
for which the balls of radius $r$ are of the form $(t-r^{sp},t+r^{sp})\times B_r(x)$. which means 
$$\abs{u(x_1,t_1)-u(x_2,t_2)} \leq C \bigl( \abs{x_1-x_2} + \abs{t_1 - t_2}^{\frac{1}{sp}} \bigr)^\zeta
\leq  C \bigl(  (\abs{x_1-x_2}^\zeta ) + \abs{t_1-t_2}^{\frac{\zeta}{sp}} \Bigr) .$$
In the case of $sp< 1$ we use the metric 
$$d((x,\tau_1),(y,\tau_2))= \max{\lbrace\abs{x-y}^{sp} , \abs{\tau_2 - \tau_1} \rbrace } .$$
The balls of radius $r$ are of the form $(t-r,t+r)\times B_{r^{\frac{1}{sp}}}(x)$.Hence we have a decay of order $r^{\frac{\xi}{sp} p}$ of the average of $u$ on the half balls. \cite[Theorem 3.2]{Gorka} implies the following H\"older continuity on $Q_{\sigma R_1,(\sigma R_1)^{sp}}$
$$\abs{u(x_1,t_1)-u(x_2,t_2)} \leq C \bigl( \abs{x_1-x_2}^{sp} + \abs{t_1 - t_2} \bigr)^{\frac{\zeta}{sp}} \leq C \bigl(  (\abs{x_1-x_2}^{\zeta} + \abs{t_1-t_2}^{\frac{\zeta}{sp}} \Bigr).$$
\end{proof}
\begin{lemma}[Stability in $L^\infty$] \label{lm:stabil}
Let $f \in L^{q,r}_{\loc}(Q_{2R,(2R)^{sp}})$ with $r \geq p^\prime$, 
$$\frac{1}{r} + \frac{n}{spq} < 1 \quad\text{and } q\geq (p_s^\star)^\prime \quad \text{in the case } sp < n,$$ 
and
$$\frac{1}{r} + \frac{1}{q} < 1 \quad\text{ and  } q > 1\quad \text{in the case } sp\geq n.$$
Let $u$ be a local weak solution to the equation
$$
u_t +(-\Delta_p)^s u = f \quad in \; Q_{2R,(2R)^{sp}} ,
$$
with
$$\norm{u}_{L^\infty(Q_{R,R^{sp}})} + \sup_{-R^{sp} < t \leq 0} \mathrm{Tail}_{p-1,sp}(u(\frarg,t);0,R) \leq M ,$$
and 
$$\norm{f}_{L^{q,r}(Q_{R,R^{sp}}(x_0,T_0))}\leq \omega .$$
Consider the $(s,p)$-caloric replacement
\[
\begin{cases}
\phi_t + (-\Delta_p)^s \phi =0 \quad & in \;\; Q_{R,R^{s \, p}} \\
\phi=u \quad & in \;\; (\R^n \setminus B_R(x_0)) \times [-R^{s\,p},0] \\
\phi(x,-R^{s\,p}) = u(x,-R^{sp}) \quad & in \;\; B_R
\end{cases}
\]
Then for $\sigma< 1$ , there is a $\delta_{M,R,\sigma}(\omega)$ such that
$$\norm{u - \phi}_{L^\infty(Q_{\sigma R ,(\sigma R)^{s\,p}})(x_0,T_0)} < \delta_{M,R,\sigma}(\omega), $$
and $\delta_{M,R, \sigma}(\omega)$ converges to $0$ as $\omega$ goes to $0$.
\end{lemma}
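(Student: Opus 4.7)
The plan is to combine three ingredients: $L^p$-closeness of $u$ and $\phi$ inherited from Lemma \ref{lm:lpavr}, uniform H\"older continuity of $u$ and $\phi$ on a slightly smaller cylinder, and a standard $L^p$-to-$L^\infty$ interpolation via the H\"older modulus. First I would apply Lemma \ref{lm:lpavr} directly to $u$ and its $(s,p)$-caloric replacement $\phi$, which gives
\[
\dashint_{Q_{R,R^{sp}}(x_0,T_0)} |u - \phi|^p \dd x \dd t \;\leq\; C(n,s,p)\, R^{\xi}\, \|f\|_{L^{q,r}(Q_{R,R^{sp}}(x_0,T_0))}^{p'} \;\leq\; C(R)\, \omega^{p'},
\]
with $\xi>0$ under the standing hypotheses on $(q,r)$. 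In particular, $\|u-\phi\|_{L^p(Q_{R,R^{sp}})}\to 0$ as $\omega\to 0$, at an explicit rate.

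Next I would show that both $u$ and $\phi$ are H\"older continuous on $Q_{\sigma R,(\sigma R)^{sp}}(x_0,T_0)$ with modulus independent of $\omega$ (for, say, $\omega\leq 1$). For $u$ this is exactly Theorem \ref{thm:basic-holder}. For $\phi$, first Corollary \ref{thm:moser-bound} yields $\|u-\phi\|_{L^\infty(Q_{R,R^{sp}})} \leq C(n,s,p,R)$ uniformly for $\omega\leq 1$, so $\|\phi\|_{L^\infty(Q_{R,R^{sp}})} \leq C(n,s,p,R,M)$; then, since $\phi = u$ outside $B_R(x_0)$, a direct splitting of the tail integral into the annulus $B_R\setminus B_{R/2}$ (controlled by $\|\phi\|_\infty$) and $\R^n\setminus B_R$ (where $\phi = u$ and hence controlled by $M$) gives $\sup_t \mathrm{Tail}_{p-1,sp}(\phi(\cdot,t);x_0,R/2) \leq C(n,s,p,R,M)$. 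As $\phi$ solves the \emph{homogeneous} equation, Theorem \ref{thm:basic-holder} applied with $f\equiv 0$ (equivalently Theorem \ref{thm:thm1tail} from Appendix A) then provides a H\"older modulus for $\phi$ on $Q_{\sigma R,(\sigma R)^{sp}}$ depending only on $n,s,p,R,\sigma,M$.

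Setting $w := u-\phi$ and letting $\alpha,K$ denote a common H\"older exponent and seminorm from the previous step, I would close by interpolation: if $(x_*,t_*)\in Q_{\sigma R,(\sigma R)^{sp}}$ realizes $|w(x_*,t_*)| = \eta := \|w\|_{L^\infty(Q_{\sigma R,(\sigma R)^{sp}})}$, then $|w|\geq \eta/2$ on the anisotropic parabolic neighborhood $\{|x-x_*|\leq (\eta/(4K))^{1/\alpha},\ |t-t_*|\leq (\eta/(4K))^{sp/\alpha}\}$, whose intersection with $Q_{\sigma R,(\sigma R)^{sp}}$ has measure at least $c\,(\eta/K)^{(n+sp)/\alpha}$. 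Integrating and rearranging gives
\[
\eta \;\leq\; C\, K^{(n+sp)/(\alpha p + n+sp)}\, \|w\|_{L^p(Q_{\sigma R,(\sigma R)^{sp}})}^{\,p\alpha / (\alpha p + n+sp)} \;\longrightarrow\; 0 \quad \text{as } \omega \to 0,
\]
which supplies the desired function $\delta_{M,R,\sigma}(\omega)$. The main obstacle I expect is the uniform H\"older estimate for $\phi$: one must propagate the $L^\infty$ and tail bounds from $u$ to $\phi$ through Corollary \ref{thm:moser-bound} and a direct tail splitting before invoking the homogeneous H\"older theorem. Once these uniform controls are in hand, the $L^p$-to-$L^\infty$ interpolation is routine and the anisotropic parabolic scaling $r\times r^{sp}$ is exactly what produces the volume factor $(\eta/K)^{(n+sp)/\alpha}$ used above.
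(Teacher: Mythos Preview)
Your argument is correct and in fact sharper than what the paper does. Both proofs share the same preliminary ingredients: the $L^p$--closeness from Lemma~\ref{lm:lpavr}, the uniform $L^\infty$ bound on $u-\phi$ from Corollary~\ref{thm:moser-bound} (which transfers the $L^\infty$ and tail bounds from $u$ to $\phi$), and the uniform H\"older estimate from Theorem~\ref{thm:basic-holder} applied to both $u$ and $\phi$. The divergence is in the final step. The paper argues by contradiction and compactness: assuming a sequence with $\|f_n\|_{L^{q,r}}\to 0$ but $\|u_n-\phi_n\|_{L^\infty(Q_{\sigma R})}\geq\varepsilon$, the uniform H\"older bounds and Arzel\`a--Ascoli produce a uniformly convergent subsequence whose limit must be zero by the $L^p$ (in fact Sobolev) convergence, a contradiction. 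Your route is instead a direct, quantitative $L^p$--to--$L^\infty$ interpolation via the H\"older modulus, which yields an explicit rate $\delta_{M,R,\sigma}(\omega)\lesssim \omega^{\,p'\alpha/(\alpha p+n+sp)}$ rather than mere convergence. This is a genuine improvement over the paper's soft argument.

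Two small points to tighten in a full write-up. First, to avoid trouble when the maximising point $(x_*,t_*)$ sits near the parabolic boundary of $Q_{\sigma R,(\sigma R)^{sp}}$, obtain the H\"older estimate on a slightly larger cylinder $Q_{\sigma' R,(\sigma' R)^{sp}}$ with $\sigma<\sigma'<1$; then for $r=(\eta/(4K))^{1/\alpha}$ small the anisotropic neighbourhood of $(x_*,t_*)$ sits entirely inside $Q_{\sigma' R,(\sigma' R)^{sp}}\subset Q_{R,R^{sp}}$, while if $r$ exceeds the fixed gap $(\sigma'-\sigma)R$ you simply take $r$ equal to that gap and still get $|w|\geq \eta/2$ on a set of fixed positive measure. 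Second, integrate over $Q_{R,R^{sp}}$ (where Lemma~\ref{lm:lpavr} gives the $L^p$ bound) rather than over $Q_{\sigma R,(\sigma R)^{sp}}$; this makes the inclusion of the neighbourhood automatic.
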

\begin{proof}
The existence of such a bound follows immediately from Corollary \ref{thm:moser-bound}.

To show the convergence of $\tau_{M,R,\sigma}$ to zero we argue by contradiction,
suppose that there is a sequence $f_n \in L^{q,r}(Q_{R,R^{s \, p}})$ and $u_n$ such that 
\[
\norm{u_n}_{L^\infty(Q_{R,R^{s \, p}})} + \sup_{T_0 - R^{s \, p} \leq t \leq T_0}  \mathrm{Tail}_{p-1,sp}(u_n(\frarg,t);0,R)  \leq M \quad \text{and} \quad \norm{f_n}_{L^{q,r}(Q_{R,R^{s \, p}})} \to 0 ,
\] 
but
\begin{equation}\label{eq:l-inft-contradic}
 \norm{u_n - \phi_n}_{L^\infty(Q_{\sigma R, (\sigma R)^{s \, p}})} > \epsilon >0.
\end{equation}

Using \eqref{eq:sob-est} from  Lemma \ref{lm:lpavr}, we have
\begin{equation}\label{eq:sobolevLp}
\lim_{n \to \infty} \int_{T_0 - R^{sp}}^{T_0} [u_n-\phi_n]_{W^{s,p}(\R^n)}^p \dd t \leq C(n,s,p,q,r,R) \lim_{n \to \infty} \norm{f_n}_{L^{q,r}(Q_{R,R^{s \, p}})}^{p^\prime} =0.
\end{equation}
By assumption, $u_n$ is uniformly bounded in $L^\infty(Q_{ R, R^{sp}})$. Now we show that $\phi_n$ is also uniformly bounded in $L^\infty(Q_{R, R^{sp}})$.
\begin{equation}\label{eq:phi_n-linfty}
\begin{aligned}
\norm{\phi_n}_{L^\infty(Q_{ R , R^{s\,p}}(x_0,T_0))}  &\leq \norm{u_n}_{L^\infty(Q_{ R ,(\sigma R)^{s\,p}})(x_0,T_0)} + \norm{u_n - \phi_n}_{L^\infty(Q_{R , R^{s\,p}})(x_0,T_0)} \\
 &\leq  M+ \norm{u_n-\phi_n}_{L^\infty(Q_{R,R^{s \, p}})}.
 \end{aligned}
\end{equation}
By Corollary \ref{thm:moser-bound}
\begin{equation}\label{eq:linfty-difference}
\begin{aligned}
\norm{u_n-\phi_n}_{L^\infty(Q_{R,R^{s \, p}}(x_0,T_0))} \leq  C(n,s,p) \vartheta^{\frac{(p-1)\vartheta}{(\vartheta -1 )^2}} \bigl( 1 + R^{sp\nu +\frac{(p-2)sp\nu}{(p-1)(\vartheta-1)}}\norm{f_n}_{L^{q,r}(Q_{R,R^{sp}})}^{1+\frac{1}{\vartheta-1}\frac{p-2}{p-1} } \bigr).
\end{aligned}
\end{equation}
Since $\norm{f_n}_{L^{q,r}(Q_{R,R^{s \, p}})}^{p^\prime}$ is uniformy bounded, \eqref{eq:linfty-difference} and \eqref{eq:phi_n-linfty} gives us a uniform bound on $ \norm{\phi_n}_{L^\infty(Q_{ R , R^{s\,p}}(x_0,T_0))} $.

Now we are in a position to use Theorem \ref{thm:basic-holder} for both of the sequences $u_n$ and $\phi_n$, which gives us a uniform bound on the H\"older seminorms of $u_n$ and $\phi_n$ in $Q_{\sigma R,(\sigma R)^{s p}}$. Therefore, by Arzela-Ascoli's Theorem $u_n - \phi_n$ has a uniformly convergent subsequence in $Q_{\sigma R,(\sigma R)^{sp}}$. By \eqref{eq:sobolevLp} the limit is $0$, contradicting \eqref{eq:l-inft-contradic}.
\end{proof}

\section{Improved H\"older regularity for nonhomogeneous equation}\label{sec4}
\begin{proposition}\label{prop:reg-scal1}
Let $f\in L^{q,r}(Q_{1,2})$ with $q,r$ satisfying
$r \geq p^\prime$, 
$$\frac{1}{r} + \frac{n}{spq} < 1 \quad\text{and } q\geq (p_s^\star)^\prime \quad \text{in the case } sp < n,$$ 
and
$$\frac{1}{r} + \frac{1}{q} < 1 \quad\text{ and  } q > 1\quad \text{in the case } sp\geq n.$$
Assume $u$ is a weak solution of $u_t + (-\Delta_p)^s u=f$ in $Q_{1,2}$ that satisfies 
$$\norm{u}_{L^\infty(Q_{1,2})} \leq 1 \; , \quad \sup_{-2\leq t \leq 0}\mathrm{Tail}_{p-1,sp}(u;0,1) \leq 1.$$
Then there exists $\omega$ such that if
$$\norm{f}_{L^{q,r}(Q_{1,2})} \leq \omega(n,s,p,q,r,\alpha), $$
$u$ is locally H\"older continuous in $Q_{\frac{1}{2},\frac{1}{2^{sp}}}$ with exponents $\alpha$ in space and $\frac{\alpha}{sp-(p-2)\alpha}$ in time, as long as 
\begin{equation}\label{eq:space_exponent}
 \alpha < \min{ \left\lbrace  \Theta , \frac{r(spq-n)-spq}{q(r(p-1)-(p-2))} \right\rbrace}.
\end{equation}
Recall that $\Theta= \min{ \left\lbrace \frac{sp}{p-1},1 \right\rbrace}$.

More precisely for $(x_1,t_1),\; (x_2,t_2) \in Q_{\frac{1}{2},\frac{1}{2^{sp}}}$ we have
$$\abs{u(x_2,t_2) - u(x_1,t_1)}\leq C(n,s,p,q,r,\alpha) \left(\abs{x_2 - x_1}^\alpha + \abs{t_2 - t_1}^{\frac{\alpha}{sp-(p-2)\alpha}} \right).$$
\end{proposition}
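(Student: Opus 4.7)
The plan is a tangential Campanato-type induction performed on \emph{intrinsic} cylinders. Fix a point $(x_0,t_0) \in Q_{1/2, (1/2)^{sp}}$, set $\tau := sp - (p-2)\alpha > 0$, and pick an auxiliary exponent $\beta$ with $\alpha < \beta < \min\{\Theta, \tilde\alpha\}$, where
\[
\tilde\alpha := \frac{r(spq-n) - spq}{q(r(p-1)-(p-2))}.
\]
I will construct inductively constants $c_k \in \R$ and fixed parameters $\lambda \in (0,1/2]$ and $\omega > 0$ so that
\[
\|u - c_k\|_{L^\infty(Q_{\lambda^k, \lambda^{k\tau}}(x_0,t_0))} \leq \lambda^{k\alpha}, \qquad k \geq 0.
\]
The desired H\"older estimate then follows by a standard dyadic argument: given $(x_1,t_1),(x_2,t_2)$, choose $k$ with $\lambda^{k+1} < \max\{|x_1-x_2|,\,|t_1-t_2|^{1/\tau}\} \le \lambda^k$.

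The base case is immediate with $c_0 = 0$. For the inductive step, assume the bound holds for all $j \le k$ and define
\[
u_k(x,t) := \lambda^{-k\alpha}\bigl(u(x_0 + \lambda^k x, t_0 + \lambda^{k\tau} t) - c_k\bigr).
\]
Using that $(-\Delta_p)^s$ is invariant under constant shifts and scales as $\lambda^{-sp}$, while $\partial_t$ picks up $\lambda^{-k\tau}$, one checks that $u_k$ solves $\partial_t u_k + (-\Delta_p)^s u_k = f_k$ on $Q_{1,1}$ with $f_k(x,t) = \lambda^{k(sp-(p-1)\alpha)} f(x_0 + \lambda^k x, t_0 + \lambda^{k\tau} t)$, and a direct change of variables yields
\[
\|f_k\|_{L^{q,r}(Q_{1,1})} = \lambda^{k\eta}\|f\|_{L^{q,r}(Q_{\lambda^k,\lambda^{k\tau}}(x_0,t_0))}, \qquad \eta := \frac{sp}{r'} - \frac{n}{q} - \alpha\,\frac{r(p-1)-(p-2)}{r}.
\]
The condition $\alpha < \tilde\alpha$ is exactly $\eta > 0$, so $\|f_k\|_{L^{q,r}(Q_{1,1})} \le \omega$ for every $k$. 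Furthermore, $\|u_k\|_{L^\infty(Q_{1,1})} \le 1$ by the inductive hypothesis, and the tail $\sup_{-1<t\leq 0}\mathrm{Tail}_{p-1,sp}(u_k(\cdot,t); 0, 1)$ is bounded by a constant $M^* = M^*(n,s,p,\alpha,\lambda)$ independent of $k$: splitting the tail integral into dyadic annuli $B_{\lambda^j}(x_0) \setminus B_{\lambda^{j+1}}(x_0)$, the inductive hypothesis together with the telescoping bound $|c_j - c_k| \le \sum_{i\ge j} 2\lambda^{i\alpha} \le 2\lambda^{j\alpha}/(1-\lambda^\alpha)$ gives $|u - c_k| \le C\lambda^{j\alpha}$ on the $j$-th annulus, and summing yields a geometric series in $\lambda^{sp-\alpha(p-1)}$ which converges because $\alpha < \Theta \le sp/(p-1)$.

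Let $v_k$ denote the $(s,p)$-caloric replacement of $u_k$ in $Q_{1/2,(1/2)^{sp}}$. By Lemma \ref{lm:stabil} applied with $R = 1/2$ and $M = 1 + M^*$, the smallness of $\omega$ forces $\|u_k - v_k\|_{L^\infty(Q_{3/8,(3/8)^{sp}})} \le \lambda^\alpha/4$. By Theorem \ref{thm:thm1tail} the function $v_k$ is locally H\"older continuous with exponent $\beta$ in space and the intrinsic exponent $\beta/(sp-(p-2)\beta)$ in time, so on any intrinsic cylinder $Q_{\lambda,\lambda^\tau} \subset Q_{3/8,(3/8)^{sp}}$
\[
|v_k(x,t) - v_k(0,0)| \le C\bigl(\lambda^\beta + \lambda^{\tau\beta/(sp-(p-2)\beta)}\bigr).
\]
The elementary algebraic identity (by cross-multiplying and using $\tau + (p-2)\alpha = sp$)
\[
\frac{\tau\beta}{sp-(p-2)\beta} \ge \alpha \iff \beta \ge \alpha
\]
shows that both exponents on the right strictly exceed $\alpha$, so both terms are $\le \lambda^\alpha/8$ for $\lambda$ sufficiently small (depending on $n,s,p,q,r,\alpha,\beta$). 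Setting $c_{k+1} := c_k + \lambda^{k\alpha} v_k(0,0)$ and combining the two estimates then yields $\|u - c_{k+1}\|_{L^\infty(Q_{\lambda^{k+1}, \lambda^{(k+1)\tau}}(x_0,t_0))} \le \lambda^{(k+1)\alpha}$, closing the induction.

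The main technical obstacle is the uniform control of the tail $\mathrm{Tail}_{p-1,sp}(u_k)$ across all iterations $k$: balancing the scale-by-scale growth of $|c_k|$ against the decay of the kernel $|y|^{-n-sp}$ requires a geometric series that converges precisely when $\alpha(p-1) < sp$, i.e.\ $\alpha < \Theta$. This is the point at which the first restriction in the statement enters essentially, while the second restriction $\alpha < \tilde\alpha$ is used to ensure positivity of $\eta$ so that the rescaled data $f_k$ remain $\omega$-small.
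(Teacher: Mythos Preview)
Your overall strategy is the right one and matches the paper's tangential iteration, but the tail control as written does not close the induction for the full range $\alpha<\Theta$. When you estimate $\mathrm{Tail}_{p-1,sp}(u_k;0,1)$ by splitting into dyadic annuli $A_j=B_{\lambda^j}(x_0)\setminus B_{\lambda^{j+1}}(x_0)$ and using only the inductive $L^\infty$ bound $|u-c_k|\le C\lambda^{j\alpha}$ on $A_j$, the contribution of $A_j$ to the tail$^{p-1}$ integral is of order $\lambda^{(k-j)(sp-\alpha(p-1))-sp}$, and summing over $j$ gives a bound of order $\lambda^{-\alpha(p-1)}$. Hence $M^*(\lambda)\sim\lambda^{-\alpha}$ blows up as $\lambda\to0$. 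Since the H\"older constants in Theorem~\ref{thm:thm1tail} scale like $\mathcal{M}$ (space) and $\mathcal{M}^{p-1}$ (time) with $\mathcal{M}=1+\|v_k\|_{L^\infty}+\mathrm{Tail}(v_k)\sim\lambda^{-\alpha}$, the oscillation bound for $v_k$ on $Q_{\lambda,\lambda^\tau}$ becomes $C\lambda^{\beta-\alpha}$ in space and $C\lambda^{\tau\gamma-\alpha(p-1)}$ in time, and to make these $\le\lambda^{\alpha}/8$ you would need $\beta>2\alpha$ (and a still stronger constraint from the time part), not merely $\beta>\alpha$. This circularity restricts you to roughly $\alpha<\Theta/2$.

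The paper breaks this circularity by a sharper tail estimate: it maintains the tail $\le1$ exactly throughout the iteration, and to do so on the intermediate annulus $B_{1/(4\lambda)}\setminus B_1$ (in the rescaled picture) it uses not the crude $L^\infty$ bound from the previous step but the \emph{pointwise H\"older growth} of the caloric replacement, $|v_{k+1}(x,t)|\lesssim \lambda^{\Theta-\epsilon-\alpha}|x|^{\Theta-\epsilon}$, coming from Theorem~\ref{thm:thm1tail} applied at step $k$. Since $(\Theta-\epsilon)(p-1)<sp$, the integral $\int_1^{1/(4\lambda)}r^{(\Theta-\epsilon)(p-1)-1-sp}\,dr$ stays uniformly bounded, and this is what lets one choose $\lambda$ small independently of the tail bound. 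You need to feed this H\"older information back into the tail estimate at each step; the inductive $L^\infty$ bounds alone are not enough.
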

\begin{proof}
\textbf{Step 1}: Decay at the origin. 

For this part, we prove a decay at the origin for $u$ under the assumptions
\begin{equation}\label{eq:step1-assump}
\norm{u}_{L^\infty(Q_{1,1})} \leq 1 \; , \quad \sup_{-1\leq t \leq 0}\mathrm{Tail}_{p-1,sp}(u;0,1) \leq 1 , \quad \text{and} \;\; \norm{f}_{L^{q,r}(Q_{1,1})} \leq \omega.
\end{equation}
We introduce the parabolic cylinder
$$G_r:=B_r(0)\times(-r^\beta,0],$$
with $\beta=sp-(p-2)\alpha$.
We show that for any exponent $\alpha$ satisfying \eqref{eq:space_exponent}, the following holds for $r< 1$
$$\norm{u(x,t)-u(0,0)}_{L^\infty(G_{r})} \leq C r^{\alpha }. $$
It is enough to prove the inequality for a sequence of $r= \lambda^k ,\; (k)_0^\infty$, for some $\lambda < 1$. Without loss of generality, we assume $u(0,0)=0$. Consider the rescaled functions
$$v_k(x,t) := \frac{u(\lambda^k x , \lambda^{k\beta}t)}{\lambda^{\alpha k}},$$
with $\lambda$ small enough to be determined later.
We will prove the following by induction,
\begin{equation}\label{eq:scaled-bound}
\norm{v_k(x,t)}_{L^\infty(G_1)} \leq 1 \qquad \text{and } \qquad \sup_{-1\leq t \leq 0} \int_{\R^n \setminus B_1} \frac{\abs{v_k(x,t)}}{\abs{x}^{n+ s\, p}} \dd x\leq 1.
\end{equation}
For $k=0$, \eqref{eq:scaled-bound} follows from our assumptions \eqref{eq:step1-assump}.
 
Observe that 
\[
\begin{cases}
\frac{\partial v_k(x,t)}{\partial t} =\lambda^{\beta k - \alpha k} u_t(\lambda^k x, \lambda^{\beta k}t) &  \\
(-\Delta_p)^s v_k(x,t) = \lambda^{k[sp-(p-1)\alpha]}(-\Delta_p)^s u(\lambda^k x , \lambda^{\beta k}t)  &  \\
\end{cases}
\]
With $\beta = sp - (p-2)\alpha$, $v_k(x,t)$ solves
$$\frac{\partial v_k}{\partial t} + (-\Delta_p)^s v_k= \lambda^{k[sp- (p-1)\alpha]}f(\lambda^k x, \lambda^{\beta k}t) =: f_k(x,t) \qquad \text{in} \; Q_{\frac{1}{\lambda^k}, \frac{1}{\lambda^{\beta k}}}.$$

Moreover,
\[
\begin{aligned}
\norm{f_{k}}_{L^{q,r}(G_1)}^r &= \int_{-1}^0 \Bigl( \int_{B_1}\abs{f_k(x,r)}^q \dd x \Bigr)^{\frac{r}{q}} \dd t \\
& = \int_{-1}^0 \Bigl( \int_{B_{\lambda^k}} \lambda^{kq[sp - (p-1) \alpha]-kn}\abs{f(x,\lambda^{\beta k}t)}^q \dd x \Bigr)^\frac{r}{q} \dd t \\
&=\int_{-1}^0 \lambda^{rk[sp-(p-1)\alpha] -\frac{krn}{q}} \Bigl( \int_{B_{\lambda^k}} \abs{f(x,\lambda^{\beta k} t)}^q \dd x \Bigr)^\frac{r}{q} \dd t \\
&= \lambda^{rk[sp-(p-1)\alpha] -\frac{krn}{q} - \beta k} \norm{f}_{L^{q,r}(G_{\lambda^k})}.
\end{aligned}
\]
Since $\lambda<1$, and the exponent of $\lambda$ is positive by \eqref{eq:space_exponent}, we get $\norm{f_{k}}_{L^{q,r}(G_1)} \leq \omega$.

Assume that \eqref{eq:scaled-bound} holds for $k$. Now we prove that it holds for $k+1$. Consider the $(s,p)$-caloric replacement of $v_k(x,t)$ in $Q_{1,1}$, say $\phi_k(x,t)$. Then
$$\abs{v_k(x,t)} \leq \abs{v_k(x,t) - \phi_k(x,t)} + \abs{\phi_k(x,t) -\phi_k(0,t)} + \abs{\phi_k(0,t)-v_k(0,t)}.$$
By Theorem \ref{thm:thm1tail}, $\phi_k$ is locally H\"older continuous in $Q_{1,1}$ and for $(x,t) \in Q_{\frac{1}{2} , \frac{1}{2^{sp}}}$
$$\abs{\phi_k(x,t) - \phi_k(0,0)} \leq C_1 \abs{x}^{\Theta - \epsilon} + C_2 \abs{t}^{\Gamma - \frac{\epsilon}{\beta}}.$$
Here we take $\epsilon= \frac{\Theta-\alpha}{2}$. Since $\norm{f_k}_{L^{q,r}(Q_{1,1})} \leq \omega$, Lemma \ref{lm:stabil} implies
\begin{equation}\label{eq:resc_ hol}
 \abs{v_k(x,t)} \leq 2\delta(\omega) + C_1 \abs{x}^{\Theta - \epsilon} + C_2 \abs{t}^{\Gamma - \frac{\epsilon}{\beta}}, \qquad \text{in} \quad Q_{\frac{1}{4} , \frac{1}{4^{sp}}} .
\end{equation}
In Theorem \ref{thm:thm1tail} the H\"older constants are bounded by
\[
\begin{aligned}
(C_2)^{\frac{1}{p-1}} \leq  C_1 &\leq C\bigl(1+ \norm{\phi_k}_{L^\infty(Q_{1,1})} + \sup_{-\frac{1}{2^{sp}}\leq t \leq 0} \mathrm{Tail}_{p-1,sp}(\phi_k;0,1)\bigr) \\
& \leq C \bigl( 1+ \norm{\phi_k}_{L^\infty(Q_{1,1})} + \sup_{-1 \leq t \leq 0} \mathrm{Tail}_{p-1,sp}(v_k;0,1) \bigr)\\
&\leq C \bigl( 1+ \norm{v_k - \phi_k}_{L^\infty(Q_{1,1})} + \norm{v_k}_{L^\infty(Q_{1,1})} + \sup_{-1 \leq t \leq 0} \mathrm{Tail}_{p-1,sp}(v_k;0,1) \bigr).
\end{aligned}
\]
Therefore, using \eqref{eq:scaled-bound}, for $v_k$ we have
$$(C_2)^{\frac{1}{p-1}} \leq  C_1 \leq C(n,s,p,\alpha)( 3 +  \norm{v_k - \phi_k}_{L^\infty(Q_{1,1})}).$$
By Corollary \ref{thm:moser-bound}
$$C_1 \leq C( 3 + C(n,s,p,q,r) (1 + \norm{f_k}_{L^{q,r}(Q_{1,1})}^{1+\frac{1}{\vartheta-1}\frac{p-2}{p-1} }) )\leq C( 3 + C(n,s,p,q,r)(1+ \omega^{1+\frac{1}{\vartheta-1}\frac{p-2}{p-1} })).$$
This is a bound independent of $k$. We can take $\omega$ to be less than $1$ and take $C_1=C(n,s,p)( 3 + 2C(n,s,p,q,r),$ with the $C(n,s,p,q,r)$ coming from Corollary \ref{thm:moser-bound}, so that the constants $C_1,\; C_2$ are independent of $\omega$ as well.

Now we proceed and prove \eqref{eq:scaled-bound} for $k+1$. First, we state our choice of $\lambda$
\begin{equation}\label{eq:lambda-smallness}
\lambda \leq  \min{ \Bigl\lbrace \frac{1}{4} , \frac{1}{4^{\frac{sp}{\beta}}}, \frac{1}{(2 C_1+ 2C_2)^{\frac{2}{\Theta-\alpha}}} , \Bigl(1+ \frac{\omega_n(4^{sp} - 1)}{sp}+ \frac{(1+C_1 +C_2)^{p-1}}{(p-1)(\Theta-\alpha)/2} \Bigr)^{\frac{2}{(p-1)(\Theta - \alpha)}}  \Bigr\rbrace }.
\end{equation}
Since $\lambda < \frac{1}{4}$, and $\lambda^\beta < \frac{1}{4^{s\, p}}$, $Q_{\lambda , \lambda^{\beta}} \subset Q_{\frac{1}{4},\frac{1}{4^{s \, p}}}$. Therefore, from \eqref{eq:resc_ hol} we obtain 
$$ \norm{v_k(x,t)}_{L^\infty (G_ \lambda)} \leq \delta(\omega) + C_1 \lambda ^{\Theta - \epsilon } + C_2 \lambda^{\beta(\Gamma - \frac{\epsilon}{\beta})}  .
$$
Notice that $\beta \Gamma \geq \Theta$, by the above choice of $\beta$. Thus,
\begin{equation}\label{eq:vk-decay-delta}
\norm{v_k(x,t)}_{L^\infty(G_\lambda)} \leq \delta(\omega) + (C_1 + C_2) \lambda^{\Theta - \epsilon}.
\end{equation}
Recall that $\epsilon= \frac{\Theta -\alpha}{2}$ and by the assumption \eqref{eq:lambda-smallness}
 $$(C_1 +C_2) \lambda^{\Theta - \epsilon} < \frac{1}{2} \lambda^\alpha .$$
Now we choose $\omega$ so that
$$2\delta(\omega) \leq \frac{1}{2} \lambda^{\Theta } \leq \frac{1}{2} \lambda^\alpha .$$
This is possible since $\delta(\omega)$ converges to zero as $\omega \to 0$. Then, \eqref{eq:vk-decay-delta} implies
$$ \norm{v_k(x,t)}_{L^\infty(G_\lambda)} \leq \lambda ^\alpha ,$$ 
which translates to 
\begin{equation}\label{eq:induction-first}
 \norm{v_{k+1}(x,t)}_{L^\infty(G_1)} = \normB{\frac{v_k(\lambda x, \lambda^\beta t)}{\lambda^\alpha}}_{L^\infty(G_1)} \leq 1,
\end{equation}
which is the first part of \eqref{eq:scaled-bound}. For the second part, we want to show
$$\sup_{-1<t<0} \int_{\R^n \setminus B_1} \frac{\abs{v_{k+1}(x ,t)}^{p-1}}{\abs{x}^{n+sp}} \dd x \leq 1. $$
We split the integral into three parts. Using the induction hypothesis
\[
\begin{aligned}
\sup_{-1<t<0} \int_{\R^n \setminus B_{\frac{1}{\lambda}}} \frac{\abs{v_{k+1}(x ,t)}^{p-1}}{\abs{x}^{n+sp}} \dd x &\leq \sup_{-\lambda^{-\beta} \leq t\leq 0} \int_{\R^n \setminus B_{\frac{1}{\lambda}}} \frac{\abs{v_{k+1}(x ,t)}^{p-1}}{\abs{x}^{n+sp}} \dd x \\
&= \lambda^{sp -\alpha(p-1)} \sup_{-1<t<0} \int_{\R^n \setminus B_1} \frac{\abs{v_{k}(x ,t)}^{p-1}}{\abs{x}^{n+sp}} \dd x \\
(\text{using  } \Theta \leq \frac{sp}{p-1})\quad & \leq \lambda^{(p-1)(\Theta -\alpha)}.
\end{aligned}
\]
Moreover, $\norm{v_k}_{L^\infty(G_1)} \leq 1$ and hence
\[
\begin{aligned}
\sup_{-1<t<0} \int_{B_{\frac{1}{\lambda}} \setminus B_{\frac{1}{4\lambda}}} \frac{\abs{v_{k+1}(x ,t)}^{p-1}}{\abs{x}^{n+sp}} \dd x &\leq 
\lambda^{sp-\alpha(p-1)} \sup_{-\lambda^\beta< t<0} \int_{B_{1} \setminus B_{\frac{1}{4}}} \frac{\abs{v_{k}(x ,t)}^{p-1}}{\abs{x}^{n+sp}} \dd x \\
 & \leq \lambda^{sp-\alpha(p-1)} \int_{B_{1} \setminus B_{\frac{1}{4}}} \frac{1}{\abs{x}^{n+sp}}\dd x \\
 &\leq \lambda^{(p-1)(\Theta - \alpha)} \frac{\omega_n(4^{sp} - 1)}{sp} := C_3 \lambda^{2(p-1)\epsilon}.
\end{aligned}
\]
For remaining part, we transfer the estimate \eqref{eq:resc_ hol} to $v_{k+1}$ and obtain
$$\abs{v_{k+1}(x,t)} \leq \delta(\omega) \lambda^{-\alpha} + C_1 \lambda^{\Theta -\epsilon -\alpha} \abs{x}^{\Theta -\epsilon} 
+ C_2 \lambda^{\beta\Gamma - \epsilon-\alpha} \abs{t}^{\Gamma -\frac{\epsilon}{\beta}} \qquad \text{in} \quad Q_{\frac{1}{4\lambda}, \frac{1}{4^{sp}\lambda^\beta}}.$$
In particular, since $\lambda^\beta \leq \frac{1}{4^{sp}}$ , $Q_{\frac{1}{4\lambda},1} \subset Q_{\frac{1}{4\lambda}, \frac{1}{4^{sp}\lambda^\beta}}$, and $\delta(\omega) \leq \lambda^{\Theta} \leq  \lambda^{\Theta -\epsilon}$ we get
$$\sup_{-1\leq t\leq 0} \abs{v(x,t)} \leq \lambda^{\Theta - \epsilon - \alpha} (1+ C_2\lambda^{\beta \Gamma - \Theta} + C_1 \abs{x}^{\Theta -\epsilon}).$$
Therefore,
\[
\begin{aligned}
\sup_{-1<t<0} \int_{B_{\frac{1}{4\lambda}} \setminus B_1} &\frac{\abs{v_{k+1}(x ,t)}^{p-1}}{\abs{x}^{n+sp}} \dd x \leq \lambda^{(p-1)(\Theta - \epsilon - \alpha)} \int_{B_{\frac{1}{4\lambda}} \setminus B_1} \frac{\abs{1+ C_2\lambda^{\beta \Gamma - \Theta} + C_1 \abs{x}^{\Theta -\epsilon}}^{p-1}}{\abs{x}^{n+sp}} \dd x \\
(\text{using}\;\; \abs{x}\geq 1)\quad \qquad & \leq (1+C_2\lambda^{\beta \Gamma - \Theta} + C_1 )^{p-1} \lambda^{(p-1)(\Theta - \epsilon - \alpha)} \int_ {B_{\frac{1}{4\lambda}} \setminus B_1} \frac{1}{\abs{x}^{n+sp-(p-1)(\Theta -\epsilon)}} \dd x \\
(\text{using} \;\; sp\geq (p-1)\Theta) \quad \qquad& \leq (1+ C_2 + C_1)^{p-1} \lambda^{(p-1)(\Theta - \epsilon - \alpha)} \int_{\R^n \setminus B_1} \frac{1}{\abs{x}^{n+\epsilon(p-1)}} \dd x  \\
&\leq \frac{(1+C_1+C_2)^{p-1}}{\epsilon(p-1)} \lambda^{(p-1)(\Theta - \epsilon - \alpha)}:= C_4 \lambda^{(p-1)\epsilon} .
\end{aligned}
\]
Hence,
$$\sup_{-1<t<0} \int_{\R^n \setminus B_1} \frac{\abs{v_{k+1}(x ,t)}^{p-1}}{\abs{x}^{n+sp}} \dd x \leq \lambda^{2(p-1)\epsilon} + C_3\lambda^{2(p-1)\epsilon} + C_4 \lambda^{(p-1)\epsilon} \leq \lambda^{(p-1)\epsilon}(1+C_3 + \frac{(1+C_1+C_2)^{p-1}}{\epsilon(p-1)}). $$
Using the assumption \eqref{eq:lambda-smallness} on $\lambda$, we obtain 
$$\sup_{-1<t<0} \int_{\R^n \setminus B_1} \frac{\abs{v_{k+1}(x ,t)}^{p-1}}{\abs{x}^{n+sp}} \dd x \leq 1.$$
\textbf{Step 2}: Regularity in a cylinder.
We choose $\alpha$ as in \eqref{eq:space_exponent} and let $\omega$ be as in Step 1. For a point $(x_0,t_0) \in Q_{\frac{1}{2} , \frac{1}{2^{sp}}}$ define
$$\tilde{u}(x,t) = \frac{1}{L} u(\frac{x}{2} + x_0 , L^{2-p} \frac{1}{2^{sp}}t + t_0 ),$$
where $L = 2^\frac{n}{p-1}(1+\abs{B_1})^{\frac{1}{p-1}}$. Then $\tilde{u}$ is a solution of 
$$\partial_t \tilde{u} + (- \Delta_p)^s \tilde{u} = \frac{L^{-(p-1)}}{2^{sp}} f \Bigl(\frac{x}{2} +x_0 , L^{2-p} \frac{1}{2^{sp}}t +t_0 \Bigr):= \tilde{f} \quad \text{in} \; Q_{1,2^{sp-1}L^{p-2}} . $$
By the choice of $L$, $\tilde{u}$ satisfies the conditions \eqref{eq:step1-assump} in Step 1. Since $L\geq 1$ we immediately have
$$\norm{\tilde{u}}_{L^\infty(Q_{1,1}(0,0))} \leq \frac{1}{L}\norm{u}_{ L^\infty(Q_{\frac{1}{2}, \frac{L^{2-p}}{2^{sp}}}(x_0,t_0))} \leq \norm{u}_{L^\infty(Q_{1,2})} \leq 1,$$
since  $Q_{\frac{1}{2},\frac{L^{2-p}}{2^{sp}}}(x_0,t_0) \subset Q_{1,2}$. As for the $L^{q,r}$ norm of $\tilde{f}$ we have
\[
\begin{aligned}
\qquad\norm{\tilde{f}}_{L^{q,r}(Q_{1,1})} &= \frac{L^{-(p-1)}}{2^{sp}} \bigl( 2^{\frac{n}{q}+\frac{sp}{r}} L^\frac{p-2}{r} \norm{f}_{L^{q,r}(Q_{\frac{1}{2},\frac{L^{2-p}}{2^{sp}}}(x_0,t_0))}\bigr) \\
& \leq L^{-(p-1)} (1/2)^{sp(1-\frac{1}{r} - \frac{n}{spq})} \norm{f}_{Q_{1,2}} \\
&\leq L^{-(p-1)} (1/2)^{sp(1-\frac{1}{r} - \frac{n}{spq})} \omega
\leq \omega.
\end{aligned}
\]
Here we have used $1-\frac{1}{r}-\frac{n}{spq} > 0$. Notice that in the case of $sp \geq n$, we are assuming $1-\frac{1}{r} -\frac{1}{q} > 0$ which is a stronger assumption.
Now we verify the assumption on the tail.
\[
\begin{aligned}
\sup_{-1\leq t \leq 0} \int_{\R^n \setminus B_1} \frac{\abs{\tilde{u}}^{p-1}}{\abs{x}^{n+sp}} \dd x &= \frac{2^{-sp}}{L^{p-1}} \sup_{t_0 - \frac{L^{2-p}}{2^{sp}}\leq t \leq t_0} \int_{\R^n \setminus B_{\frac{1}{2}(x_0)}} \frac{\abs{u(y)}^{p-1}}{\abs{y-x_0}^{n+sp}} \dd y \\
&\leq \frac{1}{L^{p-1}} \sup_{-2\leq t \leq 0} \mathrm{Tail}_{p-1,sp}(u(\frarg ,t);x_0, \frac{1}{2})^{p-1} \\
& \leq \frac{1}{L^{p-1}} (\frac{1}{2})^{sp}(\frac{1}{1-\abs{x-x_0}})^{n+sp} \sup_{-2\leq t\leq 0}\mathrm{Tail}_{p-1,sp}(u(\frarg,t);0,1)^{p-1} \\
& \quad + \frac{2^n}{L^{p-1}} \sup_{-2\leq t\leq 0} \norm{u(\frarg,t)}_{L^{p-1}(B_1(0))}^{p-1} \\
& \leq \frac{2^n}{L^{p-1}} \bigl( 1+ \abs{B_1} \norm{u}_{L^\infty(Q_{1,2})} \bigr) \leq \frac{2^n(1+\abs{B_1})}{L^{p-1}} \leq 1 .
\end{aligned}
\]
Now we can apply Step 1 to $\tilde{u}$ and we get the decay
$$\norm{\tilde{u}-\tilde{u}(0,0)}_{L^\infty(G_r)} \leq Cr^\alpha , \quad \text{for} \;\; 0<r<1$$
or in other words
$$\abs{\tilde{u}(x,t)-\tilde{u}(0,0)} \leq C (\abs{x}^\alpha + \abs{t}^{\frac{\alpha}{\beta}}), \quad \text{for} \;\; (x,t) \in Q_{1,1} .$$
In terms of $u$, this means 
\begin{equation}\label{eq:decay-poin}
\abs{u(x,t)-u(x_0,t_0)}\leq CL (2^\alpha \abs{x-x_0}^{\alpha} + (2^{sp}L^{p-2})^{\frac{\alpha}{\beta}} \abs{t-t_0}^{\frac{\alpha}{\beta}}) , \quad \text{for} \;\; (x,t) \in Q_{\frac{1}{2},\frac{1}{2^{sp} L^{p-2}}}(x_0,t_0) .
\end{equation}
Now take two points $(x_1,t_1)\, , \, (x_2,t_2) \in Q_{\frac{1}{2}, \frac{1}{2^{sp}}}$ and split the line joining them into $1+ [L^{p-2}]$ pieces,  say  
$(y_i,\tau_i)_{i=0}^{1+[L^{p-2}]}$ with $(x_1,t_1)=(y_0,\tau_0)$, $(x_2,t_2)=(y_{1+[L^{p-2}]} ,\tau_{1+[L^{p-2}]})$ , $\abs{y_{i+1}-y_i}= \frac{\abs{x_2-x_1}}{1+[L^{p-2}]} < \frac{1}{2}$ and $\abs{\tau_{i+1}- \tau_i}= \frac{\abs{t_2-t_1}}{1+[L^{p-2}]} < \frac{1}{2^{sp}L^{p-2}}$ so that $(y_{i+1},\tau_{i+1}) \in Q_{\frac{1}{2} , \frac{1}{2^{sp}L^{p-2}}}(y_i,\tau_i)$.
 By \eqref{eq:decay-poin} applied in each of $Q_{\frac{1}{2} , \frac{1}{2^{sp}L^{p-2}}}(y_i,\tau_i)$ obtain
 \[
 \begin{aligned}
 \abs{u(x_2,t_2) - u(x_1,t_1) } & \leq \sum_{i=0}^{[L^{p-2}]} \abs{u(y_{i+1},\tau_{i+1})-u(y_i,\tau_i)} \\
  &\leq CL\sum_{i=0}^{[L^{p-2}]} 2^\alpha \abs{y_{i+1} -y_i}^\alpha + (2^{sp} L^{p-2})^{\frac{\alpha}{\beta}} \abs{\tau_{i+1}-\tau_i}^{\frac{\alpha}{\beta}} \\ &\leq C(1+L)^{p-1} \Bigl( (2\frac{\abs{x_2-x_1}}{1+[L^{p-2}]})^\alpha
  + (2^{sp}L^{p-2} \frac{\abs{t_2-t_1}}{1+\floor{L^{p-2}}})^{\frac{\alpha}{\beta}} \Bigr)\\
  & \leq C(n,s,p,q,r,\alpha) (\abs{x_2 - x_1}^\alpha + \abs{t_2 - t_1}^{\frac{\alpha}{\beta}} ).
 \end{aligned}
 \]
\end{proof}
Now we prove the H\"older regularity at any scale.
\begin{proof}[Proof of Theorem \ref{thm:main-holder}]
We will consider the rescaled functions
$$\tilde{u}_{\iota}(x,t) = \frac{1}{\mu} u(Rx + x_0, \mu^{2-p}R^{sp} t + \iota + T_0)$$
with 
\[
\begin{aligned}
\mu =& 1+ \norm{u}_{L^\infty(Q_{R,2R^{sp}}(x_0,T_0))} + \sup_{ T_0- 2R^{sp}\leq t \leq T_0}\mathrm{Tail}_{p-1,sp} (u(\frarg ,t);x_0,R) \\
&\quad+ \left(  \frac{R^{sp-\frac{n}{q} - \frac{sp}{r}} \norm{f}_{L^{q,r}(Q_{R,2R^{sp}}(x_0,T_0))}}{\omega} \right)^{\frac{1}{p-1+ \frac{p-2}{r}}},
\end{aligned}
\]
where $\omega= \omega(n,s,p,q,r,\alpha)$ is the same as in the proof of Proposition \ref{prop:reg-scal1} and $\iota \in [ -(R/2)^{sp} (1-\mu^{2-p}),0]$. The interval $[ -(R/2)^{sp} (1-\mu^{2-p}),0]$ is chosen so that the cylinders $Q_{\frac{R}{2},\frac{\mu^{2-p} R^{sp}}{2^{sp}}}(x_0,T_0+\iota)$ cover all of $Q_{\frac{R}{2},(\frac{R}{2})^{sp}}(x_0,T_0)$ by varying $\iota$ over. Note that for these choices of $\iota$ we have $Q_{R,2\mu^{2-p}R^{sp}}(x_0,T_0+\iota)\subset Q_{R,2R^{sp}}(x_0,T_0)$. Then $\tilde{u}$ is a solution of
$$\partial_t \tilde{u}_\iota + (-\Delta_p^s)\tilde{u}_\iota = R^{sp}\frac{f(Rx +x_0,\mu^{2-p} R^{sp} t + \iota + T_0)}{\mu^{p-1}}, \quad \text{in} \quad Q_{1,2} . $$
We now verify that $\tilde{u}_\iota$ satisfies the conditions of  Proposition \ref{prop:reg-scal1}. The $L^{q,r}$ norm of the right hand side is
 \[
 \begin{aligned}
 \normB{R^{sp} \frac{f(Rx,\mu^{2-p} R^{sp} t + \iota)}{\mu^{p-1}}}_{L^{q,r}(Q_{1,2})} & = \frac{\mu^{\frac{p-2}{r}}}{2^{\frac{1}{r}}\mu^{(p-1)}} R^{sp-\frac{n}{q}- \frac{sp}{r}} \norm{f}_{L^{q,r}(Q_{R,2\mu^{2-p}R^{sp}}(x_0,T_0+ \iota))} \\
 & \leq \frac{R^{sp-\frac{n}{q} - \frac{sp}{r}} \norm{f}_{L^{q,r}(Q_{R,2R^{sp}}(x_0,T_0))}}{2^{\frac{1}{r}}\mu^{p-1 -\frac{p-2}{r}}} \\
 &\leq \frac{\omega}{2^{\frac{1}{r}}} < \omega .
 \end{aligned}
 \]
 The $L^\infty$ norm of $\tilde{u}_\iota$ satisfies
 $$\norm{\tilde{u}_\iota}_{L^\infty(Q_{1,2}(0,0))} = \frac{1}{\mu} \norm{u}_{L^\infty(Q_{R,2\mu^{2-p}R^{sp}(x_0,T_0+\iota)})} \leq \frac{1}{\mu} \norm{u}_{L^\infty(Q_{R,2R^{sp}})} \leq 1.$$
Similarly 
\[
\begin{aligned}
\sup_{ -2 \leq t \leq 0}\mathrm{Tail}_{p-1,sp} (\tilde{u}(\frarg ,t);0 ,1)& \leq\frac{1}{\mu} \sup_{ T_0+\iota- 2\mu^{2-p} R^{sp}\leq t \leq T_0+\iota}\mathrm{Tail}_{p-1,sp} (u(\frarg ,t);x_0,R) \\
&\leq \frac{1}{\mu }\sup_{ T_0- 2R^{sp}\leq t \leq T_0}\mathrm{Tail}_{p-1,sp} (u(\frarg ,t);x_0,R) \leq 1 .
\end{aligned}
\]
Hence, using Proposition \ref{prop:reg-scal1} for $\tilde{u}_\iota$, we get
$$\abs{\tilde{u}_\iota (\tilde{x}_2,\tilde{t}_2) - \tilde{u}_\iota(\tilde{x}_1,\tilde{t}_1)}\leq C(\abs{\tilde{x}_2 - \tilde{x}_1}^\alpha + \abs{\tilde{t}_2 - \tilde{t}_1}^{\frac{\alpha}{sp-(p-2)\alpha}})  \quad \text{ for } (\tilde{x}_1,\tilde{t}_1) ,\; (\tilde{x}_2,\tilde{t}_2) \in Q_{\frac{1}{2},\frac{1}{2^{s \, p}}}(0,0),$$

with $C= C(n,s,p,q,r,\alpha) $. This translates to 
\begin{equation}\label{eq:holder-final-coveri}
\abs{u(x_2,\tau_2) - u(x_1,\tau_1)} \leq \mu C \Bigl[ \Bigl(\frac{\abs{x_2-x_1}}{R} \Bigr)^\alpha + \Bigl(\frac{\abs{\tau_2-\tau_1}}{R^{s\, p} \mu^{2-p}} \Bigr)^{\frac{\alpha}{sp-(p-2)\alpha}} \Bigr],
\end{equation}
for $(x_1,\tau_1), \; (x_2,\tau_2) \in Q_{\frac{R}{2},  \frac{R^{s \, p}\mu^{2-p}}{2^{s \, p}}}(x_0,T_0+\iota)$. Now we vary $\iota$ to obtain an estimate in the whole $Q_{\frac{R}{2}, (\frac{R}{2})^{s \, p}}$. Specifically we split the interval $[t_1,t_2]$ into $1+ \floor{\mu^{p-2}}$ pieces, say $[\tau_{i+1},\tau_i]$, with $\tau_i - \tau_{i+1}=\frac{\abs{t_2-t_1}}{1+\floor{\mu^{p-2}}}$, $\tau_0=t_2$, and $\tau_{\floor{1+\mu^{p-2}}} = t_1$. Using \eqref{eq:holder-final-coveri} we obtain
\[
\begin{aligned}
\abs{u(x_2,t_2) - u(x_1,t_1)} &\leq \abs{u(x_2,t_1) - u(x_1,t_1)} + \abs{u(x_2,t_2) - u(x_2,t_1)} \\
&\leq \mu C \Bigl(\frac{\abs{x_2-x_1}}{R} \Bigr)^\alpha + \sum_{i=0}^{\floor{\mu^{p-2}}} \abs{u(x_2,\tau_{i}) - u(x_2,\tau_{i+1})} \\
&\leq \mu C \Bigl[  \Bigl(\frac{\abs{x_2-x_1}}{R} \Bigr)^\alpha + \sum_{i=0}^{\floor{\mu^{p-2}}} \Bigl( \frac{\abs{\tau_i-\tau_{i+1}}}{R^{s\, p} \mu^{2-p}} \Bigr)^{\frac{\alpha}{sp-(p-2)\alpha}}    \Bigr] \\
&= \mu C \Bigl[  \Bigl(\frac{\abs{x_2-x_1}}{R} \Bigr)^\alpha + \sum_{i=0}^{\floor{\mu^{p-2}}} \Bigl( \frac{\abs{t_2-t_1}}{R^{s\, p} \mu^{2-p}(1+ \floor{\mu^{p-2}})} \Bigr)^{\frac{\alpha}{sp-(p-2)\alpha}}    \Bigr] \\
&\leq \mu C \Bigl[  \Bigl(\frac{\abs{x_2-x_1}}{R} \Bigr)^\alpha + \sum_{i=0}^{\floor{\mu^{p-2}}} \Bigl( \frac{\abs{t_2-t_1}}{R^{s\, p}} \Bigr)^{\frac{\alpha}{sp-(p-2)\alpha}}    \Bigr] \\
& \leq \mu C \Bigl[  \Bigl(\frac{\abs{x_2-x_1}}{R} \Bigr)^\alpha + 2 \mu ^{p-2}\Bigl( \frac{\abs{t_2-t_1}}{R^{s\, p}} \Bigr)^{\frac{\alpha}{sp-(p-2)\alpha}}    \Bigr],
\end{aligned}
\]
which concludes the desired result.
\end{proof}

\section{Appendix A}
In this section, we spell out the necessary modifications to prove the following theorem \ref{thm:thm1tail} which is a modified version of \cite[Theorem 1.2]{BLS}. 
\begin{theorem}\label{thm:thm1tail}
Let $\Omega\subset\R^n$ be a bounded and open set, $I=(t_0,t_1]$, $p\geq 2$ and $0<s<1$. Suppose $u$
is a local weak solution of
\[
u_t+(-\Delta_p)^s u=0\qquad \mbox{ in }\Omega\times I,
\]
such that 
\begin{equation}
\label{finitesup}
u\in L^{\infty}_{\loc}(I;L^\infty_{\loc}(\Omega)) \cap L^\infty_{\loc} (I; L_{sp}^{p-1}(\R^n)).
\end{equation}
Define the exponents
\begin{equation}
\label{exponents}
\Theta(s,p):=\left\{\begin{array}{rl}
\dfrac{s\,p}{p-1},& \text{ if } s<\dfrac{p-1}{p},\\
&\\
1,& \mbox{ if } s\ge \dfrac{p-1}{p},
\end{array}
\right.\quad \mbox{ and }\quad \Gamma(s,p):=\left\{\begin{array}{rl}
1,& \mbox{ if } s<\dfrac{p-1}{p},\\
&\\
\dfrac{1}{s\,p-(p-2)},& \text{ if } s\ge \dfrac{p-1}{p}.
\end{array}
\right.
\end{equation}
Then 
\[
u\in C^\delta_{x,\loc}(\Omega\times I)\cap C^\gamma_{t,\loc}(\Omega\times I),\qquad \mbox{ for every }0<\delta<\Theta(s,p) \ \mbox{ and } \ 0<\gamma<\Gamma(s,p).
\] 
More precisely, for every $0<\delta<\Theta(s,p)$, $0<\gamma<\Gamma(s,p)$, $R>0$, $x_0\in\Omega$ and $T_0$ such that 
\[
Q_{R,R^{s\,p}}(x_0,T_0)\Subset\Omega\times (t_0,t_1],
\] 
there exists a constant $C=C(n,s,p,\delta, \gamma,\sigma)>0$ such that
\begin{equation}
\label{eq:apriori_spacetime-sigma}
\begin{aligned}
|u(x_1,\tau_1)- & u(x_2,\tau_2)| \leq C\,\bigl(\|u\|_{L^\infty(Q_{R,R^{sp}(x_0,T_0)})} +\sup_{t \in [T_0-R^{sp},T_0]} \mathrm{Tail}_{p-1,sp}(u;x_0,R) + 1\bigr)\, \left(\frac{|x_1-x_2|}{R}\right)^\delta\\
&+C\,\bigl(\|u\|_{L^\infty(Q_{R,R^{sp}(x_0,T_0)}} +\sup_{t \in [T_0-R^{sp},T_0]} \mathrm{Tail}_{p-1,sp}(u;x_0,R) + 1 \bigr)^{p-1}\,\left(\frac{|\tau_1-\tau_2|}{R^{s\,p}}\right)^\gamma.
\end{aligned}
\end{equation}
for any $(x_1,\tau_1),\,(x_2,\tau_2)\in Q_{\sigma R,(\sigma R)^{s\,p}}(x_0,T_0)$. 
\end{theorem}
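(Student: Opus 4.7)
The plan is to follow the strategy of \cite[Theorem 1.2]{BLS} essentially line by line, but carefully tracking the tail contribution so that it appears explicitly on the right hand side through $\sup_t \mathrm{Tail}_{p-1,sp}(u;x_0,R)$ rather than through a less sharp $L^{p-1}_{sp}$ quantity. By translation in space and time and the scaling $\tilde u(x,t) = u(Rx + x_0, R^{sp}t + T_0)/\mathcal M$ with
$$
\mathcal M := 1 + \|u\|_{L^\infty(Q_{R,R^{sp}}(x_0,T_0))} + \sup_{t\in[T_0-R^{sp},T_0]}\mathrm{Tail}_{p-1,sp}(u(\frarg,t);x_0,R),
$$
one reduces to the normalized situation $\|\tilde u\|_{L^\infty(Q_{1,1})} \leq 1$ and $\sup_{t\in[-1,0]}\mathrm{Tail}_{p-1,sp}(\tilde u(\frarg,t);0,1) \leq 1$, still solving the homogeneous equation. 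From here it suffices to prove a Hölder estimate on $Q_{\sigma,\sigma^{sp}}$ with a constant depending only on $n,s,p,\delta,\gamma,\sigma$.

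In the normalized setting, I would carry out the BLS scheme in the following order. First, establish the Caccioppoli-type inequality for the discrete spatial increment $\delta_h^2 u(x,t) = u(x+h,t) + u(x-h,t) - 2u(x,t)$, paying attention to the nonlocal error terms that arise from integrating outside the reference ball: these are precisely the terms where the tail shows up, and the normalization $\sup_t \mathrm{Tail}_{p-1,sp}(\tilde u;0,1) \leq 1$ bounds them by a constant. Second, iterate this Caccioppoli-type bound along an almost-increasing sequence of fractional differentiability exponents, exactly as in \cite{BLS}, until one reaches the threshold $\Theta(s,p)$; at each scale one uses Lemma \ref{lm:tail-shrink} to pass the tail estimate from the larger ball $B_1$ to the smaller balls appearing in the iteration, absorbing the $L^\infty$ contribution (which is already controlled by $1$) without producing new large constants. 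This yields the spatial Hölder estimate for any $\delta < \Theta(s,p)$ with a constant depending on $\delta$ (and blowing up as $\delta \to \Theta$).

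Third, to upgrade from spatial to space-time Hölder regularity I would use the equation as in \cite[Section 5]{BLS}: test it against a mollified version of $u(\cdot,\tau_1) - u(\cdot,\tau_2)$ and exploit the already established spatial regularity together with the $L^\infty$ bound and the tail bound to estimate the nonlocal contribution $(-\Delta_p)^s u$ in dual fractional Sobolev norms. The resulting time-difference estimate, balanced against the spatial Hölder estimate through the scaling invariance $|x|^\Theta \sim |t|^{\Theta/(sp-(p-2)\Theta)}$ of the equation, gives the time exponent $\Gamma(s,p)$ (with the factor $\mathcal M^{p-1}$ on the time part coming from the $(p-1)$-homogeneity of the spatial operator under the rescaling $v \mapsto v/\mathcal M$).

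The main obstacle, and the reason this is not a verbatim citation, is keeping the tail term clean throughout the iteration: at each dyadic scale one must replace $\mathrm{Tail}_{p-1,sp}(u;x_0,R)$ by $\mathrm{Tail}_{p-1,sp}(u;x',r)$ for a nearby center $x'$ and smaller radius $r$, and Lemma \ref{lm:tail-shrink} produces a prefactor $(R/(R-|x'-x_0|))^{n+sp}$ that is harmless as long as one confines oneself to $Q_{\sigma R,(\sigma R)^{sp}}$, which is precisely why the final estimate depends on $\sigma$. Once this bookkeeping is done, undoing the rescaling to $u$ reintroduces the factors $\mathcal M$ and $\mathcal M^{p-1}$ in the space and time parts respectively, giving \eqref{eq:apriori_spacetime-sigma}.
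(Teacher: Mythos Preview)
Your overall strategy is the same as the paper's, but the rescaling you write down does not work for $p>2$ and you are missing a step that is essential for the time estimate.

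The map $\tilde u(x,t) = u(Rx+x_0,R^{sp}t+T_0)/\mathcal M$ does \emph{not} solve the homogeneous equation: a direct computation gives $\partial_t\tilde u + \mathcal M^{p-2}(-\Delta_p)^s\tilde u = 0$. To restore the equation you must also rescale time by $\mathcal M^{2-p}$, i.e.\ set $\tilde u(x,t) = \mathcal M^{-1}u(Rx+x_0,\mathcal M^{2-p}R^{sp}t+T_0)$. But then $\tilde u$ is normalised on $Q_{1,1}$, and that cylinder pulls back to $B_R(x_0)\times(T_0-\mathcal M^{2-p}R^{sp},T_0]$, which for large $\mathcal M$ is a thin slice of the target cylinder $Q_{\sigma R,(\sigma R)^{sp}}(x_0,T_0)$. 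Undoing the scaling therefore gives the H\"older-in-time estimate only on that slice, with constant $\mathcal M\cdot\mathcal M^{(p-2)\gamma}$.

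The paper fixes this by introducing a time-shift parameter $\iota$, applying the normalised estimate on each slice $Q_{\sigma R,\sigma^{sp}\mathcal M^{2-p}R^{sp}}(x_0,T_0+\iota)$, and then, for two times $t_1,t_2$ in the full cylinder, splitting $[t_1,t_2]$ into roughly $\mathcal M^{p-2}$ subintervals and using the triangle inequality. The sum of $\mathcal M^{p-2}$ contributions of size $\mathcal M\,(|t_1-t_2|/R^{sp})^\gamma$ each is what produces the factor $\mathcal M^{p-1}$ in \eqref{eq:apriori_spacetime-sigma}; it is not simply ``$(p-1)$-homogeneity under $v\mapsto v/\mathcal M$'' as you suggest. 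You should add this covering-in-time step explicitly; without it the argument does not reach the full cylinder claimed in the theorem.
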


First we reproduce a modified version of \cite[Proposition 4.1]{BLS}, where instead of a global $L^\infty$ bound we assume $\norm{u}_{L^\infty(B_1 \times [-1,0])} +\sup_{t\in [-1,0]} \mathrm{Tail}_{p-1,sp}(u;0,1)) \leq 1$ .
\begin{proposition}
\label{prop:improve} 
Assume $p\ge 2$ and $0<s<1$. Let  $u$ be a local weak solution of $u_t+(-\Delta_p)^s u=0$ in $B_2\times (-2,0]$. We assume that 
\[
\norm{u}_{L^\infty(B_1\times [-1,0])} + \sup_{t\in [-1,0]} \mathrm{Tail}_{p-1,sp}(u(\frarg,t);0,1) \leq 1,
\]
and that, for some $q\ge p$ and $0<h_0<1/10$, we have
\[
\int_{T_0}^{T_1}\sup_{0<|h|< h_0}\left\|\frac{\delta^2_h u }{|h|^s}\right\|_{L^q(B_{R+4\,h_0})}^q \dd t<+\infty,
\]
for a radius $4\,h_0<R\le 1-5\,h_0$ and two time instants $-1<T_0<T_1\le 0$. Then we have
\begin{equation}
\label{iteralo}
\begin{split}
\int_{T_0+\mu}^{T_1}\sup_{0<\abs{h}< h_0}\left\|\frac{\delta^2_h u}{|h|^{s}}\right\|_{L^{q+1}(B_{R-4\,h_0})}^{q+1} \dd t &+\frac{1}{q+3-p}\sup_{0<\abs{h}< h_0}\left\|\frac{\delta_h u(\frarg,T_1)}{|h|^{\frac{(q+2-p)\,s}{q+3-p}}}\right\|_{L^{q+3-p}(B_{R-4\,h_0})}^{q+3-p}\\
&\leq C\,\int_{T_0}^{T_1}\left(\sup_{0<\abs{h}< h_0}\left\|\frac{\delta^2_h u }{\abs{h}^s}\right\|_{L^q(B_{R+4h_0})}^q+1\right)\dd t,
\end{split}
\end{equation}
for every $0<\mu<T_1-T_0$.
Here $C=C(n,s,p,q,h_0,\mu)>0$ and $C\nearrow +\infty$ as $h_0\searrow 0$ or $\mu\searrow 0$.
\end{proposition}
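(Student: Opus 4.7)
The plan is to follow the discrete differentiation scheme of \cite[Proposition~4.1]{BLS} essentially verbatim, the only substantive change being that terms in which the BLS proof invokes a global $L^\infty$ bound on $u$ now have to be re-estimated using the combination $\norm{u}_{L^\infty(B_1\times[-1,0])}+\sup_{t}\mathrm{Tail}_{p-1,sp}(u(\frarg,t);0,1)$, which by hypothesis is bounded by $1$. Concretely, fix $0<|h|<h_0$, let $\eta\in C_c^\infty(B_{R+3h_0})$ be a cutoff with $\eta\equiv 1$ on $B_{R-3h_0}$ and $|\nabla\eta|\leq C/h_0$, and write the equation satisfied by the first difference $\delta_h u$ by subtracting the weak formulations for $u(\frarg,t)$ and $u(\frarg+h,t)$. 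Test this equation with a function of the form $\varphi=J_{q+3-p}(\delta_h u)\,\eta^p\,\chi(t)$, where $\chi$ is a smooth temporal cutoff supported in $[T_0,T_1]$ and equal to $1$ on $[T_0+\mu,T_1]$. This choice is justified by Proposition \ref{prop:testing} together with a Steklov regularisation in time.

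The time derivative term, after the chain rule $\partial_t|\delta_h u|^{q+3-p}=(q+3-p)J_{q+3-p}(\delta_h u)\,\partial_t\delta_h u$, yields the boundary contribution at $t=T_1$ matching the second summand on the left of \eqref{iteralo}; the derivative of $\chi$ produces an extra factor $1/\mu$ absorbed into $C$. For the nonlocal bilinear form, I would split into the \emph{local} piece supported in $B_1\times B_1$ and the \emph{long-range} piece where one of the variables lies outside $B_1$. The local piece is treated as in BLS by expanding $J_p(u(x+h,t)-u(y+h,t))-J_p(u(x,t)-u(y,t))$ via a discrete Leibniz rule and invoking the pointwise inequality of Lemma \ref{lm:Lemma A.2 in Brasc} to produce a lower bound of $W^{s,p}$-type on an appropriate power of $\delta_h u$; a Sobolev-type embedding then converts this quantity into control of $\delta_h^2 u/|h|^s$ in $L^{q+1}$, yielding the first summand on the left of \eqref{iteralo}.

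The main obstacle, and the only step materially different from BLS, is the long-range nonlocal contribution: for $y\in\R^n\setminus B_1$ and $x\in\mathrm{supp}\,\eta\Subset B_1$, one uses
\[
\absb{J_p(u(x,t)-u(y,t))}\leq C(p)\bigl(|u(x,t)|^{p-1}+|u(y,t)|^{p-1}\bigr),
\]
together with $|x-y|\geq c(1+|y|)$ uniformly on the relevant region, to bound this piece tested against $\varphi$ by
\[
C\bigl(\norm{u}_{L^\infty(B_1\times[-1,0])}^{p-1}+\sup_{t}\mathrm{Tail}_{p-1,sp}(u(\frarg,t);0,1)^{p-1}\bigr)\int_{T_0}^{T_1}\norm{\varphi(\frarg,t)}_{L^1(B_1)}\dd t\leq C\int_{T_0}^{T_1}\norm{\varphi(\frarg,t)}_{L^1(B_1)}\dd t,
\]
by the hypothesis. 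Since $\norm{\varphi(\frarg,t)}_{L^1}$ is controlled, via Young's inequality and the uniform bound $\norm{\delta_h u}_{L^\infty}\leq 2$, by $1+\sup_{|h|<h_0}\norm{\delta^2_h u/|h|^s}_{L^q(B_{R+4h_0})}^q$, this produces exactly the right-hand side of \eqref{iteralo}, including the additive $+1$ that did not appear in BLS. After absorbing an $\varepsilon$-fraction of the local piece on the left and taking the supremum over $|h|<h_0$, one obtains \eqref{iteralo}; the final passage from control of powers of $\delta_h u$ to control of $\delta^2_h u/|h|^s$ on the left is by the discrete Besov embeddings used in BLS and needs no modification.
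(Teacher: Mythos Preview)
Your proposal is correct and follows essentially the same approach as the paper: both identify that the only place in \cite[Proposition~4.1]{BLS} where the global $L^\infty(\R^n\times[-1,0])$ bound is used is in the long-range nonlocal terms (the paper calls them $\mathcal{I}_2,\mathcal{I}_3$, corresponding to your split where one variable leaves the local region), and both re-estimate these by splitting the outer integral so that the contribution from $\R^n\setminus B_1$ is controlled by the tail and the contribution from the annulus is controlled by the local $L^\infty$ bound. The paper is slightly more explicit about one point you gloss over: the nonlocal term also involves the \emph{shifted} function $u_h(y)=u(y+h,t)$, and one must check that $\int_{\R^n\setminus B_R}|u_h(y)|^{p-1}/|x-y|^{n+sp}\,\dd y$ is likewise bounded by the tail plus the local $L^\infty$ norm; this is done via a change of variables and the observation that $|y-h|/|y|\geq 2/3$ on the relevant region, but it fits directly into your scheme.
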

\begin{proof}
In the proof of \cite[Proposition 4.1]{BLS}, the $L^\infty(\R^n \times [0,1])$ boundedness is only used in Step 3, in the estimation of the nonlocal terms $\Ical_2$ and $\Ical_3$, which are defined by
\[
\begin{split}
\mathcal{I}_2(t):=\int_{B_\frac{R+r}{2}\times (\R^n\setminus B_R)}& \frac{\Big(J_p(u_h(x)-u_h(y))-J_p(u(x)-u(y))\Big)}{|h|^{1+\vartheta\,\beta}}\, \times J_{\beta+1}(u_h(x)-u(x))\,\eta(x)^p\dd \mu ,
\end{split}
\]
and
\[
\begin{split}
\mathcal{I}_3(t):=-\iint_{(\R^n\setminus B_R)\times B_\frac{R+r}{2}}& \frac{\Big(J_p(u_h(x)-u_h(y))-J_p(u(x)-u(y))\Big)}{\abs{h}^{1+\vartheta\,\beta}}\,\times J_{\beta+1}(u_h(y)-u(y))\,\eta(y)^p\dd \mu .
\end{split}
\]
We also recall the definition of $\tilde{\Ical}_2$ and $\tilde{\Ical}_3$
$$\tilde{\Ical}_i := \int_{T_0}^{T_1} \Ical_i(t) \,\tau(t) \dd t, \qquad i=2,3,$$
where $\tau$ is smooth function $0\leq \tau \leq 1$ such that
$$\tau \equiv 1 \qquad \text{on} \; [T_0+\mu, +\infty), \qquad \tau \equiv 0 \qquad \text{on} \; (-\infty,T_0], \qquad \tau^\prime \leq \frac{C}{\mu}. $$
 The general argument is the same but instead of using the $L^\infty$ norm of $u(y)$ we can keep the inequality as it is and write
\[
\absb{(J_p(u_h(x)-u_h(y))-J_p(u(x)-u(y)))J_{\beta + 1}(\delta_h u(x))} \leq C(1+\abs{u_h(y)}^{p-1} + \abs{u(y)})\abs{\delta_h u(x)}^\beta,
\]
where $x \in B_{R-2h_0} $ and $4h_0 < R < 1-5h_0$. Therefore, $\abs{x-y} \geq (1-\frac{R-2h_0}{R})\abs{y} \geq C(h_0) \abs{y}$ and we get
\[
\begin{aligned}
\int_{\R^n \setminus B_R} \frac{1 + \abs{u(y)}^{p-1} + \abs{u_h(y)}^{p-1}}{\abs{x-y}^{n+sp}} \dd y \leq \,
&C(n,s,p,h_0) + (C(h_0))^{n+sp}\; \int_{\R^n \setminus B_R} \frac{\abs{u(y)}^{p-1}}{\abs{y}^{n+sp}} \dd y\\
&+ (C(h_0))^{n+sp}\; \int_{\R^n \setminus B_R} \frac{\abs{u_h(y)}^{p-1}}{\abs{y}^{n+sp}} \dd y .
\end{aligned}
\]
Now
\[
\begin{aligned}
\int_{\R^n \setminus B_R} \frac{\abs{u(y)}^{p-1}}{\abs{y}^{n+sp}} \dd y &\leq \int_{\R^n \setminus B_1} \frac{\abs{u(y)}^{p-1}}{\abs{y}^{n+sp}} \dd y + R^{-n-sp} \int_{B_1} \abs{u}^{p-1} \dd y \\
&\leq 1 + n\omega_n R^{-n-sp} \leq 1+ n \omega_n (4h_0)^{-n-sp}  ,
\end{aligned}
\]
and for $u_h$ 
\[
\begin{aligned}
\int_{\R^n \setminus B_R} \frac{\abs{u(y+h)}^{p-1}}{\abs{y}^{n+sp}} \dd y &\leq
\int_{\R^n \setminus B_R(h)} \frac{\abs{u(y)}^{p-1}}{\abs{y-h}^{n+sp}} \dd y \leq 
(\frac{3}{2})^{n+sp} \int_{\R^n \setminus B_R(h)} \frac{\abs{u(y)}^{p-1}}{\abs{y}^{n+sp}} \dd y \\ 
&\leq (\frac{3}{2})^{n+sp} \bigl[ \int_{\R^n \setminus B_1} \frac{\abs{u(y)}^{p-1}}{\abs{y}^{n+sp}} \dd y + R^{-n-sp} \int_{B_1} \abs{u(y)}^{p-1} \dd y \bigr] \\
&\leq \bigl(\frac{3}{2}\bigl)^{n+sp}(1 + n\omega_n R^{-n-sp}) \leq \bigl(\frac{3}{2}\bigl)^{n+sp}(1 + n\omega_n (4h_0)^{-n-sp}).
\end{aligned}
\]
Here we have used $B_R(h) \subset B_1$, and $\frac{\abs{y-h}}{\abs{y}} = \abs{\frac{y}{\abs{y}}-\frac{h}{\abs{y}}} \geq \abs{\frac{y}{\abs{y}}} - \abs{\frac{h}{\abs{y}}} \geq 1- \abs{\frac{h_0}{R-h_0}} \geq \frac{2}{3}$ .
Using this we get
$$\int_{\R^n \setminus B_R} \frac{1 + \abs{u(y)}^{p-1} + \abs{u_h(y)}^{p-1}}{\abs{x-y}^{n+sp}} \dd y \leq C(n,s,p,h_0),$$
and we can conclude
$$\abs{\tilde{\Ical}_2}+\abs{\tilde{\Ical}_3} \leq C(n,s,p,h_0)\int_{T_0}^{T_1}\int_{B_{\frac{R+r}{2}}} \frac{\abs{\delta_h u}^\beta}{\abs{h}^{1+\vartheta \, \beta}} \tau \dd x \dd t 
\leq C(h_0,n,s,p,q,\beta) \int_{T_0}^{T_1} \Bigl ( 1+  \int_{B_R} \absB{\frac{\delta_h u}{\abs{h}^{\frac{1+\vartheta \, \beta}{\beta}}}}^{\frac{\beta q }{q-p+2}}\Bigr)\tau \dd t .$$
Which is the same as equation (4.6) in \cite{BLS}.
\end{proof}
We can estimate the $W^{s,p}$ semi-norm of a solution as follows. The proof follows the argument in \cite[Lemma 7.1]{BLS}.
\begin{lemma}\label{lm:seminormestimate}
Let $p\ge 2$ and $0<s<1$. Let $u$ be a local weak solution of
\[
\partial_t u+(-\Delta_p)^s u=0,\quad \text{ in } B_{2R}\times (-2\,R^{s\,p},0],
\]
such that $u\in L^\infty(B_{2R}\times[-R^{s\,p},0])$.
Then 
\[
\left(R^{-n}\,\int_{-\frac{7}{8}\,R^{s\,p}}^{0}[u]_{W^{s,p}(B_{R}(x_0))}^p\,\dd t\right)^{\frac{1}{p}} \le C\,\Big(\|u\|_{L^\infty(B_{2R}\times[-R^{s\,p},0])}+ \sup_{t\in [-R^{sp},0]}\mathrm{Tail}_{p-1,sp}(u;0,2R)+1\Big),
\]
for some $C=C(n,s,p)>0$.
\end{lemma}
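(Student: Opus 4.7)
The plan is to carry out a Caccioppoli-type energy estimate by testing the equation against a localized version of $u$ itself, and to absorb the long-range contributions into the tail quantity. Without loss of generality assume $x_0=0$.

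First I would fix a spatial cutoff $\eta\in C_c^\infty(B_{7R/4})$ with $\eta\equiv 1$ on $B_R$, $0\le\eta\le 1$, and $|\eta(x)-\eta(y)|\leq C|x-y|/R$, and a temporal cutoff $\chi\in C^\infty(\R)$ with $\chi\equiv 0$ on $(-\infty,-R^{sp}]$, $\chi\equiv 1$ on $[-\tfrac{7}{8}R^{sp},\infty)$, and $|\chi'|\leq C/R^{sp}$. I would then test the weak formulation of $u_t+(-\Delta_p)^su=0$ on $(-R^{sp},0]$ with $\phi(x,t)=u(x,t)\,\eta(x)^p\,\chi(t)$, justified via the approximation scheme of Appendix B together with the hypothesis $u\in L^\infty(B_{2R}\times[-R^{sp},0])$ (which also gives $u\in L^p$ locally, so the spatial bilinear form makes sense). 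The time term produces $\tfrac{1}{2}\int_{B_{7R/4}} u(\cdot,0)^2\eta^p\dd x -\tfrac{1}{2}\iint u^2\eta^p\chi'\dd x\dd t$, both controlled by $CR^n\|u\|_{L^\infty(B_{2R}\times[-R^{sp},0])}^2$ using the upper bound on $|\chi'|$.

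For the spatial bilinear form the essential ingredient is the standard pointwise inequality (valid for $p\geq 2$)
\[
J_p(a-b)\bigl(a\,\eta_1^p-b\,\eta_2^p\bigr)\geq c_p\,|a-b|^p\,\max\{\eta_1,\eta_2\}^p \;-\; C_p\,(|a|^p+|b|^p)\,|\eta_1-\eta_2|^p,
\]
applied with $a=u(x,t)$, $b=u(y,t)$, $\eta_1=\eta(x)$, $\eta_2=\eta(y)$. Restricting the resulting nonnegative term to $B_R\times B_R$ (where $\eta\equiv 1$) delivers a multiple of $\int_{-R^{sp}}^0 [u]_{W^{s,p}(B_R)}^p\dd t$ on the left, while the correction $\iint(|u(x)|^p+|u(y)|^p)|\eta(x)-\eta(y)|^p|x-y|^{-n-sp}\dd x\dd y$ is estimated, using $|\eta(x)-\eta(y)|\leq C|x-y|/R$, by $CR^{n-sp}\|u\|_{L^\infty}^p$, which after integrating against $\chi$ in time is of order $CR^n\|u\|_{L^\infty}^p$.

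The main obstacle is handling the truly nonlocal remainder, namely the portion of the double integral where one variable lies outside $B_{2R}$. I would write this piece as $2\int\chi\dd t\int_{B_{7R/4}}u(x)\eta(x)^p\int_{\R^n\setminus B_{2R}}\frac{J_p(u(x)-u(y))}{|x-y|^{n+sp}}\dd y\dd x$ and use $|J_p(u(x)-u(y))|\leq 2^{p-1}(|u(x)|^{p-1}+|u(y)|^{p-1})$ together with the geometric fact that $|x-y|\geq |y|/8$ whenever $x\in B_{7R/4}$ and $|y|\geq 2R$. The inner $y$-integral is then bounded by $C(2R)^{-sp}\bigl(\|u\|_{L^\infty(B_{2R})}^{p-1}+\mathrm{Tail}_{p-1,sp}(u(\cdot,t);0,2R)^{p-1}\bigr)$, so the total contribution is of order $CR^n\bigl(\|u\|_{L^\infty}^p+\|u\|_{L^\infty}\sup_t\mathrm{Tail}_{p-1,sp}(u;0,2R)^{p-1}\bigr)$. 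Collecting all pieces, dividing by $R^n$, taking the $p$-th root and using Young's inequality $ab\leq a^p/p+b^{p'}/p'$ to decouple the cross product $\|u\|_{L^\infty}\cdot\mathrm{Tail}^{p-1}$ (with the additive $+1$ absorbing harmless constants depending only on $n,s,p$) yields the stated estimate.
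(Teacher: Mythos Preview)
Your proof is correct and follows essentially the same Caccioppoli strategy as the paper: test against $u$ times spatial and temporal cutoffs, separate the local and nonlocal pieces, and control the long-range part by the tail. The only noteworthy difference in execution is that the paper first shifts to $\widetilde u=u+k$ with $k=\|u\|_{L^\infty}+\sup_t\mathrm{Tail}_{p-1,sp}(u;0,2R)+1$, so that $\widetilde u\ge 1$ on $B_{2R}\times[-R^{sp},0]$, and then quotes the Caccioppoli inequality of \cite[Lemma~7.1]{BLS} for nonnegative solutions (whose error terms appear in the form $\max\{\widetilde u(x),\widetilde u(y)\}^p|\phi(x)-\phi(y)|^p$ and $\int(u^-)^{p-1}$-type tail integrals). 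Your direct use of the algebraic inequality $J_p(a-b)(a\eta_1^p-b\eta_2^p)\ge c_p|a-b|^p\max\{\eta_1,\eta_2\}^p-C_p(|a|^p+|b|^p)|\eta_1-\eta_2|^p$ avoids that shift and is slightly more self-contained; the tail and time-derivative estimates are otherwise handled in the same way.
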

 \begin{proof}
Without loss of generality, we may suppose that $x_0=0$.
Let
\[
k = \norm{u}_{L^\infty(B_{2R}\times[-R^{s\,p},0])}+ \sup_{t\in [-R^{sp},0]}\mathrm{Tail}_{p-1,sp}(u(\frarg,t);0,2R) + 1\qquad text{and }\qquad \widetilde u = u + k.
\]
Then $\widetilde u$ is a local weak solution in $B_2\times (-2\,R^{s\,p},0]$ and $\widetilde u\ge 1$ in $B_{2R}\times[-R^{s\,p},0]$. We choose $\phi$ and $\psi$ exactly as \cite[Lemma 7.1]{BLS}, that is 
\[
\eta \in C^\infty_0(2R) ,\qquad\eta \equiv 1\ \text{ in }\ B_{ R},\qquad |\nabla \eta|\le \frac{C}{R}\qquad \mbox{ and }\qquad \eta \equiv 0 \text{ in } \R^n \setminus B_{\frac{3}{2}R} ;
\]
and 
\[
\psi \in C^\infty(\R), \qquad \psi(t)=0 \text{ for }  t\leq -R^{s\,p} , \qquad \psi \equiv 1\ \text{ in }\ \left[-\frac{7}{8}\,R^{s\,p},0\right]\qquad \mbox{ and }\qquad |\psi'|\le \frac{C}{R^{s\,p}}.
\]
Then for $\phi(x,t)= \eta(x) \phi(t)$ we get
\[
\begin{aligned}
        \int_{-\frac{7}{8}R^{s\, p}}^0 &[\tilde{u}(\frarg,t)]_{W^{s,p}(B_R)}^p \dd t \leq \int_{-R^{s\,p}}^{0} \Big[\widetilde u(\frarg,t)\,\phi(\frarg,t)\Big]^p_{W^{s,p}(B_{2R})}\, \dd t\\
        &\leq C \int_{-R^{s\,p}}^{0} \iint_{B_{2R}\times B_{2R}} \max\Big\{\widetilde u(x,t),\, \widetilde u(y,t)\Big\}^p\,|\phi(x,t)-\phi(y,t)|^p\, \dd \mu \dd t\\
&+C\left (\sup_{x\in\mathrm{supp\,}\eta}\int_{\R^n\setminus B_{2R}}\frac{\dd y}{|x-y|^{n+s\,p}}\biggr)\biggl (\int_{-R^{s\,p}}^{0}\int_{B_{2R}}\widetilde u(x,t)^p\,\phi(x,t)^p\, \dd x \dd t\right)\\
&+C\Bigl(\sup_{t \in [-R^{s\,p},0]} \sup_{x\in\mathrm{supp\,}\eta}\int_{\R^n\setminus B_{2R}}\frac{(u(y,t)^+)^{p-1}}{|x-y|^{n+s\,p}}\,\dd y \Bigr) \int_{-R^{s\,p}}^{0} \,\int_{B_{2R}}\widetilde u(x,t)\,\phi(x,t)^p\,\dd x \dd t\\
                &+ \frac{1}{2}\int_{-R^{s\,p}}^{0} \int_{B_{2R}}\widetilde u(x,t)^{2} \left ( \frac{\partial \phi^p}{\partial t} \right )^{+}\, \dd x\, \dd t+\int_{B_{2R}} \widetilde u (x,0)\, \dd x \\
                &\leq C\,R^n\,(k^p+k^2+k)\leq C\,R^n\,k^p.
\end{aligned}
\]
The only difference in the proof is in estimating the term  
$$\sup_{x\in\mathrm{supp\,}\eta}\int_{\R^n\setminus B_{2R}}\frac{(u(y,t)^+)^{p-1}}{|x-y|^{n+s\,p}}\dd y .$$
 Noticing that for $x \in \supp \eta \subset B_{\frac{3}{2}R}$ we have $\frac{\abs{x-y}}{\abs{y}} \geq 1- \frac{\abs{x}}{\abs{y}} \geq 1-\frac{3/2 R}{2R} = \frac{1}{4}$, we get
\[
\int_{\R^n\setminus B_{2R}}\frac{(u(y,t)^+)^{p-1}}{|x-y|^{n+s\,p}}\dd y \leq 4^{n+s\,p}R^{-s\,p} \; \mathrm{Tail}_{p-1,sp}^{p-1}(u;0,2R) \leq C\,R^{-s\,p} k^{p-1} .
\]
\end{proof}
We can now prove the following modified version of \cite[Theorem 4.2]{BLS}.
\begin{theorem}[Spatial almost $C^s$ regularity]
\label{teo:1}
Let $\Omega\subset \R^n$ be a bounded and open set, $I=(t_0,t_1]$, $p\geq 2$ and $0<s<1$. Suppose $u$ is a local weak solution of 
\[
u_t+(-\Delta_p)^s u=0\qquad \text{ in }\Omega\times I,
\]
such that $u\in L^\infty_{\loc}(I;L^\infty(\Omega)) \cap L^\infty_{\loc}(I;L_{sp}^{p-1}(\R ^n))$.
Then $u\in C_{x,\loc}^\delta(\Omega\times I)$ for every $0<\delta<s$. 
\par
More precisely, for every $0<\delta<s$, $R>0$ and every $(x_0,T_0)$ such that 
\[
Q_{2R,2R^{s\,p}}(x_0,T_0)\Subset\Omega\times (t_0,t_1],
\] 
there exists a constant $C=C(n,s,p,\delta)>0$ such that
\begin{equation}
\label{apriori1}
\sup_{t\in \left[T_0-\frac{R^{s\,p}}{2},T_0\right]} [u(\frarg,t)]_{C^\delta(B_{R/2}(x_0))}\leq \frac{C}{R^\delta}\,\left( 1+ \norm{u}_{L^\infty(B_{2R}(x_0) \times [T_0-R^{s\,p},T_0])}+ \sup_{t \in [T_0-R^{sp},T_0]} \mathrm{Tail}_{p-1,sp}(u;x_0,2R)\right)
\end{equation}
\end{theorem}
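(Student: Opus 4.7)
The plan is to reduce to a normalized problem on the unit cylinder by a standard rescaling, and then to iterate Proposition \ref{prop:improve} starting from the energy estimate of Lemma \ref{lm:seminormestimate}. Introduce the scale-invariant quantity
$$\mathcal{M} := 1 + \|u\|_{L^\infty(B_{2R}(x_0) \times [T_0-R^{sp},T_0])} + \sup_{t \in [T_0-R^{sp},T_0]} \mathrm{Tail}_{p-1,sp}(u(\frarg,t);x_0,2R),$$
and consider $\tilde u(y,\tau) := \mathcal{M}^{-1} u(x_0 + Ry,\, T_0 + R^{sp}\tau)$. A direct change of variables shows that $\tilde u$ is a local weak solution of $\partial_t \tilde u + (-\Delta_p)^s \tilde u = 0$ in $B_2 \times (-2,0]$ and satisfies $\|\tilde u\|_{L^\infty(B_2 \times [-1,0])} \leq 1$ together with $\sup_{t \in [-1,0]} \mathrm{Tail}_{p-1,sp}(\tilde u(\frarg,t);0,2) \leq 1$. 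Undoing the scaling converts any estimate $[\tilde u(\frarg,\tau)]_{C^\delta(B_{1/2})} \leq C(n,s,p,\delta)$ for $\tau \in [-1/2,0]$ into \eqref{apriori1}, so it suffices to work with $\tilde u$.

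I would then establish the initial rung of the iteration. Lemma \ref{lm:seminormestimate} applied to $\tilde u$ with $R=1$ gives
$$\int_{-7/8}^{0} [\tilde u(\frarg,t)]_{W^{s,p}(B_{1})}^p \dd t \leq C(n,s,p),$$
which, via the standard equivalence between Gagliardo seminorms and $L^p$ norms of second order difference quotients (as used in \cite{BLS}), translates into
$$\int_{-7/8}^{0} \sup_{0 < |h| < h_0}\left\| \frac{\delta^2_h \tilde u}{|h|^{s}} \right\|_{L^p(B_{R_0})}^p \dd t \leq C,$$
for some small $h_0=h_0(n,s,p)$ and radius $R_0<1$. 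This is precisely the starting hypothesis of Proposition \ref{prop:improve} at the exponent $q=p$.

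Next, I would iterate Proposition \ref{prop:improve}, choosing at the $k$-th step the exponent $q_k = p + k$, a time shrinkage $\mu_k = \mu_0 2^{-k}$ and a spatial shrinkage of size $4h_0$, with $h_0$ kept fixed and all losses summable so that the final space-time domain still contains $B_{1/2} \times [-1/2,0]$. After $K$ steps, this yields
$$\int_{-1/2}^{0} \sup_{0 < |h| < h_0} \left\| \frac{\delta^2_h \tilde u}{|h|^{s}} \right\|_{L^{q_K}(B_{1/2})}^{q_K} \dd t \leq C(n,s,p,K).$$
Choosing $K$ large enough that $q_K > n/s$, a fractional Morrey-type embedding (applied slice by slice in $t$, as carried out in \cite[Theorem 4.2]{BLS}) converts control of second order differences into pointwise spatial H\"older regularity with exponent $s - n/q_K$, which exceeds any prescribed $\delta < s$ for $K$ sufficiently large. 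A final essential-supremum-in-time argument delivers \eqref{apriori1}.

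The main technical obstacle will be the bookkeeping in the iteration, since the constants in Proposition \ref{prop:improve} blow up as $h_0 \searrow 0$ or $\mu \searrow 0$; the summable geometric choice above is essential to obtain a finite constant depending only on $n,s,p,\delta$. A secondary subtlety is that the initial step requires the finiteness hypothesis on $\sup_h \|\delta^2_h u / |h|^s\|_{L^p}$, which is not immediately the $W^{s,p}$ estimate but follows from it by the Besov-type inclusions used implicitly throughout \cite{BLS}. Once these are dealt with, the rest of the argument is structurally identical to \cite[Theorem 4.2]{BLS}, the only change being that the global $L^\infty$ assumption has been replaced by the local one, which is precisely what the modified Proposition \ref{prop:improve} and Lemma \ref{lm:seminormestimate} were designed to accommodate.
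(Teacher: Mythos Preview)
Your rescaling $\tilde u(y,\tau) := \mathcal{M}^{-1} u(x_0 + Ry,\, T_0 + R^{sp}\tau)$ does \emph{not} produce a solution of the same equation when $p>2$. The fractional $p$-Laplacian is $(p-1)$-homogeneous, so if $\tilde u=\lambda\,u(R\,\cdot,\mu\,\cdot)$ one computes $\partial_\tau\tilde u=\lambda\mu\,u_t$ while $(-\Delta_p)^s\tilde u=\lambda^{p-1}R^{sp}(-\Delta_p)^s u$; matching these forces $\mu=\lambda^{p-2}R^{sp}$. With your choice $\lambda=\mathcal M^{-1}$ and $\mu=R^{sp}$ this fails unless $\mathcal M=1$. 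The paper therefore rescales time by the factor $\mathcal M_R^{2-p}R^{sp}$ (see the definition of $u_{R,\alpha}$), and this is not a cosmetic change: since $\mathcal M_R\ge 1$, the normalized estimate $[\tilde u(\cdot,\tau)]_{C^\delta(B_{1/2})}\le C$ for $\tau\in[-1/2,0]$ scales back to a H\"older bound only on the short time interval $[T_0-\tfrac12\mathcal M_R^{2-p}R^{sp},T_0]$, not on $[T_0-R^{sp}/2,T_0]$. The paper recovers the full interval by introducing a free time shift $\alpha\in[-R^{sp}(1-\mathcal M_R^{2-p}),0]$ in the rescaling and varying it; your argument omits this step entirely.

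A second point: the paper's normalization constant $\mathcal M_R$ also includes the term $\bigl(R^{-n}\int_{-7R^{sp}/8}^{0}[u]^p_{W^{s,p}(B_R)}\,dt\bigr)^{1/p}$, so that after rescaling one has directly $\int_{-7/8}^0[u_{R,\alpha}]^p_{W^{s,p}(B_1)}\,dt\le 1$. Lemma~\ref{lm:seminormestimate} is then used only to absorb this extra term back into the right-hand side of \eqref{apriori1}. You instead apply the lemma \emph{after} rescaling, which is fine in spirit, but once the time rescaling is corrected you must check that the lemma applies to $u_{R,\alpha}$ on the appropriate shifted interval. Apart from these issues, the iteration of Proposition~\ref{prop:improve} and the passage to $C^\delta$ regularity are exactly as in \cite[Theorem~4.2]{BLS} and the paper.
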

\begin{proof}
The proof is essentially the same as the proof of \cite[Theorem 4.2]{BLS}. We assume for simplicity that $x_0=0$ and $T_0=0$, and set
\[
\mathcal{M}_R = \|u\|_{L^\infty(B_{2R} \times [-R^{s\,p},0])}+ \sup_{t \in [-R^{sp},0]} \mathrm{Tail}_{p-1,sp}(u;0,R)+\left(R^{-n}\,\int_{-\frac{7}{8}\,R^{s\,p}}^{0} [u]^p_{W^{s,p}(B_{R})}\dd t \right)^\frac{1}{p} +1.
\]
Notice that by Lemma \ref{lm:seminormestimate} we have
$$\Mcal_R \leq C\Bigl( \|u\|_{L^\infty(B_{2R} \times [-R^{s\,p},0])}+ \sup_{t \in [-R^{sp},0]} \mathrm{Tail}_{p-1,sp}(u;0,2R) \Bigr) + 1.$$
Let $\alpha\in [-R^{s\,p}(1-\Mcal_R^{2-p}),0]$ and define
\[
u_{R,\alpha}(x,t):=\frac{1}{\Mcal_R}\,u\left(R\,x,\frac{1}{\Mcal_R^{p-2}}\,R^{s\,p}\,t+\alpha\right),\qquad \text{ for }x\in B_2,\ t\in (-2,0].
\]
Then $u_{R,\alpha}(x,t)$ is a local weak solution of 
$$u_t+(-\Delta_p)^s u=0,\qquad \text{ in } B_2\times (-2,0],$$
that satisfies
$$\norm{u_{R,\alpha}}_{L^\infty(B_{2} \times [-1,0])} + \sup_{t \in [-1,0]} \mathrm{Tail}_{p-1,sp}(u(\frarg,t);0,1) \leq 1,\qquad  \int_{-\frac{7}{8}}^0 [u_{R,\alpha}]^p_{W^{s,p}(B_1)}\, \dd t \leq 1  .$$
This function satisfies the assumption of Proposition \ref{prop:improve}, and we can do the same argument as in \cite{BLS} to obtain
$$\sup_{t\in [-1/2,0]}[u_{R,\alpha}(\frarg,t)]_{C^{\delta}(B_{1/2})}\leq C(n,s,p,\delta), $$
for a $C$ independent of $\alpha$ and by scaling back we get
$$
\sup_{\alpha-\frac{1}{2}\Mcal_R^{2-p}\,R^{s\,p}\leq t\leq 0}[u(\frarg,t)]_{C^{\delta}(B_{R/2})}\leq \frac{C}{R^\delta}\Mcal_R.
$$
By varying $\alpha\in [-R^{s\,p}(1-\Mcal_R^{2-p}),0]$ we get the desired result. 
\end{proof}
We now address the improved regularity and start with the following modified version of \cite[Proposition 5.1]{BLS}.
\begin{proposition}
Assume $p\ge 2$ and $0<s<1$. Let  $u$ be a local weak solution of $u_t+(-\Delta_p)^s u=0$ in $B_2\times (-2,0]$, such that
$$
\|u\|_{L^{\infty}(B_2 \times[-1,0])} + \sup_{t \in [-1,0]} \mathrm{Tail}_{p-1,sp}(u;0,2) \le 1,
$$
Assume further that for some $0<h_0<1/10$ and $\vartheta<1$, $\beta\ge 2$ such that $(1+\vartheta\, \beta)/\beta<1$, we have
$$
\int_{T_0}^{T_1}\sup_{0<|h|\leq h_0}\left\|\frac{\delta^2_h u }{|h|^\frac{1+\vartheta\, \beta}{\beta}}\right\|_{L^\beta(B_{R+4\,h_0})}^\beta \dd t<+\infty ,
$$
for a radius $4\,h_0<R\le 1-5\,h_0$ and two time instants $-1<T_0< T_1\le 0$. Then 
\begin{equation}
\label{iteralo2}
\begin{aligned}
\int_{T_0+\mu}^{T_1} &\sup_{0<\abs{h}< h_0}\left\|\frac{\delta^2_h u}{\abs{h}^{\frac{1+s\,p+\vartheta\,\beta}{\beta-1+p}}}\right\|_{L^{\beta-1+p}(B_{R-4\,h_0})}^{\beta-1+p} \dd t +\frac{1}{\beta+1}\sup_{0<\abs{h}< h_0}\left\|\frac{\delta_h u(\frarg,T_1)}{|h|^{\frac{1+\vartheta\,\beta}{\beta+1}}}\right\|_{L^{\beta+1}(B_{R-4\,h_0})}^{\beta+1}\\
&\leq C\,\int_{T_0}^{T_1}\sup_{0<\abs{h}< h_0}\left(\left\|\frac{\delta^2_h u }{\abs{h}^\frac{1+\vartheta\, \beta}{\beta}}\right\|_{L^\beta(B_{R+4\,h_0})}^\beta+1\right) \dd t ,
\end{aligned}
\end{equation}
for every $0<\mu<T_1-T_0$.
Here $C$ depends on the $n$, $h_0$, $s$, $p$, $\mu$ and $\beta$.
\end{proposition}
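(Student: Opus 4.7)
The plan is to follow the structure of \cite[Proposition 5.1]{BLS}, making the same kind of modification as in the proof of Proposition \ref{prop:improve} above. More precisely, one tests the weak formulation of $u_t+(-\Delta_p)^s u=0$ with a discrete-difference test function of the form
\[
\varphi(x,t)=\tau(t)\,\eta(x)^p\,J_{\beta+1}\bigl(\delta_h u(x,t)\bigr)\,|h|^{-(1+\vartheta\beta)},
\]
where $\eta$ is a spatial cutoff supported in $B_{(R+r)/2}$ with $\eta\equiv 1$ on $B_{R-4h_0}$, and $\tau$ is a time cutoff vanishing at $T_0$ and equal to $1$ on $[T_0+\mu,T_1]$. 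After discrete integration by parts in space (shifting the $h$-translation), the left-hand side of the resulting identity produces, via Lemma \ref{lm:pointwise-ineq} and the Sobolev embedding, the good term on the left of \eqref{iteralo2}: an $L^{\beta-1+p}$ norm of the second difference in space, plus the $L^{\beta+1}$ norm of the first difference at time $T_1$.

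The right-hand side splits into a local (finite range) contribution $\tilde{\mathcal{I}}_1$, where the two points both lie in $B_{(R+r)/2}$, and nonlocal contributions $\tilde{\mathcal{I}}_2$, $\tilde{\mathcal{I}}_3$ where one of the two points lies in $\R^n\setminus B_R$. The local term $\tilde{\mathcal{I}}_1$ is handled verbatim as in \cite{BLS}: it is absorbed into the left-hand side up to a term bounded by the $\beta$-level seminorm on the right of \eqref{iteralo2}. There is also a $\partial_t$-term, which produces either the first-difference initial datum at $T_1$ or is controlled by $\tau'\le C/\mu$ times an $L^{\beta+1}$ bound on $\delta_h u$; the $L^\infty$ normalization $\|u\|_{L^\infty(B_2\times[-1,0])}\le 1$ keeps this harmless.

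The key modification, and the place where the proof differs from \cite{BLS}, is exactly the bound of the tail contributions $\tilde{\mathcal{I}}_2$ and $\tilde{\mathcal{I}}_3$. For $x\in\mathrm{supp}\,\eta\subset B_{(R+r)/2}$ and $y\in\R^n\setminus B_R$ we estimate
\[
\bigl|J_p(u_h(x)-u_h(y))-J_p(u(x)-u(y))\bigr|\le C\bigl(1+|u(y)|^{p-2}+|u_h(y)|^{p-2}\bigr)\,|\delta_h u(x)|,
\]
and observe that $|x-y|\ge c(h_0)\,|y|$, so that
\[
\int_{\R^n\setminus B_R}\frac{1+|u(y,t)|^{p-1}+|u_h(y,t)|^{p-1}}{|x-y|^{n+sp}}\,\dd y
\le C(n,s,p,h_0)\Bigl(1+\mathrm{Tail}_{p-1,sp}(u(\frarg,t);0,1)^{p-1}+\|u(\frarg,t)\|_{L^\infty(B_1)}^{p-1}\Bigr),
\]
and the analogue for $u_h$ (after the usual comparison $|y-h|\ge \tfrac23|y|$ used in Proposition \ref{prop:improve}). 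By our hypotheses this last quantity is bounded by a constant depending only on $n,s,p,h_0$. Integrating against $|\delta_h u(x)|^{\beta}$ and using Young's inequality in the standard way yields
\[
|\tilde{\mathcal{I}}_2|+|\tilde{\mathcal{I}}_3|\le C(n,s,p,h_0,\beta)\int_{T_0}^{T_1}\int_{B_{(R+r)/2}}\frac{|\delta_h u|^\beta}{|h|^{1+\vartheta\beta}}\,\tau\,\dd x\dd t+\text{l.o.t.},
\]
which is absorbed in the right-hand side of \eqref{iteralo2}.

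The main obstacle I anticipate is bookkeeping: one must be careful that the $L^{p-1}_{sp}$ tail bound and the local $L^\infty$ bound together suffice for \emph{both} $u$ and its translate $u_h$, and that the final constant does not depend on quantities like $\|u\|_{L^\infty(\R^n\times[-1,0])}$, which are not assumed here. Once that is verified, the combination of the good term on the left (from Lemma \ref{lm:pointwise-ineq}), the absorbable local term, the tail estimate just described, and the time-derivative bound gives exactly the iteration inequality \eqref{iteralo2}, with the dependence $C=C(n,s,p,h_0,\mu,\beta)$ claimed in the statement.
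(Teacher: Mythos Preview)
Your approach is essentially the same as the paper's: follow \cite[Proposition~5.1]{BLS}, handle the local term $\tilde{\mathcal I}_1$ (in particular its subterm $\mathcal I_{11}$) exactly as there, and replace the global $L^\infty$ assumption in the nonlocal terms $\tilde{\mathcal I}_2,\tilde{\mathcal I}_3$ by the local $L^\infty$ plus tail bound, exactly as was already done in Proposition~\ref{prop:improve}. The paper's one-line proof says precisely this, phrased from the other direction (``the only major difference from Proposition~\ref{prop:improve} is in $\mathcal I_{11}$, treated as in \cite[Proposition~5.1]{BLS}'').

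One small inaccuracy: your pointwise tail estimate
\[
\bigl|J_p(u_h(x)-u_h(y))-J_p(u(x)-u(y))\bigr|\le C\bigl(1+|u(y)|^{p-2}+|u_h(y)|^{p-2}\bigr)\,|\delta_h u(x)|
\]
is not quite right as stated---the Lipschitz bound for $J_p$ produces $|\delta_h u(x)-\delta_h u(y)|$ rather than $|\delta_h u(x)|$ alone, and $\delta_h u(y)$ is not controlled locally for $y\notin B_R$. What actually works (and what is used in Proposition~\ref{prop:improve}) is the cruder triangle estimate $|J_p(a)-J_p(b)|\le|a|^{p-1}+|b|^{p-1}$, giving $C(1+|u(y)|^{p-1}+|u_h(y)|^{p-1})$ with no gain of a difference-quotient factor. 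This still leads to the bound you write afterwards, so the structure of your argument is fine.
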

\begin{proof}
The only major difference from the proof of Proposition \ref{prop:improve} is in the estimation of term $\Ical_{1 1}$ and it can be treated in the exact same way as in the proof of \cite[Proposition 5.1]{BLS}.
\end{proof}
Using the previous Proposition with the same type of modifications as in the proof of Theorem \ref{teo:1} we can state the following version of \cite[Theorem 5.2]{BLS}.
\begin{theorem}
\label{teo:1higher} Let $\Omega$ be a bounded and open set, let $I=(t_0,t_1]$, $p\geq 2$ and $0<s<1$. Suppose $u$ is a local weak solution of 
\[
u_t+(-\Delta_p)^s u=0\qquad \mbox{ in }\Omega\times I,
\]
such that $u\in L^\infty_{\loc}(I;L^\infty_{\loc}(\Omega)) \cap L^\infty_{\loc} (I; L_{sp}^{p-1}(\R^n))$.
Then $u\in C^\delta_{x,\loc}(\Omega\times I)$ for every $0<\delta<\Theta(s,p)$, where $\Theta(s,p)$ is defined in \eqref{exponents}.
\par
More precisely, for every $0<\delta<\Theta(s,p)$, $R>0$, $x_0\in\Omega$ and $T_0$ such that 
\[
B_{2R}(x_0)\times [T_0-2\,R^{s\,p},T_0]\Subset\Omega\times (t_0,t_1],
\] 
there exists a constant $C=C(n,s,p,\delta)>0$ such that
\begin{equation}
\label{apriori}
\sup_{t\in \left[T_0-\frac{R^{s\,p}}{2},T_0\right]} [u(\frarg,t)]_{C^\delta(B_{R/2}(x_0))} \leq \frac{C}{R^\delta}\,\left(\|u\|_{L^\infty(B_{2R}\times [T_0-R^{s\,p},T_0])} +\sup_{t \in [T_0-R^{sp},T_0]} \mathrm{Tail}_{p-1,sp}(u;x_0,2R) + 1\right) .
\end{equation}
\end{theorem}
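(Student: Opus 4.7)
The plan is to adapt the proof of Theorem \ref{teo:1} (the almost $C^s_x$ bound), replacing the single-step inequality \eqref{iteralo} from Proposition \ref{prop:improve} with the iteration scheme \eqref{iteralo2} from the $\Theta(s,p)$-version proposition that immediately precedes the statement. The key input from the excerpt is that all those propositions and the seminorm estimate Lemma \ref{lm:seminormestimate} already hold with hypotheses only involving $\|u\|_{L^\infty(B_{2R}\times[-R^{sp},0])}$ and $\sup_t \mathrm{Tail}_{p-1,sp}(u;0,2R)$ in place of a global $L^\infty$-norm, so no further modification of those tools is required.

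First, assume (by translation) $x_0=0$ and $T_0=0$, and set
\[
\Mcal_R := \|u\|_{L^\infty(B_{2R}\times[-R^{sp},0])}+ \sup_{t\in[-R^{sp},0]} \mathrm{Tail}_{p-1,sp}(u(\frarg,t);0,2R) + \Bigl(R^{-n}\int_{-\frac{7}{8}R^{sp}}^{0}[u(\frarg,t)]_{W^{s,p}(B_R)}^p\,\dd t\Bigr)^{\frac{1}{p}} + 1.
\]
By Lemma \ref{lm:seminormestimate}, $\Mcal_R$ is controlled by a constant times the $L^\infty$ norm plus the tail plus $1$, so \eqref{apriori} will follow once the bound is proved with $\Mcal_R$ in place of its right-hand side. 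For $\alpha\in[-R^{sp}(1-\Mcal_R^{2-p}),0]$ define
\[
u_{R,\alpha}(x,t):=\frac{1}{\Mcal_R}\,u\!\left(Rx,\,\Mcal_R^{2-p}R^{sp}\,t+\alpha\right),\qquad (x,t)\in B_2\times(-2,0].
\]
The intrinsic time scaling $\Mcal_R^{2-p}$ compensates for the $(p-1)$-homogeneity of $(-\Delta_p)^s$, so $u_{R,\alpha}$ is a local weak solution of the same homogeneous equation on $B_2\times(-2,0]$, and by construction
\[
\|u_{R,\alpha}\|_{L^\infty(B_2\times[-1,0])}\leq 1,\qquad \sup_{t\in[-1,0]}\mathrm{Tail}_{p-1,sp}(u_{R,\alpha}(\frarg,t);0,2)\leq 1,\qquad \int_{-\frac{7}{8}}^{0}[u_{R,\alpha}(\frarg,t)]_{W^{s,p}(B_1)}^p\,\dd t\leq 1.
\]

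Next I run the Moser–Nirenberg iteration on second-order finite differences. Writing $A^{\vartheta,\beta}(U\times J):=\int_J \sup_{0<|h|<h_0}\|\delta_h^2 u_{R,\alpha}/|h|^{(1+\vartheta\beta)/\beta}\|_{L^\beta(U)}^\beta\,\dd t$, the initial $W^{s,p}$ bound above reads $A^{s,p}(B_1\times(-7/8,0))\leq C$ (after absorbing the $\delta_h$-to-$\delta_h^2$ passage as in \cite{BLS}). Apply inequality \eqref{iteralo2} with $(\vartheta,\beta)=(s,p)$ to pass to differentiability exponent $(1+sp+sp)/(2p-1)$ and integrability $2p-1$, on slightly smaller cylinders and after a short time shift $\mu>0$. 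Iterate: at each stage the differentiability exponent increases and the integrability exponent grows by $p-1$. A standard computation (identical to \cite[Section 5]{BLS}) shows the sequence of exponents $(\vartheta_k)$ converges to $\Theta(s,p)$, while the integrability goes to infinity. After finitely many steps one obtains, for any $\delta<\Theta(s,p)$, a control
\[
\sup_{0<|h|<h_0}\int_{-1/2}^{0}\Bigl\|\frac{\delta_h^2 u_{R,\alpha}(\frarg,t)}{|h|^\delta}\Bigr\|_{L^\beta(B_{3/4})}^\beta\,\dd t\leq C(n,s,p,\delta),
\]
with $\beta$ arbitrarily large. Combined with the uniform-in-time finite-difference bound from the second term on the left of \eqref{iteralo2} at the final stage, a Morrey–Campanato type embedding (as in the end of \cite[Thm.~5.2]{BLS}) upgrades this to a uniform-in-time $C^\delta_x(B_{1/2})$ estimate:
\[
\sup_{t\in[-1/2,0]}[u_{R,\alpha}(\frarg,t)]_{C^\delta(B_{1/2})}\leq C(n,s,p,\delta).
\]

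Scaling back gives $\sup_{t\in[\alpha-\frac12 \Mcal_R^{2-p}R^{sp},\,\alpha]}[u(\frarg,t)]_{C^\delta(B_{R/2})}\le C R^{-\delta}\Mcal_R$. Letting $\alpha$ range over $[-R^{sp}(1-\Mcal_R^{2-p}),0]$ covers the time interval $[-R^{sp}/2,0]$ by the chosen choice of time scale, yielding the estimate on all of $B_{R/2}\times[-R^{sp}/2,0]$. Finally, substituting the bound for $\Mcal_R$ from Lemma \ref{lm:seminormestimate} produces \eqref{apriori}. The main obstacle is the bookkeeping of the iteration: one must verify that the increasing sequence of differentiability exponents $\vartheta_k$ produced by $(\vartheta_{k+1}+1)/\beta_{k+1}=(1+sp+\vartheta_k\beta_k)/(\beta_k-1+p)$ indeed converges to $\Theta(s,p)$ in both regimes $s<(p-1)/p$ and $s\ge(p-1)/p$, and that the condition $(1+\vartheta_k\beta_k)/\beta_k<1$ required to apply \eqref{iteralo2} is preserved along the iteration; this is exactly where the threshold $\Theta(s,p)$ arises, and is carried out as in \cite[Section 5]{BLS}.
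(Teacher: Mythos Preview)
Your proposal is correct and follows essentially the same approach as the paper: the paper's proof of this theorem is in fact just the one-line remark that it follows from the preceding $\Theta(s,p)$-proposition ``with the same type of modifications as in the proof of Theorem \ref{teo:1}'', and what you wrote is precisely that modification spelled out---define $\Mcal_R$, rescale to $u_{R,\alpha}$, verify the normalized hypotheses, run the \cite[Section 5]{BLS} iteration via \eqref{iteralo2}, scale back, and vary $\alpha$. Your closing caveat about the condition $(1+\vartheta_k\beta_k)/\beta_k<1$ and the two regimes for $s$ is exactly the bookkeeping that the paper (and \cite{BLS}) defers to \cite[Section 5]{BLS}.
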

Now we modify the argument regarding the regularity in time (see \cite[Proposition 6.2]{BLS}).
\begin{proposition}\label{prop_time}
Suppose that $u$ is a local weak solution of 
$$
\partial_t u+(-\Delta_p)^s u =0,\qquad\text{ in } B_2\times (-2,0], 
$$
such that 
$$
\norm{u}_{L^{\infty}(B_2 \times[-1,0])} + \sup_{t \in [-1,0]} \mathrm{Tail}_{p-1,sp}(u;0,2) \leq 1,
$$
and
\begin{equation}
\label{eq:holderseminorm-condit}
\sup_{t\in [-1/2,0]}[u(\frarg,t)]_{C^\delta(B_{1/2})}\le K_\delta,\qquad \mbox{ for any } s<\delta<\Theta(s,p),
\end{equation}
where $\Theta(s,p)$ is the exponent defined in \eqref{exponents}.
Then there is a constant $C= C(n,s,p,K_\delta,\delta)>0$ such that 
$$
\abs{u(x,t)-u(x,\tau)} \leq C\, \abs{t-\tau}^{\gamma},\qquad \text{ for every } (x,t),(x,\tau)\in Q_{\frac{1}{4},\frac{1}{4}},
$$
where 
$$
\gamma = \frac{1}{\dfrac{s\,p}{\delta}-(p-2)}. 
$$
In particular, $u\in C^\gamma_{t}(Q_{\frac{1}{4},\frac{1}{4}})$ for any $\gamma<\Gamma(s,p)$, where $\Gamma(s,p)$ is the exponent defined in \eqref{exponents}.
\end{proposition}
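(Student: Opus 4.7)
The plan is to compare the pointwise value $u(x_0,t)$ with its spatial mollification and to control the time variation of the latter by testing the equation against the mollifier. Fix $x_0 \in B_{1/4}$ and times $-1/4<\tau<t\leq 0$. Let $\phi_\rho \in C_c^\infty(B_\rho(x_0))$ be a standard mollifier with $\int\phi_\rho=1$, $\|\phi_\rho\|_\infty\leq C\rho^{-n}$, and $\|\nabla\phi_\rho\|_\infty \leq C\rho^{-n-1}$, where $\rho>0$ is to be chosen, and set $(u)_\rho(s):=\int u(y,s)\phi_\rho(y)\dd y$. Testing the weak formulation of Definition \ref{locweak} against $\phi_\rho(x)\,\psi_\epsilon(s)$, with $\psi_\epsilon$ a smooth approximation of $\chi_{[\tau,t]}$, and passing to the limit $\epsilon\to 0$ (justified by the $C(I;L^2_{\loc})$ continuity of $u$, together with the integrability of the nonlocal bilinear form which follows from Theorem \ref{teo:1higher} and \eqref{finitesup}), yields the identity
\begin{equation*}
(u)_\rho(t)-(u)_\rho(\tau)=-\int_\tau^t\iint_{\R^n\times\R^n}\frac{J_p(u(x,s)-u(y,s))\bigl(\phi_\rho(x)-\phi_\rho(y)\bigr)}{|x-y|^{n+sp}}\dd x\dd y\dd s.
\end{equation*}

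The core estimate is a uniform-in-$s$ bound on the inner double integral. Assume $K_\delta\geq 1$ without loss of generality. By symmetry of the kernel, restrict $x\in B_{2\rho}(x_0)$ at the cost of a factor of $2$, and split the $y$-integration into three regions. In the region $y\in B_{1/2}(x_0)$ with $|x-y|\leq\rho$, use $|\phi_\rho(x)-\phi_\rho(y)|\leq C\rho^{-n-1}|x-y|$ together with the H\"older bound $|u(x,s)-u(y,s)|^{p-1}\leq K_\delta^{p-1}|x-y|^{\delta(p-1)}$ supplied by \eqref{eq:holderseminorm-condit}; the resulting radial integral $\int_0^\rho r^{\delta(p-1)-sp}\dd r$ converges precisely because $\delta>s$ implies $\delta(p-1)>sp-1$, and totals $\leq CK_\delta^{p-1}\rho^{\delta(p-1)-sp}$ after integration in $x$. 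In the region $y\in B_{1/2}(x_0)$ with $|x-y|>\rho$, use $|\phi_\rho(x)-\phi_\rho(y)|\leq C\rho^{-n}$ and the same H\"older bound; since $\delta<\Theta\leq sp/(p-1)$ ensures $\delta(p-1)-sp<0$, the radial integral is dominated by its lower endpoint and again contributes $\leq CK_\delta^{p-1}\rho^{\delta(p-1)-sp}$. In the region $y\notin B_{1/2}(x_0)$, one has $\phi_\rho(y)=0$, $|u(x,s)|\leq 1$, $|J_p(u(x,s)-u(y,s))|\leq C(1+|u(y,s)|^{p-1})$, and $|x-y|\sim |y-x_0|$ for $\rho$ small, so the integral is controlled by $C\bigl(1+\sup_{s\in[-1,0]}\mathrm{Tail}_{p-1,sp}(u(\frarg,s);x_0,\tfrac{1}{2})^{p-1}\bigr)\leq C$. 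Combining and integrating over $s\in[\tau,t]$ gives
\begin{equation*}
\absb{(u)_\rho(t)-(u)_\rho(\tau)}\leq C\bigl(K_\delta^{p-1}\rho^{\delta(p-1)-sp}+1\bigr)|t-\tau|.
\end{equation*}

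To pass back to pointwise values, note that $\int\phi_\rho=1$ and \eqref{eq:holderseminorm-condit} give $|u(x_0,s)-(u)_\rho(s)|\leq K_\delta\rho^\delta$ for $s\in[-1/2,0]$, so the triangle inequality yields
\begin{equation*}
|u(x_0,t)-u(x_0,\tau)|\leq 2K_\delta\rho^\delta+CK_\delta^{p-1}\rho^{\delta(p-1)-sp}|t-\tau|+C|t-\tau|.
\end{equation*}
Balancing the first two terms by choosing $\rho^{sp-\delta(p-2)}\sim K_\delta^{p-2}|t-\tau|$ produces the claimed time-H\"older exponent $\gamma=\delta/(sp-\delta(p-2))=1/(sp/\delta-(p-2))$; the regime where the optimal $\rho$ would exceed the geometric scale is absorbed trivially using $\|u\|_\infty\leq 1$. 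The main technical point is the convergence of the integral in the region $|x-y|\leq \rho$, which hinges precisely on the hypothesis $\delta>s$; the legitimacy of the product test function and the limiting procedure $\epsilon\to 0$ can be justified along the lines of Appendix B, using the boundedness of $u$ and the spatial Hölder regularity already established by Theorem \ref{teo:1higher}.
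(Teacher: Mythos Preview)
Your proof is correct and follows essentially the same approach as the paper's, which in turn follows \cite[Proposition~6.2]{BLS} with the single modification that the far-field contribution is controlled via the tail assumption rather than a global $L^\infty$ bound. The only cosmetic difference is that you split the nonlocal integral according to the threshold $|x-y|\lessgtr\rho$ together with $y\in B_{1/2}(x_0)$ versus $y\notin B_{1/2}(x_0)$, whereas the paper (following \cite{BLS}) splits according to $y\in B_r(x_0)$ versus $y\notin B_r(x_0)$ and then further decomposes the exterior piece into $B_{1/2}$, $B_2\setminus B_{1/2}$, and $\R^n\setminus B_2$; the resulting estimates and the balancing $\rho^{sp-\delta(p-2)}\sim|t-\tau|$ are identical.
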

\begin{proof}
The only part that needs to be modified is the estimation of the nonlocal term $J_2$
\[
J_2:= \int_{T_0}^{T_1}\iint_{(\R^n\setminus B_r(x_0))\times B_{r/2}(x_0)}J_p(u(x,\tau)-u(y,\tau))\,\eta(x)\,\dd \mu(x,y) \dd \tau,
\]
here $T_0,T_1 \in (t_0-\theta , t_0)$ with $T_0<T_1$. We recall that $0< \theta < \frac{1}{8}$, $x_0 \in B_{\frac{1}{4}}$, and $r < \frac{1}{8}$.
Thus $x\in B_{\frac{r}{2}}(x_0)$ implies $x\in B_{\frac{5}{16}}$. 

For $y \in B_{\frac{1}{2}}(0)$, assumption \eqref{eq:holderseminorm-condit} implies
$$ \abs{u(x,\tau) - u(y,\tau)} \leq K_\delta \abs{x-y}^\delta. $$
For $y \in B_2(0)\setminus B_{\frac{1}{2}}(0)$ the $L^\infty$ bound on $u$ implies
$$ \abs{u(x,\tau) - u(y,\tau)} \leq 2 \leq C(\delta) \abs{x-y}^\delta .$$
Also notice that for $x\in B_{r/2}(x_0)$ and $y \in \R^n \setminus B_{r}(x_0)$,we have $\abs{x-y} \geq \frac{1}{2}\abs{y-x_0}$. Using these we obtain
\[
\begin{aligned}
J_2 &\leq 2\,(T_1-T_0)\,\|\eta\|_{L^\infty(B_{r/2}(x_0))} 
\sup_{t \in [-\frac{1}{2} , 0]} \iint_{(\R^n\setminus B_r(x_0))\times B_{r/2}(x_0)}\frac{|u(x,t)-u(y,t)|^{p-1}}{|x-y|^{n+s\,p}}\,\dd y \dd x \\
& \leq 2\,(T_1-T_0)\,\norm{\eta}_{L^\infty(B_{r/2}(x_0))} \Bigl( \sup_{t \in [-\frac{1}{2} , 0]}\iint_{(\R^n\setminus B_2)\times B_{r/2}(x_0)}\frac{\abs{u(x,t)-u(y,t)}^{p-1}}{\abs{x-y}^{n+sp}}\,\dd y\dd x \\
&\quad + C(\delta , K_\delta ) \iint_{(B_2 \setminus B_r(x_0)) \times B_{r/2}(x_0))} \abs{x-y}^{\delta(p-1) - n -sp} \dd y \dd x \, \Bigr) \\
&\leq C\theta \Bigl( \sup_{t \in [-\frac{1}{2} , 0]} \iint_{(\R^n \setminus B_2)\times B_{r/2}(x_0)} \frac{1+ \abs{u(y,t)}^{p-1}}{\abs{x_0 -y}^{n +sp}} \dd y \dd x \\
& \qquad +\iint_{(B_2(0) \setminus B_r(x_0) \times B_{r/2}(x_0))} \abs{x_0-y}^{\delta(p-1) - n-sp} \dd y \dd x \,\Bigr) \\
& \leq C \theta \int_{B_{r/2}(x_0)} \Bigl( \sup_{t \in [-\frac{1}{2} ,0]} \int_{\R^n \setminus B_2} \frac{1+ \abs{u(y,t)}^{p-1}}{\abs{y}^{n+sp}} \dd y + \int_{B_2 \setminus B_r(x_0)} \abs{x_0-y}^{\delta(p-1) - n -sp} \dd y \Bigr) \dd x \\
& \leq C\, \theta\, r^n \Bigl( 2^{-sp} +1 + r^{\delta(p-1) - sp} \Bigr) \leq C\,\theta \, r^{n-sp + \delta(p-1)} . \quad \text{(since} \; \delta(p-1) -sp \; \text{is not positive)}
\end{aligned}
\]
\end{proof}

Finally, we are ready to prove a modified version of \cite[Theorem 1.1]{BLS}, which is Theorem \ref{thm:thm1tail}.
\begin{proof}[Proof of Theorem \ref{thm:thm1tail}]
Consider a cylinder $Q_{2\rho,2\rho^{sp}}(\tilde{x},\tau) \Subset \Omega \times I$, first, we prove the following type of bound on the H\"older seminorm in $Q_{\rho/4,\rho^{s\,p}/4}(\tilde{x},\tau)$, and later with the aid of a covering argument we conclude the claim of the theorem.

\textbf{Claim:} For any $(x_1,\tau_1),\,(x_2,\tau_2)\in Q_{\rho/4,\rho^{s\,p}/4}(\tilde{x},\tau)$ we have
\begin{equation}
\label{apriori_spacetime-pre}
\begin{aligned}
|u(x_1,\tau_1)- & u(x_2,\tau_2)| \leq C\,\bigl(\|u\|_{L^\infty(B_{2\rho}\times [T_0-\rho^{s\,p},T_0])} +\sup_{t \in [T_0-\rho^{sp},T_0]} \mathrm{Tail}_{p-1,sp}(u;x_0,2\rho) + 1\bigr)\, \left(\frac{|x_1-x_2|}{\rho}\right)^\delta\\
&+C\,\bigl(\|u\|_{L^\infty(B_{2\rho}\times [T_0-R^{s\,p},T_0])} +\sup_{t \in [T_0-\rho^{sp},T_0]} \mathrm{Tail}_{p-1,sp}(u;x_0,2\rho) + 1 \bigr)^{p-1}\,\left(\frac{|\tau_1-\tau_2|}{\rho^{s\,p}}\right)^\gamma.
\end{aligned}
\end{equation}
The regularity in space variable has been proven in Theorem \ref{teo:1higher}. To prove the part on time regularity we set 
$$\Mcal_\rho(\tilde{x},\tau):= 1 + \norm{u}_{L^\infty(Q_{2\rho,\rho^{sp}}(\tilde{x},\tau))} + \sup_{\tau - \rho^s \leq t \leq \tau} \mathrm{Tail}_{p-1,sp}(u;\tilde{x},2\rho) $$
and consider the rescaled functions 
$$\tilde{u}_{\rho,\iota}(x,t):= \frac{1}{\Mcal_\rho(\tilde{x},\tau)} u(\rho x + \tilde{x}, \Mcal_\rho(\tilde{x},\tau)^{2-p}\rho^{sp}t + \tau + \iota),$$
for $\iota \in (-\frac{\rho^{sp}}{4}(1- \Mcal_\rho^{2-p}),0)$. Then $\tilde{u}_{\rho,\iota}(x,t)$ is a solution of 
$$\partial_t \tilde{u}_{\rho,\iota} + (-\Delta_p)\tilde{u}_{\rho,\iota} = 0, \qquad \text{in}\qquad Q_{2,2}. $$
Moreover, $\tilde{u}_{\rho,\iota}(x,t)$ satisfies the conditions of Proposition \ref{prop_time}. Indeed by construction
$$\|\tilde{u}_{\rho,\iota}\|_{L^{\infty}(B_2 \times[-1,0])} + \sup_{t \in [-1,0]} \mathrm{Tail}_{p-1,sp}(\tilde{u}_{\rho,\iota};0,2) \le 1$$
and the estimate \eqref{eq:holderseminorm-condit} follows from \eqref{apriori} in Theorem \ref{teo:1higher}. From Proposition \ref{prop_time} we obtain 
$$\sup_{x \in B_{\frac{1}{4}}} [\tilde{u}_{\rho,\iota}(x,\frarg)]_{C^\gamma[-\frac{1}{4},0]} \leq C,$$
with $C= C(n,s,p,\gamma)$ for every $0< \gamma < \Gamma(s,p)$. By scaling back this translates to
\begin{equation}\label{eq:holder-time-homogen}
\abs{u(x,t_1)-u(x,t_2)} \leq \Mcal_\rho(\tilde{x},\tau) C \Bigl( \frac{\abs{t_1 - t_2}}{\rho^{sp} \Mcal_\rho^{2-p}}\Bigr)^\gamma \quad \text{for}\quad (x,t_1) , (x,t_2) \in Q_{\frac{\rho}{4},\frac{\rho^{sp}}{4}\Mcal_\rho^{2-p} }(\tilde{x}, \tau+\iota).
\end{equation} 
By varying $\iota$ with an argument similar to the proof of Theorem \ref{thm:main-holder} we arrive at the claim \eqref{apriori_spacetime-pre}. We have to point out that the H\"older constant does change, unlike what is suggested in the proof of \cite[Theorem 1.1]{BLS}. Here is a detailed computation 

We split the time interval $[t_1,t_2]$ into $1+ \floor{\Mcal_{\rho}(\tilde{x},\tau)^{p-2}}$ pieces, say $[\tau_{i+1},\tau_i]$, with $\tau_i - \tau_{i+1}=\frac{\abs{t_2-t_1}}{1+\floor{\Mcal_{\rho}(\tilde{x},\tau)^{p-2}}}$, $\tau_0=t_2$, and $\tau_{\floor{1+\mu^{p-2}}} = t_1$. Then using \eqref{eq:holder-time-homogen} and the triangle inequality we get 
\[
\begin{aligned}
\abs{u(x,t_2) - u(x,t_1)} &\leq  \abs{u(x,t_2) - u(x,t_1)} \\
&\leq \sum_{i=0}^{\floor{\Mcal_{\rho}^{p-2}}} \abs{u(x_2,\tau_{i}) - u(x_2,\tau_{i+1})} \\
&\leq C\Mcal_{\rho} \sum_{i=0}^{\floor{\Mcal_{\rho}^{p-2}}} \Bigl( \frac{\abs{\tau_i-\tau_{i+1}}}{R^{s\, p} \Mcal_{\rho}^{2-p}} \Bigr)^{\gamma}     \\
&= C\Mcal_{\rho}(\tilde{x},\tau) \sum_{i=0}^{\floor{\Mcal_{\rho}^{p-2}}} \Bigl( \frac{\abs{t_2-t_1}}{R^{s\, p} \Mcal_{\rho}^{2-p}(1+ \floor{\Mcal_{\rho}^{p-2}})} \Bigr)^{\gamma}     \\
&\leq C \Mcal_{\rho} \sum_{i=0}^{\floor{\Mcal_{\rho}^{p-2}}} \Bigl( \frac{\abs{t_2-t_1}}{R^{s\, p}} \Bigr)^{\gamma}  \\
& \leq C\Mcal_{\rho} \Bigl[   \Mcal_{\rho} ^{p-2}\Bigl( \frac{\abs{t_2-t_1}}{R^{s\, p}} \Bigr)^{\gamma}    \Bigr] \leq C \Mcal_\rho^{p-1}\Bigl( \frac{\abs{t_2-t_1}}{R^{s\, p}} \Bigr)^{\gamma}   .
\end{aligned}
\]
Now use \eqref{apriori_spacetime-pre} in cylinders of the form
 $$Q_{\frac{r}{4},\frac{r^{s p}}{4}}(y,t), \quad \text{for} \; (y,t) \in Q_{\sigma R,(\sigma R)^{sp}} ,$$
where the radius $r= \frac{R}{C(n,s,p,\sigma)}$ is so small, such that
$$Q_{2r,2r^{s\, p}}(y,t) \subset Q_{R,R^{sp}} .$$
Consider a sequence of points $(\tilde{x}_i,\tilde{\tau}_i)$ on the segment joining $(x_1,\tau_1)$ and $(x_2,\tau_2)$ such that 
$$(\tilde{x}_i,\tilde{\tau}_i) \in Q_{\frac{r}{4}, \frac{r^{sp}}{4}}(x_{i-1},\tau_{i-1}).$$
Using \eqref{apriori_spacetime-pre} together with the triangle inequality, we obtain 
\[
\begin{aligned}
|u(x_1,\tau_1)- & u(x_2,\tau_2)| \leq C\,\bigl(\|u\|_{L^\infty(Q_{R,R^{sp}(x_0,T_0)})} +\sup_{t \in [T_0-R^{sp},T_0]} \mathrm{Tail}_{p-1,sp}(u;x_0,R) + 1\bigr)\, \left(\frac{|x_1-x_2|}{R}\right)^\delta\\
&+C\,\bigl(\|u\|_{L^\infty(Q_{R,R^{sp}(x_0,T_0)}} +\sup_{t \in [T_0-R^{sp},T_0]} \mathrm{Tail}_{p-1,sp}(u;x_0,R) + 1 \bigr)^{p-1}\,\left(\frac{|\tau_1-\tau_2|}{R^{s\,p}}\right)^\gamma,
\end{aligned}
\]
with $C=C(n,s,p,\delta,\gamma,\sigma)$, which is the desired result.

\end{proof}
\section{Appendix B}
Here we will justify the insertion of $u-v$ and $\abs{u-v}^{p-2}(u-v)$ as test functions.



\begin{proposition}\label{prop:testing}
Let $B= B_R(x_0)$ be a ball of radius $r$, $B_2=B_{\sigma R}(x_0)$ with $\sigma> 1$, and $I=(\tau_0,\tau_1]$ be an interval. Let  $f \in L^{(p_s^\star)^\prime,p^\prime}(B\times I)$ and assume that  $u \in L^p(I, W^{s,p}(B_2)) \cap L^{p-1}(I;L_{sp}^{p-1}(\R^n)) \cap C(I;L^2(B))$ be a local weak solution of
$$ u_t + (-\Delta_p)^s u = f , \quad \text{in} \; B_2 \times I$$
 with 
$$\mathrm{Tail}_{p-1,sp}(u(\frarg,t);x_0, R)\in L^{p}(I).$$
(in particular, this will be the case under the stronger assumption $\sup_{t \in I} \mathrm{Tail}_{p,sp}(u(\frarg,t);x_0,R) < \infty $ that we use in this article.)
For any time interval $[T_0,T_1] \Subset I$ and let $v \in L^p([T_0,T_1], W^{s,p}(B_2)) \cap L^{p-1}([T_0,T_1];L_{sp}^{p-1}(\R^n)) \cap C([T_0,T_1];L^2(B))$ be a weak solution to
\[
\begin{cases}
v_t + (-\Delta_p)^s v=0 \quad & in \;\; B \times [T_0,T_1] \\
v=u \quad & in \;\; (\R^n \setminus B ) \times [T_0,T_1] \\
v(x,T_0) = u(x,T_0) \quad & in \;\; B
\end{cases}
\]
In addition, assume that $F$ is a globally Lipschitz function with $F(0)=0$, which is either bounded or  $F(a)=a$. Then we have:
\[
\begin{aligned}
\int_{T_0}^{T_1} \iint_{\R^n \times \R^n}& \Bigl( J_p(u(x,t) - u(y,t)) - J_p(v(x,t) - v(y,t)) \Bigr)\\
&\times \Bigl( F(u(x,t)-v(x,t)) - F(u(y,t) -v(y,t))\Bigr)  \dd \mu \dd t \\
& + \int_{B}  \Fcal(u(x,t) - v(x,t)) \dd x \bigl|_{T_0}^{T_1} \\
 &=   \int_{T_0}^{T_1} \int_{B} F(u-v)f \dd x \dd t ,
\end{aligned}
\]
where $\Fcal(a) := \int_{0}^{a} f(t) \dd t$ is the primitive function of $F$.
\end{proposition}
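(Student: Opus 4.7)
The goal is to establish an energy identity obtained by testing the (formal) difference of the weak formulations of the $u$- and $v$-equations with the function $F(u-v)$. The obstruction is that $F(u-v)$ is not admissible as a test function in Definition \ref{locweak}: it lacks the $C^1$ regularity in time required there. My plan is to approximate $F(u-v)$ by Steklov averages in the time variable, perform the computation at the regularized level, and then pass to the limit. Write $w=u-v$; by construction $w\in L^p([T_0,T_1];X_0^{s,p}(B,B_2))\cap C([T_0,T_1];L^2(B))$ with $w(\cdot,T_0)=0$.

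The first step is to introduce the Steklov average in time, $[g]_h(x,t):=\frac{1}{h}\int_t^{t+h}g(x,s)\dd s$ for $h>0$ small, and to note that the weak formulations of $u$ and $v$ remain valid for Steklov-averaged test data. Subtracting them yields, for almost every $t\in[T_0,T_1-h]$,
\[
\partial_t w_h(\cdot,t) + [(-\Delta_p)^s u]_h(\cdot,t) - [(-\Delta_p)^s v]_h(\cdot,t) = [f]_h(\cdot,t),
\]
where now $\partial_t w_h\in L^\infty([T_0,T_1-h];L^2(B))$. Because $w_h(\cdot,t)\in X_0^{s,p}(B,B_2)$ and $F$ is globally Lipschitz with $F(0)=0$, the function $F(w_h(\cdot,t))$ lies in $X_0^{s,p}(B,B_2)$ for each $t$; a further standard convolution in time with a smooth kernel $\rho_\epsilon$ produces an admissible test function $\phi_{h,\epsilon}=F(w_h)*_t\rho_\epsilon$ in $L^p([T_0,T_1];X_0^{s,p}(B,B_2))\cap C^1([T_0,T_1];L^2(B))$, after which $\epsilon\to 0$ is a routine density argument.

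Using $F(w_h)$ as the (limit-of-admissible) test function and applying the chain rule for the composition of a Lipschitz function with an absolutely continuous $L^2$-valued map gives, pointwise in $t$,
\[
\int_B \partial_t w_h\,F(w_h)\dd x \;=\; \frac{\di}{\di t}\int_B \Fcal(w_h)\dd x,
\]
so time-integration from $T_0$ to $T_1$ produces the boundary term $\int_B\Fcal(w_h)\dd x\big|_{T_0}^{T_1}$. The nonlocal term contributes
\[
\int_{T_0}^{T_1}\!\iint_{\R^n\times\R^n}\!\Bigl([J_p(u(x)-u(y))]_h-[J_p(v(x)-v(y))]_h\Bigr)\bigl(F(w_h)(x)-F(w_h)(y)\bigr)\frac{\dd x\dd y}{|x-y|^{n+sp}}\dd t,
\]
and the right-hand side becomes $\int_{T_0}^{T_1}\!\int_B [f]_h\,F(w_h)\dd x\dd t$.

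The final step is to pass to the limit $h\to 0^+$. Standard Steklov-average theory gives $w_h\to w$ in $C([T_0,T_1];L^2(B))$, $F(w_h)\to F(w)$ in $L^p([T_0,T_1];W^{s,p}(B_2))$ (using Lipschitz continuity of $F$ and the inclusion $L^p([T_0,T_1];W^{s,p}(B_2))$ of $u-v$), and $[f]_h\to f$ in $L^{(p_s^\star)',p'}(B\times[T_0,T_1])$. Convergence of the boundary and source terms then follows by continuity of $\Fcal$ and by H\"older in space-time. The delicate part — and the place where the two cases for $F$ genuinely differ — is passing to the limit in the nonlocal integral: the integrand is bounded pointwise by a constant (depending on the Lipschitz constant of $F$) times
\[
\bigl(|u(x,t)-u(y,t)|^{p-1}+|v(x,t)-v(y,t)|^{p-1}\bigr)\,|w(x,t)-w(y,t)|,
\]
and this is integrable on $(\R^n\times\R^n)\times[T_0,T_1]$ thanks to $u,v\in L^p([T_0,T_1];W^{s,p}(B_2))$ on the diagonal region and to the tail hypothesis $\mathrm{Tail}_{p-1,sp}(u(\cdot,t);x_0,R)\in L^p([T_0,T_1])$ (together with $v=u$ off $B$) on the complementary region. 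Once this dominating envelope is produced, dominated convergence closes the argument. In the bounded-$F$ case the envelope is automatic, while the case $F(a)=a$ is the main technical obstacle: here one has to combine the $L^p$-in-time control of the fractional seminorm on $B_2$ with the tail integrability to justify absolute convergence, using the elementary estimate $|x-y|^{-(n+sp)}\lesssim (1+|y|)^{-(n+sp)}$ for $x\in B$, $y\in\R^n\setminus B_2$ to separate the near- and far-field contributions.
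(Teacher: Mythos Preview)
Your Steklov-average strategy is a valid alternative and is in fact the more classical route in parabolic theory; the paper instead mollifies in time with a smooth symmetric kernel $\zeta$, uses as test function the \emph{doubly} regularized object $[F(w^\epsilon)]^\epsilon$, works on a shrunken interval $[T_0+\epsilon_0,T_1-\epsilon_0]$, and takes two successive limits $\epsilon\to 0$, $\epsilon_0\to 0$. The advantage of your approach is that the chain rule $\partial_t\Fcal(w_h)=F(w_h)\,\partial_t w_h$ is clean because $w_h\in C^1$ in time; the paper pays for its symmetric mollifier with extra boundary correction terms $\Sigma_u(\epsilon),\Sigma_v(\epsilon)$ that must be shown to vanish. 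Both approaches split the nonlocal term into a local piece on $B_2\times B_2$ and a far-field piece on $B\times(\R^n\setminus B_2)$, and both use the tail hypothesis precisely to control the latter via an auxiliary kernel $\Gcal(x,t)\in L^{p'}_{t,x}$.

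Two points in your write-up deserve tightening. First, your dominated-convergence envelope is not literally correct: $[J_p(u(x)-u(y))]_h$ is \emph{not} pointwise bounded by $|u(x,t)-u(y,t)|^{p-1}$, so you cannot use that as a majorant. What actually works (and what the paper does, via weak convergence) is to observe that the Steklov averages converge \emph{strongly} in $L^{p'}$ and $L^p$ respectively, then pair them by H\"older. Second, you locate the bounded-versus-linear dichotomy for $F$ in the nonlocal term, but the nonlocal analysis only uses the Lipschitz bound $|F(a)-F(b)|\le C|a-b|$, which holds in both cases identically. The place where the two cases genuinely differ is the boundary term $\int_B\Fcal(w_h)\dd x\big|_{T_0}^{T_1-h}$: when $F$ is bounded, $\Fcal$ is globally Lipschitz and convergence follows from $w_h\to w$ in $C_tL^2_x$; when $F(a)=a$, $\Fcal(a)=a^2/2$ is only locally Lipschitz and you need the factorization $a^2-b^2=(a-b)(a+b)$ together with $C_tL^2_x$ continuity. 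The paper makes exactly this case distinction.
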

\begin{proof}
The proof is essentially the same as \cite[Lemma 3]{BLS}, except that here we don't use a cut off function and don't have the global boundedness of $u$ in the ball.
For simplicity we assume $x_0= 0$, $R=1$ and $\sigma=2$.

 For a function $\phi \in C((T_0,T_1) ; L^2(B)) \cap L^p((T_0,T_1) ; X^{s,p}_0(B,B_2))$, we use the following regularization of functions
\[
\phi^\epsilon(x,t) := \frac{1}{\epsilon} \int_{t-\frac{\epsilon}{2}}^{t+ \frac{\epsilon}{2}} \zeta(\frac{t-\ell}{\epsilon})\phi(x,\ell) \dd \ell =  \int_{-\frac{1}{2}}^{\frac{1}{2}} \zeta(-\sigma)\phi(x,t+\epsilon \sigma) \dd \sigma ,
\]
where $\zeta(\sigma)$ is a smooth function with compact support in $(-\frac{1}{2}, \frac{1}{2})$ satisfying
$$\abs{\zeta} \leq 1, \qquad \text{and} \qquad  \abs{\zeta^\prime} \leq 8.$$
 This regulization process gives us a test function $\phi^\epsilon \in C^1((T_0+ \epsilon,T_1- \epsilon); L^2(B)) \cap L^p((T_0+\epsilon,T_1-\epsilon); X^{s,p}_0(B,B_2) )$.
Let $t_0 = T_0 + \epsilon_0$ and $t_1 = T_1- \epsilon_0$ and we test the equation with $\phi^\epsilon$ as above, for $\epsilon < \frac{\epsilon_0}{2}$. First, we will show the claim for the smaller interval $[t_0,t_1] \subset [T_0,T_1]$, and then through a limiting argument, prove the result for the whole interval.
As in equation (3.5) in \cite{BLS}, we get
\[
\begin{aligned}
\int_{t_0}^{t_1} &\iint_{\R^n \times \R^n} \Bigl( J_p(u(x,t) - u(y,t))(\phi^\epsilon(x,t) - \phi^\epsilon(y,t)) \dd \mu \dd t  \\ 
& + \int_{B} \int_{t_0+\frac{\epsilon}{2}}^{t_1-\frac{\epsilon}{2}} \partial_t u^\epsilon(x,t) \phi(x,t) \dd t \dd x + \Sigma_u(\epsilon) \\
& = \int_{B}\bigl[ u(x,t_0)\phi(x,t_0) -u^\epsilon(x,t_0+\frac{\epsilon}{2})\phi(x,t_0 + \frac{\epsilon}{2} )\bigr] \dd x \\
& - \int_{B}\bigl[ u(x,t_1)\phi(x,t_1) -u^\epsilon(x,t_1 -\frac{\epsilon}{2})\phi(x,t_1-\frac{\epsilon}{2}) \bigr] \dd x +
\int_{t_0}^{t_1} \int_{B} \phi^\epsilon f \dd x \dd t,
\end{aligned}
\]
and we obtain a similar identity for $v$ without $\int_{t_0}^{t_1} \int_{B} \phi^\epsilon f \dd x \dd t$ in the right hand side. Here $\Sigma_u$ is defined by
\[
\begin{aligned}
\Sigma_u(\varepsilon)=&-\int_{B}\int_{t_0-\frac{\varepsilon}{2}}^{t_0+\frac{\varepsilon}{2}}  \left(\frac{1}{\varepsilon}\,\int_{t_0}^{\ell+\frac{\varepsilon}{2}} u(x,t)\,\zeta\left(\frac{\ell-t}{\varepsilon}\right)\dd t\right)\,\partial_\ell\phi(x,\ell)\dd \ell\dd x\\
&-\int_{B}\int_{t_1-\frac{\varepsilon}{2}}^{t_1+\frac{\varepsilon}{2}}  \left(\frac{1}{\varepsilon}\,\int_{\ell-\frac{\varepsilon}{2}}^{t_1} u(x,t)\,\zeta\left(\frac{\ell-t}{\varepsilon}\right)\dd t\right)\,\partial_\ell\phi(x,\ell)\dd \ell\dd x.
\end{aligned}
\]

Observe that by using an integration by parts, the term $\Sigma_u(\epsilon)$ can be rewritten as
\begin{equation}\label{eq:testing-error-term}
\begin{aligned}
\Sigma_u(\epsilon)=&-\int_{B}\left(\frac{1}{\varepsilon}\,\int_{T_0}^{T_0+\varepsilon} u(x,t)\,\zeta\left(\frac{T_0-t}{\varepsilon}+\frac{1}{2}\right)\dd t\right)\,\phi\left(x,T_0+\frac{\varepsilon}{2}\right)\dd x\\
&+\int_{B}\int_{T_0-\frac{\varepsilon}{2}}^{T_0+\frac{\varepsilon}{2}}  \left(\frac{1}{\varepsilon^2}\,\int_{T_0}^{\ell+\frac{\varepsilon}{2}} u(x,t)\,\zeta'\left(\frac{\ell-t}{\varepsilon}\right)\dd t\right)\,\phi(x,\ell) \dd \ell\dd x\\
&+\int_{B}\left(\frac{1}{\varepsilon}\,\int^{T_1}_{T_1-\varepsilon} u(x,t)\,\zeta\left(\frac{T_1-t}{\varepsilon}-\frac{1}{2}\right)\dd t\right)\,\phi\left(x,T_1-\frac{\varepsilon}{2}\right)\dd x\\
&-\int_{B}\int_{T_1-\frac{\varepsilon}{2}}^{T_1+\frac{\varepsilon}{2}}  \left(\frac{1}{\varepsilon^2}\,\int^{T_1}_{\ell-\frac{\varepsilon}{2}} u(x,t)\,\zeta'\left(\frac{\ell-t}{\varepsilon}\right)\dd t\right)\,\phi(x,\ell)\dd \ell\dd x,
\end{aligned}
\end{equation}
where we also used that $\zeta$ has compact support in $(-1/2,1/2)$.
By subtracting the identities for $u$ and $v$, we obtain
\[
\begin{aligned}
\int_{t_0}^{t_1} &\iint_{\R^n \times \R^n} \Bigl( J_p(u(x,t) - u(y,t)) -J_p(v(x,t) - v(y,t))\bigr)(\phi^\epsilon(x,t) - \phi^\epsilon(y,t)) \dd \mu \dd t  \\ 
& + \int_{B} \int_{t_0+\frac{\epsilon}{2}}^{t_1-\frac{\epsilon}{2}} \partial_t (u-v)^\epsilon(x,t) \phi(x,t) \dd t \dd x + \Sigma_u(\epsilon) -\Sigma_v(\epsilon) \\
& = \int_{B}\bigl[ (u-v)(x,t_0)\phi(x,t_0) -(u-v)^\epsilon(x,t_0+\frac{\epsilon}{2})\phi(x,t_0+\frac{\epsilon}{2} )\bigr] \dd x \\
& - \int_{B}\bigl[ (u-v)(x,t_1)\phi(x,t_1) -(u-v)^\epsilon(x,t_1 -\epsilon)\phi(x,t_1-\frac{\epsilon}{2}) \bigr] \dd x + \int_{t_0}^{t_1} \int_{B} \phi^\epsilon(x,t) f(x,t) \dd x \dd t.
\end{aligned}
\]
Now we take $\phi$ to be $F(u^\epsilon -v^\epsilon)$. Observe that
$$\partial_t (u-v)^\epsilon F(u^\epsilon -v^\epsilon) = \partial_t \Fcal(u^\epsilon -v^\epsilon).$$
After an integration by parts we get
\begin{equation}\label{eq:testing-epsilon}
\begin{aligned}
\int_{t_0}^{t_1} &\iint_{\R^n \times \R^n} \Bigl( J_p(u(x,t) - u(y,t)) -J_p(v(x,t) - v(y,t))\bigr)\\
 &\times([F(u^\epsilon -v^\epsilon)(x,t) ]^\epsilon - [F(u^\epsilon -v^\epsilon)(y,t) ]^\epsilon) \dd \mu \dd t  \\
&+\int_B \Fcal (u^\epsilon - v^\epsilon) \dd x \Bigr]_{t_0+\frac{\epsilon}{2}}^{t_1-\frac{\epsilon}{2}} 
 + \Sigma_u(\epsilon) -\Sigma_v(\epsilon) \\
=& \int_{B}\bigl[ (u-v)(x,t_0)F(u^\epsilon - v^\epsilon)(x,t_0) -(u-v)^\epsilon(x,t_0+\frac{\epsilon}{2})F(u^\epsilon - v^\epsilon)(x,t_0+\frac{\epsilon}{2}) \bigr] \dd x \\
& - \int_{B}\bigl[ (u-v)(x,t_1)F(u^\epsilon - v^\epsilon)(x,t_1) -(u-v)^\epsilon(x,t_1 -\frac{\epsilon}{2})F(u^\epsilon - v^\epsilon)(x,t_1-\frac{\epsilon}{2}) \bigr] \dd x \\
&+ \int_{t_0}^{t_1} \int_{B} (F(u^\epsilon - v^\epsilon) )^\epsilon(x,t) f(x,t) \dd x \dd t := \Ical_1- \Ical_2 + \Ical_3.
\end{aligned}
\end{equation}
We now wish to pass to the limit in $\Ical_1, \Ical_2$, and $\Ical_3$.
Let $w=u-v$, we now treat $\Ical_1$. The fact that $F$ is globally Lipschitz together with $F(0)=0$ implies $\abs{F(t)} \leq C \abs{t}$. Therefore,
\[
\begin{aligned}
& \abs{w(x,t_0)F(w^\epsilon)(x,t_0) -w^\epsilon(x,t_0+\frac{\epsilon}{2})F(w^\epsilon (x,t_0+ \frac{\epsilon}{2} )) } \\
&\leq \abs{(w(x,t_0)-w^\epsilon(x,t_0+\frac{\epsilon}{2}))F(w^\epsilon(x,t_0))} + \abs{w^\epsilon(x,t_0+\frac{\epsilon}{2})(F(w^\epsilon(x,t_0))-F(w^\epsilon(x,t_0+ \frac{\epsilon}{2})))} \\
& \leq C\bigl[ \: \abs{(w(x,t_0) - w^\epsilon(x,t_0+ \frac{\epsilon}{2}))w^\epsilon(x,t_0)} + \abs{w^\epsilon(x,t_0+ \frac{\epsilon}{2})(w^\epsilon(x,t_0)-w^\epsilon(x, t_0+\frac{\epsilon}{2}))}  \bigr],
\end{aligned}
\]
where $C$ is the Lipschitz constant of $F$. After integrating and using H\"older's inequality, we obtain
\[
\begin{aligned}
\Ical_1 =& \int_{B}\bigl[ w(x,t_0)F(w^\epsilon)(x,t_0) - w^\epsilon(x,t_0+ \frac{\epsilon}{2})F(w^\epsilon)(x,t_0+\epsilon ) \bigr] \dd x \\
&\leq C \Bigl[ \norm{w(\frarg ,t_0)-w^\epsilon(\frarg ,t_0+ \frac{\epsilon}{2})}_{L^2(B)}\norm{w^\epsilon(\frarg ,t_0)}_{L^2(B)}\\ 
&+ \norm{w^\epsilon(\frarg,t_0+ \frac{\epsilon}{2})}_{L^2(B)} \norm{w^\epsilon(\frarg ,t_0) - w^\epsilon(\frarg , t_0+ \frac{\epsilon}{2})}_{L^2(B)} \Bigr] .
\end{aligned}
\]
Since $w^\epsilon \in C((T_0+ \epsilon_0 , T_1-\epsilon_0); L^2(B))$, uniformly, we have

Observe that
\begin{equation}\label{eq:testing-linfty-l2}
\begin{aligned}
&\int_B \abs{ w(x,t_0) -w^\epsilon(x,t_0 + \frac{\epsilon}{2})}^2 \dd x =  \int_B \absB{\int_{-\frac{1}{2}}^{\frac{1}{2}} \zeta(-\sigma)[w(x,t_0) - w(x,t_0+ \frac{\epsilon}{2} +\epsilon \sigma)] \dd \sigma}^2 \dd x \\
&\leq \int_B \int_{-\frac{1}{2}}^{\frac{1}{2}} \absb{\zeta(-\sigma)[w(x,t_0) - w(x,t_0+ \frac{\epsilon}{2} +\epsilon \sigma)]}^2 \dd \sigma\dd x = \int_{-\frac{1}{2}}^{\frac{1}{2}} \int_B \absb{\zeta(-\sigma)[w(x,t_0) - w(x,t_0+ \frac{\epsilon}{2} +\epsilon \sigma)]}^2 \dd x \dd \sigma \\
& \leq \int_{-\frac{1}{2}}^{\frac{1}{2}} \int_B \absb{w(x,t_0) - w(x,t_0+ \frac{\epsilon}{2} +\epsilon \sigma)}^2 \dd x \dd \sigma \leq \sup_{ 0\leq t \leq  \epsilon} \int_B \absb{w(x,t_0) - w(x,t_0+ t)}^2 \dd x
\end{aligned} 
\end{equation}
which tends to zero since $w$ is in $C([T_0,T_1],L^2(B))$. In a similar way oe can argue that
\begin{equation}\label{eq:milif-l2-con}
\lim_{\epsilon \to 0} \norm{w^\epsilon(\frarg ,t_0) - w^\epsilon(\frarg , t_0+ \frac{\epsilon}{2})}_{L^2(B)} = 0.
\end{equation}
Using the traingle inequality we get
$$\norm{w^\epsilon(\frarg ,t_0) - w^\epsilon(\frarg , t_0+ \frac{\epsilon}{2})}_{L^2(B)} \leq \norm{w^\epsilon(\frarg ,t_0) - w(\frarg , t_0)}_{L^2(B)} + \norm{ w(\frarg , t_0) - w^\epsilon(\frarg ,t_0+\frac{\epsilon}{2}) }_{L^2(B)}.$$
using a computation similar to \eqref{eq:testing-linfty-l2} we obtain
\[
\norm{w^\epsilon(\frarg ,t_0) - w(\frarg , t_0)}_{L^2(B)} \leq \sup_{-\frac{\epsilon}{2} \leq t \leq \frac{\epsilon}{2}} \norm{w(\frarg,t_0+t)-w(\frarg,t_0)}_{L^2(B)} 
\]
and
\[
\norm{w(\frarg ,t_0) - w^\epsilon(\frarg , t_0+ \frac{\epsilon}{2})}_{L^2(B)} \leq \sup_{0 \leq t \leq \epsilon} \norm{w(\frarg,t_0+t)-w(\frarg,t_0)}_{L^2(B)}.
\]
These two expressons converge to zero, since $w \in C([T_0,T_1],L^2(B))$ and $(t_0-\epsilon,t_1+\epsilon)\Subset (T_0,T_1)$. This shows that $\Ical_1$ converges to zero. By similar reasoning, $\Ical_2$ also tends to zero.
For the term $\Ical_3$, we have
\[
\begin{aligned}
\absB{ \int_{t_0}^{t_1} &\int_B \Bigl[ (F(w^\epsilon) )^\epsilon f - F(w) f \Bigr] \dd x \dd t } = \absB{ \int_{t_0}^{t_1} \int_B \bigl( F(w^\epsilon))^\epsilon - F(w^\epsilon)  \bigr) f + \bigl( F(w^\epsilon) - F(w) \bigr)f \dd x \dd t } \\
& \leq  \int_{t_0}^{t_1} \int_B \abs{ (F(w^\epsilon))^\epsilon - F(w^\epsilon) } \abs{f(x,t)} + C \abs{w^\epsilon -w}\abs{f(x,t)}  \dd x \dd t . \\
\end{aligned}
\]
The sequence $w^\epsilon$ is bounded in $L^{p_s^\star,p}(B\times(t_0,t_1))$, therefore, it has a weakly convergent subsequence. Using the pointwise convergence of $w^\epsilon$ to $w$, we get the weak convergence of $w^\epsilon - w$ to zero. By the assumptions on $q,r$ together with H\"older's inequality \eqref{eq:Holder}, $f(x,t)$ belongs to the dual space $L^{(p_s^\star)^\prime,p^\prime}(B\times(t_0,t_1))$. Therefore,
$$\lim_{\epsilon \to 0 } \int_{t_0}^{t_1} \int_B \abs{w^\epsilon -w}\abs{f(x,t)}  \dd x \dd t = 0 .$$
On the other hand,
\[
\begin{aligned}
 \int_{t_0}^{t_1} \int_B \abs{ (F(w^\epsilon))^\epsilon &- F(w^\epsilon) } \abs{f(x,t)} \dd x \dd t \\
 &= \int_{t_0}^{t_1} \int_B \absBB{ \int_{-\frac{1}{2}}^{\frac{1}{2}} \zeta(-\sigma) (F(w^\epsilon(x,t+\epsilon \sigma))) - F(w^\epsilon(x,t)) \dd \sigma } \abs{f(x,t)} \dd x \dd t \\
 & \leq  \int_{t_0}^{t_1} \int_B  \int_{-\frac{1}{2}}^{\frac{1}{2}} \zeta(-\sigma) \abs{(F(w^\epsilon(x,t+\epsilon \sigma)) - F(w^\epsilon(x,t))} \abs{f(x,t)} \dd \sigma   \dd x \dd t \\
 & \leq C\int_{t_0}^{t_1} \int_B  \int_{-\frac{1}{2}}^{\frac{1}{2}} \zeta(-\sigma) \abs{(w^\epsilon(x,t+\epsilon \sigma) - w^\epsilon(x,t)} \abs{f(x,t)} \dd \sigma   \dd x \dd t \\
 & \leq C\int_{-\frac{1}{2}}^{\frac{1}{2}} \int_{t_0}^{t_1} \int_B \abs{(w^\epsilon(x,t+\epsilon \sigma) - w^\epsilon(x,t)} \abs{f(x,t)}   \dd x \dd t \dd \sigma \\
 &\leq C\int_{-\frac{1}{2}}^{\frac{1}{2}} \normb{\; \norm{w^\epsilon(x,t+\epsilon \sigma) -w^\epsilon(x,t)}_{L^{p_s^\star}(B)} \;}_{L^p(t_0,t_1)} \norm{f}_{L^{(p_s^\star)^\prime,p^\prime}(B\times(t_0,t_1))} \dd \sigma .
\end{aligned}
\]
Recalling that the shift operator, 
$$T(a)(g):= \norm{g(t+a)}_{L^p((t_0,t_1))} $$
for a function $g \in L^p(t_0-\epsilon_0,t_1+\epsilon_0)$ is continuous for $-\epsilon_0 \leq  a \leq \epsilon_0$. Hence, we get 
$$\lim_{\epsilon \to 0} \norm{w^\epsilon(x,t)}_{L^{p_s^\star,p}(B\times(t_0,t_1))} =\lim_{\epsilon \to 0} \norm{w^\epsilon(x,t+\epsilon \sigma)}_{L^{p_s^\star,p}(B\times(t_0,t_1))} =  \norm{w}_{L^{p_s^\star,p}(B\times(t_0,t_1))}. $$
Upon passing to a subsequence  $w^\epsilon(x,t+\epsilon \sigma)$ and $w^\epsilon(x,t)$ converge weakly in $L^{p_s^\star,p}(B \times (t_0,t_1))$, since they converge to $w(x,t)$ pointwise, we get the weak convergence
$$w^\epsilon(x,t) \toweak w(x,t) \quad \text{and} \quad w^\epsilon(x,t+\epsilon \sigma) \toweak w(x,t) \quad \text{in} \quad L^{p_s^\star,p}(B \times (t_0,t_1)). $$
Combined with the convergence of the norms, this implies the strong convergence in the norm, in particular, we have 
$$\normb{\; \norm{w^\epsilon(x,t+\epsilon \sigma) -w^\epsilon(x,t)}_{L^{p_s^\star}(B)} \;}_{L^p(t_0,t_1)} \to 0.$$
Now we turn our attention to the terms on the left hand side of \eqref{eq:testing-epsilon}. The terms $\Sigma_u(\epsilon)$ and $\Sigma_v(\epsilon)$ converge to zero. To show this we start with the following computation, borrowed from \cite[Lemma 3.3.]{BLS}. Using a suitable change of variables in \eqref{eq:testing-error-term} and recalling $\phi=F(w^\epsilon)$, we can also write
\begin{equation}
\label{sigma}
\begin{split}
\Sigma_u(\epsilon)&=-\int_{B}\left(\int_{-\frac{1}{2}}^{\frac{1}{2}} u(x,t_0 - \epsilon \rho + \frac{\epsilon}{2})\,\zeta( \rho)\, \dd \rho \right)\,F(w^\epsilon)\left(x,t_0+\frac{\varepsilon}{2}\right)\dd x\\
&+\int_{B}\int_{-\frac{1}{2}}^{\frac{1}{2}}  \left(\int_{-\frac{1}{2}}^{\rho} u(x,\varepsilon\,\rho+t_0-\varepsilon\,\sigma)\,\zeta'\left(\sigma\right)\dd \sigma\right)\,F(w^\epsilon)(x,\varepsilon\,\rho+t_0)\dd \rho\dd x\\
&+\int_{B}\left(\int^{\frac{1}{2}}_{-\frac{1}{2}} u(x,t_1-\epsilon\rho -\frac{\epsilon}{2})\,\zeta(\rho)\dd \rho\right)\,F(w^\epsilon)\left(x,t_1-\frac{\varepsilon}{2}\right)\dd x\\
&-\int_{B}\int_{-\frac{1}{2}}^{\frac{1}{2}}  \left(\int^{\rho}_{\frac{1}{2}} u(x,\varepsilon\,\rho+T_1-\varepsilon\,\sigma)\,\zeta'\left(\sigma\right)\dd \sigma\right)\,F(w^\epsilon)(x,\varepsilon\,\rho+T_1)\dd \rho\dd x\\
&:= \Sigma_u^1(\epsilon)+\Sigma_u^2(\epsilon)+\Sigma_u^3(\epsilon)+\Sigma_u^4(\epsilon).
\end{split}
\end{equation}
In a similar way to the argument for convergence of $\Ical_1$, we can see that
$$\lim_{\epsilon \to 0}\Sigma_u^1(\epsilon) = -\int_B u(x,t_0)F(w)(x,t_0) \dd x.$$
We spell out the details of the arguments for convergence of $\Sigma_u^2(\epsilon)$.
\[
\begin{aligned}
\absB{&\Sigma_u^2(\epsilon) - \int_B u(x,t_0)F(w)(x,t_0) \dd x} \\
&= \absBB{\int_B \int_{-\frac{1}{2}}^{\frac{1}{2}}  \left(\int_{-\frac{1}{2}}^{\rho} \left(u(x,\varepsilon\,\rho+t_0-\varepsilon\,\sigma) - u(x,t_0)\right)\,\zeta'\left(\sigma\right)\dd \sigma\right)\,F(w^\epsilon)(x,\varepsilon\,\rho+t_0)\dd \rho\dd x \\
& \;+ \int_B \int_{-\frac{1}{2}}^{\frac{1}{2}}  \left(\int_{-\frac{1}{2}}^{\rho} u(x,t_0)\zeta^\prime(\sigma) \dd \sigma \right) \Bigl( F(w^\epsilon)(x,\epsilon \rho +t_0)-F(w)(x,t_0) \Bigr) \dd \rho \dd x } \\
&\leq \int_{-\frac{1}{2}}^{\frac{1}{2}}  \int_{-\frac{1}{2}}^{\rho}\left(\int_B \absB{u(x,\epsilon \rho + t_0 - \epsilon \sigma) -u(x,t_0))F(w^\epsilon)(x,\epsilon \rho + t_0) \zeta^\prime(\sigma)} \dd x \right) \dd \sigma \dd \rho \\
&\;+ \int_{-\frac{1}{2}}^{\frac{1}{2}} \int_B  \abs{\zeta(\rho)}  \,  \absB{u(x,t_0)\Bigl( F(w^\epsilon)(x,\epsilon \rho +t_0)-F(w)(x,t_0) \Bigr)}\dd x \dd \rho \\
&\leq 8 \int_{-\frac{1}{2}}^{\frac{1}{2}}  \int_{-\frac{1}{2}}^{\rho} \norm{u(\frarg, \epsilon \rho + t_0 - \epsilon \sigma)-u(\frarg,t_0)}_{L^2(B)} \norm{F(w^\epsilon)(\frarg, \epsilon \rho + t_0)}_{L^2(B)} \dd \sigma \dd \rho \\
&\; + \int_{-\frac{1}{2}}^{\frac{1}{2}} \norm{u(\frarg,t_0)}_{L^2(B)} \norm{F(w^\epsilon)(\frarg,\epsilon \rho + t_0) - F(w)(\frarg,t_0)}_{L^2(B)} \dd \rho \\
& \leq 8 \sup_{t_0 \leq t \leq t_0+\epsilon } \norm{u(\frarg,t_0+t)-u(\frarg,t_0)}_{L^2(B)} \sup_{t_0-\frac{\epsilon}{2}\leq t \leq t_0+\frac{\epsilon}{2}} \norm{F(w^\epsilon)(\frarg,t_0+t)}_{L^2(B)} \\
&+ C\norm{u(\frarg,t_0)}_{L^2(B)} \sup_{t_0-\frac{\epsilon}{2}\leq t \leq t_0+ \frac{\epsilon}{2}} \norm{w^\epsilon(\frarg,t_0+t)-w(\frarg,t_0)}_{L^2(B)},
\end{aligned}
\]
where $C$ is the Lipschitz constant of $F$. We have used $\abs{\zeta} \leq 1$ and $\abs{\zeta^\prime} \leq 8$ in the computation. Since $u\in C([T_0,T_1];L^2(B))$, we get
$$\lim_{\epsilon \to 0} \sup_{t_0 \leq t \leq t_0+\epsilon } \norm{u(\frarg,t_0+t)-u(\frarg,t_0)}_{L^2(B)} =0.$$
Using a computation similar to \eqref{eq:testing-linfty-l2} we obtain
$$ \sup_{t_0-\frac{\epsilon}{2}\leq t \leq t_0+ \frac{\epsilon}{2}} \norm{w^\epsilon(\frarg,t_0+t)-w(\frarg,t_0)}_{L^2(B)} \leq \sup_{t_0-\epsilon \leq t \leq t_0+ \epsilon} \norm{w(\frarg,t_0+t)-w(\frarg,t_0)}_{L^2(B)}  .$$
This converges to zero since $w \in C([T_0,T_1];L^2(B))$, and $(t_0-\epsilon,t_1+\epsilon) \Subset (T_0,T_1)$ due to the choice of $\epsilon$. In conclusion 
$$\lim_{\epsilon \to 0} \Sigma_u^1(\epsilon) + \Sigma_u^2(\epsilon)= 0.$$
In a similar fashion, we can argue that
$$\lim_{\epsilon \to 0} \Sigma_u^3(\epsilon) + \Sigma_u^4(\epsilon)= 0.$$
Hence, $\lim_{\epsilon \to 0} \Sigma_u(\epsilon)=0$. The treatment of $\Sigma_v(\epsilon)$ is similar.

The term 
$$\int_B \Fcal (u^\epsilon - v^\epsilon) \dd x \Bigr]_{t_0+\frac{\epsilon}{2}}^{t_1-\frac{\epsilon}{2}}= \int_B \Fcal (w^\epsilon)(x , t_1-\frac{\epsilon}{2}) \dd x - \int_B \Fcal (w^\epsilon)(x, t_0 +\frac{\epsilon}{2}) \dd x, $$
converges to 
$$\int_B \Fcal (w)( x , t_1) \dd x - \int_B \Fcal (w)(x , t_0 ) \dd x.$$
To show this, we consider two cases. 

\textbf{Case A:} $F$ is bounded. In this case $\Fcal$ is globally Lipschitz, that is  $ \abs{\Fcal(a) - \Fcal(b)} \leq C \abs{a-b}$, therefore,
\[
\begin{aligned}
\absB{ \int_B \Fcal(w^\epsilon(x,t_0+\frac{\epsilon}{2})) - \Fcal(w(x,t_0)) \dd x} &\leq \int_B C\abs{w^\epsilon(x,t_0+ \frac{\epsilon}{2}) - w(x,t_0)} \dd x  \\
& \leq C \abs{B}^{\frac{1}{2}} \norm{w^\epsilon(\frarg,t_0+\frac{\epsilon}{2}) - w( \frarg ,t_0)}_{L^2(B)} \dd x,
\end{aligned}
\]
which converges to zero as was explained before, see \eqref{eq:milif-l2-con}.

\textbf{Case B:} In this case, we have $\Fcal(a)=a^2$. Therefore,
\[
\begin{aligned}
\absB{ \int_B \Fcal(w^\epsilon(x,t_0 &+\frac{\epsilon}{2})) - \Fcal(w(x,t_0)) \dd x} \leq  
\int_B \abs{w^\epsilon(x,t_0+\frac{\epsilon}{2}))^2 - w(x,t_0)^2} \dd x \\
& \leq \int_B \abs{w^\epsilon(x,t_0+\frac{\epsilon}{2})) - w(x,t_0)}\abs{w^\epsilon(x,t_0+\frac{\epsilon}{2})) - w(x,t_0)} \dd x \\
& \leq \norm{w^\epsilon(\frarg ,t_0+\frac{\epsilon}{2})) - w(\frarg ,t_0)}_{L^2(B)} \norm{w^\epsilon(\frarg ,t_0+\frac{\epsilon}{2})) + w(\frarg ,t_0)}_{L^2(B)}
\end{aligned}
\]
and since $w \in C([T_0,T_1];L^2(B))$, with an argument similar to the treatment of $\Ical_1$, as we let $\epsilon $ go to zero this term converges to zero.

Now we discuss the convergence of the nonlocal term. Our treatment is similar to the argument in \cite[Appendix B]{BLS}. The aim is to show that the following converges to zero.
\[
\begin{aligned}
\int_{t_0}^{t_1} &\iint_{\R^n \times \R^n} (J_p(u(x)-u(y))- J_p(v(x) - v(y))) 
\times \bigl[ (F(w^\epsilon(x,t)) )^\epsilon - F(w(x,t)   \\
 &- \bigl( (F(w^\epsilon(y,t)) )^\epsilon - F(w(y,t)) \bigr) \bigr] \dd \mu \dd t.
\end{aligned}
\]
 We split it into the two parts
 \[
 \begin{aligned}
\int_{t_0}^{t_1} &\iint_{B_2 \times B_2} (J_p(u(x)-u(y))- J_p(v(x) - v(y))) 
\times \bigl[ (F(w^\epsilon(x,t)) )^\epsilon - F(w(x,t)   \\
 &- \bigl( (F(w^\epsilon(y,t)) )^\epsilon - F(w(y,t)) \bigr) \bigr] \dd \mu \dd t \\
 &+ 2 \int_{t_0}^{t_1} \iint_{B \times \, (\R^n \setminus B_2) } (J_p(u(x)-u(y))- J_p(v(x) - v(y))) 
\times \bigl[ (F(w^\epsilon(x,t)) )^\epsilon - F(w(x,t)  \bigr] \dd \mu \dd t \\
&:= \Theta_1(\epsilon) + 2\Theta_2(\epsilon).
\end{aligned}
 \]
 Here we have used the boundary condition $u=v (w=0) $ for $y \in \R^n \setminus B$ .
 Since $\abs{F(a)-F(b)} \leq C \abs{a-b}$ we have 
 \[
 \begin{aligned}
 \int_{t_0}^{t_1} \norm{(F(w^\epsilon))^\epsilon}_{W^{s,p}(B_2)}^p \dd t \leq C \int_{t_0}^{t_1} \norm{w^\epsilon}_{W^{s,p}(B_2)}^p \dd t.
 \end{aligned}
 \]
 After passing to a subsequence this sequence converges weakly in $L^p((t_0,t_1);W^{s,p}(B_2))$ to $F(w(x,t))$ or in another words 
$$
\frac{(F(w^\epsilon(x,t))^\epsilon  - (F(w^\epsilon(y,t) ))^\epsilon }{\abs{x-y}^{\frac{n}{p} +s}}
$$
converges weakly in $L^p \bigl((t_0,t_1);L^p(B_2 \times B_2) \bigr)$ and since 
 $$\frac{J_p(u(x) - u(y)) - J_p(v(x)-v(y))}{\abs{x-y}^{\frac{n}{p^\prime}+(p-1)s}}$$
 belongs to $L^{p^\prime} \bigl((t_0,t_1); L^{p^\prime}(B_2 \times B_2) \bigr)$ we get the desired convergence for $\Theta_1(\epsilon)$. Now for $\Theta_2(\epsilon)$ consider
$$
\Gcal(x,t):= \int_{\R^n \setminus B_2} \frac{J_p(u(x)-u(y))-J_p(v(x)-v(y))}{\abs{x-y}^{n+sp}} \dd y.
$$
Then for almost every $x \in B $
\begin{equation}\label{eq:gcal-tail}
\begin{aligned}
\abs{\Gcal(x,t)} &\leq C(n,s,p) \int_{\R^n \setminus B_2} \frac{\abs{u(x,t)}^{p-1}+\abs{u(y,t)}^{p-1}+\abs{v(x,t)}^{p-1}+\abs{v(y,t)}^{p-1}}{\abs{y}^{n+sp}} \dd y \\
&\leq C \bigl( 2\mathrm{Tail}_{p-1,sp}(u(\frarg,t);0,2)^{p-1} +\abs{u(x,t)}^{p-1} + \abs{v(x,t)}^{p-1} \bigr)
\end{aligned}
\end{equation}
The terms $\abs{u(x,t)}^{p-1}$ and $\abs{v(x,t)}^{p-1}$ belongs to $L^{p^\prime}\bigl((t_0,t_1); L^{p^\prime}(B)\bigr)$ since $u,v \in L^p((t_0,t_1); L^p(B))$. The tail term its independent of $x$ and belongs to $L^{p^\prime}(t_0,t_1)$ by the assumption
$$\int_{T_0}^{T_1} \bigl(\mathrm{Tail}_{p-1,sp}(u(\frarg,t);0,2))\bigr)^{p^\prime} \dd t \leq \infty.$$
Thus, $\Gcal(x,t) \in L^{p^\prime}([T_0,T_1]; L^{p^\prime}(B_2))$ and as before after extracting a subsequence:
$$ F(w^\epsilon(x,t))^\epsilon  \rightharpoonup F(w(x,t)  \quad \text{in} \; L^p([t_0,t_1]; L^p(B)). $$
This shows that 
$$\Theta_2(\epsilon)= \int_{t_0}^{t_1} \int_B \Gcal(x,t) \bigl( F(w^\epsilon(x,t))^\epsilon - F(w(x,t) \bigr)$$
converges to zero.

Finally, we let $\epsilon_0$ go to zero to get the desired result for $[T_0,T_1]$.
We need to show that the following converge to zero as $\epsilon_0$ tends to $0$.
$$\Jcal_1 := \int_B \Fcal(w(x,T_0)) - \Fcal(w(x,T_0+\epsilon_0)) \dd x \qquad \text{,} \qquad \Jcal_2 := \int_B \Fcal(w(x,T_1)) - \Fcal(w(x,T_1 -\epsilon_0)) \dd x,  $$
$$\Jcal_3 := \int_{T_0}^{T_0+ \epsilon_0} \int_B F(w(x,t))f(x,t) \dd x \dd t \qquad , 
\qquad \Jcal_4 := \int_{T_1-\epsilon_0}^{T_1 } \int_B F(w(x,t))f(x,t) \dd x \dd t , $$
and
$$\Ncal_1 := \int_{T_0}^{T_0+\epsilon_0} \iint_{\R^n \times \R^n} \Bigl( \frac{J_p(u(x,t)- u(u,t)) - J_p(v(x,t)-v(y,t))}{\abs{x-y}^{n+sp}}   \Bigr) \times \Bigl( F(w(x,t)) - F(w(y,t)) \Bigr) \dd x \dd y \dd t,$$
and 
$$\Ncal_2 := \int_{T_1 - \epsilon_0}^{T_1} \iint_{\R^n \times \R^n} \Bigl( \frac{J_p(u(x,t)- u(u,t)) - J_p(v(x,t)-v(y,t))}{\abs{x-y}^{n+sp}}   \Bigr) \times \Bigl( F(w(x,t)) - F(w(y,t)) \Bigr) \dd x \dd y \dd t.$$
The arguments will be reminiscent of the ideas in the previous part.

We start with $\Jcal_2$, in the case of a bounded $F$, $\Fcal$ is globally Lipschitz and we have
\[
\begin{aligned}
\absb{\Jcal_2} &\leq  \int_B \absb{\Fcal(w(x,T_1)) - \Fcal(w(x,T_1-\epsilon_0))} \dd x \leq C \int_B \abs{w(x,T_1) - w(x,T_1-\epsilon_0)} \dd x \\
& \leq C \abs{B}^{\frac{1}{2}} \norm{w(\frarg, T_1) - w(\frarg , T_1 -\epsilon_0)}_{L^2(B)}.
\end{aligned}
\]
This converges to $0$ since $w \in C([T_0,T_1];L^2(B))$, in the case of $F(a)=a$, we have 
\[
\begin{aligned}
\absb{\Jcal_2} &\leq  \int_B \absb{\Fcal(w(x,T_1)) - \Fcal(w(x,T_1-\epsilon_0))} \dd x \leq \int_B \absb{w(x,T_1)^2 - w(x,T_1-\epsilon_0)^2} \dd x \\
& \leq  \int_B \absb{w(x,T_1) - w(x,T_1-\epsilon_0)} \absb{w(x,T_1) + w(x,T_1-\epsilon_0)} \dd x \\
&\leq \norm{w(\frarg, T_1) - w(\frarg , T_1 -\epsilon_0)}_{L^2(B)}\norm{w(\frarg, T_1) + w(\frarg , T_1 -\epsilon_0)}_{L^2(B)}.
\end{aligned}
\]
Again since $w \in C([T_0,T_1];L^2(B))$, this term converges to $0$.

 $\Jcal_1$ can be treated in a similar way. 
 For the term $\Jcal_4$, using $\abs{F(a)} \leq C \abs{a}$ we get
 \[
 \absb{\Jcal_4} \leq C \int_{T_1-\epsilon_0}^{T_1 } \int_B \abs{w(x,t)} \abs{f(x,t)} \dd x \dd t.
 \]
 Since $w \in L^{p_s^\star,p}(B \times [T_0,T_1])$ and $f \in L^{(p_s^\star)^\prime,p^\prime}(B \times [T_0,T_1])$, using H\"older's inequality \eqref{eq:Holder}, one can see that 
 $$w(x,t)f(x,t) \in L^1(B\times[T_0,T_1]).$$ 
 Now using the absolute continuity of the integral for integrable functions we can conclude that $\Jcal_4$ converges to $0$. The reasoning for convergence of $\Jcal_3$ is similar.
 
 Now we turn our attention to the nonlocal terms.
 \[
 \begin{aligned}
 \Ncal_2 &= \int_{T_1 - \epsilon_0 }^{T_1} \iint_{B_2 \times B_2} \Bigl( \frac{J_p(u(x,t)- u(y,t)) - J_p(v(x,t)-v(y,t))}{\abs{x-y}^{n+sp}}   \Bigr) \times \Bigl( F(w(x,t)) - F(w(y,t)) \Bigr) \dd x \dd y \dd t\\
  &+ 2 \int_{T_1-\epsilon_0}^{T_1} \iint_{B \times (\R^n \setminus B_2)} \frac{J_p(u(x,t)- u(y,t)) - J_p(v(x,t)-v(y,t))}{\abs{x-y}^{n+sp}} F(w(x,t)) \dd x \dd y \dd t\\
  &\quad:= \Theta_1 + 2\Theta_2
  \end{aligned}
 \]
 First, we treat $\Theta_1$. Notice that since $u,v \in L^p([T_0,T_1];W^{s,p}(B_2))$ we have
$$\frac{J_p(u(x,t)- u(y,t)) - J_p(v(x,t)-v(y,t))}{\abs{x-y}^{\frac{n}{p^\prime}+(p-1)s}} \in L^{p^\prime}([T_0,T_1]; L^{p^\prime}(B_2 \times B_2)]), $$
and using Lipschitz continuity of $F$ and the fact that $w \in L^p([T_0,T_1];W^{s,p}(B_2)) $ we have
$$\frac{F(w(x,t)) - F(w(y,t))}{\abs{x-y}^{\frac{n}{p}+s}} \in L^p([T_0,T_1];L^p(B_2 \times B_2)).$$
This implies that the integrand involved in $\Theta_1$ belongs to $L^1([T_0,T_1];L^1(B_2\times B_2))$. And similar to the treatment of $\Jcal_4$, since the volume of the integration region is shrinking to $0$, $\Theta_1$ converges to $0$.
To deal with $\Theta_2$, notice that
$$F(w(x,t)) \in L^p([T_0,T_1];L^p(B))$$
and define
$$\Gcal(x,t):= \int_{\R^n \setminus B_2} \frac{J_p(u(x,t)- u(y,t)) - J_p(v(x,t)-v(y,t))}{\abs{x-y}^{n+sp}} \dd y.$$
We can estimate this integration in terms of the Tail, that is
$$\absb{\Gcal(x,t)} \leq C(n,s,p)\Bigl( \mathrm{Tail}_{p-1,sp}(u(\frarg,t);0,2) + \abs{u(x,t)}^{p-1} + \abs{v(x,t)}^{p-1} \Bigr),$$
see for example \eqref{eq:gcal-tail}. Therefore, $\Gcal(x,t) \in L^{p^\prime}([T_0,T_1]; L^{p^\prime}(B))$. Hence using H\"older's inequality 
$$\Gcal(x,t)F(x,t) \in L^1([T_0,T_1];L^1(B)).$$
This concludes the result. $\Ncal_1$ can be treated in an exactly similar manner.
\end{proof}

\providecommand{\bysame}{\leavevmode\hbox to3em{\hrulefill}\thinspace}
\providecommand{\MR}{\relax\ifhmode\unskip\space\fi MR }
\providecommand{\MRhref}[2]{%
  \href{http://www.ams.org/mathscinet-getitem?mr=#1}{#2}
}
\providecommand{\href}[2]{#2}

\end{document}